\newcommand{\Z}{{\mathbb Z}}
\newcommand{\R}{{\mathbb R}}
\renewcommand{\div}{\mathrm{div}}
\def\XXint#1#2#3{{\setbox0=\hbox{$#1{#2#3}{\int}$ }
\vcenter{\hbox{$#2#3$ }}\kern-.6\wd0}}
\def\lbb{\mbox{$\,$---}}
\def\Yint#1{\mathchoice
{\XXint\displaystyle\textstyle{#1}}%
{\XXint\textstyle\scriptstyle{#1}}%
{\XXint\scriptstyle\scriptscriptstyle{#1}}%
{\XXint\scriptscriptstyle\scriptscriptstyle{#1}}%
\!\iint}
\def\fiint{\Yint\lbb}
\DeclareMathOperator *{\diam}{diam}
\DeclareMathOperator *{\dist}{dist}
\newtheorem{theorem}{Theorem}[section]
\newtheorem{lemma}[theorem]{Lemma}
\newtheorem{corollary}[theorem]{Corollary}
\newtheorem{proposition}[theorem]{Proposition}
\newtheorem{definition}{Definition}[section]
\theoremstyle{definition}
\begin{document}

\title[Elliptic operators satisfying Carleson condition]{Boundary value problems for elliptic operators satisfying Carleson condition}

\author{Martin Dindo\v{s}}
\address{School of Mathematics, \\
The University of Edinburgh and Maxwell Institute of Mathematical Sciences, Edinburgh UK}
\email{M.Dindos@ed.ac.uk}

\author{Jill Pipher}
\address{Dept. of Mathematics, \\ 
 Brown University, US}
\email{jill\_pipher@brown.edu}

\maketitle

{\centering\footnotesize This paper is dedicated to Carlos Kenig on the occasion of his 70th birthday.\par}

\begin{abstract} In this paper we present in concise form recent results, with illustrative proofs, on solvability of the $L^p$ Dirichlet, Regularity and Neumann problems for scalar elliptic equations on Lipschitz domains with coefficients satisfying a variety of Carleson conditions. More precisely, with $L=\mbox{div}(A\nabla)$, we assume the matrix $A$ is elliptic and satisfies a natural Carleson condition either in the form that ($|\nabla A(X)|\lesssim \mbox{dist}(X,\partial\Omega)^{-1}$ and $|\nabla A|(X)^2\mbox{dist}(X,\partial\Omega)\,dX$) or 
$\mbox{dist}(X,\partial\Omega)^{-1}\left(\mbox{osc}_{B(X,\delta(X)/2)}A\right)^2\,dX$ is a Carleson measure.

We present two types of results, the first is the so-called \lq\lq small Carleson" case where, for a given $1<p<\infty$, we prove solvability of the three considered boundary value problems under assumption the Carleson norm of the coefficients and the Lipschitz constant of the considered domain is sufficiently small. The second type of results (\lq\lq large Carleson") relaxes the constraints to any Lipschitz domain and to the assumption that the Carleson norm of the coefficients is merely bounded. In 
this case we have $L^p$ solvability for a range of $p$'s in a subinterval of $(1,\infty)$.

At the end of the paper we give a brief overview of recent results on domains beyond Lipschitz such as uniform domains or chord-arc domains.
\end{abstract}

\section{Introduction}

In this survey paper we provide an overview of solvability of various boundary value problems for real elliptic partial differential equations, focusing
on those with coefficients satisfying a natural Carleson condition described below. 
We further focus on the developments of the elliptic theory in the setting of Lipschitz domains, which is the context of
most of the authors' own contributions in this area, for several reasons. In the first place, the Lipschitz domain theory inspired the questions about this class of operators and their geometry is naturally connected to the Carleson condition on the coefficients. Second, while there have been 
striking developments of the elliptic theory on domains satisfying much weaker geometric conditions (chord arc, uniform), the Lipschitz domain
setting presents enough challenges to showcase many of the new ideas required to investigate and solve elliptic boundary value problems. 
Finally, there are two types of Carleson measure conditions on the coefficients - one defined for gradients, and another defined in terms of the 
oscillation of the coefficients. While the former condition has now been successfully treated, in many cases, on rougher domains than Lipschitz, the latter condition does not
generalize so readily to such domains.
That said,
it is emerging that the gradient Carleson measure condition is natural for the elliptic theory in domains satisfying these weaker 
geometric conditions and we briefly describe some of these extraordinary advances in the last section of this paper.
Our primary objective here is to illuminate the main ideas necessary to solve these particular
 Dirichlet, Regularity and Neumann problems in Lipschitz domains in an accessible manner, in its simplest yet illustrative instantiation, and
 in a single manuscript.

\subsection{Real valued elliptic PDEs}

Let $n\ge 2$ and $A(X)=(a_{ij}(X))$ be an $n\times n$ real matrix
with bounded coefficients defined for $X\in\Omega$, where
$\Omega\subset {\mathbb R}^n$ is an open,
connected set. (We will specify further assumptions on $\Omega$ a
bit later).

We are going to assume that $A$ is {\it elliptic} which  means that $A$ is {\it uniformly positive definite}.
That is for some constant $\lambda>0$ we have that
\begin{equation}
\lambda|\xi|^2\le \sum_{i,j=1}^n a_{ij}(X) \xi_i
\xi_j,\qquad\text{ for all $X\in \Omega$ and $\xi \in {\mathbb
R}^n$.}\label{ellipticity}
\end{equation}
The constant $\lambda$ is called {\it the ellipticity constant} of
and operator $L$ defined as follows:
\begin{equation}
Lu=\mbox{div}(A\nabla u)=\sum_{i,j=1}^n
\partial_i(a_{ij}\partial_j u). \label{Lop}
\end{equation}

We also denote by $\Lambda$ the $L^\infty$ norm of the matrix $A$. 
The most classical example of such operator is the flat Laplacian
on ${\mathbb R}^n$, in this case $A=I$ for all $X$.\vglue2mm

We shall make no assumption on whether matrix $A$ is symmetric or not, in the whole paper we allow
matrix $A$ to be {\bf non-symmetric}.\vglue2mm

We also note that the concept of ellipticity can be also defined for elliptic PDE with complex coefficients as well as elliptic systems. However, we shall not explore these directions further here, instead an interested reader can look at papers \cite{DPcompl}, \cite{DPreg} and \cite{DPex} for scalar complex coefficients elliptic PDEs or \cite{DHM}, \cite{Dsystems} and \cite{DLP} for elliptic systems where the same Carleson condition as in this manuscript is considered.

\subsection{Domains and Boundary Value Problems}

The domains in which one might solve these boundary value problems will 
require some constraints on the boundary.
We shall consider
Dirichlet problems with data in $L^p(\partial \Omega)$, and in the classical Sobolev space
$H^{1,p}(\partial \Omega)$, as well as the Neumann problem with $L^p$ data
$1<p<\infty$. In order to define spaces $L^p(\partial \Omega)$ and
$H^{1,p}(\partial \Omega)$ the domains must be of locally
finite perimeter; that is, the $n-1$ Hausdorff measure of $B(0,R)\cap
\partial\Omega$ is finite for $R<\infty$. Moreover, the space
$H^{1,p}(\partial \Omega)$ and the Neumann problem  require a well
defined outer normal at almost every boundary point
$\partial\Omega$. Hence, certain natural geometric assumptions have to be made about the set $\partial\Omega$.
\vglue1mm

For simplicity we present the results on Lipschitz domains, which are locally graphs of Lipschitz functions. As we note later in the paper, there have
been advances in the past several years in solving boundary value problems for more general classes of domains, also requiring more
general notions of Sobolev spaces on the boundary. With our focus on the conditions defining the coefficients of the operator, as opposed to the most
general geometric conditions possible on the domain, we can present many of the ideas and methods that illustrate the novelties required to 
solve these problems.
\vglue1mm

The plan for the rest of the paper is as follows. To start, the rest of the introduction is devoted to some key definitions and statements of the Dirichlet, Regularity, and Neumann problems with $L^p$ data.

In the second section, we state the main results and motivate the Carleson condition on the coefficients of the operators we are considering.
In section three, we give some background results and introduce elliptic measure. Section four presents results on the Dirichlet problem, and how they are 
connected to comparability of nontangential maximal function and square function estimates. The Regularity problem is discussed in section five and the Neumann problems in 
section six. Section seven is a brief overview of results on domains satisfying weaker geometric conditions, including the most recent state-of-the-art results.
Some of the delicate issues arising in bounding nontangential maximal functions by square functions are relegated to the appendix (section eight).

\begin{definition}
$\Z \subset \R^n$ is an $\ell$-cylinder of diameter $d$ if there
exists an orthogonal coordinate system $(x,t)$  with $x\in\mathbb R^{n-1}$ and $t\in\mathbb R$ such that
\[
\Z = \{ (x,t)\; : \; |x|\leq d, \; -2\ell d \leq t \leq 2\ell d \}
\]
and for $s>0$,
\[
s\Z:=\{(x,t)\;:\; |x|<sd, -2\ell d \leq t \leq 2\ell d \}.
\]
\end{definition}

\begin{definition}\label{DefLipDomain}
$\Omega\subset \R^n$ is a Lipschitz domain with Lipschitz
`character' $(\ell,N,C_0)$ if there exists a positive scale $r_0$ and
at most $N$ $\ell$-cylinders $\{{\Z}_j\}_{j=1}^N$ of diameter $d$, with
$\frac{r_0}{C_0}\leq d \leq C_0 r_0$ such that\vglue2mm

\noindent (i) $8{\Z}_j \cap {\partial\Omega}$ is the graph of a Lipschitz
function $\phi_j$, $\|\nabla\phi_j \|_\infty \leq \ell \, ;
\phi_j(0)=0$,\vglue2mm

\noindent (ii) $\displaystyle {\partial\Omega}=\bigcup_j ({\Z}_j \cap {\partial\Omega}
)$,

\noindent (iii) $\displaystyle{\Z}_j \cap \Omega \supset \left\{
(x,t)\in\Omega \; : \; |x|<d, \; \mathrm{dist}\left( (x,t),{\partial\Omega}
\right) \leq \frac{d}{2}\right\}$.

\noindent (iv) Each cylinder $\displaystyle{\Z}_j$ contains points from $\Omega^c={\mathbb R^n}\setminus\Omega$.\vglue1mm

\noindent We say that domain $\Omega$ is $C^1$ if all functions $\phi_j$ above are not only Lipschitz but also continuously differentiable.
\end{definition}

\noindent{\it Remark.} If the scale $r_0$ is finite, that is $r_0<\infty$ then the domain $\Omega$ from the definition above will be a bounded Lipschitz domain, i.e., the set $\Omega$ in $\mathbb R^n$ will be bounded.\vglue1mm

However, we shall also allow the scale $r_0$ to be infinite, in such case since $\displaystyle{\Z}=\mathbb R^n$
we are simply in the situation that in some coordinate system $\Omega$ can be written as
$$\Omega = \{(x,t): t > \phi(x)\}\quad\mbox{ where $ \phi(x):\mathbb R^n \rightarrow \mathbb R$ is a Lipschitz function.}$$

Hence $\Omega$ is an unbounded Lipschitz domain.

\begin{definition}\label{dgamma}
A cone of aperture $a > 0$ is a non-tangential approach region for $Q
\in \partial \Omega$ of the form
\[\Gamma_{a}(Q)=\{X\in \Omega: |X-Q|\le (1+a)\;\;{\rm dist}(X,\partial \Omega)\}.\]
\end{definition}

For ease of notation, and when there is no need for the specificity, we shall omit the dependence on the aperture of the cones in the definitions of the square function and nontangential maximal 
functions below.

\begin{definition}\label{d1.4}
The square function of a function $u$ defined on $\Omega$, relative to the family of cones $\{\Gamma(Q)\}_{Q \in \partial\Omega}$, is
\[S(u)(Q) =\left( \iint_{\Gamma(Q)} |\nabla u (X)|^2
\delta(X)^{2-n} dX \right)^{1/2}\] at each $Q \in \partial
\Omega$. For any $1<p<\infty$ we define the $p$-adapted square function by 
\[S_p(u)(Q) =\left( \iint_{\Gamma(Q)} |\nabla u (X)|^2|u(X)|^{p-2}
\delta(X)^{2-n} dX \right)^{1/p}\] at each $Q \in \partial
\Omega$. The
non-tangential maximal function relative to $\{\Gamma(Q)\}_{Q \in \partial\Omega}$ is
$$N(u)(Q) = \sup_{X\in \Gamma(Q)} |u(X)|$$
at each $Q \in \partial\Omega$
We also define the following variant of the non-tangential
maximal function:
\begin{equation}\label{NTMaxVar} \widetilde{N}(u)(Q) )
=\sup_{X\in\Gamma(Q)}\left(\fiint_{B_{{\delta(X)}/{2}}(X)}|u(Y)|^2\,d
Y\right)^{\frac{1}{2}}.
\end{equation}
\end{definition}

When we want to emphasize dependance of square or nontangential maximal functions on the particular cone $\Gamma_a$ we shall write $S_a(u)$, $S_{p,a}(u)$ or $N_a(u)$. Similarly, if we consider cones truncated at a certain height $h$ we shall use the notation $S^h(u)$, $S^h_a(u)$, $N^h(u)$ or $N^h_a(u)$. In general, the particular choice of the aperture $a$ does not matter, as operators with different apertures give rise to comparable $L^p$ norms.

We recall the definition of $L^p$ solvability of the Dirichlet problem. 
When an operator $L$ is uniformly elliptic, the Lax-Milgram lemma can be applied and guarantees the existence of weak solutions.
That is, 
given any $f\in \dot{B}^{2,2}_{1/2}(\partial\Omega)$, the homogenous space of traces of functions in $\dot{W}^{1,2}(\Omega$), there exists a unique $u\in \dot{W}^{1,2}(\Omega$) such that $Lu=0$ in $\Omega$ and ${\rm Tr}\,u=f$ on $\partial\Omega$. These \lq\lq  energy solutions" are used to define the solvability of the $L^p$ Dirichlet, Regularity and Neumann problems.\vglue1mm

We are now ready to formulate the three main boundary value
problems we would like to consider.

\begin{definition} Let $1<p\le\infty$.
The Dirichlet problem with data in $L^p(\partial \Omega, d\sigma)$
is solvable (abbreviated $(D)_{p}$) if for every $f\in \dot{B}^{2,2}_{1/2}(\partial\Omega) \cap  L^p(\partial\Omega)$ the weak solution $u$ to the problem $Lu=0$ with
continuous boundary data $f$ satisfies the estimate
\begin{equation}
\label{DPe}
\|N(u)\|_{L^p(\partial \Omega, d\sigma)} \lesssim \|f\|_{L^p(\partial \Omega, d\sigma)}.
\end{equation}
The implied constant depends only the operator $L$, $p$, and the
Lipschitz norm of $\varphi$. 
\end{definition}

As we assume we are on a Lipschitz domain, for almost every $Q\in\partial \Omega$ there is a well defined notion of $n-1$-dimensional hyperplane tangential to the surface $\partial\Omega$ at $Q$. We define $\nabla_T f$ for a boundary function $f:\partial\Omega\to \mathbb R$ to be a vector consisting of directional derivatives of $f$ w.r.t. directions in this tangential hyperplane at a given boundary point.

\begin{definition}\label{DefRpcondition}
Let $1<p<\infty$. The regularity problem with boundary data in
$H^{1,p}(\partial\Omega)$ is solvable (abbreviated $(R)_{p}$), if
for every $f\in \dot{B}^{2,2}_{1/2}(\partial\Omega)$ with 
$\nabla_T f \in L^{p}(\partial\Omega),$
the weak solution $u$ to the problem
\begin{align*}
\begin{cases}
Lu &=0 \quad\text{ in } \Omega\\
u|_{\partial B} &= f \quad\text{ on } \partial \Omega
\end{cases}
\end{align*}
satisfies
\begin{align}
\nonumber \quad\|\widetilde{N}(\nabla u)\|_{L^p(\partial
\Omega)}\lesssim \|\nabla_T f\|_{L^{p}(\partial\Omega)}.
\end{align}
The implied constant depends only the operator $L$, $p$,
and the Lipschitz norm of $\varphi$. 
\end{definition}

\begin{definition}\label{NPDefNpcondition}
Let $1<p<\infty$. The Neumann problem with boundary data in
$L^p(\partial\Omega)$ is solvable (abbreviated $(N)_{p}$), if for
every $f\in L^p(\partial\Omega)\cap \dot{B}^{2,2}_{-1/2}(\partial\Omega)$ with the property that
$\int_{{\partial\Omega}} fd\sigma=0$, the weak solution $u$ to the problem
\begin{align*}
\begin{cases}
Lu &=0 \quad\text{ in } \Omega\\
A\nabla u\cdot \nu &= f \quad\text{ on }
\partial \Omega
\end{cases}
\end{align*}
satisfies
\begin{align}
\nonumber \quad\|\widetilde{N}(\nabla u)\|_{L^p(\partial
\Omega)}\lesssim \|f\|_{L^{p}(\partial\Omega)}.
\end{align}
Again, the implied constant depends only the operator $L$, $p$,
and the Lipschitz norm of $\varphi$.  Here $\nu$ is the outer
normal to the boundary ${\partial\Omega}$. The sense in which $A\nabla u\cdot \nu = f$ on $\partial \Omega$ 
is that 
$$\iint_{\Omega} A\nabla u. \nabla \eta \,\,dX = \int_{\partial \Omega} f \eta \,d\sigma,$$
for all $\eta \in C_0^{\infty}(\mathbb R^n).$
\end{definition}

\noindent{\it Remark.} In the three definitions above we always ask for the corresponding non-tangential estimate (for $\|{N}( u)\|_{L^p(\partial
\Omega)}$ in the case of Dirichlet problem and $\|\widetilde{N}(\nabla u)\|_{L^p(\partial
\Omega)}$ in the case of Regularity and Neumann problems) to only hold for energy solutions.  The reason why this is enough is that the space such as $L^p(\partial\Omega)\cap \dot{B}^{2,2}_{-1/2}(\partial\Omega)$ is dense in  $L^p(\partial\Omega)$ and hence the solution operator then uniquely continuously extends to the whole 
$L^p(\partial\Omega)$. Hence it is enough to verify that an estimate like \eqref{DPe} on any dense subset of $L^p$.

A further important question is, assuming solvability as above, in what sense is the boundary datum attained. An answer to this is given in the Appendix of \cite{DPcompl}. 
 
For any $f\in L^p(\partial \Omega)$ the corresponding solution $u$ constructed by the continuous extension of the operator originally defined on a dense subset of $L^p$ attains the datum $f$ as its boundary values in the following sense.
Consider the average $\tilde u:\Omega\to \mathbb R$ defined by
$$\tilde{u}(x)=\fiint_{B_{\delta(x)/2}(x)} u(y)\,dy,\quad \forall x\in \Omega.$$
Then 
\begin{equation}\label{ntconv}
f(Q)=\lim_{x\to Q,\,x\in\Gamma(Q)}\tilde u(x),\qquad\text{for a.e. }Q\in\partial\Omega,
\end{equation}
where the a.e. convergence is taken with respect to the ${\mathcal H}^{n-1}$ Hausdorff measure on $\partial\Omega$. 
In fact, \eqref{ntconv} holds with $u(x)$ replacing $\tilde u(x)$, since solutions are H\"older continuous.
However for gradients of solutions, the  nontangential convergence holds, but only in the sense of  \eqref{ntconv}.
That is, defining
$${\tilde \nabla u}(x)=\fiint_{B_{\delta(x)/2}(x)} \nabla u(y)\,dy,\quad \forall x\in \Omega,$$
the argument of \cite{DPcompl} yields that

\begin{equation}
\nabla u(Q) =\lim_{x\to Q,\,x\in\Gamma(Q)}\tilde \nabla u(x),\qquad\text{for a.e. }Q\in\partial\Omega,
\end{equation}

It follows that all three problems have well-defined boundary values at a.e. boundary point.


\subsection{Carleson measures and oscillation}

\begin{definition} Let $\Omega$ be as above.
For $Q\in\partial\Omega$, $X\in \Omega$ and $r>0$ we write:
\begin{align*}
\Delta_r(Q) &= \partial \Omega\cap B_r(Q),\,\,\,\qquad T(\Delta_r) = \Omega\cap B_r(Q),\\
\delta(X) &=\text{\rm dist}(X,\partial \Omega).
\end{align*}
\end{definition}

\begin{definition}\label{cmeasure}
Let $T(\Delta_r)$ be the Carleson region associated to a surface
ball $\Delta_r$ in ${\partial\Omega}$, as defined above. A measure $\mu$ in $\Omega$ is
Carleson if there exists a constant $C$ such that 
\begin{equation}\label{e1.7}
\mu(T(\Delta_r))\le C \sigma (\Delta_r).
\end{equation}
The best possible $C$ is the Carleson norm and will denoted by $\|\mu\|_{Carl}$. The notation $\mu \in \mathcal C$ means that the measure  $\mu$ is Carleson. We also define a notion of vanishing Carleson measure which is a measure $\mu$ such that the best constant in \eqref{e1.7} goes to zero for balls $r\le r_0$ when we let $r_0\to 0+$.\vglue1mm 
\end{definition}

\begin{definition}\label{osc} For a function $f:\Omega\to\mathbb R$ we denote by
 $\text{osc}_B f$ for a nonempty set $B\subset\Omega$ to be
 the usual oscillation of a function
$f$ over a set $B$ which is
$$\sup_{x,y\in B}|f(x)-f(y)|.$$
\end{definition}

\

\section{Statements of main results}
\setcounter{equation}{0}

In this section we present the current state of knowledge concerning
results for the three boundary value problems on Lipschitz domains that we outlined  above. \vglue1mm

So far the only
assumptions we have made on the coefficients are that the
coefficients are {\it bounded, measurable} and satisfy the {\it
ellipticity condition}. However, examples will show that ellipticity alone is not enough to obtain solvability.

\begin{theorem} (\cite{CFK}) There exists a bounded measurable matrix $A$ on a
unit disk $D\subset{\mathbb R}^2$ satisfying the ellipticity
condition such that the Dirichlet problem $(D)_{p}$, the
Regularity problem $(R)_{p}$ and the Neumann problem $(N)_{p}$ are
not solvable for any $p\in (1,\infty)$.
\end{theorem}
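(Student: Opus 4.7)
The plan is to construct a single symmetric bounded measurable elliptic matrix $A$ on $D$ by pulling back the Laplacian through a carefully chosen homeomorphism of the disk whose boundary trace distorts harmonic measure into a singular measure. In two dimensions this is exactly the setting where quasiconformal maps give bounded measurable elliptic coefficients, so I would start from a quasisymmetric homeomorphism $\psi: \partial D\to \partial D$ that is singular with respect to arc length (such $\psi$ exist by classical Beurling--Ahlfors type constructions; one can arrange $\psi_*\sigma\perp \sigma$). Extend $\psi$ to a quasiconformal self-homeomorphism $\Phi:\overline D\to\overline D$ and define $A(X)$ to be the pull-back matrix so that $u$ solves $Lu=\mbox{div}(A\nabla u)=0$ in $D$ iff $v=u\circ\Phi^{-1}$ is harmonic. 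A routine computation in quasiconformal theory gives that $A$ is symmetric, bounded, measurable, and satisfies the ellipticity condition \eqref{ellipticity} with constant depending only on the QC dilatation of $\Phi$.

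The next step is to identify the elliptic measure. Fix a pole $X_0 \in D$ and let $\omega^{X_0}_L$ denote $L$-elliptic measure. Because $\Phi$ conjugates $L$ to the Laplacian and preserves $\partial D$, one checks that $\omega^{X_0}_L = \psi_*\, \omega^{\Phi(X_0)}_{\Delta}$, where $\omega_\Delta$ is classical harmonic measure on $\partial D$. Since $\omega_\Delta$ is mutually absolutely continuous with $\sigma$ and $\psi$ was chosen so that $\psi_*\sigma\perp\sigma$, we conclude $\omega^{X_0}_L \perp \sigma$. To rule out $(D)_p$ for \emph{all} $p\in(1,\infty)$, recall that solvability of $(D)_p$ is equivalent to the statement that the Poisson kernel $k^{X_0}_L=d\omega^{X_0}_L/d\sigma$ exists and lies in $L^{p'}(\sigma)$; in particular it requires $\omega^{X_0}_L\ll\sigma$. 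Mutual singularity of $\omega^{X_0}_L$ with $\sigma$ therefore contradicts $(D)_p$ for every $p$.

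For the Regularity and Neumann problems I would exploit the planar setting via $L$-harmonic conjugation. If $u$ solves $Lu=0$ in $D\subset\mathbb R^2$ with $L=\mbox{div}(A\nabla)$, there is a conjugate function $\tilde u$ satisfying $\tilde L\tilde u=0$ for a companion operator $\tilde L=\mbox{div}(\tilde A\nabla)$ built from $A$ (with $\tilde A$ again symmetric, bounded, measurable, elliptic), and the Cauchy--Riemann-type relations identify the tangential derivative $\partial_T u$ on $\partial D$ with the conormal derivative $\tilde A\nabla\tilde u\cdot\nu$, and vice versa. Consequently $(R)_p$ for $L$ is equivalent to $(N)_p$ for $\tilde L$, and a duality argument of the Kenig--Pipher type identifies $(N)_p$ for $\tilde L$ with $(D)_{p'}$ for a further operator obtained from $\tilde A$ by the same pullback recipe. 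By choosing $\Phi$ so that $\tilde A$ inherits the singularity property (for instance, by taking $\Phi$ conformal in the right sense that $\tilde A$ is itself a pullback through another quasisymmetric singular map), one ensures $(D)_{p'}$ fails for the conjugate operator for every $p'$, hence $(R)_p$ and $(N)_p$ for $L$ fail for every $p$.

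The principal obstacle is the explicit construction of $\psi$ together with a QC extension whose coefficient matrix $A$ makes \emph{both} $L$ and its conjugate operator $\tilde L$ exhibit singular elliptic measure simultaneously; this is where one has to be careful and where the symmetry assumption on $A$ (which forces the adjoint/conjugate structure to be tractable) is essential. Once the construction is in place, each of the three failures reduces to the statement that an $A_\infty(d\sigma)$-type condition on elliptic measure is a \emph{necessary} condition for $L^p$ solvability, and a singular elliptic measure can satisfy no $A_\infty$ relation with $\sigma$.
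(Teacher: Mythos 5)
Your Dirichlet argument is essentially the CFK construction and is correct: pull back the Laplacian through a quasiconformal self-map $\Phi$ of the disk whose boundary correspondence $\psi$ is singular with respect to arclength, observe that $\omega_L^{X_0}$ is then the push-forward of harmonic measure under $\psi^{-1}$ and hence mutually singular with $\sigma$, and conclude that no $B_{p'}$ (in particular, no absolute-continuity) relation can hold. This is precisely the point the paper makes when it notes the existence of a solution $u\ne 0$ with $u|_{\partial D}=0$ $\sigma$-a.e.: taking $F\subset\partial D$ with $\sigma(F)=0$ and $\omega_L(F)=1$, the $L$-solution with data $\chi_F$ does the job.

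Where the proposal goes astray is in the Regularity and Neumann parts, and in the worry expressed in the final paragraph. First, the failure of $(R)_p$ does not need any conjugation or duality: the very same counterexample $u$ has $\nabla_T u|_{\partial D}=0$ $\sigma$-a.e.\ (since $u|_{\partial D}=0$ a.e.) while $\widetilde N(\nabla u)\not\equiv 0$, so $(R)_p$ fails immediately for every $p$. Second, your chain ``$(N)_p$ for $\tilde L$ $\Leftrightarrow (D)_{p'}$ for a further pulled-back operator'' is not a true equivalence; the known relations are $(R)_p\Rightarrow(D^*)_{p'}$ and $(D^*)_p+(R)_1\Rightarrow(R)_{p'}$, and there is no analogous two-way link between $(N)_p$ and $(D)_{p'}$. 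The correct and much shorter Neumann argument, which is what the paper alludes to via \eqref{bmatrix}, is: the conjugate $\tilde u$ of $u$ satisfies $\tilde L\tilde u=0$ with $\tilde A=A^t/\det A$, and one has $\tilde A\nabla\tilde u\cdot\nu=\pm\partial_T u=0$ a.e.\ together with $|\nabla\tilde u|\simeq|\nabla u|\not\equiv 0$, so $(N)_p$ fails for $\tilde L$. Third, the ``principal obstacle'' you describe (arranging that $\tilde A$ simultaneously has singular elliptic measure) is not an obstacle at all: the pull-back of the Laplacian under a planar map has $A=(\det\Phi')(\Phi')^{-1}((\Phi')^{-1})^t$, which is symmetric with $\det A\equiv 1$, so $\tilde A=A^t/\det A=A$ and the Neumann counterexample lives for the very same operator $L$. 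Once you notice $\det A=1$, all three failures come from the single function $u$ and its conjugate $\tilde u$, with no need to control a second, independent pull-back.
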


The examples come from conformal considerations. For Dirichlet and Regularity problems, the counterexample
 is immediate given the existence of a solution $u$ on $D$ such
that $u\ne 0$ but $u\big|_{\partial\Omega}=0$ almost everywhere
with respect to the usual one dimensional Hausdorff measure on
$\partial D$.
Counterexamples to solvability of the Neumann problem in two dimensions follow easily from this as well via \eqref{bmatrix} which we shall discuss later.
\vglue2mm

The theorem above indicates that extra assumptions on smoothness of
coefficients will be required if we want to proceed with our program. The results stated below fall into two categories which we shall informally name \lq\lq small Carleson" and \lq\lq large Carleson".\vglue1mm

The \lq\lq small Carleson" results are results where we choose an arbitrary $p\in(1,\infty)$ and would like to know under what assumptions on Carleson norm of coefficients and the Lipschitz character of the domain we can solve the corresponding $L^p$ Dirichlet, Regularity of Neumann problem. An example is the following theorem which requires smallness of certain norms.

\begin{theorem}\label{LP} (\cites{DPP, DPR}) Let $1<p<\infty$ and let $\Omega\subset {\mathbb R}^n$ be a
Lipschitz domain with Lipschitz
`character' $(\ell,N,C_0)$ and a scale $r_0\in (0,\infty]$.

Let $Lu=\mbox{div}(A\nabla u)$ be a real-valued elliptic
differential operator defined on $\Omega$ with ellipticity
constant $\Lambda$ and coefficients which are such that
\begin{equation}\label{carlM}
d\mu(X)=\delta(X)^{-1}\left(\mbox{osc}_{B(X,\delta(X)/2)}A\right)^2\,dX
\end{equation}
is the density of a Carleson measure on all Carleson boxes of size
at most $r_0$ with norm $\|\mu(r_0)\|_{Carl}$. Then there exists
$\varepsilon=\varepsilon(\lambda,\Lambda,n,p)>0$ such that if
$\max\{\ell,\|\mu(r_0)\|_{Carl}\}<\varepsilon$ then the $(D)_p$ Dirichet, $(R)_p$
regularity problem and $(N)_p$ Neumann problems are
solvable.\vglue1mm

\noindent In particular, if the domain $\Omega$ is $C^1$ and bounded and
$A=(a_{ij})$ satisfies the vanishing Carleson condition, then
these boundary value problems are solvable for all $1<p<\infty.$
More generally, the conclusion of the theorem holds on bounded domains
whose boundary is locally given by a function $\phi$ such that
$\nabla \phi$ belongs to $L^{\infty} \cap $VMO.
\end{theorem}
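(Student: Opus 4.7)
The plan is a three-step reduction: flatten the boundary, establish the comparability of the square function and the nontangential maximal function on the half-space, and deduce the three boundary value problems from that comparability. First I would apply a Dahlberg--Kenig--Stein change of variables $\rho:\mathbb R^n_+\to\Omega$, adapted to each Lipschitz graph $\phi_j$ of Definition \ref{DefLipDomain}. A direct computation shows that the pulled-back operator $\tilde L=\rho^*L$ is elliptic, and that its matrix $\tilde A$ satisfies the oscillation Carleson condition \eqref{carlM} with norm bounded by $C(\|\mu(r_0)\|_{Carl}+\ell^2)$. The flattening also introduces a perturbation of $\tilde A$ from the identity of size $O(\ell)$, so in the pulled-back setting $\tilde L$ is a small perturbation of the Laplacian on $\mathbb R^n_+$ with total Carleson/perturbation parameter $O(\varepsilon)$. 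Since the three $L^p$ problems for $L$ on $\Omega$ are equivalent to the corresponding problems for $\tilde L$ on the half space, from here on I would work exclusively on $\mathbb R^n_+$.

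\textbf{Square function versus nontangential maximal function.} The analytic heart of the proof is the pair
\[
\|S(u)\|_{L^p(\mathbb R^{n-1})}\lesssim \|N(u)\|_{L^p(\mathbb R^{n-1})},\qquad \|N(u)\|_{L^p}\lesssim \|S(u)\|_{L^p}+\text{boundary term},
\]
valid for every $1<p<\infty$, with implicit constants depending on $\varepsilon$ but stable as $\varepsilon\to 0$. For the harder direction $S\lesssim N$ I would begin with the $L^2$ case via a Rellich-type integration by parts using a vector field pointing into $\mathbb R^n_+$. The terms containing derivatives of $\tilde A$ are the crux: the hypothesis is not pointwise, so one replaces $|\nabla\tilde A(X)|^2\delta(X)\,dX$ by an averaged square-oscillation density on Whitney balls $B(X,\delta(X)/2)$ and controls the resulting term via Carleson's lemma applied to $|\nabla u|^2$, together with interior Caccioppoli estimates for $u$, giving
\[
\iint_{\mathbb R^n_+}|\nabla u|^2\, d\mu(X) \lesssim \|\mu\|_{Carl}\,\|\widetilde N(u)\|_{L^2}^2.
\]
General $p$ then follows from the good-$\lambda$/stopping-time scheme of Dahlberg--Jerison--Kenig and Kenig--Pipher: on a maximal cube where $N(u)$ is small one freezes $\tilde A$ to a constant matrix and invokes the constant-coefficient comparability, with an error that is again a Carleson measure of small norm. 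The reverse inequality $N\lesssim S$ is the Fefferman--Stein argument, applied here to the averaged maximal function $\widetilde N$ so as to tolerate the lack of pointwise regularity of $\nabla u$.

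\textbf{Deducing the three boundary value problems.} Once $S\simeq N$ in $L^p$ is in hand, the Dirichlet problem follows because the comparability applied to Green's function produces a reverse Hölder inequality for the Poisson kernel of $\tilde L$, hence $\omega^{\tilde L}\in A_\infty(d\sigma)$; Gehring self-improvement and the smallness of $\varepsilon$ yield a reverse Hölder exponent so large that $(D)_p$ holds for every $p\in(1,\infty)$. For $(R)_p$ I would apply the square function estimate to $\nabla u$ coordinatewise, using smallness to absorb the lower-order commutators that arise because $\tilde L$ and $\partial_j$ do not commute, and close the bound $\|\widetilde N(\nabla u)\|_p\lesssim \|\nabla_T f\|_p$. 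Finally $(N)_p$ is obtained by duality with $(R)_{p'}$ for the transposed operator, which satisfies the same Carleson hypothesis. The main obstacle is the $L^2$ square-function bound under the oscillation rather than gradient Carleson condition, because $\nabla A$ need not exist in any pointwise sense: making the Rellich identity work with finite-difference quotients of $\tilde A$ averaged over Whitney balls, while tracking the dependence on $\varepsilon$ carefully enough to preserve the full range of $p$, is the main technical content.
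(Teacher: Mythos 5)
Your overall architecture (flatten, prove $S\simeq N$, deduce the three problems) matches the paper's, but there are two substantive gaps and one conceptual confusion worth flagging.

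\textbf{Neumann via duality is wrong in general.} You propose to obtain $(N)_p$ ``by duality with $(R)_{p'}$ for the transposed operator.'' No such duality exists for divergence-form operators with rough coefficients in dimension $n\ge 3$. The paper's treatment of the Neumann problem is entirely different: for $n=2$ one passes to the conjugate operator $\tilde L=\div(\tilde A\nabla)$ with $\tilde A=A^t/\det A$ via \eqref{bmatrix} (the stream-function construction of Kenig--Rule), and for $n\ge 3$ one proves $(N)_2$ \emph{directly} from $(R)_2$ through the chain of estimates \eqref{e7b}--\eqref{e7d} (a second Rellich-type argument applied to the co-normal derivative $H=a_{ni}\partial_i u$, requiring another round of integration by parts and absorption under smallness of $\|\mu\|_{Carl}$). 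As written, your proposal has no valid argument for $(N)_p$ when $n\ge 3$.

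\textbf{Reduction from oscillation to gradient, and the $p\ne 2$ mechanism, are different.} You propose handling the lack of a pointwise gradient by running the Rellich identity with finite-difference quotients of $\tilde A$. The paper instead mollifies ($\bar A(x,t)=(A*\eta_{t/2})(x,t)$) so that $\bar A$ satisfies the genuine gradient Carleson condition \eqref{CCalt}, and then treats the original $A$ as a small Carleson perturbation of $\bar A$ via Theorem \ref{T:perturbation}; this is cleaner and avoids re-proving the integration by parts in a discrete setting. For general $p$ you suggest a good-$\lambda$/freezing scheme plus Gehring self-improvement, but the quantitative link between small Carleson norm and an arbitrarily large reverse H\"older exponent is not automatic from that route and would require a separate argument; the paper instead introduces the $p$-adapted square function $S_p$ (Definition \ref{d1.4}, Lemma \ref{l1b}), which gives $\|N(u)\|_{L^p}\lesssim\|f\|_{L^p}+\|\mu\|_{Carl}^{1/p}\|N(u)\|_{L^p}$ directly for each fixed $p$, so the absorption is transparent and the $\varepsilon(p)$ dependence is explicit.

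\textbf{A confusion worth correcting.} The pulled-back operator $\tilde L$ is \emph{not} a small perturbation of the Laplacian: the Carleson condition controls the oscillation of $A$ over Whitney balls, not the $L^\infty$ distance of $A$ to the identity. A constant matrix far from $I$ has zero Carleson norm. The smallness assumption buys you absorption of Carleson error terms in the Rellich identity, not proximity to $\Delta$. Relatedly, the intermediate estimate you display should read $\iint |u|^2\,d\mu\lesssim\|\mu\|_{Carl}\|N(u)\|^2_{L^2}$, not $\iint|\nabla u|^2\,d\mu$: the Carleson lemma pairs the measure built from $\nabla A$ against $|u|^2$, while the $|u||\nabla u||\nabla A|$ cross terms are handled by Cauchy--Schwarz and absorption into the left-hand side.
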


Observe that the theorem above answers the solvability question for all three boundary value problems for a particular value of $p$ assuming smallness of the Carleson norm $\mu$ of coefficients of $L$ as well as that the boundary has sufficiently small Lipschitz norm (or be a $C^1/\mbox{\em VMO}$ domain). 
Examples (\cite{KKPT2}, for one) show that some assumption on the size of the Carleson measure norm is necessary if we want to solve the $L^p$ Dirichlet/Regularity/Neumann problems for a particular value of $p$.\medskip

This brings us to a second set of results which we call \lq\lq large Carleson". 
Here we relax the hypothesis on $\mu$ and $\Omega$ and only ask for $\mu$  defined as in Theorem \ref{LP} to be a Carleson measure (with potentially large norm) and similarly $\Omega$ can be an arbitrary Lipschitz domain. We then ask whether the three boundary value problems we consider are solvable for a certain range of $p\in (1,\infty)$. We start with the Dirichlet problem.

\begin{theorem}\label{Dirlarge} (\cite{KP}) Let $\Omega\subset {\mathbb R}^n$ be a
Lipschitz domain with  a scale $r_0\in (0,\infty]$, $n\ge 2$.

Let $Lu=\mbox{div}(A\nabla u)$ be a real-valued elliptic
differential operator defined on $\Omega$ with ellipticity
constant $\Lambda$ and coefficients which are such that
\begin{equation}\label{carlMM}
d\mu(X)=\delta(X)^{-1}\left(\mbox{osc}_{B(X,\delta(X)/2)}A\right)^2\,dX
\end{equation}
is the density of a Carleson measure on all Carleson boxes of size
at most $r_0$.

Then there exists $p_{dir}>1$ such that for all $p\in(p_{dir},\infty)$ the $L^p$ Dirichlet problem for the operator $L=\div(A\nabla\cdot)$ is solvable.
\end{theorem}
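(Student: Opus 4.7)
The plan is to establish that the elliptic measure $\omega=\omega_L$ associated to $L$ is an $A_\infty$ weight with respect to surface measure $\sigma$ on $\partial\Omega$. By the self-improvement of $A_\infty$, this yields a reverse H\"older estimate $\omega\in RH_q(\sigma)$ for some $q>1$, and the standard duality between Dirichlet solvability and reverse H\"older produces $(D)_p$ for every $p>p_{dir}:=q'$. So the task reduces to proving $\omega\in A_\infty(\sigma)$. It is convenient to first pull back locally to the upper half-space via the bi-Lipschitz graph map $(x,t)\mapsto(x,t-\phi(x))$; the transformed operator remains elliptic, and the oscillation Carleson condition \eqref{carlMM} is preserved with constants depending only on the Lipschitz character.

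Next I would invoke the by-now standard equivalence between $\omega_L\in A_\infty(\sigma)$ and the following Carleson measure estimate on bounded solutions: for every $u$ with $Lu=0$ in $\Omega$ and $\|u\|_\infty\le 1$,
\[
\sup_{\Delta\subset\partial\Omega}\frac{1}{\sigma(\Delta)}\iint_{T(\Delta)}|\nabla u(X)|^2\,\delta(X)\,dX\lesssim 1.
\]
Thus the whole theorem reduces to proving this Carleson estimate under the hypothesis \eqref{carlMM}. To exploit the oscillation condition effectively, I would introduce a smoothed approximant $\tilde A(X)=\fiint_{B(X,\delta(X)/8)}A(Y)\,dY$, whose entries satisfy the pointwise bounds
\[
|A(X)-\tilde A(X)|\lesssim\operatorname{osc}_{B(X,\delta(X)/2)}A,\qquad |\nabla\tilde A(X)|\,\delta(X)\lesssim\operatorname{osc}_{B(X,\delta(X)/2)}A.
\]
Consequently both $|A-\tilde A|^2\delta^{-1}\,dX$ and $|\nabla\tilde A|^2\delta\,dX$ are Carleson measures, with norms controlled by $\|\mu(r_0)\|_{Carl}$. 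This trades the oscillation hypothesis for a gradient hypothesis on $\tilde A$, the latter being amenable to integration by parts.

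The decisive step is then a Kenig--Pipher style integration-by-parts identity. Fix a surface ball $\Delta$ and a smooth cutoff $\eta$ adapted to $T(\Delta)$. Using ellipticity together with $\tilde L u=\div((\tilde A-A)\nabla u)$, and using $\delta(X)\eta(X)^2 u$ as a test function, one arrives at a schematic identity
\[
\iint_{T(\Delta)}|\nabla u|^2\delta\,\eta^2\,dX\lesssim \iint u^2|\nabla\eta|^2\delta\,dX+\iint u^2|\nabla\tilde A|^2\delta\,\eta^2\,dX+\mathcal E,
\]
where $\mathcal E$ collects cross terms from the perturbation $A-\tilde A$ and from boundary pieces. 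The first right-hand term is bounded by $\|u\|_\infty^2\sigma(\Delta)$; the second by $\|u\|_\infty^2\|\mu(r_0)\|_{Carl}\sigma(\Delta)$ via the gradient Carleson bound on $\tilde A$; and $\mathcal E$ is handled by Cauchy--Schwarz together with the Carleson property of $|A-\tilde A|^2\delta^{-1}$, absorbing a small fraction of the left-hand side.

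The main obstacle is precisely this absorption. Because the Carleson norms of $|\nabla\tilde A|^2\delta$ and $|A-\tilde A|^2\delta^{-1}$ are merely bounded and not small, a single Cauchy--Schwarz cannot close the estimate. The standard remedy is a stopping-time/sawtooth decomposition of $T(\Delta)$ into subregions on which these Carleson masses are controllably small; one establishes the desired bound on each subregion and then sums using the packing property of the stopping cubes. It is this qualitative stopping-time step --- as opposed to a quantitative good-$\lambda$ inequality --- that explains why only $A_\infty$ membership, and hence only the \emph{existence} of $p_{dir}>1$, can be extracted from this argument.
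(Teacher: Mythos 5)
Your overall architecture matches the paper's: pull back to $\mathbb R^n_+$, replace the oscillation condition by a gradient condition via a mollified matrix $\bar A$, invoke the KKPT/DKP criterion (Theorem~\ref{T:Ainfty}) equating $\omega\in A_\infty(d\sigma)$ with the Carleson estimate $\sup_\Delta\frac1{|\Delta|}\iint_{T(\Delta)}|\nabla u|^2\delta\,dX\lesssim 1$ for bounded data, and obtain that estimate by integration by parts. The conclusion $(D)_p$ for $p>p_{dir}$ then follows from $A_\infty=\bigcup_{q>1}B_q$. This is Lemma~\ref{lgeneral}, Corollary~\ref{l1} (\eqref{e4.7}), and the proof of Theorem~\ref{Dirlarge} in the paper.

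However, two points in your final paragraph are off. First, the term $\mathcal E$ you introduce by writing $\tilde L u=\div((\tilde A-A)\nabla u)$ is a genuine problem: after Cauchy--Schwarz it produces $\iint|A-\tilde A|^2|\nabla u|^2\delta\,\eta^2\,dX$ paired against $\iint|\nabla u|^2\delta\,\eta^2\,dX$, and since $|A-\tilde A|$ is merely bounded (not small) this is the same order as the left-hand side and cannot be absorbed, stopping time or not. The paper sidesteps this entirely: it proves the Carleson estimate for the energy solution of the \emph{mollified} operator $\bar L w=0$ (so there is no $\mathcal E$ term), concludes $\omega_{\bar L}\in A_\infty$, and then passes from $\bar A$ to $A$ using the Fefferman--Kenig--Pipher large-perturbation theorem (Theorem~\ref{T:perturbation2}), which preserves $A_\infty$ under bounded Carleson disagreement. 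Second, your diagnosis that ``a single Cauchy--Schwarz cannot close the estimate'' because $\|\mu\|_{Carl}$ is not small is mistaken. After Cauchy--Schwarz one uses Young's inequality with a free parameter, as in \eqref{e4.17}, to get
\[
\frac13\iint|\nabla w|^2\phi\,t\,dX+C\,\|\mu\|_{Carl}\int_{2\Delta}N(w)^2\,dx;
\]
the first piece is absorbed with a universal constant $\frac13$ regardless of the size of $\|\mu\|_{Carl}$, and the second piece is bounded by $C\|\mu\|_{Carl}\|f\|_\infty^2|\Delta|$ via the maximum principle. A large (finite) constant in the resulting Carleson estimate is exactly what Theorem~\ref{T:Ainfty} tolerates, and this is why one only obtains $A_\infty$, not solvability for all $p$. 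No stopping-time/sawtooth decomposition is used or needed in this step; that machinery appears elsewhere (in the appendix) to bound nontangential maximal functions by square functions, not to prove the $A_\infty$ Carleson estimate.
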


We then have the following result for the Regularity problem (in all dimensions) and the Neumann problem (in dimension 2):

\begin{theorem}\label{RNduality} (\cite{DHP}) Let $\Omega\subset {\mathbb R}^n$ be a
Lipschitz domain with  a scale $r_0\in (0,\infty]$, $n\ge 2$.

Let $Lu=\mbox{div}(A\nabla u)$ be a real-valued elliptic
differential operator defined on $\Omega$ with ellipticity
constant $\Lambda$ and coefficients which are such that
\begin{equation}\label{carlMMM}
d\mu(X)=\delta(X)^{-1}\left(\mbox{osc}_{B(X,\delta(X)/2)}A\right)^2\,dX
\end{equation}
is the density of a Carleson measure on all Carleson boxes of size
at most $r_0$.

Then there exists $p_{reg}>1$ such that for all $1<p<p_{reg}$ the $L^p$ Regularity problem for the operator $\ L=\div(A\nabla\cdot)$ is solvable. Furthermore $\frac1{p_{reg}}+\frac1{q_*}=1$ where $q_*>1$ is the number such that the $L^q$ Dirichlet problem for the adjoint operator $ L^*$ is solvable for all $q>q_*$. 

Additionally when $n=2$, there exists $p_{neum}>1$ such that for all $1<p<p_{neum}$ the $L^p$ Neumann problem for the operator $ L=\div(A\nabla\cdot)$ is solvable. Furthermore $\frac1{p_{reg}}+\frac1{q^*}=1$ where $q^*>1$ is the number such that the $L^q$ Dirichlet problem for the operator $ L_1=\div(A_1\nabla\cdot)$ with matrix $A_1=A/\det{A}$ is solvable for all $q>q^*$. 
\end{theorem}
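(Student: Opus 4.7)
The plan is to reduce both statements to the adjoint Dirichlet problem already treated by Theorem~\ref{Dirlarge}: the regularity claim via the Kenig--Pipher duality between Regularity and Dirichlet, and the two-dimensional Neumann claim by first conjugating to a regularity problem for a companion operator. A key preliminary is that the Carleson oscillation condition \eqref{carlMMM} is invariant under $A\mapsto A^T$ and is preserved, up to constants depending only on $\lambda,\Lambda$, under $A\mapsto A/\det A$ (because ellipticity bounds $\det A$ both above and below, so $\mbox{osc}_B(A/\det A)\lesssim \mbox{osc}_B A$). Consequently Theorem~\ref{Dirlarge} applies to $L^*$ and produces a threshold $q_*>1$ with $(D)_q$ for $L^*$ solvable for every $q>q_*$; likewise, applied to $L_1^*$, it produces $q^*>1$.

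For the regularity problem, fix $p\in(1,p_{reg})$ so that $q=p'>q_*$. Given $f$ with $\nabla_T f\in L^p(\partial\Omega)$ and $u$ the energy solution with $u|_{\partial\Omega}=f$, I would bound $\|\widetilde N(\nabla u)\|_p$ by duality: for an arbitrary test vector field $\vec g\in L^q(\partial\Omega;\R^n)$ with $\|\vec g\|_q\le 1$, construct from $\vec g$ an adjoint Dirichlet solution $v$ for $L^*$ (guaranteed by Theorem~\ref{Dirlarge}) satisfying $\|N(v)\|_q\lesssim\|\vec g\|_q$, and pair $\nabla u$ against $\vec g$ via a Green's identity in $\Omega$. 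After a Hodge-type decomposition of $\vec g$ into tangential and normal components on $\partial\Omega$ and an integration by parts on the boundary, the resulting pairing reduces to $\int \nabla_T f\cdot(\text{controlled }L^q\text{ quantity})\,d\sigma$, yielding the claimed bound and the reciprocal identity $1/p_{reg}+1/q_*=1$.

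For the two-dimensional Neumann problem I would use the classical conjugate-function device. Since $\mathrm{div}(A\nabla u)=0$, the vector field $A\nabla u$ is divergence-free in $\R^2$, so there exists $\tilde u$, unique up to a constant, with $\nabla\tilde u = JA\nabla u$, where $J$ denotes rotation by $\pi/2$. A direct curl computation applied to the identity $\nabla u=-A^{-1}J\nabla\tilde u$ shows that $\tilde u$ satisfies $\mathrm{div}(A_1\nabla\tilde u)=0$ with $A_1=A/\det A$ (up to a transposition in the non-symmetric case, which the argument accommodates). If $\nu$ is the outer normal and $\tau=J\nu$ the unit tangent, then
\[\nabla\tilde u\cdot\tau=(JA\nabla u)\cdot(J\nu)=A\nabla u\cdot\nu,\]
so the Neumann datum of $u$ equals the tangential derivative of the trace $\tilde f=\tilde u|_{\partial\Omega}$. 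Ellipticity gives $|\nabla u|\simeq|\nabla\tilde u|$ pointwise and hence $\|\widetilde N(\nabla u)\|_p\simeq\|\widetilde N(\nabla\tilde u)\|_p$, so the regularity part of the theorem applied to $L_1$---whose adjoint Dirichlet threshold is exactly $q^*$---produces $(N)_p$ for $L$ for every $p<p_{neum}$ with $1/p_{neum}+1/q^*=1$, as asserted.

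The principal obstacle is the duality step in the regularity argument. To close the Green-identity pairing one needs more than just nontangential $L^q$ control of the adjoint solution $v$: quantitative square function estimates for $v$, a.e.\ nontangential convergence of $\nabla v$ at the boundary in the sense of \eqref{ntconv}, and a careful treatment of the error terms generated by the oscillation of $A$ across dyadic scales (typically via a good-$\lambda$ or stopping-time decomposition localizing the Carleson measure $d\mu$ to small sub-regions) are all required. These pieces constitute essentially the quantitative infrastructure developed in proving Theorem~\ref{Dirlarge}; once they are in hand, the duality pairing for regularity and the two-dimensional conjugation for Neumann together reduce everything to that prior Dirichlet result.
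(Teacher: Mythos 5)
Your Neumann argument is essentially the paper's: the Kenig--Rule conjugate-function device \eqref{bmatrix} converts $(N)_p$ for $L$ into $(R)_p$ for the operator with matrix $A^t/\det A$, whose solvability range is determined by the Dirichlet problem for its adjoint (matrix $A/\det A$); and your observation that \eqref{carlMMM} is stable under transposition and under division by $\det A$ is exactly what is needed there.

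The regularity argument, however, has a genuine gap. You pair $\nabla u$ against a test field $\vec g$ via Green's identity and claim that after a ``Hodge-type decomposition'' the pairing ``reduces to $\int\nabla_T f\cdot(\text{controlled }L^q\text{ quantity})$.'' That reduction does not close on its own. In the paper's proof (Theorem \ref{t1}), the duality representative $\vec h$ built from $\vec g$ is used as forcing for the inhomogeneous problem $L^*v=\div\vec h$ with $v|_{\partial\mathbb R^n_+}=0$ (note: this is not a homogeneous adjoint Dirichlet solution as you describe), and after the integrations by parts one arrives at $I+II$ in \eqref{e11}, where $I=\int_{\partial\mathbb R^n_+}\nabla_\parallel f\cdot(\int_0^\infty\vec h_\parallel\,ds)\,dx$ is \emph{not} directly controllable by $\|\vec g\|_{q'}$. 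The essential missing ingredient in your proposal is the auxiliary solution $\tilde u$ of the block-form operator $L_0u=\div_\parallel(A_\parallel\nabla_\parallel u)+u_{tt}$ with the same boundary datum $f$. Because $(R)_q$ for $L_0$ is solvable for all $1<q<\infty$ (Theorem \ref{tblock}, itself a nontrivial fact proved via a $t$-rescaling that shrinks the partial Carleson norm to make a weakly coupled system tractable), one can reverse the integration by parts using $\tilde u$ in place of $u$; this reproduces the term $I$ with the opposite sign and it cancels, while all remaining solid integrals are bounded by the square-function and nontangential estimates for $\tilde u$ and $v$. So the ``infrastructure from Theorem \ref{Dirlarge}'' is not enough: the block-form Regularity solvability is an indispensable independent input, and without it the duality pairing you propose cannot be closed.
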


\subsection{Block form operators}

In the special case $\Omega=\mathbb R^n_+$, the hypotheses in the results above can be simplified, as it is not always necessary for all coefficients to satisfy \eqref{carlMM}; for the Dirichlet problem this condition only needs to be imposed on the last row of the matrix $A$. For simplicity we do not state the most general results possible, instead we focus on the so-called block form case when the matrix $A$ is just
$$
A=\left[ \begin{array}{c|c}
   A_\parallel & 0 \\
   \midrule
   0 & 1 \\
\end{array}\right].
$$
and $A_\parallel=(a_{ij})_{1\le i,j\le n-1}$. We have this crucial result for the Dirichlet and Regularity problems:

\begin{theorem}\label{DirRegblock} Let $Lu=\mbox{div}_x(A_\parallel(\nabla_x u))+u_{tt}$ be a block form elliptic operator on $\mathbb R^n_+$ with bounded real-valued coefficients. Then the $L^p$ Dirichlet problem for the operator $L$ is solvable for all $1<p<\infty$.\medskip

\noindent If in addition the condition \eqref{carlMM} holds for coefficients of $A_\parallel$ then also the $L^p$ Regularity problem for the operator $L$ is solvable for all $1<p<\infty$.
\end{theorem}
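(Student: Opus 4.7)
For the Dirichlet part, the plan is to exploit the block structure to write solutions explicitly via a Poisson semigroup. In the model case that $A_\parallel$ is $t$-independent, let $L_\parallel = -\mbox{div}_x(A_\parallel \nabla_x)$ be the associated maximal accretive operator on $L^2(\mathbb R^{n-1})$, and set $u(x,t) = e^{-tL_\parallel^{1/2}} f(x)$. A direct computation verifies $Lu=0$ in $\mathbb R^n_+$, while Nash--Aronson Gaussian bounds for the heat kernel of $L_\parallel$ (valid for any bounded, measurable, real elliptic matrix) together with the subordination formula
\[ e^{-tL_\parallel^{1/2}} = \int_0^\infty \frac{t}{\sqrt{4\pi s^3}}\, e^{-t^2/(4s)}\, e^{-sL_\parallel}\,ds \]
yield a pointwise kernel bound $p_t(x,y) \lesssim t/(t^2+|x-y|^2)^{n/2}$. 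This dominates $N(u)(Q)$ by the Hardy--Littlewood maximal function of $f$, and the maximal theorem gives $\|N(u)\|_p \lesssim \|f\|_p$ for every $1<p<\infty$. The general case of $t$-dependent $A_\parallel$ follows by a freezing-of-coefficients perturbation argument, in which the block form is preserved at each scale and the estimate above supplies the model bound.

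For the Regularity problem, split $\nabla u = (\nabla_x u, \partial_t u)$ and treat each component. The normal component is itself a Poisson extension: $\partial_t u = -L_\parallel^{1/2} e^{-tL_\parallel^{1/2}} f$ solves $Lu=0$ with boundary datum $-L_\parallel^{1/2} f$, so by the Dirichlet result above,
\[ \|\widetilde N(\partial_t u)\|_p \lesssim \|L_\parallel^{1/2} f\|_p. \]
Under the Carleson hypothesis \eqref{carlMM} on $A_\parallel$, the $L^p$ Kato square root estimate $\|L_\parallel^{1/2} f\|_p \approx \|\nabla_x f\|_p$ holds for the full range $1<p<\infty$: the $L^2$ case is the unconditional Auscher--Hofmann--Lacey--McIntosh--Tchamitchian identity, and its extension to $L^p$ proceeds via good-$\lambda$ inequalities whose bad-set estimates are controlled precisely by the Carleson measure in \eqref{carlMM}. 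This yields $\|\widetilde N(\partial_t u)\|_p \lesssim \|\nabla_T f\|_p$.

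For the tangential component $\nabla_x u$, the dual Riesz-transform bound $\|\nabla_x L_\parallel^{-1/2}\|_{L^p\to L^p} < \infty$ (another consequence of \eqref{carlMM}) combined with the identity $\nabla_x u = -\nabla_x L_\parallel^{-1/2}(\partial_t u)$ expresses $\nabla_x u$ as an $L^p$-bounded transformation of $\partial_t u$ on each horizontal slice. A Littlewood--Paley / square-function argument together with the nontangential maximal function / square function comparability developed in the preceding sections of the paper then promotes this slicewise bound to $\|\widetilde N(\nabla_x u)\|_p \lesssim \|\widetilde N(\partial_t u)\|_p$. The principal obstacle is the $L^p$ extension of the Kato identity for all $1<p<\infty$: the $L^2$ version is unconditional, but carrying it to the full range requires precisely the Carleson-measure regularity \eqref{carlMM}, which is why this hypothesis is indispensable for the regularity half of the theorem.
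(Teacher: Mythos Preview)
Your approach has a fundamental structural gap: the Poisson semigroup representation $u(x,t)=e^{-tL_\parallel^{1/2}}f(x)$ and the identities $\partial_t u=-L_\parallel^{1/2}e^{-tL_\parallel^{1/2}}f$, $\nabla_x u=-\nabla_x L_\parallel^{-1/2}(\partial_t u)$ only make sense when $A_\parallel$ is \emph{$t$-independent}. The theorem, however, concerns an arbitrary bounded measurable $A_\parallel(x,t)$. Your ``freezing-of-coefficients perturbation argument'' is not a known technique that bridges this gap: for the Dirichlet part no Carleson hypothesis on $A_\parallel$ is assumed, so perturbation theorems like Theorem~\ref{T:perturbation} or~\ref{T:perturbation2} are unavailable (the disagreement $A_\parallel(x,t)-A_\parallel(x,0)$ need not satisfy any Carleson condition). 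For the Regularity part the Carleson hypothesis is present, but your argument never actually invokes it to pass from $t$-independent to $t$-dependent coefficients; you instead use it only to justify an $L^p$ Kato extension, which is a separate matter (and in fact holds for real $t$-independent coefficients for all $1<p<\infty$ without any Carleson assumption, by Auscher's memoir).

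The paper's argument is entirely different and avoids semigroups. For the Dirichlet part it uses the integration-by-parts identity of Lemma~\ref{lgeneral} and its $p$-adapted analogue (Lemma~\ref{l1b}), noting that those computations only require the Carleson condition on the \emph{last row} of $A$, which for block form is identically $(0,\dots,0,1)$ and hence has Carleson norm zero; combined with the $N\lesssim S_p$ bound of Lemma~\ref{l2} (which likewise only needs the last-row condition), this gives $(D)_p$ directly for $t$-dependent $A_\parallel$. For the Regularity part (Theorem~\ref{tblock}) the paper introduces the rescaled matrices $A^k_\parallel(x,t)=A_\parallel(x,kt)$, shows that solvability of $(R)_q$ for $L_0$ and $L_k$ are equivalent via Carleson perturbation, and then observes that the \emph{partial} Carleson norm $\|\,|\nabla_x A^k_\parallel|^2t\,\|_{Carl}$ becomes arbitrarily small for large $k$; this smallness is exactly what is needed to close the square-function/nontangential-maximal-function loop for the tangential system $V=\nabla_T u$ (Lemma~\ref{S3:C7-alt1}), after which $\partial_t u$ is recovered from the equation. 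No Kato or Riesz-transform machinery is used.
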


We note that the Dirichlet part of this result is an observation of S. Mayboroda. The Regularity part can be found in \cite{DHP} and this has proven to be the key for solving the general Regularity problem with large Carleson coefficients.

\subsection{Motivation for the Carleson measure condition on the coefficients}\label{s2.1}

Consider the following simple case when the domain $\Omega$ is globally given as
$$\Omega=\{(x,t)\in {\mathbb R}^{n-1}\times{\mathbb R};\, t>\phi(x)\},$$
where $\phi$ is a Lipschitz functions $\|\phi\|_{Lip}<\infty$.

Let $L_0=\Delta$ be the usual flat Laplacian. In this case, the
solvability for $L_0$ on $\Omega$ of all three boundary value
problems $(D)_p$, $(R)_p$ and $(N)_p$ is known in an optimal range
of $p$. In particular $(D)_p$ is solvable for
$2-\varepsilon<p<\infty$, $(R)_p$ and $(N)_p$ for
$1<p<2+\varepsilon'$, here $\varepsilon, \varepsilon'$ are
determined by the Lipschitz norm $\|\phi\|_{Lip}<\infty$. (See \cite{Ken94}). \vglue2mm

Consider now a bijective bi-Lipschitz map $\Phi: {\mathbb R}^n_+
\to \Omega$ (${\mathbb R}^n_+$ being the upper half-space). Then
$$v=u\circ\Phi,\quad\text{solves the elliptic PDE}\quad L_1v=0\text{ on }{\mathbb
R}^n_+,$$ where
$$L_1=\text{div}(A\nabla.),\qquad\text{where: } A=(\det\Phi')(\Phi'^{-1})\Phi (\Phi'^{-1})^t.$$
It follows that the operator $L_1$ is bounded, elliptic and
moreover the boundary value problems $(D)_p$, $(R)_p$ and $(N)_p$
for operator $L_1$ on the upper half-space ${\mathbb R}^n_+$ are
solvable in the same range of $p$'s for which the corresponding
boundary value problems for $L_0$ on $\Omega$ are solvable.
\vglue2mm

There are two very natural bijective bi-Lipschitz map $\Phi$ we
could consider. The first one is the most obvious choice of the map
$\Phi$:
\begin{equation}
\Phi: {\mathbb R}^n_+ \to \Omega;\qquad (x,t)\mapsto
(x,t+\phi(x)).\label{map1}
\end{equation}
We now ask the following question: If $L_0=\Delta$, {\it what can we say about the regularity of the coefficients
of the operator} $L_1$?\vglue2mm

The answer is that we cannot say much beyond that the coefficients of $L_1$
are bounded and measurable. However, due to nature of the map
(\ref{map1}) we see that the matrix $A$ is independent of the
variable $t$, that is $A(x,t)=A(x)$.\vglue2mm

We now forget about the construction above and the fact that the
operator $L_1$ arose as a pull-back of a flat-Laplacian and ask
the following question. Assume that
\begin{equation}
\label{hyp1}\quad Lu=\mbox{div}(A\nabla
u),\qquad\text{for a matrix $A(x,t)=A(x)$ ($t$-independent).}
\end{equation}
If $A$ is also bounded and satisfies the ellipticity condition,
can we say something about the solvability of $(D)_p$, $(R)_p$ and
$(N)_p$ for $L$?\vglue2mm

There is a second natural choice of the map $\Phi$ we can consider
in the construction above due to Dahlberg, Keing, Ne\v{c}as, Stein
(see for example \cite{D} or \cite{N} and many others) defined as
\begin{equation}\label{map2}
\Phi(X)=(x,c_0t+(\theta_t*\phi)(x)),
\end{equation}
where $(\theta_t)_{t>0}$ is smooth compactly supported approximate
identity and $c_0$ can be chosen large enough (depending only on
$\|\nabla\phi\|_{L^\infty({\mathbb R}^{n-1})}$ so that $\Phi$ is
one to one.

Observe one new feature of this $\Phi$ as compared to (\ref{map1}).
Previously, an image of a level set $\{(x,t);\,t=const\}$ under
$\Phi$ is just a Lipschitz graph. However, for the map
(\ref{map2}) the image a such level set is a {\it smooth} function
for all $t>0$ due to the presence of a mollifier $\theta_t$. This
gives hope that {\it more smoothness} was preserved in the
pull-back procedure from $L_0$ to $L_1$. This indeed is the case
and the coefficient matrix $A$ has the property that
\begin{equation}\label{e5}
d\mu(x,t)=\sup \{ t|\nabla A(Y)|^2 \;:\;Y\in B_{t/2}((x,t)) \}dx\,dt,
\end{equation}
is a density of a Carleson measure in ${\mathbb R}^n_+$. This is how our condition \eqref{carlM} arises. 

Clearly, \eqref{e5} is not exactly \eqref{carlM} as in \eqref{e5} the coefficients are actually differentiable. But since oscillation of a function on a ball can be controlled by the supremum of the gradient multiplied by size of the ball  we see that \eqref{carlM} has the right scaling. We discuss below how we might pass from assuming 
\eqref{carlM} to instead assume \eqref{e5}  via mollification.\vglue1mm

Again, if $A$ satisfies \eqref{e5} or \eqref{carlM} and $A$ is also bounded and elliptic, we may ask whether 
we can say something about the solvability of $(D)_p$, $(R)_p$ and
$(N)_p$ for $L$.\vglue2mm

We have answered this question in Theorems \ref{LP}-\ref{RNduality}. The question that remains is the solvability of the Neumann problem in the large Carleson case in dimension larger then 2.\vglue1mm

We refer the reader to the papers \cites{HKMP1,HKMP2} where the $t$-independent case \eqref{hyp1} is considered.\vglue1mm

We now discuss the relationship of the Carleson condition to previously considered regularity assumptions on coefficients. Indeed, all three boundary value problems for elliptic operators have been considered under the assumption of various degrees of smoothness of the coefficients, starting
from $C^{\infty}$, $C^2$, $C^1$, $Lip$, $C^\alpha$, $\alpha>0$ and
finally to the Dini square condition (\cite{FJK})
$$\int_0^1 \frac{\omega^2(t)}{t}dt<\infty,$$
where $\omega$ is the modulus of continuity of the coefficients,
that is
$$|A(X)-A(Y)|\le \omega(|X-Y|),\qquad\text{for all }|X-Y|\le 1.$$
Slightly stronger than the Dini condition is the condition
$$\int_0^1 \frac{\sqrt{\omega(t)}}{t}dt<\infty,$$
which was shown in  \cite{MT} to allow the use of layer potentials to obtain solvability of these three boundary value problems. Our condition \eqref{carlM} contains all of these other conditions as subsets, in fact each of them actually implies that the Carleson measure of \eqref{carlM} is vanishing.

\section{Background results for elliptic equations}\label{S3}

Let $A$ be an $n\times n$ elliptic matrix that has bounded, measurable
coefficients. Consider the three boundary value problems for the
equation $Lu=\mbox{div}(A\nabla u)=0$ in $\Omega$. There are several known relationships connecting solvability of
these various problems for a given operator.  
(c.f. \cite{KP3}, \cite{S}, \cite{DK} et. all). Specifically, for 
$p\in(1,\infty)$, and for some $\varepsilon=\varepsilon(A,\Omega)>0:$

$$(D)_{p}\Longrightarrow (D)_q,\qquad \text{for all }q\in (p-\varepsilon,\infty).$$
$$(R)_{p}\Longrightarrow (R)_q,\qquad \text{for all }q\in (1,p+\varepsilon).$$
$$(N)_{p}+(R)_p\Longrightarrow (N)_q,\qquad \text{for all }q\in (1,p].$$
$$(R)_p\Longrightarrow (D^*)_{p'},\qquad \text{for $p'=p/(p-1)$}.$$

\medskip

Here $(D^*)$ is a Dirichlet problem for an adjoint operator
$L^*u=\mbox{div}(A^t\nabla u)$. There is also a partial converse
$$(D^*)_p+(R)_1\Longrightarrow (R)_{p'},\qquad \text{for $p'=p/(p-1)$}.$$
Here $(R)_1$ is a natural end-point Dirichlet problem with data in a
Hardy-Sobolev space and in particular $(R)_{p}\Longrightarrow (R)_1$ for any $p>1$.

\medskip

A second class of results concerns solvability issues for an
elliptic operator $L_1$ that is in some sense close to operator an
$L_0$ for which solvability is known. These results can be stated
as follows.

\begin{theorem}\label{T:perturbation} Consider operators $L_0$, ${L}_1$, with
${L}_k = \text{div}(A_k\nabla .)$ on a Lipschitz domain $\Omega$,
$\epsilon(x) = \left(a^{ij}_0(x) - a^{ij}_1(x)\right)_{i,j}$ and
$\mathbf{a}(x) = \sup_{z \in B_{\frac{\delta(x)}{2}}(x)}
\left|{\epsilon(z)}\right|$. Let
\begin{equation}
\sup_{Q \in \partial \Omega, r > 0} \frac{1}{\sigma(\Delta_r(Q))}
\iint_{T_r(Q)} \frac{\mathbf{a}^2(x)}{\delta(x)} \, dx=\epsilon_0 <
\infty,\label{E:perturbation}
\end{equation}
Finally assume that the $L^p$ Dirichlet problem $(D)_p$ is
solvable for the operator ${L}_0$.

There $M=M(p,L_0,\Omega)>0$ such that if $\epsilon_0<M$ then the
$L^p$ Dirichlet problem is solvable for the operator ${L}_1$.
\end{theorem}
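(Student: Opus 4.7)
The plan is to exploit the elliptic-measure characterization of Dirichlet solvability: $(D)_p$ for $L_k$ is equivalent to the Poisson kernel $d\omega_{L_k}/d\sigma$ lying in the reverse H\"older class of exponent $p'=p/(p-1)$. Since this holds for $L_0$ by hypothesis, it suffices to establish a quantitative comparability between $\omega_{L_0}$ and $\omega_{L_1}$ whose constants degenerate to the identity as $\epsilon_0\to 0$. Equivalently, one may work directly at the level of nontangential maximal functions: compare solutions $u_0,u_1$ of $L_0u_0=0$ and $L_1u_1=0$ in $\Omega$ with common continuous boundary data $f$, and prove $\|N(u_1)\|_{L^p(\partial\Omega)}\lesssim\|f\|_{L^p(\partial\Omega)}$ with an implicit constant that depends only on the constant in $(D)_p$ for $L_0$, provided $\epsilon_0$ is small.

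The central identity comes from $L_0(u_1-u_0)=L_0u_1=\mbox{div}(\epsilon\nabla u_1)$ (using $L_1u_1=0$) with zero boundary data, which via the Green's function $G_0$ of $L_0$ yields
$$u_1(X)-u_0(X)=-\iint_\Omega\nabla_YG_0(X,Y)\cdot\epsilon(Y)\,\nabla u_1(Y)\,dY.$$
After replacing pointwise $|\epsilon|$ by its Whitney-averaged counterpart $\mathbf{a}$ (via Caccioppoli's inequality on Whitney balls), Cauchy--Schwarz, and the Carleson embedding applied to the measure $d\mu=\mathbf{a}(Y)^2\delta(Y)^{-1}dY$ (whose Carleson norm is exactly $\epsilon_0$), the identity produces the absorption bound
$$\|N(u_1-u_0)\|_{L^p(\partial\Omega)}\le C\sqrt{\epsilon_0}\,\|N(u_1)\|_{L^p(\partial\Omega)}.$$
Combined with the triangle inequality and $(D)_p$ for $L_0$ one obtains
$$\|N(u_1)\|_{L^p}\le\|N(u_0)\|_{L^p}+\|N(u_1-u_0)\|_{L^p}\lesssim\|f\|_{L^p}+C\sqrt{\epsilon_0}\,\|N(u_1)\|_{L^p},$$
and taking $M$ so small that $C\sqrt{M}<1/2$ allows absorption of the last term. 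This yields $(D)_p$ for $L_1$ first on energy solutions (for which $\|N(u_1)\|_{L^p}$ is a priori finite, so the absorption step is rigorous), and then on all of $L^p(\partial\Omega)$ by density.

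The main obstacle is turning the Green's-function representation into the advertised absorption bound. The integrand $|\nabla_YG_0(X,Y)|$ is singular at $Y=X$ and must be handled by interior regularity together with truncation near the diagonal, with the leftover diagonal contribution absorbed into $N(u_1)$ itself. Upgrading the Carleson embedding from its natural $L^2$ form to general $L^p$ typically requires either the good-$\lambda$ machinery of Fefferman--Kenig--Pipher or a stopping-time decomposition of $d\mu$ paired with John--Nirenberg-type self-improvement of its Carleson norm. Finally, the absorption estimate implicitly uses the $S\lesssim N$ comparison for solutions of $L_0$, which itself follows from $(D)_p$ for $L_0$; keeping track of all constants so that the small multiplier really is $C\sqrt{\epsilon_0}$ is where most of the care is needed.
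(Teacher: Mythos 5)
The paper does not prove Theorem \ref{T:perturbation}; it is cited to Dahlberg \cite{D4}. Your starting point is the right one: the identity $L_0(u_1-u_0)=\mathrm{div}(\epsilon\nabla u_1)$, the Green's function representation, and the Carleson measure $\mathbf{a}^2\delta^{-1}\,dX$ are the actual ingredients of the argument. But the absorption step contains a genuine gap. After Cauchy--Schwarz and the Carleson embedding, the quantity one produces on the right is $\sqrt{\epsilon_0}\,\|S(u_1)\|_{L^p}$, and to convert this into the claimed $\sqrt{\epsilon_0}\,\|N(u_1)\|_{L^p}$ one would need $\|S(u_1)\|_{L^p}\lesssim\|N(u_1)\|_{L^p}$ for solutions of $L_1$. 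That estimate is unavailable: $A_1$ carries only bounded measurable coefficients (no Carleson hypothesis is placed on $A_1$ itself, only on the disagreement $\epsilon$), and for bounded measurable coefficients the local $L^2$ bound $S\lesssim N$ already fails in general --- by Theorem \ref{T:Ainfty} it would force $\omega_{L_1}\in A_\infty(d\sigma)$ for every such $A_1$, contradicting \cite{CFK}. Your final paragraph identifies the needed inequality as ``$S\lesssim N$ for solutions of $L_0$'', but that cannot be what is used: the integrand contains $\nabla u_1$, not $\nabla u_0$, and this misattribution is precisely what conceals the circularity.

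What Dahlberg does instead (and similarly Fefferman--Kenig--Pipher \cite{FKP} for the large-Carleson version stated as Theorem \ref{T:perturbation2}) is to avoid the square function of $u_1$ entirely. On each Whitney cube one dominates $|\nabla u_1|$ by $|u_1|/\delta$ and $|\nabla_YG_0(X,\cdot)|$ by $G_0(X,\cdot)/\delta$ via Caccioppoli --- valid cube-by-cube for any bounded measurable coefficients --- and then replaces $G_0(X,Y)$ by the elliptic-measure quantity $\omega_0^X(\Delta_{\delta(Y)})\,\delta(Y)^{2-n}$. The Carleson condition on $\mathbf{a}^2\delta^{-1}\,dX$ with respect to $\sigma$ is transferred to one with respect to $\omega_0$ using the $A_\infty$ property, which is exactly where the hypothesis $(D)_p$ for $L_0$ enters. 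The result is a bound for $N(u_1-u_0)$ by $\sqrt{\epsilon_0}$ times a maximal-function average of $N(u_1)$ taken against $\omega_0$; the final $L^p(d\sigma)$ absorption then follows from weighted maximal-function estimates and a good-$\lambda$ argument. This route through $\omega_0$, rather than through the square function of $u_1$, is the idea your proposal is missing.
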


This result can be found in \cite{D4}. An analogous result also
holds for the Regularity and Neumann problems (with extra
assumption in the case of the Neumann problem) by \cite{KP2}.

A second type of perturbation results  for Dirichlet and Regularity problems is as follows.

\begin{theorem}\label{T:perturbation2}(\cites{FKP,KP2}) Consider operators $L_0$, ${L}_1$ as in Theorem 
\ref{T:perturbation}, with \eqref{E:perturbation} finite (potentially large).

\noindent Then if for some $p\in(1,\infty)$ the $L^p$ Dirichlet problem $(D)_p$ is
solvable for the operator ${L}_0$, then there exists $q> 1$ such that 
the $L^q$ Dirichlet problem $(D)_q$ is
solvable for the operator ${L}_1$.

\noindent Similarly, if if for some $p\in(1,\infty)$ the $L^p$ Regularity problem $(R)_p$ is
solvable for the operator ${L}_0$, then there exists $q>1$ such that 
the $L^q$ Regulatity problem $(R)_q$ is
solvable for the operator ${L}_1$.
\end{theorem}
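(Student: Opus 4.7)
The plan is to pass through elliptic measure and the $A_\infty$ condition. By Dahlberg's classical characterization, $(D)_p$ solvability for $L_0$ is equivalent to $\omega_{L_0}\in B_{p'}(d\sigma)$ (the reverse Hölder class of exponent $p'$), and in particular $\omega_{L_0}\in A_\infty(d\sigma)$. If I can show $\omega_{L_1}\in A_\infty(\omega_{L_0})$, then by transitivity of $A_\infty$, $\omega_{L_1}\in A_\infty(d\sigma)$, and the self-improving property of reverse Hölder classes upgrades this to $\omega_{L_1}\in B_{q'}(d\sigma)$ for some $q'>1$, which is equivalent to $(D)_q$ for $L_1$ with $q=(q')'$. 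Thus the heart of the matter is the comparison $\omega_{L_1}\in A_\infty(\omega_{L_0})$.

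To establish it, I would compare solutions $u_0$ and $u_1$ sharing the same boundary data via the Green's function identity
$$u_0(X)-u_1(X)=\iint_\Omega \nabla_Y G_{L_1}(X,Y)\cdot (A_0-A_1)(Y)\nabla u_0(Y)\,dY,$$
and bound the right-hand side by a Carleson/square-function duality: $\mathbf{a}^2(x)\delta(x)^{-1}dx$ is a Carleson measure by \eqref{E:perturbation}, the square function of $u_0$ is controlled via $(D)_p$ for $L_0$, and the square function of $G_{L_1}(X,\cdot)$ is controlled by its nontangential maximal function and hence by $\omega_{L_1}$. Since $\epsilon_0$ is only assumed finite, Theorem~\ref{T:perturbation} cannot be applied globally; instead one performs a stopping-time decomposition of the Carleson box above a fixed surface ball $\Delta_0$ into sub-regions on which the truncated Carleson mass of $\mathbf{a}^2/\delta$ stays below a threshold $\eta(\lambda,\Lambda,n)$ small enough for Theorem~\ref{T:perturbation} to deliver a local $A_\infty$ comparison between appropriately localized versions of $\omega_{L_0}$ and $\omega_{L_1}$. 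A good-$\lambda$/John--Nirenberg patching, exploiting the sparseness of the stopping family (quantified by $\epsilon_0$), then assembles these local comparisons into the global statement $\omega_{L_1}\in A_\infty(\omega_{L_0})$, with constants depending only on $\epsilon_0$ and the $A_\infty$ constants of $\omega_{L_0}$.

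For the Regularity part, the parallel plan replaces elliptic measure by nontangential maximal functions of gradients: $(R)_p$ for $L_0$ is a reverse-Hölder-type estimate for $\widetilde N(\nabla u_0)$ in terms of $\nabla_T f$, and one seeks the analogous reverse-Hölder improvement for $\widetilde N(\nabla u_1)$ via the same stopping-time subdivision combined with the Regularity version of Theorem~\ref{T:perturbation} from \cite{KP2}. Alternatively one can invoke the duality $(R)_p\Leftrightarrow(D^*)_{p'}$ listed at the start of Section~\ref{S3} to reduce to the Dirichlet case for the adjoint operator, which has already been treated. In either route, the main obstacle is the stopping-time/good-$\lambda$ machinery itself: one must design the stopping rule so that the error contributions at the boundary of the stopping family are summable, and track the dependence of the final exponent $q$ on $\epsilon_0$ and the $A_\infty$ constants of $\omega_{L_0}$ alone, \emph{not} on the original $p$. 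It is exactly this $p$-independence, rooted in the openness of the reverse-Hölder/$A_\infty$ conditions, that makes the FKP/Kenig--Pipher perturbation theorems genuinely large-norm results rather than mere extensions of Theorem~\ref{T:perturbation}.
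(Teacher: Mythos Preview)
The paper does not prove this theorem; it is stated as a citation to \cite{FKP} (Dirichlet part) and \cite{KP2} (Regularity part) and used as a black box. So there is no ``paper's own proof'' to compare against, and your sketch must be judged on its own merits.

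Your outline for the Dirichlet half is essentially the FKP strategy: reduce to showing $\omega_{L_1}\in A_\infty(\omega_{L_0})$, then use transitivity and self-improvement of reverse H\"older classes. The stopping-time/sawtooth idea you describe is in the right spirit, though the actual FKP argument is organized around proving a Carleson-type estimate for the kernel $d\omega_{L_1}/d\omega_{L_0}$ rather than literally invoking Theorem~\ref{T:perturbation} on subregions; still, your plan is a recognizable paraphrase of the method and would work if carried out with care.

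There is, however, a genuine gap in your ``alternative'' route for the Regularity half. You write that one can invoke ``the duality $(R)_p\Leftrightarrow(D^*)_{p'}$'' to reduce to the Dirichlet case for $L_1^*$. But as the paper itself records at the start of Section~\ref{S3}, this is \emph{not} an equivalence: one has $(R)_p\Rightarrow(D^*)_{p'}$, while the converse direction requires the additional endpoint input $(R)_1$, i.e.\ $(D^*)_p+(R)_1\Rightarrow(R)_{p'}$. So even after you establish $(D^*)_{q'}$ for $L_1^*$ via the Dirichlet perturbation, you would still need to know $(R)_1$ for $L_1$ to close the loop, and nothing in your argument supplies that. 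The actual proof in \cite{KP2} proceeds directly, adapting the perturbation machinery to control $\widetilde N(\nabla u_1)$ in terms of $\widetilde N(\nabla u_0)$ (this is the content behind results like Theorem~\ref{regperturb} cited later in the paper), rather than passing through the Dirichlet problem for the adjoint. Your first route for Regularity---a direct stopping-time argument on gradients---is therefore the viable one, and you should drop the duality shortcut.
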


While the perturbation theory has been extended to more general domains for both the Dirichlet and Neumann problems (see section seven), 
it is not known whether such results hold for the Neumann problem.

\subsection{Elliptic measure, $A_\infty$ and $B_p$}

We
recall the definition of the elliptic measure. In \cite{LSW63} it
was proved that for every $g\in C(\partial\Omega)$ (or $C_0(\partial\Omega)$  if the domain is unbounded) there exists a
unique $u\in W^{1,2}_{loc}(\Omega)\cap C(\overline{\Omega})$ such
that $Lu=0$ in $\Omega$ and $u=g$ on $\partial \Omega$. Such
solution is also called a Peron's solution as it can be constructed by
a method introduced by Peron.\vglue2mm

Since we are in the case of a single equation, the maximum
principle applies. It implies that
$$\|u\|_{C(\overline{\Omega})}\leq
\|g\|_{C(\partial\Omega)}.$$ Thus for every fixed $X\in\Omega$ the
map defined by
$$C(\partial \Omega)\ni g\mapsto u(X)$$
is a bounded linear functional on $C(\partial \Omega)$. The Riesz
Representation Theorem implies the existence of a unique regular
Borel measure $\omega^X$ such that
$$u(X)= \int_{\partial \Omega} g(Q)\,d \omega^X(Q).$$
We will write $\omega$ instead of $\omega^{X}$ if we speak about a
fixed $X$. The particular choice of point $X$ does not matter,
since by the comparison principle we have
$$C^{-1}\omega^Y(E)\le \omega^X(E)\le C\omega^Y(E)$$
for a constant $C=C(X,Y)>0$ depending only on points
$X,Y\in\Omega$ but not on a set $E$. The measure $\omega$ is
called the {\it elliptic measure} of an operator $L$.\vglue2mm

We now make an explicit connection between the solvability of
Dirichlet problem $(D)_p$ and certain weight classes $B_p$ (sometimes also denoted $RH_p$).\vglue2mm

The reverse H\"older class $B_q$, $q>1$, is defined as the class
of all non-negative functions $k\in L^1_{loc}$ such that
$$\left(\fint_Q k^q\right)^{\frac{1}{q}}\leq C\fint_Q k$$
for all balls $Q$, where $\fint_Q k=\frac{1}{|Q|}\int_Q k$. Using
for example Lemma 1.4.2 in \cite{Ken94} one sees that (after writing $d\omega=k\,d\sigma)$:
$$(D)_p \Leftrightarrow \omega\in B_{p'}(d\sigma),\qquad p'=p/(p-1).$$

We shall denote by $A_\infty(d\sigma)$
$$A_\infty(d\sigma)=\bigcup_{p'>1}B_{p'}(d\sigma).$$

Observe that when $\omega\in A_\infty(d\sigma)$ if follow that there exists $p_0>1$ such that
 the $L^p$ Dirichlet problem $(D)_p$ is
solvable for the operator ${L}$ with elliptic measure $\omega$ for all $p>p_0$. Hence, it is therefore extremely important to know when the elliptic measure of a particular operator belongs to this class. 
An important breakthrough in our understanding when this happens is due to two papers \cite{DKP09} and \cite{KKPT}. We formulate the result in the theorem below.

\begin{theorem}\label{T:Ainfty} Consider an elliptic operator $L$, with
${L} = \text{div}(A\nabla .)$ on a Lipschitz domain $\Omega$ and let $\omega=\omega^X$ for some $X\in\Omega$ be the elliptic measure of this operator.

\noindent Then $\omega\in A_\infty(d\sigma)$ if and only if there exists a constant $C<\infty$ such that 
for all surface balls $\Delta\subset \partial\Omega$ and all Borel subsets $E\subset \Delta$ the solution $u$ to the equation $Lu=0$ in $\Omega$ with boundary datum $\chi_E$ satisfies the estimate
\begin{equation}\label{e1}
\sup_{B} |B|^{-1}\iint_{B\cap \Omega}|\nabla u(X)|^2\delta(X)\,dX\le C.
\end{equation}
Here, the supremum is taken over all balls $B$ in the ambient space centred at a boundary point and $\delta(X)$ denotes the distance of an interior point $X$ to the boundary $\partial\Omega$.
\end{theorem}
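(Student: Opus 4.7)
The plan is to prove both implications separately, with the reverse direction carrying the real content. Throughout, fix $E \subset \Delta_r(Q_0) \subset \partial\Omega$ and write $u(X) = \omega^X(E)$, so that $Lu = 0$ in $\Omega$ and $0 \le u \le 1$.

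For the forward direction $\omega \in A_\infty \Rightarrow \eqref{e1}$, the starting point is the chain-rule identity $L(u^2) = 2\,\nabla u^T A \nabla u$ (valid since $Lu=0$), which I would pair with the Green's function representation. Letting $X_0$ be a corkscrew point at scale $r$ above the center of $B$, integration against $G(X_0,\cdot)$ gives
\begin{equation*}
\iint_\Omega \nabla u^T A \nabla u \cdot G(X_0, X)\, dX = \tfrac{1}{2}\left[\int_{\partial\Omega} u^2 \, d\omega^{X_0} - u^2(X_0)\right] \le \tfrac{1}{2}.
\end{equation*}
On the Lipschitz domain $\Omega$, the CFMS comparison between $G$ and elliptic measure, together with $A_\infty$ (which upgrades to doubling of $\omega$ and the Reverse H\"older bound on $d\omega/d\sigma$), yields $G(X_0, X) \gtrsim \delta(X)\, r^{1-n}$ uniformly for $X \in B \cap \Omega$ after suitably normalizing. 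Combined with ellipticity $\nabla u^T A \nabla u \ge \lambda |\nabla u|^2$, this converts the identity into $\iint_{B\cap\Omega} |\nabla u|^2 \delta(X)\, dX \lesssim r^{n-1} \approx |B|$, which is \eqref{e1}.

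For the converse $\eqref{e1} \Rightarrow \omega \in A_\infty$, the intermediate tool I would build is $\epsilon$-\emph{approximability}: for every $\epsilon>0$ and every bounded solution $u$ with $\|u\|_\infty \le 1$, one constructs a (non-solution) function $\Phi = \Phi_\epsilon$ with
\begin{equation*}
\|u - \Phi\|_{L^\infty(\Omega)} \le \epsilon \qquad \text{and} \qquad |\nabla \Phi(X)|\, dX \in \mathcal{C},
\end{equation*}
the Carleson norm of $|\nabla\Phi|\,dX$ being controlled by $C(\epsilon)$. The construction is a Whitney/stopping-time procedure: one partitions $\Omega$ into maximal regions over which $u$ oscillates by at most $\epsilon$, glues together the averages of $u$ on these regions via a smooth partition of unity, and iterates on the stopping children. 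The Carleson hypothesis \eqref{e1} is exactly what bounds the total surface mass of stopping cubes at each generation and thereby controls the Carleson norm of $|\nabla\Phi|$.

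Once $\epsilon$-approximability is in place, I would apply the good-$\lambda$ argument of Kenig--Koch--Pipher--Toro: taking $u = \omega^\cdot(E)$ on a scaled sawtooth above $\Delta$, test $\Phi$ against a suitable vector field via the divergence theorem and use that $|\nabla \Phi|\,dX$ is Carleson to show that whenever $\omega^{X_\Delta}(E)/\omega^{X_\Delta}(\Delta) > 1-\eta$ with $\eta = \eta(\epsilon)$ small, one has $\sigma(E) \ge c\,\sigma(\Delta)$. This is the weak reverse-H\"older/$A_\infty$ criterion for $\omega$ with respect to $\sigma$, placing $\omega$ in some class $B_{p'}$. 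The main obstacle is the construction and verification of $\Phi$: arranging the stopping-time decomposition so that the telescoping sum producing $\nabla \Phi$ inherits a bounded Carleson norm from \eqref{e1} — and then converting this geometric-analytic output into the density estimate for $\omega$ — is the genuinely delicate step, while the forward direction is essentially Green-function bookkeeping.
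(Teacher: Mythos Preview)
The paper does not prove this theorem; it is stated as a background result and attributed to \cite{DKP09} and \cite{KKPT}. So there is no ``paper's proof'' to compare against beyond those references. That said, your proposal has one genuine gap and one point worth flagging.

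\textbf{Forward direction.} Your Green's function argument does not go through as written. The step where you assert $G(X_0,X)\gtrsim \delta(X)\,r^{1-n}$ uniformly on $B\cap\Omega$ is not a consequence of $\omega\in A_\infty$. By CFMS one has $G(X_0,X)\approx \omega^{X_0}(\Delta_{\delta(X)})/\delta(X)^{n-2}$, and $A_\infty$ only yields $\omega^{X_0}(\Delta_{\delta(X)})/\omega^{X_0}(\Delta_r)\gtrsim (\delta(X)/r)^{(n-1)\alpha}$ for some exponent $\alpha\ge 1$ that can be arbitrarily large. Plugging this in gives $G(X_0,X)\gtrsim \delta(X)^{(n-1)\alpha-(n-2)}r^{-(n-1)\alpha}$, which is much smaller than $\delta(X)r^{1-n}$ near the boundary unless $\alpha=1$ (i.e., unless $\omega$ and $\sigma$ are actually comparable, far stronger than $A_\infty$). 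The argument in \cite{DKP09} instead passes through solvability of $(D)_{p_0}$ for some $p_0$ (equivalent to $A_\infty$), obtains local $L^{p_0}$ square-function bounds, and then localizes/upgrades to the Carleson estimate for bounded data; the Green's identity alone does not produce \eqref{e1}.

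\textbf{Reverse direction.} Your route via $\epsilon$-approximability followed by the Kenig--Koch--Pipher--Toro good-$\lambda$ argument is a legitimate path, but it is the strategy of \cite{KKPT2} (2000), not the paper actually cited here, \cite{KKPT} (Kenig--Kirchheim--Pipher--Toro, 2016). The latter gives a more direct argument from the square-function Carleson condition to $A_\infty$ that bypasses the construction of the approximant $\Phi$. Your outline is correct in spirit and would succeed, but it is a longer detour than what the cited reference does.
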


The condition \eqref{e1} in this particular formulation has become the primary tool used to prove the $A_\infty$ property and hence solvability of the $L^p$ Dirichlet problem for some $p>1$.
Unfortunately, no similar result is known for the Regularity and Neumann problems which creates more technical difficulties in arguing for solvability of both of these boundary value problems.

\section{Dirichlet problem}
\setcounter{equation}{0}

In this section we give some of the main ideas and calculations involved in the proofs of Theorems \ref{LP} and \ref{Dirlarge} for the Dirichlet problem. 
Assume for now that ${L} = \text{div}(A\nabla .)$ is a divergence-form elliptic operator on an unbounded Lipschitz domain 
$$\Omega=\{(x,t)\in {\mathbb R}^{n-1}\times{\mathbb R};\, t>\phi(x)\},$$
as in the section \ref{s2.1}.
The  pull-back transformation \eqref{map2} we have introduced there to motivate the Carleson condition  
allows us to consider the corresponding Dirichlet/Regularity or Neumann problems in
the domain ${\mathbb R}^n_+$. This is because, for $\Omega=\{(x,t):\, t>\phi(x)\}$, the pull-back map preserves the ellipticity condition and the Carleson condition on the coefficients (although the Carleson bound coefficients of  the new operator on ${\mathbb R}^n_+$ might increase and will depend on $\|\nabla\phi\|_{L^\infty}$ as well).

In the case $\Omega$ is a bounded Lipschitz domain we need to do a further localization argument which for the sake of brevity of our presentation we omit here, but which is fairly standard. An interested reader can see \cite{DPP} for details.

Hence from now on we assume that $\Omega={\mathbb R}^n_+$. We shall rename the variable $x_n$ of ${\mathbb R}^n_+$ as $t$, to distinguish it from the remaining directions. Hence we interchangeably use the notation $\partial_n=\partial_t$ for the derivative in this direction throughout the paper.

The next reduction comes in the form of replacing the Carleson condition \eqref{carlM} by the stronger condition: 
\begin{equation}\label{CCalt}
\delta(X)\left[\sup_{Y\in B(X,\delta(X)/2)}|\nabla A(Y)|\right]^2\mbox{ is a Carleson measure}.
\end{equation}
To see this, one consider a new matrix $\bar{A}$ obtained from $A$ via mollification
$\bar{A}(x,t)=(A*\eta_{t/2})(x,t)$ for a family of smooth mollifiers $(\eta_t)_{t>0}$ (for details see \cite{DPP} where this observation was made). The matrix valued function $\bar{A}$ is uniformly elliptic but now satisfies \eqref{CCalt} instead of the oscillation condition, \eqref{carlM}, that holds for $A$. In addition, we also have
\begin{equation}\label{CCpert}
\delta(X)^{-1}\left[\sup_{Y\in B(X,\delta(X)/2)}|A(Y)-\bar{A}(Y)|\right]^2\mbox{ is a Carleson measure.}
\end{equation}

By Theorem \ref{T:perturbation} we can then use solvability (which we establish below) for the operator with matrix $\bar{A}$ to deduce solvability of the Dirichlet problem for the operator with matrix $A$, since \eqref{CCpert} will have  small Carleson norm, provided \eqref{carlM} was small.

In case \eqref{carlM} has large Carleson norm, then Theorem \ref{T:perturbation2} applies and we get solvability 
of the $L^p$ Dirichlet problem for the operator with matrix $A$ for some large $p>1$. Thus matters can really be reduced to operators on $\mathbb R^n_+$ satisfying either small or large Carleson condition \eqref{CCalt}. \medskip

Below we follow \cite{DPP} and just consider $p=2$. 
Our goal is to prove that for a continuous data $f$ the estimate
\begin{equation}\label{e7}
\|N(u)\|_{L^2} \lesssim
\|f\|_{L^2}
\end{equation}
holds for an energy solution $u$ with datum $f$. We begin with the following lemma.

\begin{lemma}\label{lgeneral} Let ${L} = \text{div}(A\nabla .)$ be an elliptic operator on ${\mathbb R}^n_+$ such that the matrix $A$ satisfies \eqref{CCalt} and let $\|\mu\|_{Carl}$ be its Carleson norm.

\noindent Assume that $w:{\mathbb R}^n_+\to \mathbb R$ is such that for every boundary ball $\Delta\subset\partial {\mathbb R}^n_+$ we have $w\in W^{1,2}(T(\Delta)\cap {\mathbb R}^n_+)$.
Then the following statements hold:
\begin{equation} \label{e4.4}
\iint_{T(\Delta)} S(w)^2\,dX\le C(\|\mu\|_{Carl})\int_{2\Delta}N(w)^2\, dx- C\iint_{\mathbb R^n_+}\frac1{a_{nn}}(Lw)w\phi t\,dX,
\end{equation}
for some smooth cutoff function $\phi=1$ on $T(\Delta)$ and vanishing outside $T(2\Delta)$. Furthermore, assume that $w$ has sufficiently fast decay to zero as $(x,t)\to\infty$. Then 
\begin{eqnarray} \label{e4.5}
\label{ee1m} && \int_{\partial{\mathbb
R}^n_+}S(w)^2\,dx
\le C\int_{\partial{\mathbb R}^n_+}|w|^2
dX\\\nonumber&&\qquad+C\|\mu\|_{Carl}\int_{\partial{\mathbb
R}^n_+}N(w)^2 dx-C\iint_{{\mathbb R}^n_+}\frac1{a_{nn}}(Lw)wt\,dX.
\end{eqnarray}
\end{lemma}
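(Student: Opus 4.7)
The plan is a weighted integration-by-parts argument that produces the square-function norm of $w$ on the left and isolates boundary data, a $(Lw)w$ source term, and Carleson-controlled coefficient errors on the right. First, by Fubini the boundary integral of $S(w)^2$ is, up to an aperture-dependent constant, equal to $\iint_{\mathbb R^n_+}|\nabla w|^2\,\delta(X)\,\phi(X)\,dX$ where $\phi$ is a smooth cutoff equal to $1$ on $T(\Delta)$ and supported in $T(2\Delta)$ (for \eqref{e4.4}), or $\phi\equiv 1$ (for \eqref{e4.5}). The ellipticity bound $a_{nn}\geq\lambda$ then lets one replace $|\nabla w|^2$ by $a_{nn}^{-1}A\nabla w\cdot\nabla w$ up to a multiplicative constant.

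The key identity arises from applying the divergence theorem to $\mathrm{div}\bigl(\phi t w\,A\nabla w/a_{nn}\bigr)$: the $t$-factor makes the boundary flux at $\{t=0\}$ vanish, and decay at infinity handles the far field for \eqref{e4.5}. Expanding the divergence yields
\[
\iint\frac{\phi t\,A\nabla w\cdot\nabla w}{a_{nn}}\,dX = -\iint\frac{\phi t w\,Lw}{a_{nn}}\,dX - \iint\frac{\phi w\,(A\nabla w)_n}{a_{nn}}\,dX - \iint\frac{tw\,A\nabla w\cdot\nabla\phi}{a_{nn}}\,dX + \iint\frac{\phi t w\,A\nabla w\cdot\nabla a_{nn}}{a_{nn}^2}\,dX.
\]
The cross-term is split via $(A\nabla w)_n/a_{nn}=\partial_t w + \sum_{j<n}(a_{nj}/a_{nn})\partial_j w$: the normal piece $\tfrac{1}{2}\iint\phi\,\partial_t(w^2)\,dX$ integrates by parts in $t$ to yield the desired boundary term $-\tfrac{1}{2}\int_{\partial\mathbb R^n_+}\phi\,w^2|_{t=0}\,dx$ (the source of $\int_\partial|w|^2$ in \eqref{e4.5}, and absorbed into $\int_{2\Delta}N(w)^2$ for \eqref{e4.4}), while the tangential pieces, integrated by parts in $x_j$, generate cutoff-derivative and coefficient-derivative remainders.

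The cutoff-derivative terms are supported in $T(2\Delta)\setminus T(\Delta)$ and routinely contribute $\lesssim\int_{2\Delta}N(w)^2\,d\sigma$. The coefficient-derivative term carrying the $t$-weight, $\iint\phi t w\,A\nabla w\cdot\nabla a_{nn}/a_{nn}^2\,dX$, splits by Cauchy-Schwarz into one factor absorbable into $\iint\phi t|\nabla w|^2\,dX$ on the LHS (for small $\epsilon$) and another equal to $\bigl(\iint\phi t\,w^2|\nabla A|^2\,dX\bigr)^{1/2}$; by the Carleson hypothesis \eqref{CCalt} and Carleson embedding, this is bounded by $\|\mu\|_{\mathrm{Carl}}^{1/2}\,\|\widetilde N(w)\|_{L^2(\partial)}$. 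The main obstacle is the \emph{tangential} coefficient-derivative contribution $\iint\phi w^2|\nabla A|\,dX$, which a priori lacks the $t$-weight needed for direct Carleson embedding; it is handled by a refined Cauchy-Schwarz split $|\nabla A|=(t|\nabla A|^2)^{1/2}\cdot t^{-1/2}$ combined with the pointwise bound $|\nabla A|\lesssim\delta^{-1}$ inherited from the mollification reduction (the same device as in \eqref{CCpert}), which allows the $t^{-1/2}$ singularity to be bootstrapped back into the LHS via ellipticity and absorbed. Once every remaining contribution is either absorbed or bounded by $\|\mu\|_{\mathrm{Carl}}\int_\partial N(w)^2\,d\sigma$ and $\int_\partial|w|^2\,d\sigma$, the estimates \eqref{e4.4} and \eqref{e4.5} follow.
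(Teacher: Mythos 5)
Your overall strategy mirrors the paper's: test the PDE against $w\phi t/a_{nn}$, use ellipticity, and push derivatives around until the boundary term, the $Lw$ term, and Carleson-controlled errors emerge. However, there is a genuine gap in how you handle the tangential cross-terms $\sum_{j<n}\iint \phi\,w(\partial_j w)\,(a_{nj}/a_{nn})\,dX$. You propose to integrate these directly by parts in $x_j$, which produces a coefficient-derivative remainder of the form $\iint \phi\,w^2\,|\nabla A|\,dX$ carrying \emph{no} $t$-weight, and you then claim to control it by the split $|\nabla A|=(t|\nabla A|^2)^{1/2}\,t^{-1/2}$ together with $|\nabla A|\lesssim\delta^{-1}$. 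This does not close: after Cauchy--Schwarz the second factor is $\bigl(\iint\phi\,w^2\,t^{-1}\,dX\bigr)^{1/2}$, which \emph{diverges} for any $w$ with nonzero boundary trace (the generic case here — $w$ is not assumed to vanish at $t=0$), and neither the pointwise bound on $|\nabla A|$ nor the LHS $\iint|\nabla w|^2\phi t\,dX$ can absorb a weight $t^{-1}$ acting on $|w|^2$. So the step you flag as "the main obstacle" is exactly where the argument breaks down.

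The paper's resolution is a different and essential device: before integrating by parts in $x_j$, one inserts an artificial factor $1=\partial_n t$ into $-\tfrac12\iint\partial_j(|w|^2)(a_{nj}/a_{nn})\,\phi\,(\partial_n t)\,dX$ and first integrates by parts in the $t$-direction. This manufacturing of a $t$-weight is what converts every subsequent coefficient-derivative term into the benign form $\iint |w||\nabla w||\nabla A|\,\phi\,t\,dX$, which is then amenable to Cauchy--Schwarz, Carleson embedding, and absorption via the arithmetic--geometric mean inequality. Without this $\partial_n t$ trick, the tangential contributions simply cannot be brought under the Carleson hypothesis. You should replace your "refined Cauchy--Schwarz split" with this integration-by-parts-in-$t$ step, after which the remaining estimates in your outline (cutoff errors, absorption, the $j=n$ boundary term, and the covering/partition-of-unity passage from the local bound to the global one) line up with the paper's proof.
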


\noindent{\it Remark.} A careful observation of the proof given below establishes that only coefficients $(a_{nj})_{j=1,2,\dots, n}$ of the matrix $A$ have to satisfy the Carleson condition for \eqref{e4.4}-\eqref{e4.5} and hence $\mu$ only needs to capture the Carleson norm of these coefficients.\medskip

In particular, if $u$ is an energy solution to $Lu=0$ in ${\mathbb R}^n_+$ it follow that

\begin{corollary}\label{l1} Under the same assumption of $L$ as in Lemma \ref{lgeneral} we have for any energy solution of $Lu=0$ with datum $u\big|_{\partial{\mathbb
R}^n_+}=f\in C_0^\infty(\mathbb R^{n-1})$:
\begin{eqnarray} \label{e4.6}
\label{ee1} && \int_{\partial{\mathbb R}^n_+}S(u)^2\,dx
\le C\int_{\partial{\mathbb R}^n_+}|f|^2
dX+C\|\mu\|_{Carl}\int_{\partial{\mathbb R}^n_+}N(u)^2 dx.
\end{eqnarray}
If in addition $f$ is bounded and supported on a ball $\Delta$ then
\begin{equation} \label{e4.7}
\frac1{|\Delta|}\iint_{T(\Delta)} |\nabla u|^2t\,dX\le C(\|\mu\|_{Carl})\|f\|_{L^\infty}.
\end{equation}
\end{corollary}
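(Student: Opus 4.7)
The plan is straightforward: both inequalities should follow essentially by specializing Lemma \ref{lgeneral} to the case $w=u$, where the hypothesis $Lu=0$ makes the final bulk term on the right-hand side vanish. So the proof proposal amounts to verifying that the hypotheses of Lemma \ref{lgeneral} apply, identifying the boundary trace of $w$, and handling the nontangential term on the right by a maximum principle argument.

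For the first estimate \eqref{e4.6}, I would take $w=u$ in \eqref{e4.5}. Since $u$ is an energy solution and $f\in C_0^\infty(\mathbb R^{n-1})$, standard regularity gives $u\in W^{1,2}(T(\Delta)\cap\mathbb R^n_+)$ for every boundary ball, and the compact support of $f$ combined with the maximum principle and decay estimates for solutions yields sufficiently fast decay of $u$ at infinity to justify the integration by parts implicit in \eqref{e4.5}. Because $Lu=0$, the term $\iint \frac{1}{a_{nn}}(Lu)u\, t\, dX$ drops out entirely. The remaining boundary integral $\int_{\partial\mathbb R^n_+}|w|^2\, dx$ becomes $\int_{\partial\mathbb R^n_+}|f|^2\, dx$ by the boundary condition $u|_{\partial\mathbb R^n_+}=f$, giving \eqref{e4.6} exactly.

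For the localized estimate \eqref{e4.7}, I would instead apply \eqref{e4.4} to $w=u$. Again $Lu=0$ kills the bulk term. The term $\int_{2\Delta}N(u)^2\,dx$ is controlled by noting that $f$ is bounded with support in $\Delta$, so by the maximum principle $|u(X)|\leq \|f\|_{L^\infty}$ throughout $\mathbb R^n_+$; consequently $N(u)(Q)\leq \|f\|_{L^\infty}$ pointwise, so $\int_{2\Delta}N(u)^2\,dx\lesssim |\Delta|\|f\|_{L^\infty}^2$. Plugging this into \eqref{e4.4} and dividing by $|\Delta|$ yields \eqref{e4.7}, once one identifies $\iint_{T(\Delta)}S(u)^2 dX$ with $\iint_{T(\Delta)}|\nabla u|^2 t\, dX$ via the standard Fubini/cone-overlap computation (in $\mathbb R^n_+$ the projection of $(x,t)$ onto the set of $Q\in\partial\mathbb R^n_+$ whose cone contains it has measure $\sim t^{n-1}$, and $\delta(X)^{2-n}\cdot t^{n-1}=t$).

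The only really delicate point I anticipate is justifying the decay and integrability needed to apply \eqref{e4.5} honestly: one must ensure that all boundary terms at infinity vanish in the integration by parts that underlies the lemma. For $f\in C_0^\infty$, this is standard (solutions with compactly supported bounded data decay in $\mathbb R^n_+$), but writing it rigorously would require an approximation step (e.g., truncating at height $t=R$ and letting $R\to\infty$, controlling the boundary contributions at $t=R$ by standard pointwise decay estimates for solutions with compactly supported data). Everything else is an immediate specialization of Lemma \ref{lgeneral}.
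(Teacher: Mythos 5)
Your proposal is correct and takes essentially the same route as the paper: specialize Lemma~\ref{lgeneral} to $w=u$, observe that $Lu=0$ kills the bulk term, identify the boundary trace with $f$, and for \eqref{e4.7} invoke the maximum principle to get $N(u)\le\|f\|_{L^\infty}$ pointwise before integrating over $2\Delta$. The only cosmetic difference is in how the decay-at-infinity hypothesis of \eqref{e4.5} is justified: you suggest truncating integrals at height $R$ and controlling boundary contributions, whereas the paper approximates $u$ itself by energy solutions $u_n$ on the slabs $\{0<t<n\}$ (with zero data at $t=n$, extended by zero above), proves the estimate uniformly in $n$, and passes to the limit using local $W^{1,2}$ convergence $u_n\to u$. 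Both are standard and land in the same place.
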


\begin{proof} Clearly \eqref{e4.6} follows from  \eqref{e4.5} as the last term vanishes due to $u$ being a solution, provided $u$ has sufficient decay at infinity.
In particular, we need that $(r_0)^{-1}\iint_{{\mathbb R}^{n-1}\times [r_0,2r_0]}|u|^{2} \, dX\to 0$ as $r_0\to
\infty$. This is indeed the case under our assumption but requires some extra approximation to be shown. We outline here the main idea. We consider $u_n$ to be the energy solutions to $Lu_n=0$ in $\Omega_n=\{(x,t):0<t<n\}$ with boundary datum $f$ at $t=0$ and vanishing at $t=n$. We then define $u_n=0$ for $t\ge n$. Clearly, each such $u_n$ will satisfy $(r_0)^{-1}\iint_{{\mathbb R}^{n-1}\times [r_0,2r_0]}|u_n|^{2} \, dX\to 0$ and we could show that \eqref{e4.6} still holds for constants that do not depend on $n$. That is 
\begin{eqnarray}
\label{ee1z} && \int_{\partial{\mathbb R}^n_+}S(u_n)^2\,dx
\le C\int_{\partial{\mathbb R}^n_+}|f|^2
dX+C\|\mu\|_{Carl}\int_{\partial{\mathbb R}^n_+}N(u_n)^2 dx.
\end{eqnarray}
Taking the limit $n\to\infty$ it can be shown that $u_n\to u$ where $u$ is the energy solution to $Lu=0$ on $\mathbb R^n_+$ with boundary datum $f$ at $t=0$.
Furthermore $u_n\to u$ locally uniformly on compact subsets of $\mathbb R^n_+$ and $\nabla u_n\to \nabla u$ in $L^2$ on such compact subsets. Hence \eqref{e4.6} holds for $u$ by taking limit in the inequality \eqref{ee1z}.

Also \eqref{e4.7} follows from \eqref{e4.4} and the maximum principle (the energy solutions do satisfy the maximum principle due to the decay to zero at infinity) as for any $x\in\mathbb R^{n-1}$ we have that $N(u)(x)\le \|u\|_{L^{\infty}(\mathbb R^n_+)}\le \|f\|_{L^\infty}$.
\end{proof}

\noindent {\it Proof of Lemma~\ref{lgeneral}.}
Here and below we use the summation convention. We introduce the following localisation. Let 
\begin{equation}
\phi(x,t)=\varphi(x)^2\psi(t)^2,\quad\mbox{where }\varphi\in C_0^\infty(\mathbb R^{n-1}),\quad \varphi(x)=\begin{cases}1,&\mbox{ for }x\in \Delta,\\0,&\mbox{ for }x\notin 2\Delta,\end{cases}
\end{equation}
and $\psi\in C^\infty(\mathbb R)$, $\psi(t)=\begin{cases}1,&\mbox{ for }t\le r_0,\\0,&\mbox{ for }t\ge 2r_0\end{cases}$. Here $r_0>0$ will be specified later.

We begin by
integrating by parts using the ellipticity condition. Assuming we choose $r_0$ so that $r_0\approx \mbox{diam} (\Delta)$ we see that
\begin{eqnarray}\nonumber
\iint_{T(\Delta)} |\nabla w|^2t\,dX&\le&
\iint_{{\mathbb R}^n_+}|\nabla w|^2\phi t\,dX\approx\iint_{{\mathbb
R}^n_+}\frac{a_{ij}}{a_{nn}}(\partial_iw)(\partial_jw)\phi t\,
dX =\\
\label{ee2} &-&\iint_{{\mathbb
R}^n_+}\frac1{a_{nn}}w\partial_i(a_{ij}\partial_j w) t\phi\, dX-\iint_{{\mathbb
R}^n_+}w(\partial_jw)a_{ij}\partial_i\left(\frac{\phi t}{a_{nn}}\right)\,
dX.
\end{eqnarray}

\noindent Notice that the first term in the second line contains 
$Lw=\partial_i(a_{ij}\partial_j w)$ and we no not deal with it anymore as it is as in \eqref{e4.4}. We work on the last term of \eqref{ee2}.
It is equal to

\begin{eqnarray}\label{ee3}
-\iint_{{\mathbb R}^n_+}w(\partial_jw)\frac{a_{nj}}{a_{nn}} \phi\,
dX&+&\iint_{{\mathbb
R}^n_+}w(\partial_jw)\frac{a_{ij}}{a^2_{nn}}(\partial_ia_{nn})\phi t\,
dX\\&-&\iint_{{\mathbb R}^n_+}w(\partial_jw)\frac{a_{nj}}{a_{nn}} (\partial_i\phi) t\,
dX.\nonumber
\end{eqnarray}
Consider now the first term of (\ref{ee3}). For $j=n$ we get that it is equal to

\begin{equation}
-\frac{1}2\iint_{{\mathbb R}^n_+}\partial_n(|w|^{2}\phi) \, dX+\frac{1}2\iint_{{\mathbb R}^n_+}|w|^{2}(\partial_n\phi) \, dX
=\frac12\int_{\partial{\mathbb R}^n_+}|w(\cdot,0)|^2\phi\,
dx+\frac{1}2\iint_{{\mathbb R}^n_+}|w|^{2}(\partial_n\phi) \, dX,\label{ee4}
\end{equation}
which corresponds to Dirichlet data at the boundary. For $j<n$ the
first term of (\ref{ee3}) is handled as follows. We introduce an
artificial $1$ into the term by placing $\partial_{n} t$ inside
the integral. After integration by parts we get

\begin{eqnarray}
&-&\frac{1}2\iint_{{\mathbb
R}^n_+}\partial_j(|w|^{2})\frac{a_{nj}}{a_{nn}}\phi(\partial_{n}t) \,
dX=\label{ee5}\frac12\iint_{{\mathbb
R}^n_+}\partial_{n}\left(\partial_j(|w|^2)\frac{a_{nj}}{a_{nn}}\phi\right)t
\,dX\\\nonumber&=& \frac12\iint_{{\mathbb
R}^n_+}\partial_j\partial_{n}(|w|^2)\frac{a_{nj}}{a_{nn}}\phi t \,
dX+\frac12\iint_{{\mathbb
R}^n_+}\partial_j(|w|^2)\partial_{n}\left(\frac{a_{nj}}{a_{nn}}\right)\phi
t \, dX\nonumber\\&+&\frac12\iint_{{\mathbb
R}^n_+}\partial_j(|w|^2)\frac{a_{nj}}{a_{nn}}(\partial_n \phi)
t \, dX.
\nonumber
\end{eqnarray}
The first term after the last equal sign can be
further integrated by parts and we obtain

\begin{eqnarray}\nonumber
&&\iint_{{\mathbb
R}^n_+}\partial_j\partial_{n}(|w|^2)\frac{a_{nj}}{a_{nn}}\phi t \,
dX=\\&&\qquad-\iint_{{\mathbb
R}^n_+}\partial_{n}(|w|^2)\partial_j\left(\frac{a_{nj}}{a_{nn}}\right)
\phi t \, dX-\iint_{{\mathbb
R}^n_+}\partial_n(|w|^2)\frac{a_{nj}}{a_{nn}}(\partial_j \phi)
t \, dX.\label{ee6}
\end{eqnarray}

Now we estimate terms that look similar. The second term of \eqref{ee3}, the second term of \eqref{ee5} after the equal sign and the first term of \eqref{ee6} after the equal sign can all be bounded by

\begin{equation}
 C \iint_{{\mathbb R}^n_+}|w||\nabla w||\nabla
A| \phi t \, dX.
 \label{ee7}
 \end{equation}

Here $\nabla A$ stands for either $\nabla a_{nj}$ or $\nabla
a_{nn}$. Notice also the the last term of (\ref{ee3}) is also of
this type. By Cauchy-Schwarz we get that the righthand side of
(\ref{ee7}) is less than

\begin{equation}
C\left(\iint_{{\mathbb R}^n_+}|w|^{2}|\nabla A|^2 \phi t \, dX\right)^{1/2}\left(\iint_{{\mathbb R}^n_+}|\nabla w|^2 \phi t \,
dX\right)^{1/2}.
 \label{ee8}
 \end{equation}
Using the Carleson condition on the coefficients, and the fact
that their Carleson norm \eqref{CCalt} is $\|\mu\|_{Carl}$  we get that
this can be further bounded by

\begin{equation}
 \label{ee9}
C\|\mu\|^{1/2}_{Carl}\left(\int_{2\Delta}N(w)^{2} dX\right)^{1/2} \left(\iint_{{\mathbb
R}^n_+}|\nabla w|^2\phi t \, dX\right)^{1/2}.
\end{equation}
Here we have used the fact that the support of $\phi$ at the boundary is inside $2\Delta$. Finally, after using the inequality between arithmetic and geometric means we can achieve that
\begin{equation}\label{e4.17} 
\mbox{\eqref{ee9}}\le \frac13 \iint_{{\mathbb
R}^n_+}|\nabla w|^2\phi t \, dX +C\|\mu\|_{Carl}\int_{2\Delta}N(w)^{2} dX.
\end{equation}
The first term is exactly one third of the second term on the first line of \eqref{ee2} and hence can be absorbed by it, while the second term is precisely what we need in \eqref{e4.4}.

In order to establish \eqref{e4.4} we still need to estimate a few more remaining terms.  The first term after the equal sign of \eqref{ee4} is pointwise bounded by the nontangential maximal function, i.e., $|w(x,0)|\le N(w)(x)$ and hence the integral bound follows.

The second term after the equal sign of \eqref{ee4} can be bounded by $C\int_{2\Delta}N(w)^{2} dX$. Indeed, $\partial_n \phi$ is supported in $2\Delta\times [r_0,2r_0]$ and is of size $(r_0)^{-1}$. For any $(x,t)$ inside the support we also have a pointwise bound $|w(x,t)|\le N(w)(x)$ from which the claim follows.

Consider now the last term of \eqref{ee3}, the last term of \eqref{ee5} and the last term of \eqref{ee6}. Given that $|\nabla \phi|\lesssim (\phi)^{1/2}(r_0)^{-1}$ and that matrix coefficients of $A$ are bounded they all can be estimated by
\begin{equation}\label{e4.18}
C \iint_{\mathbb R^{n-1}_+}|\nabla w||w|\frac{(\phi)^{1/2}t}{r_0}dX\le \frac13 \iint_{{\mathbb
R}^n_+}|\nabla w|^2\phi t \, dX+C'\iint_{2\Delta\times[0,2r_0]}|w|^2(r_0)^{-1}dX,
\end{equation}
using the AG inequality and the fact that $(t/r_0)\le 2$. We again absorb the first term after the equal sign into 
the second term on the first line of \eqref{ee2}, while the last term again has the bound by $C'\int_{2\Delta}N(w)^{2} dX$ for the reasons already explained above. This concludes the proof of \eqref{e4.4}.
\medskip

We now consider \eqref{e4.5}. Here for some terms we proceed differently, as we turn the local estimate we have so far into a global one. Consider a cover of the boundary $\mathbb R^{n-1}$ by non-overlapping rectangles $(\Delta_n)_{n\in\mathbb N}$, all of size $r_0$. Let $(\varphi_n)_{n\in\mathbb N}$ be a partition of unity subordinate to the enlarged rectangles $(2\Delta_n)_{n\in\mathbb N}$. Then for each cutoff function $\phi_n:=\varphi_n(x)\psi^2(t)$ where $\psi$ is again smooth and $\psi(t)=\begin{cases}1,&\mbox{ for }t\le r_0,\\0,&\mbox{ for }t\ge 2r_0\end{cases}$,
 we have that the calculation \eqref{ee2}-\eqref{e4.17} holds. We now sum over all $n\in\mathbb N$ to get
\begin{eqnarray}\label{e4.19}
&&\hskip6mm\iint_{{\mathbb R}^n_+}|\nabla w|^2\psi^2 t\,dX\le C\int_{\partial{\mathbb R}^n_+}|w(\cdot,0)|^2
dx+C\iint_{{\mathbb R}^n_+}|w|^{2}|\partial_n(\psi^2)| \, dX\\&&\nonumber -C\iint_{{\mathbb R}^n_+}\frac1{a_{nn}}(Lw)w\psi^2 t\,dX+C\|\mu\|_{Carl}\int_{\partial{\mathbb R}^n_+}N(w)^{2} dX+C\iint_{{\mathbb
R}^n_+}|\nabla w||w|\partial_n (\psi)^2t \, dX.
\end{eqnarray}

Here the first two terms after the equal sign come from \eqref{ee4}, the third one is the first term of the last line of \eqref{ee2}, the fourth term is due to \eqref{ee7}-\eqref{e4.17} and the last term comes from the least term of \eqref{ee5}. Terms such as the last term of \eqref{ee3} and last term of \eqref{ee6} are completely gone since $\sum_n\varphi_n=1$ implies that $\sum_n\partial_j\phi_n=0$ for all $j<n$.

The last term of \eqref{e4.19} is dealt with using Cauchy-Schwarz and the AG inequality in a spirit similar to what we did for \eqref{e4.18}. Using the fact that $|\partial_n\phi|\lesssim (r_0)^{-1}$ we finally obtain:

\begin{eqnarray}\label{e4.20}
&&\hskip6mm\iint_{{\mathbb R}^n_+}|\nabla w|^2\psi^2 t\,dX\le C\int_{\partial{\mathbb R}^n_+}|w(\cdot,0)|^2
dx+C(r_0)^{-1}\iint_{{\mathbb R}^{n-1}\times[r_0.2r_0]}|w|^{2} \, dX\\&&\qquad\qquad\qquad\qquad\qquad\nonumber -C\iint_{{\mathbb R}^n_+}\frac1{a_{nn}}(Lw)w\psi^2 t\,dX+C\|\mu\|_{Carl}\int_{\partial{\mathbb R}^n_+}N(w)^{2} dX.
\end{eqnarray}
From this \eqref{e4.5} follows by letting $r_0\to\infty$ as the assumption that $w$ has sufficient decay that infinity implies that the term $(r_0)^{-1}\iint_{{\mathbb R}^{n-1}\times[r_0.2r_0]}|w|^{2} \, dX$ converges to zero. \qed

The following lemma has been established in \cite{KP} via a stopping time argument. We shall prove a version of it in the appendix of this paper.

\begin{lemma}\label{l2} Consider any
operator $L$ of the form $Lu=\div A\nabla u$ on ${\mathbb R}^n_+$
with bounded elliptic coefficients $A$ such that (\ref{CCalt}) is
a Carleson measure. Then for any $p>0$ and any energy solution $Lu=0$
\begin{equation}
\label{equiv2} \|N(u)\|_{L^p(\partial{\mathbb R}^n_+)}\lesssim\|S_2(u)\|_{L^p(\partial{\mathbb R}^n_+)}.
\end{equation}
\end{lemma}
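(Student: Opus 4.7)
The plan is to establish a good-$\lambda$ inequality comparing $N(u)$ and $S_2(u)$. Concretely, for $\gamma > 0$ sufficiently small there should exist $\varepsilon(\gamma)$ with $\varepsilon(\gamma) \to 0$ as $\gamma \to 0^+$ such that
\begin{equation}\label{plangl}
\sigma\bigl(\{N(u) > 2\alpha,\ S_2(u) \leq \gamma\alpha\}\bigr) \leq \varepsilon(\gamma)\, \sigma(\{N(u) > \alpha\}).
\end{equation}
Integrating this against $\alpha^{p-1}\, d\alpha$ in the standard way then yields $\|N(u)\|_{L^p} \lesssim \|S_2(u)\|_{L^p}$ for every $p > 0$.

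To prove \eqref{plangl}, apply a Whitney-type decomposition to the open level set $E_\alpha := \{N(u) > \alpha\}$, producing a family of surface balls $\{\Delta_j\}$ with radii $r_j \approx \mathrm{dist}(\Delta_j, E_\alpha^c)$. It suffices to estimate the contribution of those $\Delta_j$ containing at least one ``good point'' $Q_j \in \Delta_j$ with $S_2(u)(Q_j) \leq \gamma\alpha$, since the balls without such a point contribute at most $\gamma^{-2}\alpha^{-2}\int_{E_\alpha} S_2(u)^2\, d\sigma$, which integrates away harmlessly. For each such ball $\Delta_j$, fix a point $Q_j^* \in E_\alpha^c$ at distance $\approx r_j$ from $\Delta_j$, and let $X_j$ be the corresponding corkscrew point over $\Delta_j$ at height comparable to $r_j$; then $X_j \in \Gamma(Q_j^*)$ forces $|u(X_j)| \leq \alpha$.

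Next, apply Lemma \ref{lgeneral} to the translated solution $w = u - u(X_j)$ on the Carleson box $T(C\Delta_j)$. The last term in \eqref{e4.4} vanishes because $Lw = 0$, yielding a local bound of $\iint_{T(\Delta_j)} |\nabla u|^2 \delta\, dX$ by the $L^2$ norm of $N(w)$ on $2\Delta_j$. Combined with interior Caccioppoli and Moser-type estimates propagated along a chain of Whitney balls connecting a generic $X \in \Gamma(Q_j) \cap T(C\Delta_j)$ back to $X_j$, this upgrades to a pointwise bound
\begin{equation*}
\sup_{X \in \Gamma(Q_j) \cap T(C\Delta_j)} |u(X) - u(X_j)| \leq C\, S_2(u)(Q_j) + (\text{Carleson error}) \leq C'\gamma\alpha.
\end{equation*}
Consequently $|u(X)| \leq (1 + C'\gamma)\alpha < 2\alpha$ on a large portion of the Whitney region, which forces $N(u)(Q) \leq 2\alpha$ for every $Q$ in a correspondingly large sub-portion of $\Delta_j$. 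A standard covering argument then bounds the remaining exceptional set by $C\gamma\, \sigma(\Delta_j)$, giving \eqref{plangl}.

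The main obstacle lies in managing the potentially large term $C\|\mu\|_{Carl}\int N(w)^2\, dx$ that appears in \eqref{e4.4}; left untreated it would swamp the right-hand side of the good-$\lambda$ inequality. The key device, following \cite{KP}, is to insert a secondary stopping-time decomposition isolating those sub-regions where the Carleson density of $A$ concentrates above a small threshold. On those ``bad'' sub-regions one exploits the Carleson packing condition to obtain an $\ell^1$ summation of their measures, while on the complement the oscillation of $A$ is small enough that the error term can be absorbed into the main estimate. A careful iteration of this dichotomy through the Whitney chain produces the requisite decay factor $\varepsilon(\gamma) \to 0$ and closes the argument.
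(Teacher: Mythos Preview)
Your good-$\lambda$/Whitney framework is the right scaffolding, but the core step is genuinely broken. The pointwise bound
\[
\sup_{X\in\Gamma(Q_j)\cap T(C\Delta_j)}|u(X)-u(X_j)|\le C\,S_2(u)(Q_j)
\]
is simply false in general: propagating along a Harnack chain from $X_j$ at height $\sim r_j$ down to $X$ at height $t$ gives $|u(X)-u(X_j)|\lesssim\sum_k r_k\big(\fint_{B_k}|\nabla u|^2\big)^{1/2}$, and Cauchy--Schwarz against the square function introduces a factor $\sqrt{\log(r_j/t)}$ that blows up as $t\to 0$. So you cannot conclude $|u(X)|<2\alpha$ on any definite portion of the cone from smallness of $S_2(u)$ at a single point. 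Moreover, your invocation of Lemma~\ref{lgeneral} is backward: that lemma bounds the local square function by $\int N(w)^2$, which is the opposite direction to what you need and makes the argument circular.

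The paper avoids the pointwise trap entirely. Instead of controlling $u$ throughout the truncated cone, it constructs the Lipschitz graph $\hbar_{\nu,a}$ (roughly, the level where the cone first sees $w>\nu$ from above) and proves an $L^2$ estimate for $u-u(q)$ \emph{on that graph} by a direct integration-by-parts computation (Lemma~\ref{S3:L8}). That estimate has a cross-term $\|S_b(u)\|_{L^2(\Delta)}\|\tilde N_a(u)\|_{L^2(\Delta)}$, which is why the good-$\lambda$ inequality (Lemma~\ref{LGL}) must be stated with the extra hypothesis $(M(S_b^2 u)M(\tilde N_a^2 u))^{1/4}\le\gamma\nu$ rather than just $S_2(u)\le\gamma\alpha$; your plain good-$\lambda$ \eqref{plangl} would not close. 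The $L^2$ graph bound is then converted to a measure bound via the Hardy--Littlewood maximal function on the graph (Corollary~\ref{S3:L6}). Finally, the Carleson norm of $A$ enters only as the constant $(1+\|\mu\|_{Carl})$ in front of that cross-term; there is no secondary stopping-time on the Carleson density as you suggest.
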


\noindent{\it Remark.} Again a careful study of the proof reveals that the Carleson condition is only required for the coefficients in the last row of the matrix $A$.\medskip

We are now ready to prove Theorem \ref{Dirlarge} as well as the Dirichlet part of Theorem \ref{LP} for $p=2$.\medskip

\noindent {\it Proof of Theorem~\ref{Dirlarge}.} It suffices to prove that the elliptic measure of our operator $L$ that satisfies \eqref{CCalt} on $\mathbb R^n_+$ belongs to $A_\infty(d\sigma)$. However by Theorem \ref{T:Ainfty} we only need to show \eqref{e1} for $f=\chi_E$, where $E\subset \Delta$ is a Borel set. This however follows from \eqref{e1} via the following consideration. We approximate $\chi_E$ by a sequence $(f_n)$ of $C_0^\infty(\mathbb R^{n-1})$ functions with support in a small enlargement of $\Delta$ and bounded by $1$. For each such $f_n$ we know that \eqref{e4.7} will hold for the energy solution $u_n$ with boundary datum $f_n$. 
\begin{equation} \label{e4.7w}
\frac1{|\Delta|}\iint_{T(\Delta)} |\nabla u_n|^2t\,dX\le C(\|\mu\|_{Carl})\|f_n\|_{L^\infty}=C(\|\mu\|_{Carl}).
\end{equation}
Also $u_n\to u$ locally uniformly in $W^{1,2}$ on compact subsets of $\mathbb R^n_+$. This allows to take the limit $n\to\infty$ to get
\begin{equation} \label{e4.7ww}
\frac1{|\Delta|}\iint_{T(\Delta)\cap\{t>\varepsilon\}} |\nabla u|^2t\,dX\le C(\|\mu\|_{Carl}),
\end{equation}
for any $\varepsilon>0$. Finally, limiting $\varepsilon\to 0$ we get the claim for $u$.
\qed\medskip

\noindent {\it Proof of Theorem~\ref{LP}.} We combine \eqref{equiv2} with \eqref{e4.6}. It follows that for $Lu=0$ with $u\big|_{\partial{\mathbb R}^n_+}=f\in L^2\cap C_0^\infty$
\begin{equation}
\label{eeeee} \|N(u)\|^2_{L^2}\le C_1\|S_2(u)\|^2_{L^2}\le CC_1\|f\|^2_{L^2}+CC_1\|\mu\|_{Carl}\|N(u)\|^2_{L^2}.
\end{equation}
It follows that if we take $\|\mu\|_{Carl}$ small enough so that $CC_1\|\mu\|_{Carl}<1/2$ we get that
\begin{equation}
\label{eeee} \|N(u)\|^2_{L^2}\le 2CC_1\|f\|^2_{L^2},
\end{equation}
giving us solvability of the $L^2$ Dirichlet problem for $f$ from a dense subset of $L^2$. This is however sufficient, see the remark after Definition \ref{NPDefNpcondition} on extending the solvability to the whole $L^2$.\qed\medskip

So far the calculation has been done for $p=2$. To consider different values of $p>1$ the key new idea is so-called $p$-adapted square function  defined in Definition \ref{d1.4} and originally introduced in \cite{DPP}.
It is then possible via similar integration by parts as above to establish
and analogue of Corollary \ref{l1}.

\begin{lemma}\label{l1b} Let $p\in(1,\infty)$. If $u$ is a bounded energy solution to $Lu=0$ in the domain ${\mathbb R}^n_+$
then we have for some $C=C(p,\lambda,\Lambda,n)>0$:
\begin{eqnarray}
\nonumber && \int_{\partial{\mathbb
R}^n_+}S_p(u)^p\,dx=\iint_{{\mathbb R}^n_+}|u|^{p-2}|\nabla
u|^2 t\,dX \le C\int_{\partial{\mathbb R}^n_+}|u|^p
dX+C\|\mu\|_{Carl}\int_{\partial{\mathbb R}^n_+}N(u)^p dx.
\end{eqnarray}
\end{lemma}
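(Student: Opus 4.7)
The plan is to adapt the integration-by-parts scheme from the proof of Lemma \ref{lgeneral} (cf.\ Corollary \ref{l1}), inserting the weight $|u|^{p-2}$ throughout. The key algebraic identity replacing a simple chain rule is
\[
|u|^{p-2}\partial_j u \;=\; \tfrac{1}{p-1}\,\partial_j\!\bigl(u|u|^{p-2}\bigr),
\]
valid for $p>1$ in the a.e.\ sense, with the regularisation $(u^2+\varepsilon^2)^{(p-2)/2}$ used to handle the singularity of $|u|^{p-2}$ at $\{u=0\}$ when $p<2$. Throughout, I will use the smooth cutoff $\phi(t)=\psi(t)^2$ with $\psi\equiv 1$ on $[0,r_0]$ and $\psi\equiv 0$ on $[2r_0,\infty)$, together with the test function $\eta=u|u|^{p-2}\phi t/a_{nn}$, which is admissible because $u$ is bounded and $A$ is elliptic.

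Starting from ellipticity,
\[
\iint_{\mathbb R^n_+}|u|^{p-2}|\nabla u|^2\phi t\, dX\;\lesssim\;\iint_{\mathbb R^n_+}\tfrac{a_{ij}}{a_{nn}}|u|^{p-2}(\partial_i u)(\partial_j u)\phi t\, dX\;=:\;I,
\]
and then writing $\iint a_{ij}(\partial_j u)(\partial_i\eta)\,dX=0$, the product rule on $\eta$ combined with the identity above yields, after rearrangement,
\[
(p-1)I=-\iint \tfrac{a_{ij}}{a_{nn}}u|u|^{p-2}(\partial_j u)\,\partial_i(\phi t)\, dX+\iint \tfrac{a_{ij}}{a_{nn}^2}u|u|^{p-2}(\partial_j u)\phi t\,\partial_i a_{nn}\, dX.
\]
Since $\phi$ depends only on $t$, the first term is nontrivial only for $i=n$. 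Its $j=n$ piece, after using $u|u|^{p-2}\partial_n u=\tfrac{1}{p}\partial_n(|u|^p)$ and integrating by parts in $t$, produces the Dirichlet boundary contribution $\tfrac{1}{p}\int_{\partial\mathbb R^n_+}|u|^p\,dx$ together with interior $\partial_n\phi$-terms localised to $\{r_0\le t\le 2r_0\}$. The $j<n$ piece is treated by the ``artificial $\partial_n t=1$'' trick used in \eqref{ee5}: insert $\partial_n t$ inside the integrand, integrate by parts first in $t$ and then in $x_j$, producing integrands schematically of the form $|u|^{p-1}|\nabla u|\,|\nabla A|\,t$ together with further $\partial_n\phi$-terms. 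The second displayed integral already has this schematic form via its $\partial_i a_{nn}$ factor.

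The critical $|u|^{p-1}|\nabla u|\,|\nabla A|\,t$ terms are handled by the Cauchy--Schwarz splitting $\bigl(|u|^{(p-2)/2}|\nabla u|\bigr)\cdot\bigl(|u|^{p/2}|\nabla A|\bigr)$: the first factor contributes $I^{1/2}$, while the second factor is controlled via \eqref{CCalt} and Carleson's inequality in the form
\[
\iint|u|^p|\nabla A|^2\phi t\,dX\;\lesssim\;\|\mu\|_{Carl}\int_{\partial\mathbb R^n_+}N(u)^p\,dx.
\]
An AM--GM step then absorbs a small multiple of $I$ into the left-hand side of $(p-1)I$, leaving exactly the two terms on the right-hand side of the claimed estimate. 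Finally, taking $r_0\to\infty$ via the slab-truncation argument used in Corollary \ref{l1} kills the $\partial_n\phi$-terms using the forced decay $(r_0)^{-1}\iint_{\mathbb R^{n-1}\times[r_0,2r_0]}|u|^p\,dX\to 0$.

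The main obstacle is the non-smoothness of $s\mapsto|s|^{p-2}$ at $s=0$ when $p\in(1,2)$. This is handled by first carrying out the entire argument with $(u^2+\varepsilon^2)^{(p-2)/2}$ in place of $|u|^{p-2}$---where every pointwise derivative and integration by parts is classical---obtaining constants independent of $\varepsilon$, and then letting $\varepsilon\to0$ by dominated/monotone convergence, using the boundedness of $u$. A secondary (but substantial) obstacle is the bookkeeping: identifying which parts of the expansion produce the boundary term $\tfrac{1}{p}\int|u|^p\,dx$, which become the absorbable $|u|^{p-1}|\nabla u|\,|\nabla A|\,t$ terms after the $\partial_n t=1$ trick, and which vanish as $r_0\to\infty$---all in close analogy with the case analysis in \eqref{ee3}--\eqref{e4.17}.
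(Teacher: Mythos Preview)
Your proposal is correct and follows exactly the route the paper intends: the paper does not give a proof of this lemma but explicitly says ``We leave this calculation to an interested reader, the case $p=2$ being a guide,'' and your outline is precisely the $p$-adapted version of the argument in Lemma~\ref{lgeneral}/Corollary~\ref{l1}, with the identity $u|u|^{p-2}\partial_j u=\tfrac{1}{p}\partial_j(|u|^p)$ playing the role that $w\partial_j w=\tfrac12\partial_j(|w|^2)$ plays there. One small point: by taking $\phi=\psi(t)^2$ with no spatial cutoff you are implicitly jumping straight to the global estimate (the analogue of \eqref{e4.19}--\eqref{e4.20} after the partition of unity has been summed); to make the integration by parts rigorous you should either insert the spatial cutoff $\varphi(x)^2$ as in \eqref{ee2}--\eqref{e4.18} and then sum, or else first restrict to $f\in C_0^\infty$ and use the approximation scheme of Corollary~\ref{l1}, but you already flag the latter.
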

We leave this calculation to an interested reader, the case $p=2$ being a guide. This, together with an
analogue of Lemma \ref{l2}, yields solvability of $(D)_p$ for operators with coefficients having sufficiently small Carleson norm $\|\mu\|_{Carl}$.\medskip

Finally, we address briefly the block form case (c.f. Theorem \ref{DirRegblock}). As the Carleson norm of the last row of a block-form matrix $A$ has Carleson norm zero, it follows that for all $p\in(1,\infty)$ we have
\begin{eqnarray}
\nonumber && \int_{\partial{\mathbb
R}^n_+}S_p(u)^p\,dx\le C\int_{\partial{\mathbb R}^n_+}|u|^pdx.
\end{eqnarray}
Since also $N\approx S_p$ we get solvability of $(D)_p$ for the block form operators on $\mathbb R^n_+$ without imposing any conditions on the coefficients beyond boundedness and ellipticity.

\section{Regularity problem}
\setcounter{equation}{0}

We consider first the Regularity problem under the small Carleson condition on the coefficients. The result here follows  \cite{DR} where the two dimensional case was done, and then subsequently \cite{DPR}, for all dimensions.

Recalling from section \ref{S3} of this paper,  it suffices to prove solvability of the
Regularity problem for $p=2$. Solvability for other values of
$p$ is then a consequence of the fact that 
\cite{DK} has established:
$$(D^*)_p+(R)_1\Longrightarrow (R)_{p'},\qquad \text{for $p'=p/(p-1)$}.$$
It follows that if we establish $(R)_2$ for operators satisfying
small Carleson condition we conclude that $(R)_{p'}$ also holds,
since $(R)_2\Longrightarrow (R)_1$ and the solvability of
$(D^*)_p$ follows from the previous section.

Hence we prove $(R)_2$ solvability for operators with sufficiently small Carleson norm on domains with small Lipschitz character. Again, it suffices to consider an operator $L$ on $\mathbb R^n_+$ satisfying \eqref{CCalt} as we can perform the same reductions as for the Dirichlet problem we have discussed previously.

The first step is an analogue of
Corollary \ref{l1} but only for the square function of the tangential
gradient $\nabla_Tu$ of a solution $u$. Eventually, we want to control the full gradient
for which we shall use the equation that $u$ satisfies.

\begin{lemma}\label{ll1} If $u$ is a bounded energy solution to $Lu=0$ in the domain ${\mathbb R}^n_+$.
Then we have the following:
\begin{eqnarray}
&&\label{e8}\\\nonumber && \int_{\partial{\mathbb
R}^n_+}S(\nabla_T u)^2\,dx \le C\int_{\partial{\mathbb
R}^n_+}|\nabla_Tu|^2 dx+\|\mu\|_{Carl}\int_{\partial{\mathbb
R}^n_+}N(\nabla
u)^2 dx,
\end{eqnarray}
where $\|\mu\|_{Carl}$ is the Carleson norm \eqref{CCalt} of coefficients of the operator $L$.
\end{lemma}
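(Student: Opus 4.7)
The strategy is to apply Lemma \ref{lgeneral} --- specifically the global bound \eqref{e4.5} --- to each tangential derivative $w_k := \partial_k u$, $k = 1,\dots,n-1$, and sum in $k$. The hypotheses of \eqref{e4.5} (namely $w\in W^{1,2}_{loc}$ and sufficient decay at infinity) are verified via interior Caccioppoli estimates on $\nabla u$ and the truncation-approximation already used in the proof of Corollary \ref{l1}. This yields
$$\int S(\nabla_T u)^2\,dx \le C\int |\nabla_T u|^2\,dx + C\|\mu\|_{Carl}\int N(\nabla u)^2\,dx - C\sum_{k<n}\iint \frac{(L\partial_k u)(\partial_k u)}{a_{nn}}\,t\,dX,$$
so the entire argument reduces to bounding the commutator term on the right, up to absorbing a small multiple of $\int S(\nabla_T u)^2$ back into the left-hand side.

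\textbf{Commutator identity and main integration by parts.} Differentiating $Lu=0$ weakly in the tangential direction $x_k$ produces $L(\partial_k u) = -\partial_i((\partial_k a_{ij})\partial_j u)$. Substituting and integrating by parts in $x_i$ --- the boundary contribution at $t=0$ vanishes because of the factor $t$ --- gives two schematic pieces:
$$-C\iint \frac{(\partial_k a_{ij})(\partial_j u)(\partial_i\partial_k u)}{a_{nn}}\,t\,dX \quad\text{and}\quad -C\iint (\partial_k a_{ij})(\partial_j u)(\partial_k u)\,\partial_i\!\left(\frac{t}{a_{nn}}\right)dX.$$
Cauchy--Schwarz on the first piece gives
$$\Big(\iint|\nabla A|^2|\nabla u|^2 t\,dX\Big)^{1/2}\Big(\iint|\nabla^2 u|^2 t\,dX\Big)^{1/2} \le C\|\mu\|^{1/2}_{Carl}\|N(\nabla u)\|_{L^2}\|S(\nabla u)\|_{L^2},$$
using Carleson's embedding on the first factor via \eqref{CCalt} and $\int S(\nabla u)^2 \approx \iint |\nabla^2 u|^2 t\,dX$ on the second. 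To swap $\|S(\nabla u)\|$ for $\|S(\nabla_T u)\|$, one uses $Lu=0$ to solve for $\partial_n^2 u$ in terms of tangential second derivatives plus a $\nabla A\cdot\nabla u$ remainder, yielding the pointwise bound $|\nabla\partial_n u|\lesssim |\nabla\nabla_T u| + |\nabla A||\nabla u|$ and hence $\|S(\partial_n u)\|_{L^2}^2 \lesssim \|S(\nabla_T u)\|_{L^2}^2 + \|\mu\|_{Carl}\|N(\nabla u)\|_{L^2}^2$. AM--GM then absorbs the resulting $S(\nabla_T u)$ contribution. Expanding $\partial_i(t/a_{nn}) = \delta_{in}/a_{nn} - t(\partial_i a_{nn})/a_{nn}^2$ in the second piece produces a $(\nabla A)^2(\nabla u)^2 t$ contribution, directly controlled by $\|\mu\|_{Carl}\|N(\nabla u)\|_{L^2}^2$, together with a delicate $t$-unweighted residue $\iint (\partial_k a_{nj})(\partial_j u)(\partial_k u)/a_{nn}\,dX$ arising from $\partial_n t = 1$.

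\textbf{Main obstacle: the unweighted residue.} The principal difficulty is this last residue: it fits neither the Carleson embedding (which needs a $t$ weight) nor the $S$-$N$ dictionary. The fix is to write $1 = \partial_n t$ and integrate by parts once more in $x_n$ (again the $t=0$ boundary term vanishes), producing $t$-weighted contributions of the schematic types $\iint\nabla A\cdot\nabla u\cdot\nabla^2 u\cdot t\,dX$, $\iint|\nabla A|^2|\nabla u|^2 t\,dX$ and $\iint|\nabla^2 A||\nabla u|^2 t\,dX$. Under the mollification reduction preceding the argument in this section --- $A$ is replaced by $\bar A = A * \eta_{t/2}$, for which the Carleson hypothesis on $|\nabla A|$ propagates to a comparable weighted bound on $|\nabla^2 \bar A|$ --- all three weighted pieces are absorbed by the same Cauchy--Schwarz/Carleson scheme used above. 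This is the step where the gradient Carleson hypothesis \eqref{CCalt} is used in its full strength; without the mollification trick the unweighted residue would not close.
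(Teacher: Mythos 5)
Your overall architecture is correct and matches the paper's: apply the global estimate \eqref{e4.5} of Lemma~\ref{lgeneral} to each tangential derivative $v_k=\partial_k u$, identify the commutator term via $L(\partial_k u)=-\partial_i\big((\partial_k a_{ij})\partial_j u\big)$, integrate by parts in $x_i$, and isolate as the principal obstacle the $t$-unweighted residue $\sum_{k<n}\iint \tfrac{\partial_k a_{nj}}{a_{nn}}v_k v_j\,dX$. You also correctly write $1=\partial_n t$ and integrate by parts once more to re-introduce the weight. Up to that point your argument tracks the paper closely.

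The gap is in how you close the term with two derivatives on the coefficients, schematically $\iint |\nabla^2 A|\,|\nabla u|^2\, t\,dX$. Your claim is that the one-step mollification $\bar A = A*\eta_{t/2}$ makes $|\nabla^2\bar A|$ a Carleson density at the right weight. But what mollification actually gives is $|\nabla^2\bar A(X)|\lesssim (\mathrm{osc}_B A)/\delta(X)^2$, hence $|\nabla^2\bar A|^2\,\delta^3\,dX$ is Carleson and $|\nabla^2\bar A|\,\delta^2\lesssim 1$. Neither of these makes $|\nabla^2\bar A|\,t\,dX$ a Carleson measure: to see it, Cauchy--Schwarz the integral $\iint_{T(\Delta)}|\nabla^2\bar A|\,t\,dX$ against the Carleson bound on $|\nabla^2\bar A|^2 t^3$ and you are left with $\big(\iint_{T(\Delta)} t^{-1}\,dX\big)^{1/2}$, which diverges logarithmically. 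So Carleson embedding does not apply to $\iint|\nabla^2\bar A|\,|\nabla u|^2\,t\,dX$, and no Cauchy--Schwarz split of this term matches the first-order Carleson condition \eqref{CCalt}. The step as you state it cannot close.

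The paper's resolution of exactly this term is different and avoids second derivatives of $A$ entirely: the dangerous factor is $\partial_n\partial_k a_{nj}$ with $k<n$, i.e. one of the two derivatives is \emph{tangential}. The paper integrates by parts one more time moving this $\partial_k$ off the coefficients and onto $v_k v_j$ (no boundary term appears because $k<n$). What comes out are terms of the types $\iint|\nabla A|^2|\nabla u|^2 t$, $\iint|\nabla A|\,|\nabla u|\,|\nabla^2 u|\,t$, all of which you have already handled by Carleson embedding and Cauchy--Schwarz. This is the decisive observation your proposal is missing; it is not optional, since the mollification route genuinely does not control the second-derivative term. One small remark in your favor: your explicit use of $Lu=0$ to express $\partial_n^2 u$ in terms of $\nabla\nabla_T u$ and $\nabla A\cdot\nabla u$ (the content of Lemma~\ref{ll2}) is needed to absorb the $j=n$ pieces of the Cauchy--Schwarz step into $\sum_{k<n}\iint|\nabla v_k|^2 t$, and it is a step the paper glosses over, so that part of your write-up is actually more careful than the source.
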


\begin{proof}
We apply Lemma \ref{lgeneral} to  partial derivatives $v_k=\partial_k
u$, $k=1,2,\dots,n-1$.
The main difference from the previous case is that the last term
of (\ref{e4.5}) is no longer equal to zero. Hence we obtain the following:
\begin{eqnarray}
\label{e9ff} \sum_{k=1}^{n-1}\int_{\partial{\mathbb
R}^n_+}S(v_k)^2\,dx &\le&
C\sum_{k=1}^{n-1}\int_{\partial{\mathbb R}^n_+}|v_k|^2
dx+C\|\mu\|_{Carl}\int_{\partial{\mathbb
R}^n_+}N(\nabla u)^2 dx\\
\nonumber&-&C\sum_{k=1}^{n-1}\iint_{{\mathbb
R}^n_+}\frac1{a_{nn}}v_k(Lv_k) t\, dX.
\end{eqnarray}

Clearly, since $\partial_k(Lu)=0$ we see that $L(\partial_k
u)=Lv_k=[L,\partial_k]v_k$, where $[.,.]$ denotes the usual
commutator bracket. This yields that each $v_k$ is a solution of
the following auxiliary inhomogeneous equation:
\begin{equation}
\div(A\nabla v_k)=Lv_k=-\div((\partial_k A){\bf v})=\div
\vec{F_k},\label{eqDer}
\end{equation}
where the $i$-th component of the vector $\vec{F_k}$ is $(\vec{
F_k})^i=-(\partial_ka_{ij})\partial_ju=-(\partial_ka_{ij})v_j$.
\vglue2mm

Using this, the last term of (\ref{e9ff}) is

\begin{eqnarray}&&\label{e22}
\sum_{k<n}\iint_{{\mathbb
R}^n_+}\frac{v_k}{a_{nn}}\partial_i((\partial_ka_{ij})v_j) t\,
dX=-\sum_{k<n}\iint_{{\mathbb
R}^n_+}\partial_i\left(\frac{v_k}{a_{nn}}t
\right)(\partial_ka_{ij})v_j\, dX,\\\nonumber
&=&-\sum_{k<n}\iint_{{\mathbb
R}^n_+}\partial_i\left(\frac{1}{a_{nn}}
\right)(\partial_ka_{ij})v_jv_kt\, dX
-\sum_{k<n}\iint_{{\mathbb R}^n_+}\frac{1}{a_{nn}}
(\partial_ka_{ij})(\partial_i v_k)v_jt\, dX
\\\nonumber&&-\sum_{k<n}\iint_{{\mathbb R}^n_+}\frac{\partial_ka_{nj}}{a_{nn}}v_k v_j\, dX.
\end{eqnarray}
where we have integrated by parts. We note that the last term only
appears for $i=n$ as ($\partial_n(t)=1$). The first term
is bounded by $\|\mu\|_{Carl} \int_{{\mathbb
R}^n_+}N(\nabla u)^2\,d\sigma$. Here we are using the small Carleson
condition and the bound $v_j,v_k\le N(\nabla u)$. (We are omitting the localization details 
that  ensure that the integrals are finite in this sketch).  The second term is handled
exactly as (\ref{ee7}). Hence this term is (in absolute value)
smaller than
$$\frac12 \sum_{k<n}\iint_{{\mathbb R}^n_+}|\nabla v_k|^2 t \, dX+
C\|\mu\|_{Carl}\int_{\partial{\mathbb R}^n_+}N^2(\nabla
u)dx.$$  Thus as before the first term term can be
absorbed into the left-hand side of (\ref{e9ff}). Hence the only
term remaining is
$$-\sum_{k<n}\iint_{{\mathbb R}^n_+}\frac{\partial_ka_{nj}}{a_{nn}}v_k v_j\, dX=-\sum_{k<n}\iint_{{\mathbb
R}^n_+}\frac{\partial_ka_{nj}}{a_{nn}}v_k v_j\partial_n(t)\,
dX.$$ Here we have introduced an extra term $1=\partial_n(t)$
and now integrate by parts again. This gives

\begin{eqnarray}\nonumber&&\sum_{k<n}\iint_{{\mathbb
R}^n_+}\partial_n\left(\frac{1}{a_{nn}}\right)(\partial_ka_{nj})v_k
v_j t\, dX+ \sum_{k<n}\iint_{{\mathbb
R}^n_+}\frac{\partial_ka_{nj}}{a_{nn}}v_k (\partial_nv_j) t\,
dX+
\\\label{e24} &+& \sum_{k<n}\iint_{{\mathbb
R}^n_+}\frac{\partial_ka_{nj}}{a_{nn}}(\partial_nv_k) v_j t\,
dX+\sum_{k<n}\iint_{{\mathbb
R}^n_+}\frac{\partial_n\partial_ka_{nj}}{a_{nn}}v_k v_j t\,
dX.
\end{eqnarray}

The first three terms are of same type we have encountered above
and same bounds apply to them. Finally, in the last term we have
two derivatives on the coefficients (the term
$\partial_n\partial_ka_{nj}$) but only one of the derivatives is
in the normal direction since $k<n$. Hence we integrate by parts
one more time (moving the $\partial_k$ derivative). We get three more terms

\begin{eqnarray} &-& \sum_{k<n}\iint_{{\mathbb
R}^n_+}(\partial_na_{nj})\partial_k\left(\frac1{a_{nn}}\right)v_k
v_j t\, dX
\\\nonumber &-& \sum_{k<n}\iint_{{\mathbb
R}^n_+}\frac{\partial_na_{nj}}{a_{nn}}(\partial_kv_k) v_j t\,
dX-\sum_{k<n}\iint_{{\mathbb
R}^n_+}\frac{\partial_na_{nj}}{a_{nn}}v_k (\partial_kv_j) t\,
dX.
\end{eqnarray}
All these enjoy the same bounds as the terms we encountered above.
From this (\ref{e8}) follows. \vglue2mm
\end{proof}

Lemma \ref{ll1} deals with the square function estimates for
tangential directions. We have the following for the normal
derivative:

\begin{lemma}\label{ll2} Let $u$ be a solution to $Lu=0$, where $L$ is an elliptic differential operator with
bounded coefficients which are such that (\ref{CCalt}) is the
density of a Carleson measure. Then
\begin{eqnarray}
\label{e30} \int_{\partial{\mathbb
R}^n_+}S^2(\partial_nu)\,d\sigma&\le&K\left[\int_{\partial{\mathbb
R}^n_+}S^2(\nabla_Tu)\,d\sigma+\|\mu\|_{Carl}\int_{\partial{\mathbb
R}^n_+}N^2(\nabla u)\,d\sigma\right].
\end{eqnarray}
Here $K$ only depends on the ellipticity constant and dimension $n$.
\end{lemma}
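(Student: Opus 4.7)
The plan is to use the PDE $Lu=0$ to replace the single second derivative of $u$ that is \emph{not} tangential, namely $\partial_n^2 u$, by an expression involving (i) second derivatives of $u$ that carry at least one tangential direction and (ii) first derivatives of $u$ multiplied by $\nabla A$. Integration of the resulting pointwise inequality through the cones converts (i) into $S(\nabla_T u)^2$ and (ii), via the Carleson hypothesis \eqref{CCalt}, into $\|\mu\|_{Carl}\, N(\nabla u)^2$.

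More precisely, writing $Lu=0$ in non-divergence form and using $a_{nn}\ge\lambda>0$, we solve for the normal--normal derivative:
\begin{equation*}
\partial_n^2 u=-\frac{1}{a_{nn}}\Bigl[\sum_{(i,j)\neq(n,n)}a_{ij}\,\partial_i\partial_j u+\sum_{i,j}(\partial_i a_{ij})(\partial_j u)\Bigr].
\end{equation*}
Every second derivative on the right-hand side, as well as every mixed component $\partial_k\partial_n u=\partial_n(\partial_k u)$ for $k<n$ that appears in $\nabla\partial_n u$, is a component of $\nabla v_k$ for some tangential derivative $v_k=\partial_k u$. Together with the $L^\infty$ bound on $A$ this gives the pointwise inequality
\begin{equation}\label{eq:ptwiseDn}
|\nabla\partial_n u(X)|^2\le K\bigl(|\nabla(\nabla_T u)(X)|^2+|\nabla A(X)|^2\,|\nabla u(X)|^2\bigr),
\end{equation}
with $K=K(\lambda,\Lambda,n)$.

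Next, multiply \eqref{eq:ptwiseDn} by $\delta(X)^{2-n}$, integrate over $\Gamma(Q)$, and then over $Q\in\partial\mathbb{R}^n_+$. The first term contributes $K\int_{\partial\mathbb{R}^n_+}S(\nabla_T u)^2\,d\sigma$. For the second, the standard Fubini identity $\int_{\partial\mathbb{R}^n_+}\mathbf{1}_{\Gamma(Q)}(X)\,dQ\approx\delta(X)^{n-1}$ collapses the cone integral to the solid integral $\iint_{\mathbb{R}^n_+}|\nabla A|^2|\nabla u|^2\delta(X)\,dX$. Since by \eqref{CCalt} the measure $\delta(X)|\nabla A|^2\,dX$ is dominated by a Carleson measure with norm $\|\mu\|_{Carl}$, the classical Carleson-measure lemma applied to $|\nabla u|$ bounds this solid integral by $C\|\mu\|_{Carl}\int_{\partial\mathbb{R}^n_+}N(\nabla u)^2\,d\sigma$, producing \eqref{e30}.

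The main obstacle is purely technical: the pointwise identity \eqref{eq:ptwiseDn} requires $u\in W^{2,2}_{\mathrm{loc}}$ and a pointwise meaning for $\nabla A$. This is resolved exactly as in the Dirichlet section: first work with the mollification $\bar A=A\ast\eta_{t/2}$, which inherits the gradient-Carleson form of \eqref{CCalt} and yields $C^2$ solutions, establish \eqref{e30} there, and then transfer back to the original operator via the perturbation machinery of Theorems \ref{T:perturbation}--\ref{T:perturbation2}, together with standard localisation to justify finiteness of the integrals as was done for Lemma \ref{lgeneral}.
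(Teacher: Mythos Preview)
Your proof is correct and follows essentially the same route as the paper. The only cosmetic difference is that the paper first writes $a_{nn}\partial_n v_n=\partial_n(a_{nn}v_n)-(\partial_n a_{nn})v_n$ and then invokes the divergence-form identity $\partial_n(a_{nn}v_n)=-\sum_{(i,j)\neq(n,n)}\partial_i(a_{ij}\partial_j u)$, whereas you pass directly to non-divergence form; after expanding the product rule these are the same pointwise inequality, and the subsequent integration and use of the Carleson lemma are identical.
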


\begin{proof}  We use the notation introduced above
where we denoted $v_n=\partial_n u$. Clearly
\begin{eqnarray}
\label{e31} &&\iint_{{\mathbb R}^n_+}|\nabla v_n(X)|^2
t\,dX=\iint_{{\mathbb R}^n_+}|\nabla_T v_n(X)|^2
t\,dX+\iint_{{\mathbb R}^n_+}|\partial_n
v_n(X)|^2 t\,dX\\
\nonumber&=&\iint_{{\mathbb R}^n_+}|\partial_n (\nabla_T u(X))|^2
t\,dX+\iint_{{\mathbb R}^n_+}|\partial_n v_n(X)|^2 t\,dX.
\end{eqnarray}
The first term is clearly controlled by the square function of
$\nabla_T u$ which has a bound by Lemma \ref{ll1}. It remains to deal with the second term. Since
$$|a_{nn}\partial_nv_n|^2=|\partial_n(a_{nn}v_n)-\partial_n(a_{nn})v_n|^2\le 2|\partial_n(a_{nn}v_n)|^2+2|\partial_n(a_{nn})v_n|^2.$$
We see that by the ellipticity assumption
\begin{eqnarray}
\label{e32} &&\iint_{{\mathbb R}^n_+}|\partial_n v_n(X)|^2
\delta(X)\,dX\approx\iint_{{\mathbb R}^n_+}(a_{nn}(X))^2|\partial_n
v_n(X)|^2 \delta(X)\,dX \\\nonumber &\le& 2\iint_{{\mathbb
R}^n_+}|\partial_n(a_{nn}v_n)|^2 t\,dX+2\iint_{{\mathbb
R}^n_+}|\partial_n(a_{nn})v_n|^2 t\,dX.
\end{eqnarray}
The second term (using the Carleson condition) is bounded by
$\|\mu\|_{Carl}\int_{{\partial\Omega}}N^2(\nabla u)\,dx$. We further
estimate the first term. Using the equation $u$ satisfies we see
that
$$\partial_n(a_{nn}v_n)=-\sum_{(i,j)\ne(n,n)}\partial_i(a_{ij}\partial_j u).$$
It follows that
\begin{eqnarray}
\label{e33} &&\iint_{{\mathbb R}^n_+}|\partial_n(a_{nn}v_n)|^2
t\,dX\le(n^2-1)\sum_{(i,j)\ne(n,n)}\iint_{{\mathbb
R}^n_+}|\partial_i(a_{ij}\partial_j u)|^2
t\,dX\\\nonumber&\le&2(n^2-1)\sum_{(i,j)\ne(n,n)}\left[\iint_{{\mathbb
R}^n_+}|\partial_i(a_{ij})|^2|\partial_j u|^2 t\,dX+\iint_{{\mathbb
R}^n_+}|a_{ij}|^2|\partial_i\partial_j u|^2t\,dX\right].
\end{eqnarray}
The first term here is of the same type as the last term of
(\ref{e32}) and has the same bound. Because $(i,j)\ne(n,n)$
$$|\partial_i\partial_j u|^2\le |\nabla(\nabla_T u)|^2,$$
hence the last term of (\ref{e33}) is also bounded by the square
function of $\nabla_T u$.
\end{proof}

If we combine the results of Lemma \ref{ll1} and $\ref{ll2}$ we
obtain the following inequality.

\begin{lemma}\label{ll3} Let $u$ be an energy solution to $Lu=\div
A\nabla u=0$, where $L$ is an elliptic differential operator with
bounded coefficients which are such that (\ref{CCalt}) is the
density of a Carleson measure. Then there exists $K>0$ depending only on the ellipticity
constant and dimension $n$ such
that
\begin{eqnarray}
\label{e34} &&\int_{\partial{\mathbb R}^n_+}S^2(\nabla
u)\,dx\le 
K\left[\int_{\partial{\mathbb
R}^n_+}|\nabla_Tu|^2 dx+\|\mu\|_{Carl}\int_{\partial{\mathbb
R}^n_+}N^2(\nabla
u) dx\right].
\end{eqnarray}
In particular, for such $u$:
\begin{eqnarray}
\label{e34b} &&\int_{\partial{\mathbb R}^n_+}S^2(\nabla
u)\,dx\lesssim
\int_{\partial{\mathbb R}^n_+}N^2(\nabla u) dx.
\end{eqnarray}
\end{lemma}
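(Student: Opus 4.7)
The plan is to simply assemble the two preceding lemmas. Starting from the pointwise identity $|\nabla u|^2 = |\nabla_T u|^2 + |\partial_n u|^2$, which holds everywhere in $\mathbb R^n_+$, I integrate over each nontangential cone $\Gamma(Q)$ against the weight $\delta(X)^{2-n}$ and take square roots only at the end; this gives the clean decomposition
$$S^2(\nabla u)(Q) = S^2(\nabla_T u)(Q) + S^2(\partial_n u)(Q)$$
for every $Q \in \partial\mathbb R^n_+$. Integrating this identity over the boundary reduces the estimate \eqref{e34} to controlling the two pieces separately.

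The first piece is handled directly by Lemma \ref{ll1}, which gives
$$\int_{\partial\mathbb R^n_+} S^2(\nabla_T u)\,dx \le C\int_{\partial\mathbb R^n_+}|\nabla_T u|^2\,dx + C\|\mu\|_{Carl}\int_{\partial\mathbb R^n_+}N^2(\nabla u)\,dx.$$
For the second piece, Lemma \ref{ll2} gives
$$\int_{\partial\mathbb R^n_+} S^2(\partial_n u)\,dx \le K\int_{\partial\mathbb R^n_+} S^2(\nabla_T u)\,dx + K\|\mu\|_{Carl}\int_{\partial\mathbb R^n_+} N^2(\nabla u)\,dx,$$
and substituting the bound from Lemma \ref{ll1} into the right-hand side here produces exactly the form claimed in \eqref{e34}, after possibly enlarging the constant $K$.

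To pass from \eqref{e34} to \eqref{e34b} I only need the elementary pointwise bound $|\nabla_T u(x)| \le N(\nabla u)(x)$ valid for a.e.\ $x \in \partial\mathbb R^n_+$, which follows from the nontangential convergence of $\nabla u$ (as recorded in the introductory remark after Definition \ref{NPDefNpcondition}) together with $|\nabla_T u| \le |\nabla u|$. Squaring and integrating yields $\|\nabla_T u\|_{L^2(\partial\mathbb R^n_+)}^2 \le \|N(\nabla u)\|_{L^2(\partial\mathbb R^n_+)}^2$, so \eqref{e34} collapses to
$$\int_{\partial\mathbb R^n_+} S^2(\nabla u)\,dx \le K\bigl(1+\|\mu\|_{Carl}\bigr)\int_{\partial\mathbb R^n_+} N^2(\nabla u)\,dx,$$
which is \eqref{e34b}. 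No step here is really a serious obstacle, since the heavy lifting (the integration-by-parts, commutator, and Carleson absorption arguments) was already done in Lemmas \ref{ll1} and \ref{ll2}; the only care needed is to justify that the energy solution $u$ has enough regularity and decay for $\nabla u$ to play the role of $w$ in Lemma \ref{lgeneral} globally, and to ensure the integrals on the right-hand sides are finite so that absorption into the left-hand side is legitimate. This is handled by the same approximation procedure (truncating in $t$ and passing to the limit) used in the proof of Corollary \ref{l1}.
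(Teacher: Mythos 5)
Your proof is correct and follows exactly the route the paper intends: the paper states Lemma~\ref{ll3} with only the one-line remark that it follows by combining Lemmas~\ref{ll1} and~\ref{ll2}, and your explicit write-up --- splitting $S^2(\nabla u)$ into tangential and normal pieces, substituting the Lemma~\ref{ll1} bound into the right-hand side of Lemma~\ref{ll2}, and then using the pointwise bound $|\nabla_T u|\le N(\nabla u)$ to collapse \eqref{e34} to \eqref{e34b} --- supplies precisely the omitted bookkeeping.
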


To establish $(R)_2$, we follow the same idea as presented in the previous section on solvability of $(D)_2$. Clearly we can conclude that $(R)_2$ will hold for sufficiently small $\|\mu\|_{Carl}$, provide we also have the following fact.

\begin{lemma}\label{ll3a} Let $L$ be as in Lemma \ref{ll3} and assume that the Carleson norm of $\|\mu\|_{Carl}$ is sufficiently small. Then
\begin{eqnarray}
\label{e34a} &&\int_{{\mathbb R}^n_+}N^2(\nabla u) dx\lesssim
\int_{\partial{\mathbb R}^n_+}S^2(\nabla u)\,dx.
\end{eqnarray}
\end{lemma}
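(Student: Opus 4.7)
The plan is to derive $\|N(\nabla u)\|_{L^2}\lesssim \|S(\nabla u)\|_{L^2}$ (under a small Carleson assumption) by applying an inhomogeneous analogue of Lemma~\ref{l2} componentwise to the partial derivatives $v_k=\partial_k u$. Differentiating $Lu=0$ in the direction $x_k$ (for any $k=1,\dots,n$) gives, as in \eqref{eqDer},
\[
Lv_k=\div(\vec F_k),\qquad (\vec F_k)^i=-(\partial_k a_{ij})\,\partial_j u,
\]
so each $v_k$ satisfies an inhomogeneous divergence-form equation whose right-hand side is controlled pointwise by $|\vec F_k(X)|\lesssim |\nabla A(X)|\,|\nabla u(X)|$.

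The first main step is to establish an inhomogeneous version of Lemma~\ref{l2}: for any $v\in W^{1,2}_{\mathrm{loc}}(\mathbb R^n_+)$ with $Lv=\div(\vec F)$ and appropriate decay at infinity,
\begin{equation}\label{Inhomplan}
\|N(v)\|^2_{L^2(\partial \mathbb R^n_+)}\lesssim \|S(v)\|^2_{L^2(\partial \mathbb R^n_+)}+\iint_{\mathbb R^n_+}|\vec F(X)|^2\,\delta(X)\,dX.
\end{equation}
This is proved by rerunning the stopping-time/sawtooth argument behind Lemma~\ref{l2} that is presented in the appendix; the only modification is that the local integration by parts on each sawtooth region leaves an extra term of the form $\iint \vec F\cdot\nabla\eta\,v$. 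Cauchy--Schwarz, the fact that $|\nabla\eta|\lesssim \delta^{-1}$ on the sawtooth, and Caccioppoli on $v$ turn this into $\bigl(\iint|\vec F|^2\delta\bigr)^{1/2}$ times a fractional power of $\|N(v)\|_{L^2}$, from which \eqref{Inhomplan} follows by the usual absorption.

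The second step is a Carleson-embedding bound on the error: since $d\mu(X)=|\nabla A(X)|^2\delta(X)\,dX$ is Carleson with norm $\|\mu\|_{Carl}$, the standard Carleson embedding with respect to the non-tangential maximal operator yields
\[
\iint_{\mathbb R^n_+}|\vec F_k|^2\,\delta\,dX\le \iint_{\mathbb R^n_+}|\nabla u|^2\,d\mu\lesssim \|\mu\|_{Carl}\int_{\partial\mathbb R^n_+}N(\nabla u)^2\,d\sigma.
\]
Summing \eqref{Inhomplan} in $k$ and substituting produces
\[
\|N(\nabla u)\|^2_{L^2(\partial\mathbb R^n_+)}\le C_1\|S(\nabla u)\|^2_{L^2(\partial\mathbb R^n_+)}+C_2\|\mu\|_{Carl}\|N(\nabla u)\|^2_{L^2(\partial\mathbb R^n_+)},
\]
and once $\|\mu\|_{Carl}$ is sufficiently small, the last term is absorbed into the left-hand side, yielding \eqref{e34a}.

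The principal obstacle is the first step: the stopping-time proof of Lemma~\ref{l2} must be extended with enough care that the inhomogeneity contributes only the tent-type quantity $\iint|\vec F|^2\delta\,dX$ (and not, e.g., an $L^2(d\sigma)$ norm of some maximal function of $\vec F$), since only this precise weighting is compatible with the Carleson measure $d\mu=|\nabla A|^2\delta\,dX$ that appears naturally in the second step. Throughout, the standard approximation argument (truncation of the domain at height $n$ and mollification of data, as used in the proof of Corollary~\ref{l1}) is invoked to justify finiteness of the integrals.
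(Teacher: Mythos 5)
The overall shape of your argument---differentiate the equation, get an inhomogeneous equation $Lv_k=\div\vec F_k$ with $|\vec F_k|\lesssim|\nabla A||\nabla u|$, then combine an ``inhomogeneous $N\lesssim S$'' estimate with the Carleson embedding $\iint|\vec F_k|^2\delta\lesssim\|\mu\|_{Carl}\|N(\nabla u)\|^2$---is conceptually aligned with what is done, but the crucial first step, \eqref{Inhomplan}, has a genuine gap, and the ``componentwise'' framing breaks down for $k=n$.

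Concretely: when you rerun the sawtooth integration by parts with an inhomogeneity $\div\vec F$, the extra contribution is not of the clean form $\iint\vec F\cdot\nabla\eta\,v$. After converting $\partial_{tt}^2v$ via the equation and integrating the divergence by parts, the test function $\nabla(v\,t\,\zeta)$ produces three pieces, and among them is the term $\iint F_n\,v\,\zeta\,dX$ arising from $\partial_n t=1$, which carries no $t$-weight. Cauchy--Schwarz against $\iint|\vec F|^2\delta$ leaves $\iint|v|^2\delta^{-1}$, which is not controlled. The way this is actually handled in the paper (visible already in the proof of Lemma~\ref{ll1} and built into Lemma~\ref{S3:L8bb}) is by exploiting the specific structure $F_n^{(k)}=-(\partial_k a_{nj})\partial_j u$ with $k<n$: because $\partial_k$ is a \emph{tangential} derivative, one can integrate by parts in $\partial_k$, insert $\partial_n t$ again, and iterate until every piece carries a $t$-weight and at most one derivative lands on $A$. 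This is an algebraic cancellation mechanism tied to the way $\vec F_k$ arises from commutators with $L$---it is not recoverable from the tent-space size of $\vec F$ alone, so the abstract estimate \eqref{Inhomplan} for generic $\vec F$ is not what is being (or can be) proved. In the paper the local estimate (Lemma~\ref{S3:L8bb}, from \cite{DPreg}) instead lands with mixed terms $\|\mu\|^{1/2}_{\mathcal C}\|S(v)\|\|\tilde N(v-\vec c)\|$ and $\|\mu\|^{1/2}_{\mathcal C}\|\tilde N(v-\vec c)\|^2$ on the right, which is what the good-$\lambda$ machinery of Lemmas~\ref{LGL}--\ref{S3:C7} then converts into the global $N\lesssim S$ for small $\|\mu\|_{Carl}$.

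Secondly, your claim to apply the inhomogeneous lemma ``for any $k=1,\dots,n$'' is problematic for $k=n$: then $F_n^{(n)}=-(\partial_n a_{nj})\partial_j u$ involves a \emph{normal} derivative of $A$, and the tangential integration-by-parts escape hatch is unavailable. The paper deliberately avoids applying this machinery to $v_n=\partial_n u$; instead (as in Lemma~\ref{ll2}) it uses the equation $Lu=0$ to express $a_{nn}\partial_n v_n$ in terms of second derivatives of $u$ that contain at least one tangential direction, so that $v_n$ inherits the required bounds from those for $\nabla_T u$. Finally, the differentiated equations \eqref{system} couple all the $v_j$, $j<n$, so the lemma has to be formulated (and is formulated, in Lemma~\ref{S3:L8bb}) for the vector $v=\nabla u$, with the necessary constant-vector subtraction $v-\vec c$ in the good-$\lambda$ step, rather than applied one component at a time. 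In summary, the outer logic and the Carleson embedding step are correct, but the claimed clean reduction to \eqref{Inhomplan} hides the core difficulty, which is where the structure of $\vec F_k$ must be used.
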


We again briefly address the main idea of the proof in the appendix. For further details see \cite{DPR}. \medskip

Now we look at the large Carleson case. Consider now an elliptic operator $L$ on $\mathbb R^n_+$ with
bounded coefficients which are such that (\ref{CCalt}) is a Carleson measure (but not necessarily small). We aim to establish the Regularity part of the claim of Theorem \ref{RNduality} in this case. We present a recent argument from \cite{DHP}.\medskip

The argument consists of two parts, the first one of which is the reduction to special block from matrices, stated below.

\begin{theorem}\label{t1} Let $L=\div(A\nabla\cdot)$ be an operator in ${\mathbb R}^n_+$ where matrix $A$ is uniformly elliptic, with bounded real coefficients such that there exists a constant $C$
\begin{equation}\label{Carl2}
|\nabla A|^2t\,dt\,dx\qquad\mbox{is a Carleson measure, and} \qquad t |\nabla A| \le C.
\end{equation}
Suppose that for some $p>1$ the $L^p$ Regularity problem for the block form operator
\begin{equation}\label{e0a}
{ L}_0 u= \mbox{\rm div}_\parallel(A_\parallel \nabla_\parallel u)+u_{tt},
\end{equation}
(where $A_{\parallel}$ is the matrix $(a_{ij})_{1\le i,j\le n-1}$) is solvable in ${\mathbb R}^n_+$. \vglue3mm

Then we have the following: For any $1<q<\infty$ the $L^q$ Regularity problem for the operator $L$ is solvable in ${\mathbb R}^n_+$ if and only if the $L^{q'}$ Dirichlet problem for the adjoint operator $L^*$ is solvable in ${\mathbb R}^n_+$.
\end{theorem}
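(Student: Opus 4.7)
The forward direction $(R)_q \Rightarrow (D^*)_{q'}$ is general and already appears in the list of implications at the start of Section \ref{S3}, valid for any uniformly elliptic divergence-form operator with bounded measurable coefficients; I would dispose of it immediately and turn to the harder direction.

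For $(D^*)_{q'} \Rightarrow (R)_q$, my plan is to reduce the problem to the endpoint $(R)_1$ for $L$. The partial converse collected in Section \ref{S3} reads $(D^*)_p + (R)_1 \Rightarrow (R)_{p'}$; applied with $p = q'$, it shows that the hypothesis $(D^*)_{q'}$ upgrades $(R)_1$ for $L$ into $(R)_q$ for $L$. Thus the entire difficulty concentrates on proving $(R)_1$ for the operator $L$, using the assumption that $(R)_p$ is solvable for the block-form operator $L_0$.

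To prove $(R)_1$ for $L$, I would proceed by atomic decomposition and perturbative comparison with $L_0$. Let $f$ be a Hardy--Sobolev atom, $u$ the energy solution of $Lu = 0$ with datum $f$, and $v$ the energy solution of $L_0 v = 0$ with the same datum. Since $(R)_p$ for $L_0$ holds by hypothesis, the general implication $(R)_p \Rightarrow (R)_1$ noted in Section \ref{S3} gives $(R)_1$ for $L_0$, so $\|\widetilde N(\nabla v)\|_{L^1} \lesssim 1$ uniformly over atoms $f$. Setting $w = u - v$, the remainder has zero boundary data and solves
\begin{equation}
L_0 w \;=\; \div\bigl((A_0 - A)\nabla u\bigr),
\end{equation}
where $A_0$ denotes the block-form matrix associated with $L_0$. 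The task thus reduces to an inhomogeneous $(R)_1$-style estimate for $L_0$ with divergence source $F = (A_0 - A)\nabla u$, together with a bound on $F$ in a suitable tent space.

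The main obstacle, and the technical core of the argument, is that $A_0 - A$ is not pointwise small under the hypothesis \eqref{Carl2}: only $\nabla A$ is controlled, in a Carleson-measure sense. My plan is an integration-by-parts trick that effectively trades the rough factor $A - A_0$ for $t\,\partial_t A$, using the pointwise bound $t|\nabla A| \le C$ from \eqref{Carl2} together with identities of the form $A(x,t) - A(x,0) = \int_0^t \partial_s A(x,s)\,ds$ to move one $t$-derivative off the solution and onto the coefficients. After this substitution, the quantity $|\nabla A|^2 t\,dx\,dt$ is Carleson, so the source is estimated against $\widetilde N(\nabla u)$ by Carleson's inequality together with the square-function / nontangential-maximal-function comparability for $L_0$ already established in Section 4. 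The delicate bookkeeping lies in turning the resulting bound into a genuine contraction in the $(R)_1$ tent-space norm, so that a Neumann-series or fixed-point argument absorbs $\widetilde N(\nabla u)$ on the right-hand side despite the non-smallness of $A - A_0$ itself; this forces a careful atomic localization and exploits the explicit solvability for $L_0$ at the atom's scale.
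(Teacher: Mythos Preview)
The forward direction is correctly disposed of. However, your strategy for the reverse implication has a genuine gap at its core.

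You propose to obtain $(R)_1$ for $L$ by writing $w=u-v$, so that $L_0 w=\div\bigl((A_0-A)\nabla u\bigr)$, and then to ``trade'' the factor $A-A_0$ for $t\,\partial_tA$ via an identity like $A(x,t)-A(x,0)=\int_0^t\partial_sA\,ds$. But $A_0$ is \emph{not} the boundary trace $A(\cdot,0)$: the block-form matrix $A_0$ keeps $A_\parallel$ in the upper-left block and has zeros in the last row and column (with $1$ in the $(n,n)$ slot). The difference $A-A_0$ therefore consists of the entries $a_{in},a_{nj}$ for $i,j<n$ together with $a_{nn}-1$, which under \eqref{Carl2} are merely bounded and are not controlled by $t|\nabla A|$ or by the Carleson norm of $|\nabla A|^2t$. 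No integration by parts converts these $O(1)$ quantities into small ones, so there is no contraction and no Neumann-series closure; atomic localization supplies no smallness parameter either. Consequently the source $F=(A_0-A)\nabla u$ is only bounded by $|\nabla u|$ itself, and the inhomogeneous estimate you would need reproduces the very quantity you are trying to control.

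The paper's route is quite different and avoids this obstruction. It does not treat $L$ as a perturbation of $L_0$. Instead it represents $\|\tilde N_{1,\varepsilon}(\nabla u)\|_{L^q}$ by tent-space duality (Lemma~\ref{l1bb}) against a vector $\vec h$, introduces the solution $v$ of $L^*v=\div\vec h$ with zero Dirichlet data (controlled by the $(D^*)_{q'}$ hypothesis via Lemma~\ref{l2aa}), and by integration by parts reduces the pairing to a \emph{boundary} integral involving $\nabla_\parallel f$. At that stage the block-form solution $\tilde u$, which shares the same boundary data $f$, is substituted for $u$: the boundary terms match and cancel (the term $I$ drops out), and all remaining interior discrepancies carry at least one \emph{derivative} of $A$, which \emph{is} Carleson-controlled under \eqref{Carl2}. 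Thus $L_0$ enters only to replace $u$ by $\tilde u$ on $\partial\mathbb R^n_+$, never to approximate the coefficient matrix.
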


Assume for now that $L_0$ is such that for some $p>1$ the Regularity problem $(R)_p$ is solvable in ${\mathbb R}^n_+$. Since by Theorem \ref{Dirlarge} there exists $p_{dir}>1$ such that the Dirichlet problem for the operator $L^*$ is solvable for $p\in (p_{dir},\infty)$ (as the coefficients of $L^*$ also satisfy the large Carleson condition) it follows from Theorem \ref{t1} that the $L^q$ Regularity problem for $L$ is solvable in the interval $q\in (1,q_{reg})$, where $1/p_{dir}+1/q_{reg}=1$.\medskip

Thus Theorem \ref{t1}, together with the theorem below, implies the Regularity part of the claim of Theorem \ref{RNduality}.

\begin{theorem}\label{tblock}
Let ${ L}_0 u= \mbox{\rm div}_\parallel(A_\parallel \nabla_\parallel u)+u_{tt}$ be an operator in ${\mathbb R}^n_+$ where matrix $A_\parallel$ is uniformly elliptic $(n-1)\times(n-1)$ matrix, with bounded real coefficients such that \begin{equation}\label{Carl2m}
d\mu(X)=\delta(X)\left[\sup_{B(X,\delta(X)/2)}|\nabla A(X)|\right]^2\,dX\qquad\mbox{is a Carleson measure.}
\end{equation}
Then we have the following: For any $1<q<\infty$ the $L^q$ Regularity problem for the operator $ L_0$ is solvable in ${\mathbb R}^n_+$.
\end{theorem}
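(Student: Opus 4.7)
The plan is to leverage the block form structure together with the Dirichlet solvability already available for block form operators without any Carleson hypothesis. Observe first that the adjoint $L_0^* u = \mbox{div}_\parallel(A_\parallel^t \nabla_\parallel u) + u_{tt}$ is itself of block form, so by the Dirichlet part of Theorem \ref{DirRegblock} (Mayboroda's observation) both $(D)_q$ for $L_0$ and $(D^*)_q$ for $L_0^*$ are solvable for every $q \in (1,\infty)$, requiring only boundedness and ellipticity of $A_\parallel$. In view of the duality implication $(D^*)_p + (R)_1 \Longrightarrow (R)_{p'}$ recalled in Section \ref{S3}, combined with the standard atomic-decomposition fact $(R)_{p_0} \Longrightarrow (R)_1$, it therefore suffices to establish $(R)_{p_0}$ for a single exponent $p_0 \in (1,\infty)$; the full range $1 < q < \infty$ will follow automatically from $(D^*)_q$ for all $q$.

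I propose to take $p_0 = 2$ and to prove $(R)_2$ by the integration-by-parts scheme of Lemmas \ref{ll1}-\ref{ll3} tailored to the block form case. The key structural gain is that $a_{nn} \equiv 1$ and $a_{nj} = a_{jn} = 0$ for $j < n$, so that every term in the identities \eqref{e22}-\eqref{e24} involving a derivative $\partial_k a_{nj}$ or $\partial_k a_{nn}$ vanishes identically rather than being merely dominated by the Carleson norm. The surviving contributions arise from the inhomogeneous equations satisfied by the derivatives of $u$,
\begin{equation*}
L_0 v_k = -\mbox{div}_\parallel((\partial_k A_\parallel)\nabla_\parallel u),\quad k<n,\qquad L_0(\partial_t u) = -\mbox{div}_\parallel((\partial_t A_\parallel)\nabla_\parallel u),
\end{equation*}
and these are precisely controlled by the Carleson quantity $|\nabla A_\parallel|^2 t\,dx\,dt$ appearing in \eqref{Carl2m}. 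Together with the gradient analogue of Lemma \ref{l2}, $\|N(\nabla u)\|_{L^2} \lesssim \|S(\nabla u)\|_{L^2}$ (which, by the remark after Lemma \ref{l2}, only needs the Carleson hypothesis on the rows where it is truly needed), this yields an inequality of the schematic form
\begin{equation*}
\|N(\nabla u)\|_{L^2}^2 \lesssim \|\nabla_T f\|_{L^2}^2 + C\|\mu\|_{Carl}\|N(\nabla u)\|_{L^2}^2.
\end{equation*}

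The main obstacle is the absorption of the last term when $\|\mu\|_{Carl}$ is large, since direct absorption is no longer available. The plan is a stopping-time/sawtooth decomposition: partition $\mathbb{R}^n_+$ into a countable family of sawtooth subregions on each of which the local Carleson norm of $\mu$ falls below a preset absorption threshold $\varepsilon$ depending only on the ellipticity constants of $A_\parallel$, so that the small-Carleson argument closes the estimate on each piece; one then pastes the contributions together using the boundary Carleson control on the stopping surfaces and a good-$\lambda$ type inequality. This parallels the stopping-time technique referred to in the proof of Lemma \ref{l2} in the appendix, but is technically more delicate because one has to control the traces of both $u$ and $\nabla u$ on the artificial lateral boundaries of the sawtooths produced by the decomposition and verify that the local square function estimates match across them.
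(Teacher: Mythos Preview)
Your reduction to a single exponent $p_0$ via the implication $(D^*)_p + (R)_1 \Longrightarrow (R)_{p'}$ is correct and matches the paper's final extrapolation step. You also correctly identify the central obstacle: the integration-by-parts scheme yields $\|N(\nabla u)\|_{L^2}^2 \lesssim \|\nabla_T f\|_{L^2}^2 + C\|\mu\|_{Carl}\|N(\nabla u)\|_{L^2}^2$, and absorption fails when $\|\mu\|_{Carl}$ is large.

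However, your proposed resolution---a sawtooth decomposition into regions of small local Carleson norm---is not the paper's approach, and you have not actually carried it out; you only sketch a plan and flag its difficulties (traces on artificial boundaries, pasting across stopping surfaces). As stated, this is a genuine gap. The paper's idea is both different and considerably simpler: it exploits a \emph{rescaling in the transversal variable}. Setting $A^k_\parallel(x,t)=A_\parallel(x,kt)$, the operators $L_k$ are Carleson perturbations of $L_0$, so $(R)_q$ for $L_0$ is equivalent to $(R)_q$ for $L_k$. The crucial computation is that the \emph{partial} Carleson norm built from $|\nabla_x A^k_\parallel|^2 t\,dX$ (no $\partial_t$) satisfies $\|\mu^k\|_{Carl}\le \|\mu\|_{Carl}(1+C\log k)/k^2$, hence is as small as one likes for large $k$. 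Now comes the structural point you partly saw but did not fully exploit: in block form, the system \eqref{system} for the tangential derivatives $V=(\partial_1 u,\dots,\partial_{n-1}u)$ involves only $\partial_m b_{ij}$ with $m<n$, i.e.\ only $\nabla_x A_\parallel$, never $\partial_t A_\parallel$. Thus the absorption constant in the tangential estimate is governed by the small $\|\mu^k\|_{Carl}$, not the full (possibly large) Carleson norm. This closes the estimate for $N(V)$ in $L^2$, and $\partial_t u$ is then handled separately using the equation and the already-established bound on $N(V)$, where the appearance of the full Carleson norm is harmless because it multiplies a term already controlled. No sawtooth machinery is needed.
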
 
\noindent Note that Theorem \ref{tblock} implies the second claim in Theorem \ref{DirRegblock}.\medskip

\noindent It remains to prove Theorems \ref{t1} and \ref{tblock}. We start with Theorem \ref{t1}.\medskip

\noindent {\it Proof of Theorem \ref{t1}.}
Throughout this proof, we make the assumption that $|\nabla A(x,t)|$ is bounded by a constant $M$ for all $(x,t)$.
 All the estimates established below will be independent of $M$. This assumption entails that boundary integrals like those in
 \eqref{e8c}, \eqref{e9}, and so on, are meaningful in a pointwise sense. The assumption can be removed by approximating by a matrix
 that satisfies condition \eqref{Carl2} by a sequence of matrices with bounded gradients - details can be found in
 section 7 of \cite{Dsystems}.

We start by summarising useful results from \cite{KP2}. Let us denote by $\tilde{N}_{1,\varepsilon}$ the $L^1$-averaged version of the non-tangential maximal function for {\it doubly truncated} cones. That is, for $\vec{u}:{\mathbb R}^n_+\to\mathbb R^m$, we set

$$\tilde{N}_{1,\varepsilon}(\vec{u})(Q)=\sup\left\{\fiint_{Z\in B(X,\delta(X)/2)}|\vec{u}|dZ:\, X\in\Gamma_\varepsilon(Q):=\Gamma(Q)\cap
\{X: \varepsilon < \delta(X)< 1/\varepsilon\}\right\}.$$

Lemma 2.8 of \cite{KP2}, stated below, provides a way to estimate the $L^q$ norm of $\tilde{N}_{1,\varepsilon}(\nabla F)(Q)$ via duality (based on tent-spaces).

\begin{lemma}\label{l1bb} There exists $\vec{\alpha}(X,Z)$ with $\vec{\alpha}(X,\cdot):B(X,\delta(X)/2)\to{\mathbb R}^n$ and \newline $\|\vec\alpha(X,\cdot)\|_{L^\infty(B(X,\delta(X)/2))}=1$, a nonnegative scalar function $\beta(X,Q)\in L^1(\Gamma_\varepsilon(Q))$ with $\int_{\Gamma_\varepsilon(Q)}\beta(X,Q)\,dX=1$ and a nonnegative $g\in L^{q'}(\partial{\mathbb R^n_+},d\sigma)$ with $\|g\|_{L^{q'}}=1$ such that
\begin{equation}
\left\|\tilde{N}_{1,\varepsilon}(\nabla F)\right\|_{L^q(\partial{\mathbb R^n_+},d\sigma)}\lesssim \iint_{{\mathbb R^n_+}}\nabla F(Z)\cdot \vec{h}(Z)\, dZ,
\label{e1a}
\end{equation}
where 
$$\vec{h}(Z)=\int_{\partial{\mathbb R^n_+}}\iint_{\Gamma(Q)}g(Q)\vec{\alpha}(X,Z)\beta(X,Q)\frac{\chi(2|X-Z|/\delta(X))}{\delta(X)^n}\,dX\,dQ,$$
and $\chi(s)=\chi_{(0,1)}(|s|)$.

Moreover, for any $G:{\mathbb R^n_+}\to\mathbb R$ with $\tilde{N}_{1}(\nabla G)\in L^q(\partial{\mathbb R^n_+},dx)$ we also have an upper bound
\begin{equation}
\iint_{{\mathbb R^n_+}}\nabla G(Z)\cdot h(Z)\, dZ\lesssim \left\|\tilde{N}_{1}(\nabla G)\right\|_{L^q(\partial{\mathbb R^n_+},dx)}.
\label{e2}
\end{equation}
The implied constants in \eqref{e1a}-\eqref{e2} do not depend on $\varepsilon$, only on the dimension $n$.
\end{lemma}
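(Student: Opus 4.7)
\noindent{\it Proof plan for Lemma \ref{l1bb}.} The statement is the standard tent-space linearization of the $L^1$-averaged non-tangential maximal function. The plan is to peel off each of the three ``pieces'' $g$, $\beta$, $\vec\alpha$ via three successive duality/linearization steps, and then verify that the resulting kernel $\vec h$ also controls $\iint \nabla G\cdot \vec h$ by $\|\tilde N_1(\nabla G)\|_{L^q}$.

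First, I would linearize the outer $L^q$ norm. By duality in $L^q(\partial\mathbb R^n_+)$, pick a nonnegative $g\in L^{q'}$ with $\|g\|_{L^{q'}}=1$ such that
\begin{equation*}
\|\tilde N_{1,\varepsilon}(\nabla F)\|_{L^q(\partial\mathbb R^n_+)}\le 2\int_{\partial\mathbb R^n_+} \tilde N_{1,\varepsilon}(\nabla F)(Q)\,g(Q)\,dQ.
\end{equation*}
Next, I would linearize the supremum defining $\tilde N_{1,\varepsilon}(\nabla F)(Q)$. For each $Q$ the sup over the truncated cone $\Gamma_\varepsilon(Q)$ is bounded by an average of the integrand near the maximizer: for every $\eta>0$ one can find a measurable (in $Q$) nonnegative weight $\beta(X,Q)$, supported on $\Gamma_\varepsilon(Q)$, with $\int_{\Gamma_\varepsilon(Q)}\beta(X,Q)\,dX=1$, such that
\begin{equation*}
\tilde N_{1,\varepsilon}(\nabla F)(Q)\le (1+\eta)\int_{\Gamma_\varepsilon(Q)}\beta(X,Q)\,\fiint_{B(X,\delta(X)/2)}|\nabla F(Z)|\,dZ\,dX.
\end{equation*}
Measurable selection of $\beta$ is the most delicate step; it is done in \cite{KP2} by a standard approximation with a countable dyadic family of ``witnesses'' $X$ inside $\Gamma_\varepsilon(Q)$ and choosing $\beta$ concentrated near one that nearly attains the sup. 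Finally, I would linearize the absolute value in the $L^1$ average: pointwise in $X$ pick $\vec\alpha(X,\cdot)\in L^\infty(B(X,\delta(X)/2))$ with $\|\vec\alpha(X,\cdot)\|_\infty=1$ and $|\nabla F(Z)|=\nabla F(Z)\cdot \vec\alpha(X,Z)$ a.e.\ (one can simply take $\vec\alpha(X,Z)=\nabla F(Z)/|\nabla F(Z)|$ on the support of $\nabla F$ and anything of unit length elsewhere).

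Combining the three steps and using $|B(X,\delta(X)/2)|\simeq \delta(X)^n$ together with $\chi_{B(X,\delta(X)/2)}(Z)=\chi(2|X-Z|/\delta(X))$, Fubini gives
\begin{equation*}
\|\tilde N_{1,\varepsilon}(\nabla F)\|_{L^q(\partial\mathbb R^n_+)}\lesssim \iint_{\mathbb R^n_+}\nabla F(Z)\cdot \vec h(Z)\,dZ,
\end{equation*}
with exactly the $\vec h$ in the statement, which proves \eqref{e1a}.

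For the reverse inequality \eqref{e2} applied to a general $G$, I would simply run the construction of $\vec h$ backwards. Unfolding the definition of $\vec h$ and applying Fubini gives
\begin{equation*}
\iint_{\mathbb R^n_+}\nabla G(Z)\cdot \vec h(Z)\,dZ=\int_{\partial\mathbb R^n_+}g(Q)\int_{\Gamma_\varepsilon(Q)}\beta(X,Q)\left(\fiint_{B(X,\delta(X)/2)}\nabla G(Z)\cdot \vec\alpha(X,Z)\,dZ\right)dX\,dQ.
\end{equation*}
Since $\|\vec\alpha(X,\cdot)\|_\infty=1$, the innermost bracket is bounded by the $L^1$ average of $|\nabla G|$ on $B(X,\delta(X)/2)$, which for $X\in\Gamma_\varepsilon(Q)\subset\Gamma(Q)$ is $\le \tilde N_1(\nabla G)(Q)$. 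Using $\int_{\Gamma_\varepsilon(Q)}\beta(X,Q)\,dX=1$ and $\|g\|_{L^{q'}}=1$, Hölder's inequality on $\partial\mathbb R^n_+$ then yields \eqref{e2}. The $\varepsilon$-independence of the implied constants is transparent from the construction since $\varepsilon$ only restricts the support of $\beta$.

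The main obstacle is the measurable selection producing $\beta(X,Q)$; everything else is bookkeeping. In \cite{KP2} this is circumvented by discretizing $\Gamma_\varepsilon(Q)$ via a Whitney decomposition of $\mathbb R^n_+$, reducing the sup over $X$ to a countable sup, at which point a standard measurable-selector argument (or even a direct construction using indicator functions of the selected Whitney cube normalized by its volume) gives $\beta$ explicitly.
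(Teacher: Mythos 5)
Your proof is correct and follows the standard tent-space duality argument: the paper itself does not prove this lemma but simply quotes it as Lemma 2.8 of \cite{KP2}, and your three-step linearization (outer $L^q$--$L^{q'}$ duality for $g$, measurable selection of a probability density $\beta$ on the truncated cone to replace the supremum, and the choice $\vec\alpha(X,Z)=\nabla F(Z)/|\nabla F(Z)|$ to linearize the $L^1$ average) is precisely the mechanism behind that result, with the verification of \eqref{e2} by unwinding $\vec h$ and applying $\|\vec\alpha\|_\infty\le 1$, $\int\beta=1$, and H\"older exactly as it should be. One small remark: your $\vec\alpha$ happens not to depend on $X$, which is a harmless simplification of the statement; the only genuinely delicate point, the joint measurability of $\beta(X,Q)$, is correctly flagged and is handled by the Whitney/dyadic discretization you describe.
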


For the matrix $A = (a_{ij})$ as above, we let $v:{\mathbb R}^n_+\to\mathbb R$ be the solution of the inhomogenous Dirichlet problem for the operator ${L}^*$ (adjoint to $L$):
\begin{equation}
{ L}^*v=\div(A^*\nabla v)=\div(\vec{h})\mbox{ in }{\mathbb R}^n_+,\qquad v\Big|_{\partial{\mathbb R}^n_+}=0.\label{e3}
\end{equation}

Then Lemma 2.10 - Lemma 2.13 of \cite{KP2} gives us the following estimates for the nontangential maximal and square functions of $v$.

\begin{lemma}\label{l2aa} If the $L^{q'}$ Dirichlet problem is solvable for the operator ${L}^*$, where $q>1$, 
 then there exists $C<\infty$ depending on $n$, $q$, and $ L^*$, such that for any $\vec{h}$ as in Lemma \ref{l1bb} and $v$ defined by \eqref{e3} we have
\begin{equation}
\|N(v)\|_{L^{q'}(\partial{\mathbb R}^n_+,d\sigma)}+\|\tilde{N}(\delta\nabla v)\|_{L^{q'}(\partial{\mathbb R}^n_+,d\sigma)}+\|S(v)\|_{L^{q'}(\partial{\mathbb R}^n_+,d\sigma)}\le C.
\label{e4}
\end{equation}
\end{lemma}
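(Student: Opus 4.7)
The proof will exploit the specific structure of the vector field $\vec h$ from Lemma \ref{l1bb} together with the consequences of $(D^*)_{q'}$-solvability captured by Theorem \ref{T:Ainfty}. The plan has three steps.

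First, I would extract the key size information about $\vec h$. Since $|\vec\alpha(X,\cdot)|\le 1$, $\int\beta(X,Q)\,dX=1$, and the cutoff $\chi(2|X-Z|/\delta(X))$ forces $Z\in B(X,\delta(X)/2)$, the inner integrals defining $\vec h(Z)$ collapse to an average over boundary points $Q$ in a surface ball of radius $\approx\delta(Z)$ around the vertical projection $\pi Z$. A direct computation then gives the pointwise bound $|\vec h(Z)|\lesssim Mg(\pi Z)$, where $M$ is the Hardy-Littlewood maximal function; in particular $\vec h$ lies in a tent space whose norm is controlled by $\|g\|_{L^{q'}}=1$.

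Second, I would estimate $\|N(v)\|_{L^{q'}}$ via the Green's function representation
$$v(X)=-\iint_{\mathbb R^n_+}\nabla_YG^*(X,Y)\cdot\vec h(Y)\,dY,$$
where $G^*$ is the Green's function of $L^*$. Since $(D^*)_{q'}$ forces $\omega^{L^*}\in A_\infty(d\sigma)$ by Theorem \ref{T:Ainfty}, the Caffarelli-Fabes-Mortola-Salsa comparability yields $\delta(Y)|\nabla_YG^*(X,Y)|\lesssim G^*(X,Y)$ away from the pole, while the reverse Hölder property $\omega^{L^*}\in B_q$ provides summability over dyadic scales. Combining these ingredients with Step 1 through standard tent-space/Carleson-measure duality yields $\|N(v)\|_{L^{q'}}\lesssim\|g\|_{L^{q'}}=1$. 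The square function bound $\|S(v)\|_{L^{q'}}$ then follows by pairing $L^*v=\mathrm{div}(\vec h)$ against $v\varphi t$ with a Carleson cutoff $\varphi$, exactly as in Lemma \ref{lgeneral}, the inhomogeneous term producing an additional contribution controlled by Step 1. Finally, for $\tilde N(\delta\nabla v)$, Caccioppoli on Whitney balls gives
$$\fiint_{B(X,\delta(X)/2)}|\delta\nabla v|^2\,dY\lesssim\fiint_{B(X,\delta(X))}|v|^2\,dY+\delta(X)^2\fiint_{B(X,\delta(X))}|\vec h|^2\,dY,$$
so that taking suprema over $X\in\Gamma(Q)$ and $L^{q'}$ norms reduces this estimate to the preceding two.

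The main obstacle is the second step: converting the scalar reverse-Hölder statement $\omega^{L^*}\in B_q(d\sigma)$ into pointwise and tent-space control of $v$ at the level of its nontangential maximal function. The required gradient bounds on the Green's function must be derived from elliptic-measure comparability, and then interlaced with the Carleson structure of $\vec h$ via tent-space duality. This delicate combination is what necessitates the full machinery of Lemmas 2.10-2.13 in \cite{KP2}.
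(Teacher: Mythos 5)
The paper does not prove this lemma; it is imported verbatim from Lemmas 2.10--2.13 of \cite{KP2} and used as a black box, so there is no in-paper argument to compare against. Your outline follows the right general route (Green's function representation, CFMS-type comparability, square function via integration by parts, Caccioppoli for $\tilde N(\delta\nabla v)$), which is indeed the spirit of the Kenig--Pipher argument, but there is a concrete gap in Step~1 that then undermines Step~2.

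The pointwise bound $|\vec h(Z)|\lesssim Mg(\pi Z)$ is not correct. Running the computation you describe --- namely, on the support of the cutoff one has $\delta(X)\approx\delta(Z)$, $\iint_{\Gamma(Q)}\beta(X,Q)\,dX\le 1$, and $|Q-\pi Z|\lesssim\delta(Z)$ --- only gives
\begin{equation*}
|\vec h(Z)|\ \lesssim\ \frac{1}{\delta(Z)^{n}}\int_{\Delta(\pi Z,\,C\delta(Z))}g(Q)\,dQ\ \lesssim\ \frac{Mg(\pi Z)}{\delta(Z)},
\end{equation*}
which is off by a full factor of $\delta(Z)$. Such a bound is not of tent-space type; $\int_0^\infty|\vec h(x,t)|\,dt$ would be logarithmically divergent. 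Feeding it into the representation $v(X)=-\iint_{\mathbb R^n_+}\nabla_YG^*(X,Y)\cdot\vec h(Y)\,dY$ together with $\delta(Y)|\nabla_YG^*(X,Y)|\lesssim G^*(X,Y)$ then produces an integral of the form $\iint G^*(X,Y)\,\delta(Y)^{-2}Mg(\pi Y)\,dY$, which fails to converge in the vertical variable, so Step~2 cannot be closed from Step~1 as stated. The genuine control on $\vec h$ in \cite{KP2} is integral in nature: it exploits the full normalization $\int_{\Gamma_\varepsilon(Q)}\beta(X,Q)\,dX=1$ over the \emph{entire} cone, not merely over a single Whitney ball near $Z$, and the output is boundedness in $L^{q'}$ of functionals such as $\tilde T(|\vec h|)(Q)=\iint_{\Gamma(Q)}|\vec h(Z)|\,\delta(Z)^{1-n}\,dZ$ --- exactly the content of Lemma 2.13 of \cite{KP2} referenced elsewhere in the present paper. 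Reconstructing the nontangential and square function estimates from that integral control (rather than from a false pointwise one) is precisely the work performed in Lemmas 2.10--2.13 of \cite{KP2}, and it is the substance that the phrase ``standard tent-space/Carleson-measure duality'' in your Step~2 glosses over.
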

\vglue5mm

 Let $u$ be the solution of the following boundary value problem
\begin{equation}
{L}u=\div(A\nabla u)=0\mbox{ in }{\mathbb R}^n_+,\qquad u\Big|_{\partial{\mathbb R}^n_+}=f,\label{e5xb}
\end{equation}
where we assume that $f\in \dot{W}^{1,q}(\partial{\mathbb R}^n_+\cap \dot{B}^{2,2}_{1/2}(\partial{\mathbb R}^n_+)$ for some $q>1$. Then clearly, $u\in \dot{W}^{1,2}({\mathbb R}^n_+)$ by Lax-Milgram. 

Fix $\varepsilon>0$. Our aim is to estimate $N_{1,\varepsilon}(\nabla u)$ in $L^q$ using Lemma \ref{l1bb}. Let $\vec{h}$ be as in Lemma \ref{l1bb} for $\nabla F=\nabla u$. Then since $\vec{h}\big|_{\partial{{\mathbb R}^n_+}}=0$ and $\vec{h}$ vanishes at $\infty$, we have by integration by parts
\begin{equation}\label{e6}
\|N_{1,\varepsilon}(\nabla u)\|_{L^q}\lesssim \iint_{{{\mathbb R}^n_+}}\nabla u\cdot\vec{h}\,dZ=-\iint_{{\mathbb R}^n_+} u\,\div\vec{h}\,dZ
\end{equation}
$$\hskip1cm=-\iint_{{{\mathbb R}^n_+}}u\,{ L}^*v\,dZ=-\iint_{{{\mathbb R}^n_+}}u\,\div(A^*\nabla v)\,dZ.$$
We now move $u$ inside the divergence operator and apply the divergence theorem to obtain:
\begin{equation}\nonumber
\mbox{RHS of \eqref{e6}}=-\iint_{{{\mathbb R}^n_+}}\div(uA^*\nabla v)\,dZ+\iint_{{\mathbb R}^n_+} A\nabla u\cdot\nabla v\,dZ=
\iint_{\partial{{\mathbb R}^n_+}}u(\cdot,0)a^*_{nj}\partial_jv\,dx,
\end{equation}
since
$$\iint_{{\mathbb R}^n_+} A\nabla u\cdot\nabla v\,dZ=-\iint_{{\mathbb R}^n_+} Lu\,v\,dZ=0.$$
Here there is no boundary integral since $v$ vanishes on the boundary of ${{\mathbb R}^n_+}$. It follows that
\begin{equation}\label{e8c}
\|N_{1,\varepsilon}(\nabla u)\|_{L^q}\lesssim\int_{\partial{{\mathbb R}^n_+}}u(x,0)a^*_{nj}(x,0)\partial_jv(x,0)\,dx,
\end{equation}
where the implied constant in \eqref{e8c} is independent of $\varepsilon>0$. Now, we use the fundamental theorem of calculus and the decay of $\nabla v$ at infinity to write \eqref{e8c} as
\begin{equation}\label{e9}
\|N_{1,\varepsilon}(\nabla u)\|_{L^q}\lesssim-\int_{\partial{{\mathbb R}^n_+}}u(x,0)\left(\int_0^\infty \frac{d}{ds}\left(a^*_{nj}(x,s)\partial_jv(x,s)\right)ds\right)dx.
\end{equation}
Recall that $\div(A^*\nabla v)=\div(\vec{h})$ and hence the righthand side of \eqref{e9} equals 
\begin{equation}\label{e10}
\int_{\partial{{\mathbb R}^n_+}}u(x,0)\left(\int_0^\infty \left[\sum_{i<n}\partial_i(a^*_{ij}(x,s)\partial_jv(x,s))-\div\vec{h}(x,s)\right]ds\right)dx.
\end{equation}
We integrate by parts moving $\partial_i$ for $i<n$ onto $u(\cdot,0)$. The integral term containing $\partial_nh_n(x,s)$ does not need to be considered as it equals to zero by the fundamental theorem of calculus since $\vec{h}(\cdot,0)=\vec{0}$ and
$\vec{h}(\cdot,s)\to\vec{0}$ as $s\to\infty$). 

It follows that
\begin{eqnarray}\nonumber
\|N_{1,\varepsilon}(\nabla u)\|_{L^q}&\lesssim& \int_{\partial{{\mathbb R}^n_+}} \nabla_\parallel f(x)\cdot\left(\int_0^\infty \left[\vec{h}_\parallel(x,s)-(A^*\nabla v)_\parallel(x,s)\right]ds\right)dx\\
&=&I+II.\label{e11}
\end{eqnarray}
Here $I$ is the term containing $\vec{h}_\parallel$ and $II$ contains $(A^*\nabla v)_\parallel$. The notation we are using here  is that, for a vector $\vec{w}=(w_1,w_2,\dots,w_n)$, the vector $\vec{w}_\parallel$ denotes the first $n-1$ components of $\vec{w}$, that is $(w_1,w_2,\dots,w_{n-1})$.

As shall see below we do not need worry about term $I$. This is because what we are going to do next is essentially undo the integration by parts we have done above but we swap function $u$ with another better behaving function $\tilde{u}$ with the same boundary data. Doing this we eventually arrive to $\|\tilde N(\nabla\tilde{u})\|_{L^q}$ plus some error terms (solid integrals) that arise from the fact that $u$ and $\tilde{u}$ disagree inside the domain. This explain why we get the same boundary integral as $I$ but with opposite sign as this \lq\lq reverse process" will undo and eliminate all such boundary terms. 

We solve a new auxiliary PDE problem to define $\tilde{u}$. Let $\tilde{u}$ be the solution
of the following boundary value problem for the operator $ L_0$ whose matrix $A_0$ has the block-form
$
A_0=\left[ \begin{array}{c|c}
   A_\parallel & 0 \\
   \midrule
   0 & 1 \\
\end{array}\right] 
$ and

\begin{equation}
{ L}_0 \tilde{u}=\div(A_0\nabla\tilde{u})=0\mbox{ in }\Omega,\qquad \tilde{u}\Big|_{\partial\Omega}=f.\label{e5a}
\end{equation}
Recall that we have assumed that the $L^q$ Regularity problem for the operator 
${L_0}$ is solvable; that is, for a constant $C>0$ independent of $f$, 
$\|\tilde{N}(\nabla \tilde{u})\|_{L^q} \le C\|\nabla_\parallel f\|_{L^q}.$ Then, by \eqref{e34b}, we see that
\begin{equation}
\|\tilde{N}(\nabla \tilde{u})\|_{L^q}+\|S(\nabla\tilde{u})\|_{L^q}\le C\|\nabla_\parallel f\|_{L^q}.
\label{e12}
\end{equation}

We look the term $II$. 
Let
\begin{equation}
\vec{V}(x,t)=-\int_t^\infty (A^*\nabla v)_\parallel(x,s)ds.
\end{equation}
It follows that by the fundamental theorem of calculus
$$II=\int_{\partial{{\mathbb R}^n_+}}\nabla_\parallel u(x,0)\cdot \vec{V}(x,0)dx=\iint_{{{\mathbb R}^n_+}}\partial^2_{tt}\left[\nabla_\parallel \tilde{u}(x,t)\cdot \vec{V}(x,t)\right]t\,dx\,dt,$$
and therefore,
\begin{eqnarray}
II&=&\iint_{{{\mathbb R}^n_+}}\partial^2_{tt}(\nabla_\parallel \tilde{u})\cdot \vec{V}(x,t)t\,dx\,dt+\iint_{{{\mathbb R}^n_+}}\partial_{t}(\nabla_\parallel \tilde{u})\cdot \partial_t(\vec{V}(x,t))t\,dx\,dt\nonumber+\\&&+\iint_{{{\mathbb R}^n_+}}\nabla_\parallel \tilde{u}\cdot \partial^2_{tt}(\vec{V}(x,t))t\,dx\,dt
=II_1+II_2+II_3.
\end{eqnarray}
Here $\tilde{u}$ is same as in \eqref{e5a} (observe that $u$ and $\tilde u$ have the same boundary data).
Since $ \partial_t\vec{V}(x,t)=(A^*\nabla v)_\parallel$ the term $II_2$ is easiest to handle and can be estimated as a product of two square functions
\begin{equation}\label{125}
|II_2|\le \|S(\partial_t\tilde{u})\|_{L^q}\|S(v)\|_{L^{q'}}.
\end{equation}

By our assumption that the $L^{q'}$ Dirichlet problem for the operator ${L}^*$ is solvable, Lemma \ref{l2aa} applies and
provides us with an estimate $\|S(v)\|_{L^{q'}}\le C$. Combining this estimate with \eqref{e12} yields
\begin{equation}
|II_2|\le C\|\nabla_\parallel f\|_{L^q},
\end{equation}
as desired.

Next we look at $II_1$. We integrate by parts moving $\nabla\parallel$ from $\tilde{u}$. This gives us
\begin{equation}\label{eq335}
II_1 =\iint_{{{\mathbb R}^n_+}}\partial^2_{tt}\tilde{u}\cdot\left(\int_{t}^\infty\div_{\parallel}(A^*\nabla v)_{\parallel}ds\right)t\,dx\,dt.
\end{equation}
Using the PDE $v$ satisfies we get that
$$\int_{t}^\infty\div_{\parallel}(A^*\nabla v)_{\parallel}ds=(a_{nj}\partial_jv)(x,t)+\int_{t}^\infty\div\vec{h}\,ds.$$
Using this in \eqref{eq335} we see that
\begin{equation}\label{eq335a}
II_1 =\iint_{{{\mathbb R}^n_+}}(\partial^2_{tt}\tilde{u})(a_{nj}\partial_jv)t\,dx\,dt+\iint_{{{\mathbb R}^n_+}}\partial^2_{tt}\tilde{u}
\cdot\left(\int_{t}^\infty\div\vec{h}\,ds\right)t\,dx\,dt.
\end{equation}
Here the first term enjoys the same estimate as $II_2$, namely \eqref{125}. We work more with the second term which we call $II_{12}$. We integrate by parts in $\partial_t$. 
\begin{eqnarray}\label{eq335b}
II_{12} &=&\iint_{{{\mathbb R}^n_+}}(\partial_{t}\tilde{u})(
\div\vec{h})t\,dx\,dt-\iint_{{{\mathbb R}^n_+}}\partial_{t}\tilde{u}
\cdot\left(\int_{t}^\infty\div\vec{h}\,ds\right)\,dx\,dt\\\nonumber
&=&II_{121}-\iint_{{{\mathbb R}^n_+}}\partial_{t}\tilde{u}
\cdot\left(\int_{t}^\infty\div\vec{h}\,ds\right)\,dx\,dt\\\nonumber
&=&II_{121}+\int_{\partial{\mathbb R^n_+}}\tilde{u}(x,0)\left(\int_0^\infty \div\vec{h}\right)dx+\iint_{{{\mathbb R}^n_+}}\nabla\tilde{u}\cdot\vec{h}\,dx\,dt\\\nonumber
&=&II_{121}-\int_{\partial\mathbb R^n_+}\nabla_\parallel \tilde{u}(x,0)\left(\int_0^\infty \vec{h}_\parallel\right)dx+II_{123}=
II_{121}-I+II_{123}.
\end{eqnarray}
In the second line we have swapped $\partial_t$ and $\partial_{\parallel}$ derivatives  integrating by parts twice. 
This integration yields a boundary term but fortunately this term is precisely as the term $I$ defined by \eqref{e11} but since it comes with opposite sign these two terms cancel out.  We return to the terms $II_{121}$ and $II_{123}$ later.

Next we look at $II_3$. We see that
\begin{equation}
II_3= \iint_{{{\mathbb R}^n_+}}\nabla_\parallel \tilde{u}\cdot \partial_t(A^*\nabla v)_\parallel t\,dx\,dt=
\iint_{{{\mathbb R}^n_+}}\nabla_\parallel \tilde{u}\cdot ((\partial_t A)^*\nabla v)_\parallel t\,dx\,dt
\end{equation}
\begin{equation}\nonumber
+\iint_{{{\mathbb R}^n_+}}\nabla_\parallel \tilde{u}\cdot (A^*\nabla (\partial_t v))_\parallel t\,dx\,dt=II_{31}+II_{32}.
\end{equation}

In order to handle the term $II_{31}$ we will use the fact that the matrix A satisfies the Carleson measure condition \eqref{Carl2}.
The argument uses a stopping time argument that is typical in connection with Carleson measures.

To set this up, let $\mathcal O_j$ denote $\{x \in \partial{{\mathbb R}^n_+}:  N(\nabla\tilde{u})(Q)S(v)(Q) > 2^j\}$ and 
define an enlargement of $\mathcal O_j$ by $\tilde{\mathcal O}_j := \{ M(\chi_{\mathcal O_j}) > 1/2\}$. (Note that  $|\tilde{\mathcal O}_j | \lesssim |\mathcal O_j|$.)
We will break up integrals over ${{\mathbb R}^n_+}$ into regions determined by the sets:
$$F_j = \{X = (y,t) \in {{\mathbb R}^n_+}: |\Delta_{ct}(y) \cap \mathcal O_j| > 1/2, \,\,|\Delta_{ct}(y) \cap \mathcal O_{j+1}| \leq 1/2\},$$
where $c$ depends on the aperture of the cones used to define the nontangential maximal function and square functions. Then, 

\begin{eqnarray}
|II_{31}|&\lesssim& \iint_{{{\mathbb R}^n_+}} |\nabla\tilde{u}||\partial_t A||\nabla v|t dX \le \sum_j \iint_{{{\mathbb R}^n_+} \cap F_j} |\nabla\tilde{u}||\partial_t A||\nabla v|t dX \nonumber\\
&\le& \sum_j \int_{\tilde{\mathcal O}_j \setminus \mathcal O_j} \iint_{\Gamma(Q) \cap F_j} |\nabla\tilde{u}||\partial_t A||\nabla v|t^{2-n} dX dx\nonumber\\
&\le&\sum_j \int_{\tilde{\mathcal O}_j \setminus \mathcal O_j} \left(\iint_{\Gamma(Q)}|\nabla v|^2 |\nabla \tilde{u}|^2 t^{2-n}dX\right)^{1/2}\left(\iint_
{\Gamma(Q) \cap F_j}|\partial_tA|^2|^2t^{2-n}dX\right)^{1/2}dQ\nonumber
\end{eqnarray}
\begin{eqnarray}
&\le&\sum_j \int_{\tilde{\mathcal O}_j \setminus \mathcal O_j} N(\nabla\tilde{u})(Q) S(v)(Q) \left(\iint_
{\Gamma(Q) \cap F_j}|\partial_tA|^2|^2t^{2-n}dX\right)^{1/2} \,dQ\nonumber\\
&\le&\sum_j 2^j \left(\int_{\tilde{\mathcal O}_j}  \iint_
{\Gamma(Q) \cap F_j}|\partial_tA|^2|^2t^{2-n}dX\,dQ\right)^{1/2}  |\tilde{\mathcal O}_j|^{1/2}   \nonumber\\
&\lesssim&\sum_j 2^j  |\mathcal O_j| \,\,\lesssim\,\, \int_{\partial{{\mathbb R}^n_+}} N(\nabla\tilde{u})(Q) S(v)(Q) dQ.\label{Stopping}
\end{eqnarray}

 The penultimate inequality follows from the Carleson measure property of $|\partial_tA|^2| t dX$ as the integration is over the
 Carleson region $\{X=(y,t): \Delta_{ct}(y) \subset \tilde{\mathcal O}_j\}$.
 
 Consequently, by H\"older's inequality,
\begin{equation}
|II_{31}|\lesssim \|S(v)\|_{L^{q'}}\|N(\nabla\tilde{u})\|_{L^q}.
\end{equation}
Hence as above 
\begin{equation}
|II_{31}|\le C\|\nabla_\parallel f\|_{L^q}.
\end{equation}
For the term $ II_{32}$ we separate the parallel and tangential parts of the gradient, to get 
\begin{eqnarray}\nonumber
II_{32}&=&\iint_{{{\mathbb R}^n_+}}\nabla_\parallel \tilde{u}\cdot (A_\parallel^*\nabla_\parallel (\partial_t v))t\,dx\,dt+
\iint_{{{\mathbb R}^n_+}}\nabla_\parallel \tilde{u}\cdot (a_{in}^*\partial^2_{tt} v)_{i<n}t\,dx\,dt\\
&=&-\iint_{{{\mathbb R}^n_+}}\div_\parallel(A_\parallel\nabla \tilde{u})(\partial_tv)tdx\,dt+II_{33}=\iint_{{{\mathbb R}^n_+}}(\partial^2_{tt}\tilde{u})(\partial_tv)tdx\,dt+II_{33}.\nonumber
\end{eqnarray}
Here we have integrated the first term by parts and then used the equation that $\tilde{u}$ satisfies. It follows that in the last expression the first term has square functions bounds identical to \eqref{125}. For $II_{33}$
we write $\partial^2_{tt}v$ as
$$\partial^2_{tt}v=\partial_t\left(\frac{a^*_{nn}}{a^*_{nn}}\partial_t v\right)=\frac1{a^*_{nn}}\partial_t(a^*_{nn}\partial_tv)-\frac{\partial_t a^*_{nn}}{a^*_{nn}}\partial_t v$$
$$=-\frac1{a^*_{nn}}\left[\div_\parallel(A^*_\parallel\nabla_\parallel v)+\sum_{i<n}\left[\partial_i(a^*_{in}\partial_t v)+\partial_t(a^*_{ni}\partial_i v)\right]+\partial_t(a^*_{nn})\partial_t v -\div\vec{h}\right],$$
where the final line follows from the equation that $v$ satisfies. It therefore follows that the term $II_{33}$ can be written as a sum of five terms (which we shall call $II_{331},\dots,II_{335}$).

Terms $II_{331}$ and $II_{332}$ are similar and we deal with then via integration by parts (in $\partial_i$, $i<n$):
\begin{equation}\label{eq133}
|II_{331}|+|II_{332}|\le C\iint_{{\mathbb R}^n_+}|\nabla^2 \tilde{u}||\nabla v|t+C\iint_{{\mathbb R}^n_+}|\nabla A||\nabla\tilde{u}||\nabla v|t.
\end{equation}
For the third term $II_{333}$ we observe that $\partial_t(a^*_{ni}\partial_i v)=\partial_i(a^*_{ni}\partial_t v)+(\partial_ta^*_{ni})\partial_iv-(\partial_ia^*_{ni})\partial_tv$ which implies that it again can be estimated by the right-hand side of \eqref{eq133}. 
The same is true for the term $II_{334}$ which has a bound by the second term on the right-hand side of \eqref{eq133}. It remains to consider the term  $II_{335}$ which is
\begin{equation}\label{eq134}
II_{335}=\sum_{i<n}\iint_{{{\mathbb R}^n_+}}\frac{a_{ni}}{a_{nn}}\partial_i\tilde{u}\,(\div\vec{h})\,t\,dx\,dt.
\end{equation}
Notice the similarity of this term with $II_{121}$, hence the calculation below also applies to it.
We again integrate by parts. Observe we get an extra term when $\partial_t$ derivative falls on $t$. This gives us
\begin{equation}\label{eq135}
|II_{121}|+|II_{335}|\le C\iint_{{\mathbb R}^n_+}|\nabla^2 \tilde{u}||\vec{h}|t+\iint_{{\mathbb R}^n_+}|\nabla A||\nabla\tilde{u}||\vec{h}|t+\sum_{i}\left|\iint_{{\mathbb R}^n_+} \frac{a_{ni}}{a_{nn}}\partial_i\tilde{u}\,h_n\,\,dx\,dt\right|.
\end{equation}

We deal with terms on the right-hand side of \eqref{eq133} and \eqref{eq135} now. The first term of \eqref{eq133}  can be seen to be a product of two square functions and hence by H\"older it has an estimate by $\|S(\nabla\tilde{u})\|_{L^q}\|S(v)\|_{L^{q'}}$. The second term of \eqref{eq133} is similar to the term $II_{31}$ with analogous estimate. It follows that
\begin{eqnarray}\nonumber
|II_{331}|+|II_{332}|+|II_{333}|+|II_{334}|&\le& C(\|S(\nabla\tilde{u})\|_{L^q}+\|N(\nabla\tilde{u})\|_{L^q})\,\|S(v)\|_{L^{q'}}\\\label{eq136}
&\le& C\|\nabla_\parallel f\|_{L^q},
\end{eqnarray}
by using \eqref{e12} and Lemma \ref{l2aa}. The first two terms of \eqref{eq135} have similar estimates, provided we introduce as in \cite{KP2} the operator $\tilde{T}$. Here
$$\tilde{T}(|\vec{h}|)(Q)=\iint_{\Gamma(Q)}|\vec{h}|(Z)\delta(Z)^{1-n}(Z)dZ.$$
The last term of \eqref{eq135} and also the term $II_{123}$ is handled using \eqref{e2}.
Here the presence of $\frac{A_{ni}}{A_{nn}}$ in the integral is harmless as we have flexibility to hide this term into the vector-valued function $\vec\alpha$ in the definition of $\vec{h}$. This gives us
\begin{eqnarray}\nonumber
|II_{121}|+|II_{123}|+|II_{335}|&\le& C(\|S(\nabla\tilde{u})\|_{L^q}+\|N(\nabla\tilde{u})\|_{L^q})\,\|\tilde{T}(|\vec{h}|)\|_{L^{q'}}+C\|\tilde{N}_1(\nabla\tilde{u})\|_{L^q}\\\label{eq136x}
&\le& C\|\nabla_\parallel f\|_{L^q}.
\end{eqnarray}
Here the bound for $\|\tilde{T}(|\vec{h}|)\|_{L^{q'}}$ follows from Lemma 2.13 of \cite{KP2}.
\vglue2mm

In summary, under the assumptions we have made we see that
$$II=\int_{\partial\mathbb R^n_+}\nabla_\parallel u(x,0)\cdot \vec{V}(x,0)dx\le C\|\nabla_\parallel f\|_{L^q}-I.$$

After putting all estimates together (since term $I$ cancels out), we have established the following:
$$ \|\tilde{N}_{1,\varepsilon}(\nabla u)\|_{L^q}\le C\|\nabla_\parallel f\|_{L^q}.$$

\medskip

\noindent {\bf Remark:} The assumption that $L^p$ Regularity problem for the block form operator ${L}_0$ is solvable for some $p>1$ implies solvability of the said Regularity problem for all values of $p\in (1,\infty)$. This 
can be see from implications of section \ref{S3}, namely that 
$(D^*)_p+(R)_1\Longrightarrow (R)_{p'}$ and $(R)_q\Longrightarrow (R)_1$ for all $1<p,q<\infty$. As the 
$(D^*)_p$ is again a block form operator it is solvable for all $1<p<\infty$ and therefore in the block form case we have that $(R)_q\Longrightarrow (R)_1\Longrightarrow (R)_{p'}$ for all $1<p'<\infty$. See  \cite{DK} and \cite{DPP} for more details.\vglue2mm

An argument is required to demonstrate that the control of $\tilde{N}_{1,\varepsilon}(\nabla u)$ of a solution $Lu=0$ implies the control of $\tilde{N}(\nabla u)$ (the $L^2$ averaged version of the non-tangential maximal function). Firstly, as the established estimates are independent of $\varepsilon>0$ we obtain
$$\|\tilde{N_1}(\nabla u)\|_{L^q}=\lim_{\varepsilon\to 0+}\|\tilde{N}_{1,\varepsilon}(\nabla u)\|_{L^q}\le C\|\nabla_\parallel f\|_{L^q}.$$
Secondly, as $\nabla u$ satisfies a reverse H\"older self-improvement inequality as a consequence of Caccioppoli's inequality
$$\left(\fiint_B|\nabla u|^{2+\delta}\right)^{1/(2+\delta)}\lesssim \left(\fiint_{2B}|\nabla u|^{2}\right)^{1/2},$$
for some $\delta>0$ depending on ellipticity constant and all $B$ such that $3B\subset{{\mathbb R}^n_+}$, it also follows (c.f. \cite[Theorem 2.4]{S}) that
$$\left(\fiint_B|\nabla u|^2\right)^{1/2}\lesssim \left(\fiint_{2B}|\nabla u|\right),$$
which implies a bound of $\tilde{N}(\nabla u)(\cdot)$ defined using cones $\Gamma_a(\cdot)$ of some aperture $a>0$ by $\tilde{N}_1(\nabla u)(\cdot)$ defined using cones $\Gamma_b(\cdot)$ of some slightly larger aperture $b>a$. Hence $ \|\tilde{N}(\nabla u)\|_{L^q}\le C\|\nabla_\parallel f\|_{L^q}$ must hold.
This completes the proof of Theorem \ref{t1}. \qed\medskip

It remains to prove Theorem \ref{tblock}. \medskip

\noindent {\it Proof of Theorem \ref{tblock}.} Consider therefore $A_\parallel$ as in Theorem \ref{tblock}  and denote by $ L_0$ the operator
\begin{equation}\label{e0ax}
{ L}_0 u= \mbox{\rm div}_\parallel(A_\parallel \nabla_\parallel u)+u_{tt}.
\end{equation}

For each $k=2,3,4,\dots$ let $ L_k$ be a related rescaled operator in $t$-variable defined as follows:
\begin{equation}\label{e0axs}
{ L}_k u= \mbox{\rm div}_\parallel(A^k_\parallel \nabla_\parallel u)+u_{tt},
\end{equation}
where 
\begin{equation}\label{ss1}
A^k_\parallel(x,t)=A_\parallel(x,kt),\qquad\mbox{for all }x\in\mathbb R^{n-1}\mbox{ and }t>0. 
\end{equation}

We claim that for each $k=2,3,\dots$ the $L^q$ Regularity problem for $L_0$ in $\mathbb R^n_+$  is solvable if and only if the $L^q$ Regularity problem for $ L_k$ in $\mathbb R^n_+$  is solvable.

This can be see as follows. Using the mean value theorem, the coefficients $A^k_\parallel$ can be viewed as Carleson perturbations of coefficients of $ L_0$ which are $A_\parallel$. That is, similar to \eqref{CCpert}, we have that
\begin{equation}\label{CCpertX}
\delta(X)^{-1}\left[\sup_{Y\in B(X,\delta(X)/2)}|A_\parallel(Y)-{A^k_\parallel}(Y)|\right]^2\mbox{ is a Carleson measure.}
\end{equation}
Thus, if the $L^q$ Regularity problem for $ L_0$ in $\mathbb R^n_+$ is solvable, then so is the 
$L^{\tilde{q}}$ Regularity problem for $ L_k$ in $\mathbb R^n_+$ for some $\tilde{q}>1$ by Theorem \ref{T:perturbation2}.
But for these block form operators, solvability of the Regularity problem for one value $\tilde{q}>1$ implies solvability for all values (because the Dirichlet problem for the adjoint is solvable for all $1<q'<\infty$). Therefore we can deduce that the $L^q$ Regularity problem for $L_k$ in $\mathbb R^n_+$ is solvable. The reverse implication has a similar proof. 

Next, we consider what we can say about the Carleson condition for the coefficients $A^k_\parallel$. We want to look at 
\begin{equation}\label{e5.55}
d\mu^k(x,t)=|\nabla_x A^k_\parallel (x,t)|^2t\,dx\,dt.
\end{equation}
Notice that the gradient is only taken in $x$ variable, not in $t$, so we are not examining the same (full) Carleson measure property of the coefficients.
Given that \eqref{Carl2m} holds,  it follows that for 
$$d\mu^0(x,t)=|\nabla_x A_\parallel (x,t)|^2t\,dx\,dt,$$
we have that
\begin{equation}\label{ss2}
\|\mu^0\|_{Carl}\le \|\mu\|_{Carl}\qquad\mbox{and }\qquad |\nabla_x A_\parallel (x,t)|\le \frac{\|\mu\|^{1/2}_{Carl}}t.
\end{equation}
Let $\Delta\subset\mathbb R^{n-1}$ be a boundary ball of radius $r$. Let $T(\Delta)$ be the usual Carleson region associated with $\Delta$.

 To estimate the Carleson norm of $\mu^k$ in the region $T(\Delta)\cap \{X:\delta(X)<r/k\}$, a change of variables $(x,t)\mapsto (x,kt)$  together with the first
  the Carleson norm property in
  \eqref{ss2} gives an upper bound of $1/k^2$.
 In the region $T(\Delta)\cap \{X:\delta(X)\ge r/k\}$, 
we use the second estimate in \eqref{ss2} and altogether this gives:
\begin{equation}\label{ss3}
\|\mu^k\|_{Carl}\le \|\mu\|_{Carl}\frac{1+C(n)\log k}{k^2},\qquad\mbox{for some }C(n)>0.
\end{equation}
It follows that by choosing $k$ large enough we can make the Carleson norm of $\mu^k$ as small as we wish.
This observation will be crucial for what follows.

From now on let $B_\parallel=A^k_\parallel$ for some large fixed $k$ which will be determined later. Let 
\begin{equation}\label{e0axx}
{L} u= \mbox{\rm div}_\parallel(B_\parallel \nabla_\parallel u)+u_{tt},
\end{equation}
and we consider the Regularity problem for this operator on $\Omega=\mathbb R^n_+$. Our objective now is to solve the $L^q$ Regularity problem for $L$ for some $q>1$, thus proving  Theorem \ref{tblock}.

Suppose that $Lu=0$ and that $u\big|_{\partial\Omega}=f$ for some $f$ with $\nabla_x f\in L^q$. Let us recall \eqref{eqDer} but now applied to the block-form case. It follows that we have the following for $v_j=
\partial_j u$:
\begin{eqnarray}\label{system}
{ L}v_m&=&\sum_{i,j=1}^{n-1}\partial_i((\partial_m b_{ij})v_j)\quad\mbox{in }\Omega,\quad m=1,2,\dots,n-1,\\
v_m\Big|_{\partial\Omega}&=&\partial_m f.\nonumber
\end{eqnarray}
Observe that only $v_1,\dots, v_{n-1}$ appears in these equations and hence \eqref{system} is a {\it weakly coupled} fully determined system of $n-1$ equations for the unknown vector valued function $V=(v_1,v_2\dots,v_{n-1})$
with boundary datum $V\big|_{\partial\Omega}=\nabla_xf\in L^p$. We call this system {\it weakly coupled} because each $\partial_mb_{ij}$ appearing on the righthand side has small Carleson measure norm, which follows from \eqref{ss3} since $k$ will be chosen to be (sufficiently) large. 

In particular, Lemma \ref{ll1} applies here but because $v_n$ does not appear in \eqref{system} we will only have $N(\nabla_T u)=N(V)$ on the right-hand side. That is:
\begin{eqnarray}
\label{e8ss} && \int_{\partial{\mathbb
R}^n_+}S(V)^2\,dx \le C\int_{\partial{\mathbb
R}^n_+}|V|^2 dx+\|\mu^k\|_{Carl}\int_{\partial{\mathbb
R}^n_+}N(V)^2 dx,
\end{eqnarray}
where $\|\mu^k\|_{Carl}$ is the  (partial) Carleson norm  of coefficients of the operator $L^k=L$ as defined by \eqref{e5.55}. This requires revisiting some of the arguments in the proof of Lemma \ref{ll1} by checking that $\partial_t B$ only appears for the coefficients of the last row and column on the matrix $B=A^k_\parallel$. Hence those are all equal to zero and do not show up in the formula \eqref{e8ss}.\medskip

It remains to establish nontangential estimates of $N(V)$ since we would like to move such terms from the righthand side of \eqref{e8ss}. This follows from the following lemma from \cite{DHP} which we address in the appendix:

\begin{lemma}\label{S3:C7-alt1} Under the above assumptions on $L$, for sufficiently large $k$ we have 
that for any $p>1$ and $a>0$ there exists a 
constant $C>0$ such that 
\begin{equation}\label{S3:C7:E00oo}
\|{N}(V)\|_{L^{p}({\mathbb R}^{n-1})}\le C\|S(V)\|_{L^{p}({\mathbb R}^{n-1})}+C\|V\|_{L^{p}({\mathbb R}^{n-1})}.
\end{equation}
\end{lemma}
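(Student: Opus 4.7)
The plan is to adapt the good-$\lambda$ inequality technique used for scalar solutions in Lemma \ref{l2} to the weakly coupled vector system \eqref{system}, exploiting the smallness of the coupling's Carleson norm \eqref{ss3} for large $k$.

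First, I would fix $\lambda>0$ and a small parameter $\gamma>0$ and aim to prove a good-$\lambda$ inequality of the form
\begin{equation*}
|\{Q: N(V)(Q) > 2\lambda,\ (S(V)+M(V))(Q) \le \gamma \lambda\}| \le \varepsilon\, |\{Q: N(V)(Q) > \lambda\}|,
\end{equation*}
where $M$ denotes the Hardy--Littlewood maximal operator on $\mathbb{R}^{n-1}$ and $\varepsilon=\varepsilon(\gamma,\|\mu^k\|_{Carl})$ can be made arbitrarily small by choosing $\gamma$ small and $k$ large. Integration against $\lambda^{p-1}\,d\lambda$ then yields \eqref{S3:C7:E00oo}, the $C\|V\|_{L^p}$ term accounting for the $M(V)$ contribution, which is controlled in $L^p(\mathbb{R}^{n-1})$ by $\|V\|_{L^p}$ for $p>1$.

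Second, the good-$\lambda$ inequality reduces, via a Whitney decomposition of $\mathcal{O}_\lambda:=\{N(V)>\lambda\}$ into boundary balls $\{\Delta_j\}$, to local estimates on the Carleson regions $T(\Delta_j)$. On each I would establish an analogue of \eqref{e4.4} for the system: integrate by parts as in the proof of Lemma \ref{ll1} applied to each $v_m$, using the PDE \eqref{system} to rewrite the inhomogeneous solid term. The cross-coupling produces contributions of the shape
\begin{equation*}
C \|\mu^k\|_{Carl}^{1/2}\left(\int_{2\Delta_j} N(V)^2\,dx\right)^{1/2}\left(\iint_{T(2\Delta_j)} |\nabla V|^2\, t\,dX\right)^{1/2},
\end{equation*}
which, after AM--GM, are partly absorbed into the LHS and partly bounded by a small constant times $\int N(V)^2$. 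The hypothesis $S(V)(Q_j^*)\le \gamma\lambda$ at a good reference point $Q_j^*$ in $\Delta_j$ (which exists whenever the good-$\lambda$ set intersects $\Delta_j$) together with $M(V)(Q_j^*)\le \gamma\lambda$ controls both the interior square function term and boundary averages of $V$, forcing the bad set in $\Delta_j$ to occupy at most an $\varepsilon$-fraction.

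The main obstacle is the coupling: in contrast with Lemma \ref{l2}, where $Lu=0$ causes certain boundary-terms to vanish, here each $v_m$ is driven by the other components and the standard integration by parts produces unavoidable cross terms. Every such cross term, however, carries a factor $\partial_m b_{ij}$, and by \eqref{ss3} the partial Carleson norm $\|\mu^k\|_{Carl}$ can be made as small as desired by enlarging $k$. Consequently the coupling plays the role of a small-Carleson perturbation and can be absorbed, exactly as the smallness of $\|\mu\|_{Carl}$ is used to close the $L^2$ Dirichlet argument in \eqref{eeeee}--\eqref{eeee}. Once $k$ is chosen large enough to make $\varepsilon<1$, the good-$\lambda$ inequality yields \eqref{S3:C7:E00oo}.
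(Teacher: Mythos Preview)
Your overall strategy---a good-$\lambda$ inequality followed by integration in $\lambda$---matches the paper's approach. The gap is in the local step. You propose to establish on each Whitney region an analogue of \eqref{e4.4}, integrating by parts as in Lemma~\ref{ll1}; but \eqref{e4.4} and Lemma~\ref{ll1} bound the local \emph{square function} by the \emph{nontangential maximal function} plus lower-order terms. In a good-$\lambda$ argument for $N\lesssim S$ the required direction is the opposite: on a Whitney cube $\Delta_j$ of $\{N(V)>\lambda\}$ containing a point where $S(V)\le\gamma\lambda$, one must show that $\{N(V)>2\lambda\}\cap\Delta_j$ has small measure. An inequality of type \eqref{e4.4} cannot deliver this, and the boundary maximal function $M(V)$ of the trace does not by itself control interior values of $V$ in the cones.

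The paper supplies the missing ingredient via the Lipschitz-graph construction of the appendix: one builds the function $\hbar_{\nu,a}$ as in \eqref{h} and proves the local estimate Lemma~\ref{S3:L8-alt1zz}, which bounds $\int|V-\vec{c}|^2$ \emph{on the graph of} $\theta\hbar$ by $\|S_b(V)\|\cdot\|N_a(V-\vec{c})\|$, a Carleson-small term $\|\mu^k\|_{Carl}\big(\|N_a(V-\vec{c})\|^2+\|N_a(V)\|^2\big)$, and an interior piece that is killed by choosing $\vec{c}=V(q)$ at a corkscrew point. Combined with Corollary~\ref{S3:L6} and Lemma~\ref{Lw-u}, this yields a good-$\lambda$ inequality along the lines of Lemma~\ref{LGL}. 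Note also the role of the constant $\vec{c}$ stressed in the Remark after Lemma~\ref{S3:L8-alt1zz}: since $V-\vec{c}$ does not solve \eqref{system} (the inhomogeneity $\partial_i((\partial_m b_{ij})v_j)$ still carries $v_j$, not $v_j-c_j$), an extra term involving $N_a(V)$ rather than $N_a(V-\vec{c})$ appears in \eqref{TTBBMMxx}. This is where the smallness of $\|\mu^k\|_{Carl}$ (hence large $k$) is genuinely used, and it---rather than a boundary maximal function $M(V)$---is the source of the additional $\|V\|_{L^p}$ term in \eqref{S3:C7:E00oo}.
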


We can now combine Lemma \ref{S3:C7-alt1} with estimate \eqref{e8ss}. It follows as before that
\begin{eqnarray}\label{sss33}
\|{N}(V)\|_{L^{2}({\mathbb R}^{n-1})}&\le& C\|S(V)\|_{L^{2}({\mathbb R}^{n-1})}+C\|\nabla_x f\|_{L^{2}({\mathbb R}^{n-1})}\\\nonumber
&\le& C\|\nabla_x f\|_{L^{2}} + C\|\mu\|^k_{Carl} \|{N}(\vec\eta)\|_{L^{2}({\mathbb R}^{n-1})}.
\end{eqnarray}
For $k$ chosen so large that the constant $C\|\mu\|^k_{Carl}<1/2$ we then obtain
\begin{equation}\label{S3:C7:E00ooss}
\|{N}(\nabla_T u)\|_{L^{2}({\mathbb R}^{n-1})}=\|{N}(V)\|_{L^{2}({\mathbb R}^{n-1})}\le 2C\|\nabla_x f\|_{L^{2}({\mathbb R}^{n-1})}.
\end{equation}
This estimate nearly establishes that the $L^2$ Regularity problem for $L=L_k$ is solvable, the only missing part is the corresponding nontangential estimate for $v_n=\partial_tu$. \medskip

We start with the square function estimate for $v_n$. Since we already have estimates for $S(V)$ the only remaining term that needs an estimate is
$\iint_{\mathbb R^n_{+}}|\partial_{tt}u|^2tdt\,dx$. Since $Lu=0$ this equation shows that
\begin{equation}
\iint_{\mathbb R^n_{+}}|\partial_{tt}u|^2tdt\,dx= \iint_{\mathbb R^n_{+}}\sum_{i,j,s,r<n}\partial_i(b_{ij}\partial_ju)\partial_s(b_{sr}\partial_ru)t\,dt\,dx
\end{equation}
$$\le C\|S(\nabla_Tu)\|^2_{L^2(\mathbb R^{n-1})}+C\iint_{\mathbb R^n_{+}}|\nabla_x B_\parallel|^2|\nabla_Tu|^2t\,dt\,dx$$
$$\le C\|S(\nabla_Tu)\|^2_{L^2(\mathbb R^{n-1})}+C\|\mu^k\|_{Carl}\|N(\nabla_Tu)\|^2_{L^2(\mathbb R^{n-1})}.$$

It is again possible to establish an analogue of Lemma \ref{S3:C7-alt1} for $v_n$ (c.f. \cite{DHP}).
\begin{equation}\label{S3:C7:E00oozz}
\|N(v_n)\|_{L^p(\mathbb R^{n-1})}\le C\|S(v_n)\|_{L^p(\mathbb R^{n-1})}+C\||\nabla B|^2t\|_{Carl}\|N(\nabla_T u)\|_{L^p(\mathbb R^{n-1})}. 
\end{equation}
Here the Carleson norm in the above estimate might not be small as it involves the $\partial_t$ derivative of $B$. That is not an issue however since we already have bounds of $\|N(\nabla_T u)\|_{L^2(\mathbb R^{n-1})}$ and $\|S(\nabla_T u)\|_{L^2(\mathbb R^{n-1})}$ by $C\|\nabla_x f\|_{L^{2}}$ from \eqref{S3:C7:E00ooss}.
Hence also $|N(v_n)\|_{L^2(\mathbb R^{n-1})}\lesssim \|\nabla_x f\|_{L^{2}}$ and therefore the Regularity problem in $L^2$ for $L=L_k$ is solvable on $\mathbb R^{n}_+$.
As this also implies solvability for $L_0$, the argument is complete. \qed


\section{The Neumann problem}

We first consider the Neumann problem in dimension $n=2$ with the large Carleson condition imposed on the coefficients of our matrix. 
The solvability of the Neumann problem
can be reduced to solvability of the Regularity problem using an observation in \cite{KR}; namely, if $u$
solves $ Lu=\div(A\nabla u)=0$ in a Lipschitz domain $\Omega$ then $\tilde{u}$ uniquely (modulo constants) defined via
\begin{equation}\label{bmatrix}
\begin{bmatrix}
0 & -1\\ 1 & 0
\end{bmatrix}\nabla{\tilde{u}}=A\nabla u
\end{equation}
solves the equation $\tilde{ L}u=\div(\tilde{A}\nabla u)=0$ with $\tilde{A}=A^t/\det{A}$ and the tangential derivative of $u$ on $\partial\Omega$ is the co-normal derivative of $\tilde{u}$ on $\partial\Omega$ and vice-versa. 

If $A$ satisfies the Carleson condition \eqref{carlMMM} then so does $A^t/\det{A}$ (with a possibly larger constant) and hence the $L^p$ Neumann problem for a given matrix $A$ is solvable in the same range $1<p<p_{max}$ for which the $L^p$ Regularity problem for the matrix $A^t/\det{A}$ is solvable. The range of solvability for the operator with matrix $A^t/\det{A}$ is determined by the range of solvability of 
the Dirichlet problem for its adjoint operator, which has matrix $A/\det{A}$, reducing the second claim of Theorem \ref{RNduality} to the first (about the Regularity problem) and we have already shown that.

In summary, in two dimensions, the solvability of the Neumann problem can be deduced from solvability of the Regularity problem for a related operator whose coefficients also satisfy the Carleson condition.

The large Carleson condition case for the Neumann problem is open in dimensions larger than two
even in smooth domains.\medskip

Next, we consider the Neumann problem under the small Carleson condition, where the results from \cite{DPR} apply and give us 
solvability of $(N)_p$ for all $1<p<\infty$ in all dimensions.\medskip

For simplicity we only outline here the case $p=2$, that is the $(N)_2$ Neumann problem. The full proof for all $p$ can be found in \cite{DPR} but its main idea is already contained in the $p=2$ case presented below.

Again, our standard reductions apply and we can focus on the case of $L$ on $\mathbb R^n_+$ satisfying the small Carleson condition \eqref{CCalt}. By the considerations of the previous section, we know that if 
the Carleson norm of coefficients is small enough then
$(R)_2$ BVP is solvable. This gives us the estimate
\begin{equation}\label{e7b}
\|N(\nabla u)\|_{L^2(\partial {\mathbb R}^n_+)} \lesssim
\|\nabla_T f\|_{L^2(\partial {\mathbb R}^n_+)}.
\end{equation}
We want to control $\|N(\nabla u)\|_{L^2(\partial {\mathbb
R}^n_+)}$ in terms of the co-normal derivative $A\nabla u\cdot
\nu\big|_{\partial\Omega}$. However, since in our case
$\Omega={\mathbb R}^n_+$ this is just
$$H\big|_{\{t=0\}},\qquad\text{where:}\qquad H=a_{ni}\partial_i u.$$
In the light of (\ref{e7b}) it suffices to prove these two inequalities:
\begin{equation}\label{e7c}
\|\nabla_T f\|_{L^2(\partial {\mathbb R}^n_+)}\lesssim
\|S(H)\|_{L^2(\partial {\mathbb R}^n_+)},
\end{equation}
and
\begin{equation}\label{e7d}
\|S(H)\|_{L^2(\partial {\mathbb R}^n_+)}\lesssim
\|H\|_{L^2(\partial {\mathbb R}^n_+)},
\end{equation}
for sufficiently small Carleson norm of coefficients. Then \eqref{e7b}-\eqref{e7d} together imply that $(N)_2$ is solvable.
\vglue2mm

We start with the estimate (\ref{e7c}). Denote again by
$v_k=\partial_ku$ for $k=1,2,\dots,n$. For each $k\le n-1$ we have
\begin{eqnarray}
\nonumber \int_{{\mathbb R}^{n-1}}|v_k(x,0)|^2dx&=&-\iint_{{\mathbb
R}^{n}_+}\partial_n(|v_k|^2)(X)\,dX\\
\label{NPe2} &=&-2\iint_{{\mathbb R}^{n}_+}v_k(\partial_nv_k)\,dX.
\end{eqnarray}

Since $\partial_nv_k=\partial_kv_n$ we have that this equals to

\begin{eqnarray}
&&-2\iint_{{\mathbb R}^{n}_+}v_k(\partial_kv_n)\,dX\nonumber\\
&=&-2\iint_{{\mathbb
R}^{n}_+}v_k\partial_k\left(\frac{a_{ni}}{a_{nn}}v_i\right)\xi\,dX+2\sum_{i<n}\iint_{{\mathbb
R}^{n}_+}v_k\partial_k\left(\frac{a_{ni}}{a_{nn}}v_i\right)\,dX.\label{NPe3}
\end{eqnarray}

The second term of (\ref{NPe3}) can be further written as
\begin{eqnarray}
2\sum_{i<n}\iint_{{\mathbb
R}^{n}_+}v_kv_i\partial_k\left(\frac{a_{ni}}{a_{nn}}\right)\,dX+\sum_{i<n}\iint_{{\mathbb
R}^{n}_+}\partial_i(|v_k|^{2})\frac{a_{ni}}{a_{nn}}\,dX.\label{NPe4}
\end{eqnarray}
We introduce $(\partial_n t)$ into the both terms of (\ref{NPe4})
and integrate by parts. This gives us:

\begin{eqnarray}
&&-\sum_{i<n}\left[2\iint_{{\mathbb
R}^{n}_+}\partial_n\left(v_kv_i\partial_k\left(\frac{a_{ni}}{a_{nn}}\right)\right)
t\,dX+\iint_{{\mathbb
R}^{n}_+}\partial_n\left(\partial_i(|v_k|^{2})\frac{a_{ni}}{a_{nn}}\right)
t\,dX\right]\nonumber\\
&=&-\sum_{i<n}\left[2\iint_{{\mathbb
R}^{n}_+}(\partial_nv_k)v_i\partial_k\left(\frac{a_{ni}}{a_{nn}}\right)
t\,dX+2\iint_{{\mathbb
R}^{n}_+}v_k(\partial_nv_i)\partial_k\left(\frac{a_{ni}}{a_{nn}}\right)
t\,dX                 \right. \nonumber\\
&&\hskip2mm +\left. \iint_{{\mathbb
R}^{n}_+}\partial_i(|v_k|^{2})\partial_n\left(\frac{a_{ni}}{a_{nn}}\right)
t\,dX\right]\label{NPe5}\\
&&-\sum_{i<n}\left[2\iint_{{\mathbb
R}^{n}_+}v_kv_i\partial_n\partial_k\left(\frac{a_{ni}}{a_{nn}}\right)
t\,dX+\iint_{{\mathbb
R}^{n}_+}\partial_n\partial_i(|v_k|^{2})\frac{a_{ni}}{a_{nn}}
t\,dX\right]\nonumber.
\end{eqnarray}

The last two terms can be integrated by parts one more time as we
switch the order of derivatives. This gives
\begin{eqnarray}
&&\sum_{i<n}\left[2\iint_{{\mathbb
R}^{n}_+}\partial_k\left(v_kv_i\right)\partial_n\left(\frac{a_{ni}}{a_{nn}}\right)
t\,dX+\iint_{{\mathbb
R}^{n}_+}\partial_i\left(\frac{a_{ni}}{a_{nn}}\right)\partial_n
(|v_k|^2) t\,dX\right].\label{NPe6}
\end{eqnarray}

The first three terms on the righthand side of (\ref{NPe5}) and
both terms of (\ref{NPe6}) can be bounded from above by

\begin{eqnarray}
&&\hskip-10mm C\iint_{{\mathbb R}^{n}_+}|\nabla u||\nabla^2 u||\nabla
A|t\,dX\label{NPe10}\\
&\le& \left(\iint_{{\mathbb R}^{n}_+}|\nabla^2
u|^2t\,dX\right)^{1/2}\left(\iint_{{\mathbb R}^{n}_+}|\nabla
u|^{2}|\nabla A|^2t\,dX\right)^{1/2}\nonumber\\&+&\|\mu\|_{Carl}\|S(\nabla
u)\|_{L^2({\partial\mathbb R}^{n}_+)}\|N(\nabla u)\|_{L^2({\partial\mathbb
R}^{n}_+)}.\nonumber
\end{eqnarray}
The first term of (\ref{NPe3}) can be written as

\begin{eqnarray} \nonumber &&-2\iint_{{\mathbb
R}^{n}_+}v_k\partial_k\left(\frac{H}{a_{nn}}\right)\,dX=-2\iint_{{\mathbb
R}^{n}_+}v_k\partial_k\left(\frac{H}{a_{nn}}\right)(\partial_n
t)\,dX\\
&=& 2\iint_{{\mathbb
R}^{n}_+}(\partial_nv_k)\partial_k\left(\frac{H}{a_{nn}}\right)
t\,dX+ \label{NPe7} 2\iint_{{\mathbb
R}^{n}_+}v_k\partial_n\partial_k\left(\frac{H}{a_{nn}}\right)
t\,dX,
\end{eqnarray}
where the last term further yields:
\begin{eqnarray}
&&\label{NPe8} -2\iint_{{\mathbb
R}^{n}_+}(\partial_kv_k)\partial_n\left(\frac{H}{a_{nn}}\right)
t\,dX.
\end{eqnarray}

If the derivative in the first two terms on the righthand side of
(\ref{NPe7}) and (\ref{NPe8}) falls on the coefficients of the
matrix $A$ we obtain terms we have already bounded above as in
\ref{NPe10}. If the derivative falls on $H$ the first terms on
the righthand side of both (\ref{NPe7}) and (\ref{NPe8}) are
bounded by
\begin{eqnarray}
\nonumber C\iint_{{\mathbb R}^{n}_+}|\nabla v_k||\nabla H|
t\,dX&\le& C\left(\iint_{{\mathbb R}^{n}_+}|\nabla v_k|^2
t\,dX\right)^{1/2} \left(\iint_{{\mathbb R}^{n}_+}|\nabla H|^2
t\,dX\right)^{1/2}\\\nonumber &\le&C\|S(v_k)\|_{L^2(\partial{\mathbb
R}^{n}_+)}\|S(H)\|_{L^2({\partial\mathbb R}^{n}_+)}.
\end{eqnarray}

Summing over all $k<n$ this yields a global estimate
\begin{eqnarray}
\nonumber \int_{\partial{\mathbb R}^{n}_+}|\nabla_T u|^2dx&\le&
C\|S(v_k)\|_{L^2(\partial{\mathbb R}^{n}_+)}
\|S(H)\|_{L^2(\partial{\mathbb R}^{n}_+)}\\&+& \|\mu\|_{Carl}\|S(\nabla
u)\|_{L^2(\partial{\mathbb R}^{n}_+)}\|N(\nabla
u)\|_{L^p(\partial{\mathbb R}^{n}_+)}.\nonumber
\end{eqnarray}

From this by (\ref{e7b}) and using Lemma \ref{ll3} we get that for
all sufficiently small norm of $\|\mu\|_{Carl}$ the desired estimate
(\ref{e7c}) holds.\vglue2mm

We now look at the estimate (\ref{e7d}). By (\ref{e7c}) we know that for sufficiently large $K>0$ the inequality
\begin{eqnarray} &&
\int_{\partial{\mathbb R}^{n}_+} N^{2}(\nabla u)\,dx+
\iint_{{\mathbb R}^{n}_+}|\nabla H|^2t\,dX\label{NPe16a} \le K
\iint_{{\mathbb R}^{n}_+}|\nabla H|^2t\,dX
\end{eqnarray}
holds. Clearly,
\begin{eqnarray} \nonumber
K\iint_{{\mathbb R}^{n}_+}|\nabla
H|^2t\,dX\approx\iint_{\R^n_+}b_{ij}(\partial_iH)(\partial_jH)
t\,dX,
\end{eqnarray}
for some matrix $B$ satisfying the ellipticity condition to be
specified later (but we now set $b_{nn}=1$). We use \eqref{NPe16a} and apply Lemma \ref{lgeneral} to $H$. This gives:

\begin{eqnarray} &&\hskip10mm
\int_{\partial{\mathbb R}^{n}_+} N^{2}(\nabla u)\,dx+
\iint_{{\mathbb R}^{n}_+}|\nabla H|^2t\,dX\label{NPe16ax} \\&\le&\nonumber C\int_{\partial{\mathbb R}^n_+}|H|^2dX+C\|\mu\|_{Carl}\int_{\partial{\mathbb
R}^n_+}N(H)^2 dx-C\iint_{{\mathbb R}^n_+}\frac1{a_{nn}}(\tilde LH)Ht\,dX.
\end{eqnarray}
By assuming that $C\|\mu\|_{Carl}<\frac12$ we then get that

\begin{eqnarray} &&\nonumber
\frac12\int_{\partial\R^n_+} N^{2}(\nabla u)\,dx+
\iint_{\R^n_+}|\nabla H|^2t\,dX\\\label{NPe24} &\le&
C\int_{\partial\R^n_+}|H|^{2}\,d\sigma-
C\iint_{\R^n_+}H(\widetilde{L}H) t\,dX.
\end{eqnarray}
Here $\widetilde{L}H=\mbox{ div}(B\nabla H)$. Clearly,
(\ref{NPe24}) implies the desired estimate (\ref{e7d}) modulo the last
extra term which we shall consider now.\vglue2mm

Using the summation convention, i.e., only writing sums
whenever the sum is not taken over all indices, we have 
\begin{eqnarray} &&\nonumber \widetilde{L}H=\partial_i(b_{ij}\partial_j
H)=\sum_{j<n} \partial_i(b_{ij}\partial_j(a_{nk} \partial_k u))+
\partial_i(b_{in}\partial_n(a_{nk} \partial_k u)).
\end{eqnarray}
Since $Lu=0$ we know that $\partial_n(a_{nk} \partial_k
u)=-\sum_{j<n}\partial_j(a_{jk} \partial_k u)$. Hence
\begin{eqnarray} &&\nonumber \widetilde{L}H=\partial_i(b_{ij}\partial_j
H)=\sum_{j<n} [\partial_i(b_{ij}\partial_j(a_{nk} \partial_k u))-
\partial_i(b_{in} \partial_j(a_{jk} \partial_k u))].
\end{eqnarray}
We also swap the role of $i$ and $k$ in the second term. From this
\begin{eqnarray} &&\label{NPe25}\widetilde{L}H=\partial_i(b_{ij}\partial_j
H)=\sum_{j<n} [\partial_i(b_{ij}\partial_j(a_{nk} \partial_k u))-
\partial_k(b_{kn} \partial_j(a_{ji} \partial_i u))].
\end{eqnarray}
We choose $b_{ij}=a_{ji}/a_{nn}$. Notice that this guarantees that
$b_{nn}=1$ as desired. We first look at the terms in (\ref{NPe25}) where all three derivatives
fall on $u$. We claim that such terms all cancel out since they are:
\begin{eqnarray} &&\label{NPe26}\\&&\nonumber\sum_{j<n} [b_{ij}a_{nk} (\partial_i\partial_j\partial_k u)-
b_{kn}a_{ji} (\partial_i\partial_j\partial_k
u)]=\sum_{j<n}a_{nn}^{-1}(a_{ji}a_{nk}-a_{nk}a_{ji})\partial_i\partial_j\partial_k
u=0.
\end{eqnarray}

It follows that the last term of (\ref{NPe24}) can be written as
\begin{eqnarray} &&\label{NPe27}
\iint_{\R^n_+}\sum_{j<n}[b_{ij}(\partial_i\partial_j
a_{ij})(\partial_k u)-b_{kn}(\partial_k\partial_j
a_{ji})(\partial_i u)]H t dX
\end{eqnarray}
plus terms with bound:
\begin{eqnarray} &&\label{NPe28}
\iint_{\R^n_+}|\nabla u|[|\nabla u||\nabla A||\nabla B|+|\nabla^2
u||\nabla A||B|+|\nabla^2 u||\nabla B||A|]t\,dX.
\end{eqnarray}
Terms (\ref{NPe27}) have two derivatives on coefficients $a_{ij}$
however one is $\partial_j$, $j<n$. We therefore integrate by
parts in $\partial_j$. This yields additional terms, but all are
of the form that can be bounded by (\ref{NPe28}). Clearly the terms in (\ref{NPe28}) are
bounded by $\|\mu\|_{Carl}[\|N(\nabla u)\|^2_{L^2}+\|N(\nabla
u)\|_{L^2}\|S(\nabla u)\|_{L^2}]$. Hence for sufficiently
small Carleson norm of $\mu$ these terms can absorbed in (\ref{NPe24})
within the term $\frac12\int_{\partial{\mathbb R^n_+}} N^{2}(\nabla u)\,dx$ on the lefthand side.
This yields the desired estimate (\ref{e7d}) and concludes our proof.\qed

\section{Weaker Geometric Conditions on the boundary of the domain}
\setcounter{equation}{0}

We conclude this survey by pointing out a few of the exciting developments in elliptic theory in the 
setting of rough domains. The domains under investigation satisfy a variety of geometric conditions strictly weaker than the Lipschitz condition, 
and the question is how much of the existing theory of harmonic measure, or even beyond to the case of elliptic measure, as well
as solvability of various boundary value problems, extends to such domains.

We do not pretend to give a thorough overview in this survey paper of this decades long developing, but now rather fast moving, area. Rather, we highlight some of the striking developments and provide a few
references for further reading. In particular, the recent paper \cite{MPT} deals with uniformly rectifiable domains satisfying a corkscrew condition and gives a very nice historical overview with a lot of the important references, especially regarding the Regularity problem. The paper \cite{HMMTZ}, whose main results are
 mentioned below, also has a fairly comprehensive introduction to developments connecting regularity of the elliptic measure (the $A_\infty$ condition) to geometric 
 properties of the boundary.

To motivate the definitions coming up, we note first a hierarchy of domains that are defined by some geometric conditions that are natural in the context of
 the harmonic or elliptic theory. 
\medskip

A {\it uniform domain} satisfies the interior corkscrew condition and the Harnack chain condition.

An {\it nontangentially accessible (NTA) domain} is a uniform domain that satisfies the exterior corkscrew condition. (\cite{JKnta})

A {\it chord arc domain} is an NTA domain whose boundary is Ahlfors-David (AD) regular.

A uniform domain whose boundary is uniformly rectifiable (\cite{DS}) satisfies an exterior corkscrew condition and is therefore chord arc. (See Theorem 
\ref{rectchordarc} below.)
\medskip

The investigation into boundedness of singular integrals, properties of harmonic measure, and regularity of solutions to boundary value problems started decades ago and is converging towards a rather complete theory for elliptic operators of the form $ L=\div(A\nabla)$ with coefficients satisfying the Carleson condition 
\eqref{CCalt}.  This condition on coefficients can also be replaced by the hypotheses that
\begin{enumerate}
\item $A$ is Lipschitz and $\delta(X)\nabla A(X)$ is bounded, and 
\item $\delta(X) |\nabla A(X)|^2 dX$ is a Carleson measure.
\end{enumerate}

As we have seen, there are major differences in the 
nature of the results one might expect under the different assumptions: namely, that Carleson norm of the coefficients is
arbitrarily large, and that the Carleson norm has the vanishing property or is sufficiently small. In the latter case, solvability of the boundary value problem in
all $L^p$ spaces, $1<p<\infty$ is expected, whereas under the former assumption, only in some $L^p$ space.

\subsection{Definitions} 

We now define some of the geometric properties referred to above.

\smallskip

A closed set $E \subset {\mathbb R}^{n}$ is $n-1$-dimensional AD-regular  if
there is some uniform constant $C$ such that for $\sigma=H^{n-1}$ (the $n-1$ dimensional Hausdorff measure)
\begin{equation} \label{eq1.ADR}
\frac1C\, r^{n-1} \leq \sigma(E\cap B(Q,r)) \leq C\, r^{n-1},\,\,\,\forall r\in(0,R_0),Q \in E,
\end{equation}
where $R_0$ is the diameter
of $E$ (which may be infinite).

As we shall only be discussing $n-1$-dimensional regularity, we subsequently drop the reference to $n-1$.

\smallskip

A measure $\mu$ is {\it uniformly rectifiable} if it is AD regular and there exist constants $\theta,M>0$ such that for each  $x$ in the support of $\mu$ and each $r$ smaller than the diameter of the support of $\mu$, there is a Lipschitz mapping $g$, with Lipschitz constant less than $M$, from the ball $B(0,R)$ to 
$\mathbb R^{n-1}$ satisfying the bound $\mu\big(B(x,r)\cap g(B(0,r))\big)\geq\theta r^{n-1}.$

\smallskip

A set $E \subset {\mathbb R}^{n}$ is uniformly rectifiable if $\sigma=H^{n-1}$ is uniformly rectifiable.

\smallskip

A domain $\Omega\subset {\mathbb R}^{n}$
satisfies the {\it corkscrew condition} if for some uniform constant $c>0$ and
for every surface ball $\Delta:=\Delta(Q,r),$ with $Q\in \partial\Omega$ and
$0<r<\diam(\partial\Omega)$, there is a ball
$B(x_\Delta,cr)\subset B(Q,r)\cap\Omega$.  The point $x_\Delta\subset \Omega$ is called
a {\it corkscrew point relative to} $\Delta$ (or, relative to $B$).

\smallskip

A domain
$\Omega$ satisfies the {\it Harnack Chain condition} if there is a uniform constant $C$ such that
for every $\rho >0,\, \Lambda\geq 1$, and every pair of points
$x,x' \in \Omega$ with $\delta(x),\,\delta(x') \geq\rho$ and $|x-x'|<\Lambda\,\rho$, there is a chain of
open balls
$B_1,\dots,B_N \subset \Omega$, $N\leq C(\Lambda)$,
with $x\in B_1,\, x'\in B_N,$ $B_k\cap B_{k+1}\neq \emptyset$
and $C^{-1}\diam (B_k) \leq \dist (B_k,\partial\Omega)\leq C\diam (B_k).$  The chain of balls is called
a {\it Harnack Chain}.

\smallskip

As we stated at the beginning of this section, a domain $\Omega$ that satisfies both the corkscrew and Harnack Chain conditions is a
 {\it uniform domain}, and is also called a {\it 1-sided NTA domain}; the class of NTA and chord arc domains have also been defined above.

\smallskip

\subsection{Further results on rough domains}

We now give a sampling of some of the recent work on boundary value problems in the setting of rough domains.  We begin by noting that since hypotheses (1) and (2) are preserved on Lipschitz subdomains, the Dirichlet problem solved in \cite{KP} is also known to be solvable on chord arc domains.
This was pointed out in \cite{HMMTZ}, where
the authors went on to establish, for uniform domains whose boundary is AD regular,  the
equivalence between uniform rectifiability of the boundary and regularity of elliptic measure for operators satisfying conditions (1) and (2) above. This 
landmark result had been previously established first for the Laplacian and then under the assumption of smallness of the Carleson norm of the coefficients.
\medskip

\begin{theorem}[\cite{HMMTZ}]\label{rectchordarc}
	Let $\Omega\subset \mathbb R^n$, $n\ge 3$, be a uniform domain with AD regular boundary and set $\sigma=H^{n-1}|_{\partial \Omega}$. Let $A$ be a (not necessarily symmetric) uniformly elliptic matrix on $\Omega$ satisfying assumptions (1) and (2) . Then the following are equivalent:
\begin{itemize}
		\item\label{1-thm-main} The elliptic measure $\omega_L$ associated with the operator $ L=\mbox{div} (A\nabla)$ is of class $A_\infty$ with respect to the surface measure. 
		\item\label{3-thm-main} $\partial \Omega$ is uniformly rectifiable.
		\item\label{2-thm-main} $\Omega$ is a chord-arc domain.
		\end{itemize} 
\end{theorem}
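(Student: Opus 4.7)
The plan is to split the three-way equivalence into a purely geometric part $(ii)\Leftrightarrow(iii)$ (given the standing hypothesis that $\Omega$ is uniform with AD regular boundary) and an analytic part $(i)\Leftrightarrow(ii)$. The geometric equivalence is classical: $(iii)\Rightarrow(ii)$ follows because a chord-arc domain is NTA with AD regular boundary, and the boundary of such a domain is automatically uniformly rectifiable (essentially David-Jerison and David-Semmes). For $(ii)\Rightarrow(iii)$, since $\Omega$ is already uniform, only the exterior corkscrew condition needs to be produced; this is supplied by Azzam-Hofmann-Martell-Mourgoglou-Tolsa type results asserting that a uniform domain whose boundary is AD regular and uniformly rectifiable must satisfy exterior corkscrew. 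Hence $\Omega$ is NTA with AD regular boundary, i.e., chord-arc.

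For the main analytic direction $(ii)\Rightarrow(i)$, I would appeal to Theorem~\ref{T:Ainfty}: it suffices to verify that for every Borel $E\subset\Delta\subset\partial\Omega$ the solution $u$ with datum $\chi_E$ satisfies the uniform Carleson bound \eqref{e1}. The mechanism is a corona decomposition of $\partial\Omega$ produced by the uniform rectifiability hypothesis: the dyadic boundary cubes split into a Carleson-packed family of ``stopping'' cubes, plus a countable collection of coherent trees on each of which $\partial\Omega$ is well approximated, at every scale, by a Lipschitz graph with uniformly small Lipschitz constant. Over each tree, one constructs an associated sawtooth Lipschitz subdomain $\Omega_\mathcal{T}\subset\Omega$, and observes that the hypotheses (1) and (2) on the coefficient matrix $A$ survive the localization with uniformly controlled constants (this is the point where one critically uses that the Carleson condition is defined via $\delta(X)$, and that $\delta_{\Omega_\mathcal{T}}\gtrsim \delta_\Omega$ away from the stopping generation). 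On each $\Omega_\mathcal{T}$ one invokes Theorem~\ref{Dirlarge} to produce an $A_\infty$ estimate for the elliptic measure of $L$ on $\Omega_\mathcal{T}$, which translates (via the Riesz representation and Theorem~\ref{T:Ainfty} applied inside $\Omega_\mathcal{T}$) into a local Carleson estimate for $|\nabla u|^2\delta\,dX$ on a sawtooth region. Summing these local estimates against the Carleson packing of the stopping cubes, and controlling the transitions between trees using the maximum principle together with standard Caccioppoli estimates, yields the global bound \eqref{e1}. I expect this packing/patching step to be the main obstacle, since one must carefully pass from nontangential control of $u$ on each $\Omega_\mathcal{T}$ to gradient estimates in $\Omega$ and then sum them coherently across scales.

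Finally, $(i)\Rightarrow(ii)$ is a free-boundary type implication. Starting from $\omega_L\in A_\infty(\sigma)$, a first step is to use the perturbation Theorem~\ref{T:perturbation2} to transfer this to the Laplacian (or at least to an operator with a simpler structure), for which UR characterizations of $A_\infty$ of harmonic measure on uniform domains with AD regular boundary are available (Hofmann-Martell and follow-ups). The bridge is provided by extracting from \eqref{e1}, applied to solutions with $\chi_E$ data, quantitative $\varepsilon$-approximation of the Green function by affine objects on Whitney cubes, which via comparison with surface measure and $L^2$-boundedness of the Riesz transform with respect to $\sigma$ (Nazarov-Tolsa-Volberg) forces uniform rectifiability of $\partial\Omega$. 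Combining $(ii)\Rightarrow(iii)$ at the end closes the cycle.
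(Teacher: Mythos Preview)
The paper does not prove this theorem; it is stated in the survey Section~7 as a result quoted from \cite{HMMTZ}, so there is no ``paper's own proof'' to match. That said, two substantive comments on your sketch are in order.

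Your treatment of the geometric equivalence $(ii)\Leftrightarrow(iii)$ and of the direction $(iii)\Rightarrow(i)$ is essentially correct and in line with what the paper indicates just before the statement: hypotheses (1) and (2) are inherited by Lipschitz subdomains, so the Kenig--Pipher Lipschitz result (Theorem~\ref{Dirlarge}) transfers to chord-arc domains via a corona/sawtooth construction. Note, though, that the clean route is $(ii)\Rightarrow(iii)\Rightarrow(i)$ rather than $(ii)\Rightarrow(i)$ directly: once you know $\Omega$ is chord-arc you have interior \emph{and} exterior corkscrews, which is what makes the approximation by Lipschitz subdomains and the patching of local $A_\infty$ estimates go through cleanly.

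The genuine gap is in your argument for $(i)\Rightarrow(ii)$. You propose to invoke Theorem~\ref{T:perturbation2} to pass from $L$ to the Laplacian. That theorem requires the disagreement $\epsilon(X)=A(X)-I$ to satisfy the Carleson condition \eqref{E:perturbation}, i.e.\ that $\delta(X)^{-1}|A(X)-I|^2\,dX$ be Carleson. Hypotheses (1) and (2) control only $|\nabla A|$, not $|A-I|$; for instance $A$ could be a constant elliptic matrix far from the identity, in which case $|A-I|^2/\delta$ is certainly not Carleson. So the reduction to $\Delta$ fails as stated, and one cannot simply import the harmonic free-boundary results. This is precisely the direction that is new in \cite{HMMTZ}: the proof there does not perturb to the Laplacian but instead works directly with the operator $L$, exploiting (1) and (2) to control oscillation of solutions and ultimately produce exterior corkscrews (equivalently, uniform rectifiability) from the $A_\infty$ hypothesis. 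Your closing sentence about $\varepsilon$-approximation of the Green function and Riesz transforms gestures in the right direction, but the perturbation shortcut you lead with is not available.
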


Additionally, the solvability of the Dirichlet problem for operators satisfying (1) and (2) holds 
 in domains even weaker than chord arc. Although not explicit in \cite{AHMMT}, the authors
 of \cite{MPT} note that the result stated in \cite{AHMMT} for harmonic functions holds for this class of operators on the domains considered
 there and defined by the  {\it Interior Big Pieces of Chord Arc Domains} 
(IBPCAD) condition. See \cite{AHMMT} for the definition.
 
\medskip

Recently, and building on a delicate construction in \cite{MT} for harmonic functions, more progress has been
made on the Regularity problem for the class of operators satisfying (1) and (2).
 In \cite{MPT}, it is shown that the Regularity problem  in $L^p$ is solvable for elliptic operators whose matrix satisfies a (large) Carleson measure condition in domains
that are rougher than Lipschitz, assuming
solvability of a Dirichlet problem for the conjugate index $p'$.
From this, the authors are able to conclude solvability of $(R)_p$ for some $p>1$ 
for this class of operators on chord arc domains (or even weaker) domains.
On the domains they consider, it is necessary to work with the Haj\l{}asz-Sobolev space. This was identified in \cite{MouT}, where
results like those below were obtained for the Regularity problem for the Laplacian. In both papers, a corona-type decomposition of the domain is 
the foundational tool. In the case of \cite{MPT}, the decomposition of \cite{MT} has been modified in order
 to use the solvability of the Regularity problem from \cite{DPR} (for the weaker oscillation condition 
\eqref{carlM} on the matrix) as a black box in the Lipschitz subdomains they construct.

\medskip

\begin{theorem}[\cite{MPT}  $(D^*)_{p'} \Longrightarrow (R)_p$]\label{thm.regularity} Let $\Omega\subset\mathbb R^{n}$, $n\geq2$, be a bounded domain satisfying the corkscrew condition and with uniformly rectifiable boundary. Let $p\in(1,\infty)$, $p'$ its H\"older conjugate, and $ L=\mbox{div} (A\nabla)$, where $A$ 
satisfies \eqref{CCalt}.  Suppose that $(D^*)_{p'}$ is solvable in $\Omega$. Then $(R)_p$ is solvable in $\Omega$, and the constants in 
the norm bound depend only on ellipticity constants, $p$, $n$, the corkscrew constant, the uniform rectifiability constants, the constant in \eqref{CCalt}, and the 
$(D^*)_{p'}$ constant.
\end{theorem}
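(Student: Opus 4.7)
The proof plan is to adapt the duality machinery developed in the proof of Theorem \ref{t1} to the rough setting by means of a corona decomposition of $\partial\Omega$, using the Lipschitz case as a black box on the resulting pieces. Fix $f$ in a suitable dense subspace with $\nabla_T f \in L^p(\partial\Omega)$ and let $u$ solve $Lu=0$ with $u|_{\partial\Omega}=f$. Following Lemma \ref{l1bb}, the target estimate $\|\tilde{N}(\nabla u)\|_{L^p} \lesssim \|\nabla_T f\|_{L^p}$ reduces by duality to controlling $\iint_{\Omega} \nabla u \cdot \vec{h}\, dZ$ for test vector fields $\vec{h}$ of the special tent-space form with $\|g\|_{L^{p'}}=1$. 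As in Theorem \ref{t1}, solve the inhomogeneous adjoint problem $L^* v = \mathrm{div}(\vec{h})$ with $v|_{\partial\Omega}=0$; by the hypothesis $(D^*)_{p'}$ together with Lemma \ref{l2aa}, the solution $v$ inherits the full suite of estimates $\|N(v)\|_{L^{p'}} + \|\tilde{N}(\delta\nabla v)\|_{L^{p'}} + \|S(v)\|_{L^{p'}} \lesssim 1$.

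To carry out the subsequent integrations by parts in the absence of a flat boundary, I would invoke the corona decomposition of $\partial\Omega$ as a UR set, in the refined form used in \cite{MT} and \cite{MPT}: this produces a family of stopping-time regions $\{S\}$ of boundary cubes satisfying a uniform Carleson packing condition, and for each $S$ an interior \emph{Lipschitz} chord-arc subdomain $\Omega_S \subset \Omega$ whose boundary agrees with $\partial\Omega$ on a large portion of the cubes of $S$ and is a smooth Lipschitz graph elsewhere, with Lipschitz character controlled by the UR constants alone. Inside each $\Omega_S$, $L$ continues to satisfy the Carleson oscillation condition \eqref{CCalt} with norm controlled by the original norm plus a contribution bounded by the corona parameters. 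Since $(D^*)_{p'}$ on $\Omega$ propagates to chord-arc subdomains via the comparability of elliptic measures, Theorem \ref{t1} combined with Theorem \ref{tblock} supplies solvability of the $L^p$ Regularity problem for $L$ in each $\Omega_S$. On each $\Omega_S$ I then produce an auxiliary solution $\tilde{u}_S$ with the trace of $u$ as data, playing the role of the block-form companion $\tilde{u}$ in the proof of Theorem \ref{t1} and carrying the nontangential bound $\|\tilde{N}(\nabla \tilde{u}_S)\|_{L^p(\partial\Omega_S)} \lesssim \|\nabla_T f\|_{L^p(\partial\Omega_S)}$.

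With these pieces in place, the final step is to execute the sequence of integrations by parts from the proof of Theorem \ref{t1} separately inside each $\Omega_S$ and sum over stopping regions. The routine interior error terms transfer directly: products of the form $\iint |\nabla A||\nabla u||\nabla v|\delta\, dZ$ are handled by the Carleson stopping-time argument in \eqref{Stopping}, and products of square functions are handled by H\"older using the $(D^*)_{p'}$ bounds on $v$ and the new $(R)_p$ bounds on $\tilde{u}_S$.

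The \emph{principal obstacle}, and the one that genuinely requires the finest use of the corona construction, is the control of the \emph{seam error}: boundary integrals on $\partial\Omega_S \setminus \partial\Omega$ that arise because $\Omega_S$ is only a subdomain, together with mismatches between the auxiliaries $\tilde{u}_S$ on neighboring regions. These seam terms have no analogue in the Lipschitz proof and must be shown to sum to a quantity controlled by $\|\nabla_T f\|_{L^p}$. The argument would exploit three ingredients: the Carleson packing of the top cubes $Q(S)$; the fact that the seams lie in Whitney regions of size comparable to $Q(S)$, so that boundary integrals over them can be estimated by averaged nontangential maximal functions via Caccioppoli and the reverse H\"older self-improvement; and the careful choice of $\tilde{u}_S$ (as in \cite{MT}) that makes the transitions between adjacent subdomains small on average. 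A further technical point is that on a non-Lipschitz boundary $\nabla_T f$ is to be interpreted in the Haj\l{}asz--Sobolev sense as in \cite{MouT}, and one must verify that the terminal boundary pairing in the duality computation really recovers this norm rather than a strictly smaller tangential-gradient norm.
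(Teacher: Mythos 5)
The paper itself does not prove Theorem~\ref{thm.regularity}; the result is quoted from \cite{MPT}, and the only guidance the survey offers is the short paragraph preceding the statement: \cite{MPT} modifies the corona-type decomposition of \cite{MT}, uses the Lipschitz-domain Regularity solvability of \cite{DPR} as a black box on the Lipschitz subdomains the decomposition produces, and interprets tangential derivatives on the rough boundary via the Haj\l{}asz-Sobolev space. There is thus no internal proof to compare against, only that description.

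Measured against it, you have the right high-level ingredients (corona decomposition, Lipschitz chord-arc subdomains, black-box Lipschitz solvability, Carleson packing, Haj\l{}asz-Sobolev space), but the architecture you propose departs from the source in a way that leaves a genuine gap. You intend to run the duality computation of Theorem~\ref{t1} globally on $\Omega$ -- invoking Lemma~\ref{l1bb} and Lemma~\ref{l2aa}, solving $L^*v=\div(\vec h)$ on $\Omega$, producing block-form companions $\tilde u_S$, and then carrying out the long chain of integrations by parts separately inside each $\Omega_S$ before summing. But Lemma~\ref{l1bb}, Lemma~\ref{l2aa}, and essentially every step of the proof of Theorem~\ref{t1} are tied to the coordinate structure of $\mathbb R^n_+$: they use $\int_0^\infty ds$ in the last variable, the cancellation between $\partial_t$ and $\partial_\parallel$ integrations by parts, the flat block-form comparison operator $L_0$, and boundary integrals taken exactly on $\{t=0\}$. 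Nothing in the paper extends any of this to a UR boundary, and you treat the extension as given. The phrase ``as a black box'' in the paper's description signals that \cite{MPT} does \emph{not} re-run this calculation on the rough domain: the Lipschitz Regularity estimate is applied fully packaged on each $\Omega_S$, and the corona and Carleson-packing machinery transfers the nontangential-maximal bound to $\partial\Omega$. That is precisely what avoids the ``seam errors'' you rightly identify as the principal obstacle -- your treatment of them is an acknowledgment of the hole rather than a proof, and it is exactly where your hybrid route diverges from the cited source. Two further flags: the claim that $(D^*)_{p'}$ on $\Omega$ ``propagates'' to each chord-arc $\Omega_S$ by comparability of elliptic measures is not automatic and is itself one of the delicate steps of the corona construction; and, as you note but should treat as structural rather than incidental, the terminal boundary pairing must be shown to recover the Haj\l{}asz-Sobolev norm, which is a nontrivial matching issue on a UR boundary.
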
 

As a corollary to solvability of the Dirichlet problem in chord arc domains, together with the perturbation theory for the Regularity problem which
holds on such domains, they obtain the following.

\begin{corollary}[Solvability of $(R)_p$ for some $p>1$]\label{cor.reg} Let $\Omega\subset\mathbb R^{n}$, $n\geq 2$, be a chord arc domain. Let $L=\mbox{div}(A\nabla)$ with $A$ a matrix as in the theorem. Then there exists $p>1$ such that $(R)_p$ is solvable in $\Omega$.	
\end{corollary}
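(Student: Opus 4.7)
The plan is to combine Theorem \ref{thm.regularity} with the HMMTZ theorem (Theorem \ref{rectchordarc}), reducing the problem to solvability of a Dirichlet problem for the adjoint operator. Since $\Omega$ is chord arc, it is a uniform domain (hence satisfies the interior corkscrew condition), and by Theorem \ref{rectchordarc} its boundary is uniformly rectifiable, so the geometric hypotheses of Theorem \ref{thm.regularity} are satisfied. It therefore suffices to produce some $p'>1$ for which $(D^*)_{p'}$ is solvable for $L^*=\mathrm{div}(A^t\nabla\,\cdot)$; Theorem \ref{thm.regularity} then yields $(R)_p$ with $p=p'/(p'-1)>1$.

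To produce such a $p'$, I would first reconcile the Carleson oscillation condition \eqref{CCalt} on $A$ with the stronger pointwise/gradient conditions (1) and (2) required by Theorem \ref{rectchordarc}. Following the mollification argument recalled in Section 4, replace $A$ by $\bar A(X):=(A\ast \eta_{\delta(X)/2})(X)$. The matrix $\bar A$ is elliptic, satisfies $\delta(X)|\nabla \bar A(X)|\lesssim 1$ and $\delta(X)|\nabla \bar A|^2\,dX\in\mathcal C$, i.e.\ conditions (1)--(2), while
\[
d\nu(X)=\delta(X)^{-1}\Big[\sup_{Y\in B(X,\delta(X)/2)}|A(Y)-\bar A(Y)|\Big]^2\,dX
\]
is a Carleson measure on $\Omega$, so $\bar L^*$ is a Carleson perturbation of $L^*$.

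Next, apply Theorem \ref{rectchordarc} to $\bar L^*$ on the chord arc domain $\Omega$ (the theorem applies to non-symmetric operators). The conclusion is that $\omega_{\bar L^*}\in A_\infty(d\sigma)$, hence the $(D^*)_{p_1'}$ problem is solvable for $\bar L^*$ for some $p_1'\in(1,\infty)$. Now invoke the perturbation theory for the Dirichlet problem on chord arc domains (the version of Theorem \ref{T:perturbation2} valid in this geometric setting, which is the one mentioned in the preamble to the corollary): the Carleson bound on $\nu$ transfers solvability to $L^*$, giving $(D^*)_{p'}$ for some $p'>1$ (possibly different from $p_1'$, but still finite). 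Finally, feed this $p'$ into Theorem \ref{thm.regularity} to obtain $(R)_p$ for $L$ with $p=p'/(p'-1)>1$.

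The main obstacle is the last step of the perturbation transfer: one needs the Carleson perturbation theorem for the Dirichlet problem in the chord arc setting (not merely in Lipschitz domains, as stated in Theorem \ref{T:perturbation2}). Once this is granted — and it is, by the extensions of the Fefferman--Kenig--Pipher perturbation theory to uniform/chord arc domains cited in the paper — the proof is a direct concatenation of the three ingredients: mollification, HMMTZ on the mollified adjoint, perturbation back to $L^*$, and Theorem \ref{thm.regularity} from $(D^*)_{p'}$ to $(R)_p$.
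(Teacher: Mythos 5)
Your argument is logically valid and reaches the right conclusion, but it takes a somewhat different (and longer) route than the one the paper indicates. The paper's proof, as sketched in the paragraph preceding the corollary, has two ingredients: (i) the known solvability of the $L^{p'}$ Dirichlet problem on chord arc domains for operators with coefficients satisfying the Carleson condition --- established by the extension of the results of \cite{KP} to chord arc domains pointed out in \cite{HMMTZ}, as explained at the start of the preceding subsection; and (ii) the Carleson perturbation theory for the \emph{Regularity} problem on such domains (Theorem \ref{regperturb}, from \cite{DFMperturb}). Since $A$ satisfies \eqref{CCalt}, so does $A^t$, and (i) already provides the $(D^*)_{p'}$ hypothesis of Theorem \ref{thm.regularity} directly, without any mollification. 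You instead manufacture $(D^*)_{p'}$ by mollifying $A$ to a matrix $\bar A$ satisfying (1)--(2), applying Theorem \ref{rectchordarc} to $\bar L^*$, and then transferring back to $L^*$ via Carleson perturbation of the Dirichlet problem; this works, but it trades the hypothesis ``Dirichlet theory for \eqref{CCalt}-operators is known on chord arc domains'' for the hypothesis ``Dirichlet Carleson perturbation is available on chord arc domains,'' and neither is stated as a theorem in the paper. You correctly flag the latter as the delicate step. A small further discrepancy: you feed $(D^*)_{p'}$ straight into Theorem \ref{thm.regularity} and do not invoke the regularity perturbation result at all, whereas the paper's description of the \cite{MPT} argument explicitly uses it. That is not an error on your part --- Theorem \ref{thm.regularity} as stated already encodes the $(D^*)_{p'}\Rightarrow (R)_p$ implication for $A$ satisfying \eqref{CCalt} --- but it does mean your proof is a genuine variant of the route the paper attributes to \cite{MPT}. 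Both approaches rest on the same two pillars (Dirichlet solvability on chord arc domains and the $(D^*)\Rightarrow(R)$ mechanism); they differ only in which perturbation theorem, if any, is used to glue them together.
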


In fact, the domains for which Corollary \ref{cor.reg} hold are more general: corkscrew with an AD-regular boundary and satisfying the IBPCAD condition. 

\smallskip

Finally, we give one reference for the perturbation results for the Regularity theory that are needed. There were several advances in perturbation 
in rougher domains than Lipschitz - the latest one can be found in \cite{DFMperturb} (and see also the references therein). 
In \cite{DFMperturb}, it is shown that
the solvability of the Dirichlet problem for an operator $L_1$ which is a {\it Carleson perturbation} of $L_0$, leads to a comparison of the 
nontangential maximal function of gradients of solutions with the same boundary values.

\begin{theorem}\label{regperturb}
Let $\Omega$ be a uniform domain  and let $L_0,\,L_1$ be two elliptic operators whose coefficients are real, non necessarily symmetric. Assume that the Dirichlet problem for the adjoint operator $L_1^*$ is solvable in $L^{q'}$.

If $L_1$ is a Carleson perturbation of $L_0$, then for any $f\in C_c(\partial \Omega) $, the two solutions,
$u_{0,f}$ and $u_{1,f}$ to the Dirichlet problems $L_0 u_{0,f}= 0$ and $L_1 u_{1,f} = 0$ with data $f$ verify
\begin{equation}\label{ntgrad}
\|\tilde{N}(\nabla u_{1,f})\|_{L^q(\partial \Omega,\sigma)} \leq C M \|\tilde{N}(\nabla u_{0,f})\|_{L^q(\partial \Omega,\sigma)},
\end{equation}
\end{theorem}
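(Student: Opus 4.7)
\textbf{Proof proposal for Theorem \ref{regperturb}.}

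The plan is to adapt the duality strategy used in the proof of Theorem \ref{t1}, treating the ``unknown'' operator as $L_1$ and the ``comparison'' operator as $L_0$. The essential point is that the Carleson perturbation hypothesis allows one to convert the problem for $L_1$ into an estimate expressed entirely in terms of $u_{0,f}$, with the error absorbed by a Carleson measure argument. Fix $f\in C_c(\partial\Omega)$ and set $w:=u_{1,f}-u_{0,f}$, so that $w|_{\partial\Omega}=0$ and
\[
L_1 w \;=\; L_1 u_{1,f} - L_1 u_{0,f} \;=\; -\,L_1 u_{0,f} \;=\; \mathrm{div}\bigl((A_0-A_1)\nabla u_{0,f}\bigr),
\]
since $L_0 u_{0,f}=0$.

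First, apply the tent-space duality of Lemma \ref{l1bb} (in its version appropriate to uniform domains) to produce a vector field $\vec h$, vanishing on $\partial\Omega$ and decaying at infinity, such that $\|\tilde{N}_{1,\varepsilon}(\nabla u_{1,f})\|_{L^q(\partial\Omega,\sigma)}\lesssim \iint_\Omega \nabla u_{1,f}\cdot\vec h\,dZ$. Let $v$ be the energy solution of the inhomogeneous adjoint problem $L_1^*v=\mathrm{div}(\vec h)$ in $\Omega$ with $v=0$ on $\partial\Omega$. Because $(D^*)_{q'}$ is solvable for $L_1$, Lemma \ref{l2aa} yields the uniform bound
\[
\|N(v)\|_{L^{q'}}+\|\tilde{N}(\delta\nabla v)\|_{L^{q'}}+\|S(v)\|_{L^{q'}}\;\le\;C.
\]

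Next, split
\[
\iint_\Omega \nabla u_{1,f}\cdot\vec h\,dZ \;=\; \iint_\Omega \nabla u_{0,f}\cdot\vec h\,dZ + \iint_\Omega \nabla w\cdot\vec h\,dZ,
\]
and handle the two pieces separately. The first term is, by the opposite inequality \eqref{e2} of Lemma \ref{l1bb}, bounded by $C\|\tilde{N}_1(\nabla u_{0,f})\|_{L^q}$, which is controlled by $C\|\tilde{N}(\nabla u_{0,f})\|_{L^q}$. For the second term, integrate by parts twice. Using $w=0$ and $v=0$ on $\partial\Omega$, together with the adjoint identity, one obtains
\[
\iint_\Omega \nabla w\cdot\vec h \,dZ \;=\; -\iint_\Omega w\,L_1^*v\,dZ \;=\; -\iint_\Omega A_1\nabla w\cdot\nabla v \,dZ \;=\; \iint_\Omega (A_0-A_1)\nabla u_{0,f}\cdot\nabla v \,dZ,
\]
where in the last step one uses the equation satisfied by $w$. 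Thus everything is reduced to estimating $\iint (A_0-A_1)\nabla u_{0,f}\cdot\nabla v\,dZ$ by $C M \|\tilde N(\nabla u_{0,f})\|_{L^q}\|S(v)\|_{L^{q'}}$.

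The main obstacle is precisely this last integral, because the coefficient matrix $A_1-A_0$ is only assumed to be a \emph{Carleson perturbation}, not small. I would handle it by a stopping-time argument in the spirit of the estimate \eqref{Stopping} from the proof of Theorem \ref{t1}: decompose $\Omega$ into Whitney-type regions $F_j$ determined by level sets of the product $N(\nabla u_{0,f})(Q)\,S(v)(Q)$, bound the integrand pointwise by $|\nabla u_{0,f}|\,|A_0-A_1|\,|\nabla v|$, and apply Cauchy-Schwarz in tent-coordinates to separate $\iint_{\Gamma(Q)}|\nabla u_{0,f}|^2|\nabla v|^2\delta^{2-n}dZ$ (controlled by $N(\nabla u_{0,f})(Q)\,S(v)(Q)$) from $\iint_{\Gamma(Q)\cap F_j}|A_0-A_1|^2\delta^{1-n}dZ$ (a Carleson region integral controlled by the Carleson perturbation constant $M$ times $|\tilde{\mathcal O}_j|$). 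Summing the geometric series in $j$ and applying H\"older's inequality in $L^q\times L^{q'}$ gives the desired bound $CM\|\tilde{N}(\nabla u_{0,f})\|_{L^q}\|S(v)\|_{L^{q'}}\le CM\|\tilde{N}(\nabla u_{0,f})\|_{L^q}$. Letting $\varepsilon\to 0$ and upgrading from $\tilde{N}_1$ to $\tilde{N}$ via Caccioppoli's reverse H\"older inequality (as at the end of the proof of Theorem \ref{t1}) completes the proof of \eqref{ntgrad}. The subtle technical points are the availability of Lemmas \ref{l1bb} and \ref{l2aa} on uniform (rather than half-space) domains, and the construction of the Whitney decomposition adapted to the rough boundary --- both of which are by now standard in the chord-arc/uniform setting.
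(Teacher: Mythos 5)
This statement appears in the paper's survey section (Section 7) and is attributed to \cite{DFMperturb} (Dai--Feneuil--Mayboroda); the paper itself gives no proof, so there is no internal argument to compare against.

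That said, your proposal is essentially the natural adaptation of the Kenig--Pipher duality scheme from \cite{KP2}, which is the same scaffold the paper uses for Theorem \ref{t1}: produce $\vec h$ by tent-space duality, solve the inhomogeneous adjoint problem $L_1^*v=\div\vec h$ with $v|_{\partial\Omega}=0$, integrate by parts to reduce the pairing $\iint\nabla u_{1,f}\cdot\vec h$ to the ``known'' term $\iint\nabla u_{0,f}\cdot\vec h$ plus the disagreement integral $\iint(A_0-A_1)\nabla u_{0,f}\cdot\nabla v$, and close that by a stopping-time/Carleson argument in the spirit of \eqref{Stopping}. This is the right framework and is, in outline, what the cited reference does. (A small algebraic slip: from $-\iint w\,L_1^*v$ the boundary-free integrations by parts give $+\iint A_1\nabla w\cdot\nabla v$, not $-\iint A_1\nabla w\cdot\nabla v$; your final identity $\iint\nabla w\cdot\vec h=\iint(A_0-A_1)\nabla u_{0,f}\cdot\nabla v$ is nevertheless correct.)

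There is, however, one genuine gap in the stopping-time step as you have written it. You bound $\bigl(\iint_{\Gamma(Q)}|\nabla u_{0,f}|^2|\nabla v|^2\delta^{2-n}\bigr)^{1/2}$ by $N(\nabla u_{0,f})(Q)\,S(v)(Q)$, pulling out the pointwise sup $N(\nabla u_{0,f})$. But Theorem \ref{regperturb} imposes no smoothness on $A_0$ beyond boundedness and ellipticity, so for a solution $u_{0,f}$ of $L_0u_{0,f}=0$ the pointwise nontangential maximal function $N(\nabla u_{0,f})$ need not be comparable to the averaged one $\tilde N(\nabla u_{0,f})$ (this comparability is only available when, say, $|\nabla A_0|\lesssim\delta^{-1}$, as noted at the start of the appendix). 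Since the target estimate \eqref{ntgrad} must be in terms of $\tilde N(\nabla u_{0,f})$, your sketch proves something a priori weaker. The fix is to replace the pointwise sup over $\Gamma(Q)$ by a Whitney decomposition of the cone and Cauchy--Schwarz at the scale of Whitney balls, so that $\nabla u_{0,f}$ is only ever controlled through its $L^2$ averages over interior balls (i.e., through $\tilde N$), while $|\nabla v|$ is handled through interior regularity of $v$ together with the bound on $\|\tilde N(\delta\nabla v)\|_{L^{q'}}$ and $\|S(v)\|_{L^{q'}}$ from Lemma \ref{l2aa}. This refinement (and the extension of Lemmas \ref{l1bb} and \ref{l2aa} to uniform domains, which you correctly flag) is where the real technical work in \cite{DFMperturb} lies.
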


\medskip

As we defined earlier in this paper,  the disagreement (\cite{FKP}) between $A_0$ and $A_1$ is:
\begin{align}\label{DEFDF}
    \epsilon(X):=\sup_{Y\in B_{X}} |\mathcal{E}(Y)|,\ \ \mathcal{E}(Y):= A_0(Y)-A_1(Y).
\end{align}
and two operators are {Carleson perturbations} of one another when the following condition, which preserves $A_\infty$, holds.
\begin{equation}
\delta(X)\left[\sup_{Y\in B(X,\delta(X)/2)}|\epsilon(Y)|\right]^2\mbox{ is a Carleson measure}.
\end{equation}

\noindent The righthand side of \eqref{ntgrad} can be infinite, of course, but if it is finite then it must bound the corresponding nontangential maximal 
function estimate for gradients of solutions to $L_1$. 
From this result, one can conclude that if  $(R)_p$ is solvable for $L_0$ on a chord arc domain (or on the weaker domains considered in 
\cite{MPT}, where the notion of tangential derivatives is understand in terms of the Haj\l{}asz-Sobolev space), then so is $(R)_p$ for $L_1$.

\smallskip

The estimate in the statement of Theorem \ref{regperturb} may have implications for solvability of the Neumann problem, but this problem is still wide open even
for Lipschitz domains, except in the case of dimension two.


\section{Appendix - bounds of nontangential maximal function by square function.}
\setcounter{equation}{0}

In our treatment of the Dirichlet, Regularity and Neumann problems we have omitted proofs of Lemmas  \ref{l2}, \ref{ll3a}, \ref{S3:C7-alt1} and of the estimate \eqref{S3:C7:E00oozz}. For completeness we present here the main idea on how such results can be established. We have given them a unified treatment which is primarily based on \cite{DHM}. As we prefer to impose minimal possible assumptions on coefficients, we shall work here with the averaged version of the non-tangential maximal function $\widetilde{N}$ as defined in \eqref{NTMaxVar}. Recall that we always have $\widetilde{N}\le N$, with the opposite inequality $N\lesssim \widetilde{N}$  (with $\widetilde{N}$ using wider cones than $N$) holding in certain situations as well.
This holds for example for solutions $u$ of $Lu=0$ due to De Giorgi-Nash-Moser theory, for $\nabla u$ of such solutions if $|\nabla A|\lesssim \delta(X)^{-1}$ \cite{DPreg}, but fails to hold for gradients if the coefficients are just bounded and measurable.\medskip

The major innovation in the approach we present here is the use of an entire family of Lipschitz graphs on which the nontangential 
maximal function is large in lieu of a single graph constructed via a stopping time argument. 
This is necessary as we are using $L^2$ averages of solutions to define the nontangential maximal 
function and hence the knowledge of certain bounds for a solution on a single graph provides no 
information about the $L^2$ averages over interior balls. Our underlying domain we work on here is $\Omega=\mathbb R^n_+$. 

Let $u:\Omega=\{(x,t):t>0\}\to\mathbb R^N$ be a vector valued function such that $u\in L^2_{loc}(\Omega)$ and sufficient decay at infinity. Consider 
$$w(X)=\left(\fiint_{B_{{\delta(X)}/{2}}(X)}|u(Y)|^2\,d
Y\right)^{\frac{1}{2}}.$$
Then clearly, $\widetilde{N}(u)(Q)=N(w)(Q)$. Also clearly,  $w:\Omega\to \mathbb R$ be is continuous function with $w(x,t)\to 0$ as $t\to\infty$. For a constant $\nu>0$, define the set
\begin{equation}\label{E}
E_{\nu,a}:=\big\{x'\in\partial\Omega:\,N_{a}(w)(x')>\nu\big\}
\end{equation}
where, as usual, $a>0$ is a fixed background parameter denoting the aperture of cones used to define $N$. Also, 
consider the map $\hbar:\partial\Omega\to\mathbb R$ given at each $x'\in\partial\Omega$ by 
\begin{equation}\label{h}
\hbar_{\nu,a}(w)(x'):=\inf\left\{x_0>0:\,\sup_{z\in\Gamma_{a}(x_0,x')}w(z)<\nu\right\}
\end{equation}
with the convention that $\inf\varnothing=\infty$. 
We remark that $\hbar$ differs from the function $\tilde{\hbar}:\partial\Omega\to{\mathbb R}$ defined 
at each $x'\in\partial\Omega$ as
\begin{equation}\label{Eqqq-4}
\tilde{\hbar}_{\nu, a}(w)(x'):=\sup\left\{x_0 >0:\,\sup_{z\in\Gamma_{a}(x_0,x')}w(z)>\nu\right\}.
\end{equation}
The function $\tilde{\hbar}$ has been used in arguments for scalar equations 
(cf. \cite[pp.\,212]{KP} and \cite{KKPT2}). While there are clear similarities in the manner 
in which the functions $\hbar$ and $\tilde{\hbar}$ are defined, throughout this paper we prefer to use $\hbar$ 
as it works better here. 

At this point we observer that $\hbar_{\nu,a}(w,x')<\infty$ for all points $x'\in\partial\Omega$. 
This is due to the fact that we assume that the averages $w$ got zero as $t\to\infty$. 
It follows that $\hbar_{\nu,a}(w)(x')<\infty$.

\begin{lemma}\label{S3:L5} Fix two positive numbers $\nu,a$. Then the following properties hold.
\vglue2mm

\noindent (i)
The function $\hbar_{\nu, a}(w)$ is Lipschitz, with a Lipschitz constant $1/a$. That is,
\begin{equation}\label{Eqqq-5}
\left|\hbar_{\nu,a}(w)(x')-\hbar_{\nu,a}(w)(y')\right|\leq a^{-1}|x'-y'|
\end{equation}
for all $x',y'\in\partial\Omega$.

\vglue2mm

\noindent (ii)
Given an arbitrary $x'\in E_{\nu,a}$, let $x_0:=\hbar_{\nu,a}(w)(x')$. Then there exists a 
point $y=(y_0,y')\in\partial\Gamma_{a}(x_0,x')$ such that $w(y)=\nu$ and $\hbar_{\nu,a}(w)(y')=y_0$. 		
\end{lemma}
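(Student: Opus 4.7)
The whole argument will hinge on a single geometric fact about the cone family: whenever $h-h' \ge a^{-1}|y'-x'|$, one has the nesting $\Gamma_a(h,y') \subseteq \Gamma_a(h',x')$. This is a triangle-inequality computation, interpreting $\Gamma_a(h',x')$ as the cone with ``vertex'' at the interior point $(h',x')$ and slope $1/a$: if $z\in\Gamma_a(h,y')$ then $|z'-x'|/a\le |z'-y'|/a+|y'-x'|/a<(z_0-h)+(h-h')=z_0-h'$. Both parts of the lemma will be reduced to this one inclusion.

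For (i), I would fix $x',y'\in\partial\Omega$, set $r=|x'-y'|$ and $x_0=\hbar_{\nu,a}(w)(x')$. For every $h'>x_0$, the definition of $\hbar$ gives $\sup_{\Gamma_a(h',x')}w<\nu$; applying the nesting with $h=h'+r/a$ then yields $\sup_{\Gamma_a(h,y')}w<\nu$, hence $\hbar_{\nu,a}(w)(y')\le h'+r/a$. Letting $h'\to x_0^+$ and interchanging the roles of $x'$ and $y'$ completes (i).

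For (ii), I would first verify that $\sup_{\Gamma_a(x_0,x')}w=\nu$. The bound $\le\nu$ follows by letting $h\to x_0^+$ in the definition; a strict deficit would, by continuity of $w$, extend into some $\Gamma_a(x_0-\epsilon,x')$ and contradict $x_0$ being the infimum. Next, for a sequence $\epsilon_k\to 0^+$, I would select $z_k\in\overline{\Gamma_a(x_0-\epsilon_k,x')}$ with $w(z_k)\ge\nu$; such maxima are attained because $w$ is continuous, decays at infinity, and the horizontal spread in the cone is controlled by its height, so the sequence is bounded. A subsequential limit produces $y\in\overline{\Gamma_a(x_0,x')}$ with $w(y)\ge\nu$. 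Should $y$ lie in the interior, then $y\in\Gamma_a(h,x')$ for some $h>x_0$, which contradicts $\sup_{\Gamma_a(h,x')}w<\nu$. Hence $y\in\partial\Gamma_a(x_0,x')$, and combined with $w(y)\le\sup_{\overline{\Gamma_a(x_0,x')}}w=\nu$ this gives $w(y)=\nu$.

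To finish by verifying $\hbar_{\nu,a}(w)(y')=y_0$, membership in $\partial\Gamma_a(x_0,x')$ supplies the identity $y_0-x_0=a^{-1}|y'-x'|$. For any $h<y_0$, the point $y$ sits directly above $y'$ at height $y_0>h$, so $y\in\Gamma_a(h,y')$ and $\sup_{\Gamma_a(h,y')}w\ge\nu$, forcing $\hbar(y')\ge y_0$. For any $h>y_0$, the choice $h^*=h-(y_0-x_0)>x_0$ triggers the nesting $\Gamma_a(h,y')\subseteq\Gamma_a(h^*,x')$, so $\sup_{\Gamma_a(h,y')}w\le\sup_{\Gamma_a(h^*,x')}w<\nu$, forcing $\hbar(y')\le y_0$. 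The main subtlety in the whole plan will be ensuring the limit $y$ in (ii) lands on $\partial\Gamma_a(x_0,x')$ rather than the interior; the strict inequality $\sup_{\Gamma_a(h,x')}w<\nu$ for all $h>x_0$, supplied by the definition of the infimum, is what excludes interior maximizers and makes the rest of the argument mechanical.
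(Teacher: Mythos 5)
Your proof is correct. The paper itself does not supply an argument for this lemma (it simply says the proof is standard and refers to \cite{DHM}), so there is no internal proof to compare against, but your reasoning is the natural one and it is sound. The single nesting fact $\Gamma_a(h,y')\subseteq\Gamma_a(h',x')$ whenever $h-h'\ge a^{-1}|y'-x'|$, proved by the triangle inequality, does all the work in (i) and also supplies the ``$\le$'' half of $\hbar_{\nu,a}(w)(y')=y_0$ in (ii); the ``$\ge$'' half uses only that $y$ lies directly above $y'$ at height $y_0$. Your handling of the delicate step in (ii) — extracting a convergent subsequence of near-maximizers $z_k\in\overline{\Gamma_a(x_0-\epsilon_k,x')}$ — correctly invokes the decay of $w$ at infinity together with the fact that horizontal spread in the cone is dominated by height, which forces the $z_k$ into a fixed compact set; and the exclusion of interior limit points uses exactly that $\sup_{\Gamma_a(h,x')}w<\nu$ for every $h>x_0$, which is what the infimum definition provides. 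Two small remarks worth keeping in mind: (a) with the literal cone from Definition \ref{dgamma} the slope constant is $\sqrt{a^2+2a}$ rather than $a$, so the Lipschitz constant would be $(a^2+2a)^{-1/2}$; your argument is insensitive to the exact value of the slope and delivers whatever the lemma statement intends, so this is a notational issue with the cone convention and not a flaw in your proof. (b) In (ii) one should note that $x'\in E_{\nu,a}$ forces $x_0=\hbar_{\nu,a}(w)(x')>0$, so the limit point $y$ (with $y_0\ge x_0$) stays strictly inside $\Omega$ and the vertex case $y=(x_0,x')$ is admissible and consistent with the conclusion.
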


The proof is standard and can be found in \cite{DHM}.

\begin{lemma}\label{l6} For any $a>0$ there exists $b=b(a)>a$ and $\gamma=\gamma(a)>0$ such that the following holds. 
Having fixed an arbitrary $\nu>0$, for each point $x'$ from the set 
\begin{equation}\label{Eqqq-17}
\big\{x':\,N_{a}(w)(x')>\nu\mbox{ and }S_{b}(u)(x')\leq\gamma\nu\big\}
\end{equation}
there exists a boundary ball $R$ with $x'\in 2R$ and such that
\begin{equation}\label{Eqqq-18}
\big|w\big(\hbar_{\nu,a}(w)(z'),z'\big)\big|>\nu/{2}\,\,\text{ for all }\,\,z'\in R.
\end{equation}
\end{lemma}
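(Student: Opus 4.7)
The plan is to use Lemma \ref{S3:L5}(ii) to locate a single point $y$ where the average $w$ equals $\nu$ exactly, and then exploit the smallness of $S_b(u)(x')$ via a Poincar\'e inequality to conclude that these $L^2$ averages cannot drop below $\nu/2$ in a small surface neighborhood. Let $x_0 := \hbar_{\nu,a}(w)(x')$ and let $y = (y_0, y') \in \partial\Gamma_a(x_0, x')$ be the point produced by Lemma \ref{S3:L5}(ii), so $w(y) = \nu$ and $\hbar_{\nu,a}(w)(y') = y_0$. Since $\Omega = \mathbb R^n_+$, $\delta(y) = y_0$ and $|y' - x'| = \sqrt{a^2 + 2a}\, y_0$. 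Define $R := B(y', \beta y_0) \subset \partial\Omega$ with a geometric constant $\beta = \beta(a) > \sqrt{a^2 + 2a}/2$ so that $x' \in 2R$, but small enough for the inclusions below. By Lemma \ref{S3:L5}(i), $|\hbar(z') - y_0| \leq \beta y_0/a$ for all $z' \in R$, so $Z_{z'} := (\hbar(z'), z')$ lies within distance $O(\beta y_0)$ of $y$; hence both averaging balls $B_1 := B_{\hbar(z')/2}(Z_{z'})$ and $B_2 := B_{y_0/2}(y)$ sit inside a common ball $\tilde B \subset \Omega$ of radius $\sim y_0$ centered near $y$, on which $\delta(X) \approx y_0$. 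For $\beta$ small enough (depending only on $a$) we also have $\tilde B \subset \Gamma_b(x')$ for a suitable $b = b(a) > a$.

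Applying the Poincar\'e inequality in $\tilde B$ and then using $\delta(X) \approx y_0$ on $\tilde B$ together with $\tilde B \subset \Gamma_b(x')$ yields
$$\fiint_{\tilde B} |u - u_{\tilde B}|^2 \,\lesssim\, y_0^2 \fiint_{\tilde B} |\nabla u|^2 \,\approx\, y_0^{2-n} \iint_{\tilde B} |\nabla u|^2\, dX \,\approx\, \iint_{\tilde B} |\nabla u|^2 \delta(X)^{2-n}\, dX \,\leq\, S_b(u)(x')^2,$$
where $u_{\tilde B} := \fiint_{\tilde B} u$. Writing $\eta := \bigl(\fiint_{\tilde B} |u - u_{\tilde B}|^2\bigr)^{1/2}$, we obtain $\eta \leq C(a)\, S_b(u)(x') \leq C(a)\,\gamma \nu$.

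For the final step, since $|B_i| \approx |\tilde B|$ for $i = 1, 2$, both $\bigl|\fiint_{B_i}(u - u_{\tilde B})\bigr|$ and $\bigl(\fiint_{B_i}|u - u_{\tilde B}|^2\bigr)^{1/2}$ are dominated by $C'(a)\,\eta$ via Cauchy-Schwarz and a change of averaging domain. Expanding
$$\fiint_{B_i} |u|^2 = |u_{\tilde B}|^2 + 2\, u_{\tilde B} \cdot \fiint_{B_i}(u - u_{\tilde B}) + \fiint_{B_i} |u - u_{\tilde B}|^2,$$
and using $\fiint_{B_2} |u|^2 = w(y)^2 = \nu^2$ forces $|u_{\tilde B}| \geq \nu - C''(a)\,\gamma\nu$ once $\gamma$ is small. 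The same identity applied to $B_1$, with the nonnegative last term discarded, gives
$$|w(Z_{z'})|^2 = \fiint_{B_1} |u|^2 \,\geq\, |u_{\tilde B}|^2 - 2|u_{\tilde B}|\,C'(a)\,\eta \,>\, \nu^2/4,$$
provided $\gamma = \gamma(a)$ is taken sufficiently small. The main technical point is the order in which the constants are fixed: $\beta = \beta(a)$ must be determined first from cone geometry alone, which then pins down $b = b(a)$, and only afterwards can $\gamma = \gamma(a)$ be chosen small enough for the perturbation estimate in the last display to close.
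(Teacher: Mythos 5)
Your overall strategy is the right one and matches the likely skeleton of the argument in \cite{DHM}: locate the touching point $y$ with $w(y)=\nu$ and $\hbar_{\nu,a}(w)(y')=y_0$ via Lemma~\ref{S3:L5}(ii), then compare the $L^2$ averages $w(Z_{z'})$ and $w(y)$ using Poincar\'e together with the smallness of $S_b(u)(x')$. The concluding expansion of $\fiint_{B_i}|u|^2$ around the mean, and the order in which the constants are fixed, are both correct. However, the single-ball Poincar\'e step contains a genuine gap: the two constraints you impose on $\beta$ cannot be met simultaneously. Since $y\in\partial\Gamma_a(x_0,x')$ only gives $|x'-y'|\lesssim_a y_0$ with a constant that grows linearly in the cone aperture (and the bound is essentially sharp when $x_0\ll y_0$), the requirement $x'\in 2R$ forces $\beta\gtrsim a$. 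On the other hand, in order for $B_1=B_{\hbar(z')/2}(Z_{z'})$ and $B_2=B_{y_0/2}(y)$ to sit inside one ball $\tilde B\subset\Omega$ of radius $\sim y_0$ on which $\delta\approx y_0$, you need $|Z_{z'}-y|\lesssim y_0$, hence $\beta\lesssim 1$. These two requirements are incompatible once $a$ is not small; checking your own numbers, $\beta>\sqrt{a^2+2a}/2$ and $\beta\bigl(\sqrt{1+a^{-2}}+\tfrac{1}{2a}\bigr)<\tfrac12$ (which is what keeps $\tilde B$ inside $\Omega$) define an empty interval for every $a>0$. So the proof as written does not establish the lemma for all apertures.

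The standard remedy is to replace the single Poincar\'e ball $\tilde B$ by a finite chain $C_0=B_1,C_1,\dots,C_K=B_2$ of overlapping balls, each of radius $\sim y_0$ and each lying in $\Gamma_b(x')\cap\{\delta\approx y_0\}$, where $K=O(|x'-y'|/y_0)=O(a)$. Applying Poincar\'e on each $C_j$ (and on each overlap $C_j\cap C_{j+1}$) and telescoping the means $c_j:=\fiint_{C_j}u$, Cauchy--Schwarz gives $|c_0-c_K|\lesssim\sqrt{K}\,S_b(u)(x')$, while $\fiint_{B_i}|u-c_i|^2\lesssim S_b(u)(x')^2$ as before. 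Your final algebra then goes through verbatim with $u_{\tilde B}$ replaced by $c_K$ and $c_0$, provided $\gamma(a)$ is chosen small enough to absorb the extra factor $\sqrt{K}\sim\sqrt{a}$, and $b(a)$ is chosen large enough that the entire chain lies in $\Gamma_b(x')$. This is exactly where the $a$-dependence of $b$ and $\gamma$ in the statement comes from, a dependence your one-ball argument does not account for.
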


Here $S_b$ is the square function associated with cones of aperture $b$. See  again \cite{DHM} for the proof.\medskip

Given a Lipschitz function $\hbar:{\mathbb{R}}^{n-1}\to{\mathbb{R}}$, denote by $M_\hbar$ the 
Hardy-Littlewood maximal function considered on the graph of $\hbar$. That is, 
given any locally integrable function $f$ on the Lipschitz surface 
$\Lambda_\hbar=\{(\hbar(z'),z'):\,z'\in{\mathbb R}^{n-1}\}$, define 
$(M_\hbar f)(x):=\sup_{r>0}\fint_{\Lambda_\hbar\cap B_r(x)}|f|\,d\sigma$ for each $x\in\Lambda_\hbar$. 

\begin{corollary}\label{S3:L6} 
Fix $a>0$.  and let $b,\,\gamma$ be as in Lemma~\ref{l6}. Then there exists a finite 
constant $C=C(n)>0$ with the property that for any $\nu>0$ and any point $x'\in E_{\nu,a}$ 
such that $S_{b}(u)(x')\leq\gamma\nu$ one has
\begin{equation}\label{Eqqq-23}
(M_{\hbar_{\nu,a}}w)\big(\hbar_{\nu,a}(x'),x'\big)\geq\,C\nu.
\end{equation}
\end{corollary}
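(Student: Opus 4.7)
The corollary is essentially a direct geometric consequence of Lemma~\ref{l6}: the lemma produces a whole boundary ball's worth of points where $w$, evaluated on the Lipschitz graph of $\hbar_{\nu,a}$, is bounded below by $\nu/2$, and the Hardy--Littlewood maximal function on that graph instantly picks this up. I would carry out the following three steps.

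First, apply Lemma~\ref{l6} to the point $x'$ to produce a boundary ball $R\subset\mathbb R^{n-1}$ of some radius $r>0$ with $x'\in 2R$ and
$$w\bigl(\hbar_{\nu,a}(z'),z'\bigr)>\nu/2\quad\text{for all }z'\in R.$$
Since $\hbar=\hbar_{\nu,a}$ is $(1/a)$-Lipschitz by Lemma~\ref{S3:L5}(i), for each $z'\in R$ we have $|z'-x'|\le 3r$ and consequently
$$\bigl|(\hbar(z'),z')-(\hbar(x'),x')\bigr|\le \sqrt{1+a^{-2}}\,|z'-x'|\le 3\sqrt{1+a^{-2}}\,r=:\rho.$$
Thus, letting $\tilde B$ be the ball in $\mathbb R^n$ of radius $\rho$ centred at $(\hbar(x'),x')$, the entire piece of the graph $\Lambda_\hbar$ lying over $R$ is contained in $\Lambda_\hbar\cap \tilde B$.

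Second, estimate numerator and denominator in the average over $\Lambda_\hbar\cap\tilde B$. Using that the area element on the graph is $\sqrt{1+|\nabla\hbar|^2}\,dz'$, and that $|\nabla\hbar|\le 1/a$, the lower bound from Lemma~\ref{l6} yields
$$\int_{\Lambda_\hbar\cap \tilde B}|w|\,d\sigma\,\ge\,\int_R w(\hbar(z'),z')\sqrt{1+|\nabla\hbar|^2}\,dz'\,\ge\,\tfrac{\nu}{2}\,|R|.$$
For the denominator, the projection $\pi:(\hbar(z'),z')\mapsto z'$ is $1$-Lipschitz, so the base of $\Lambda_\hbar\cap \tilde B$ lies in the $(n-1)$-dimensional ball $B_\rho(x')$, and consequently
$$\sigma(\Lambda_\hbar\cap \tilde B)\,\le\,\sqrt{1+a^{-2}}\,|B_\rho(x')|\,=\,c_n\sqrt{1+a^{-2}}\,\rho^{n-1}.$$

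Third, take the ratio. Since $|R|=c_n r^{n-1}$ and $\rho=3\sqrt{1+a^{-2}}\,r$, the quotient is bounded below by
$$\fint_{\Lambda_\hbar\cap \tilde B}|w|\,d\sigma\,\ge\,\frac{(\nu/2)\,c_n r^{n-1}}{c_n\sqrt{1+a^{-2}}\cdot\bigl(3\sqrt{1+a^{-2}}\bigr)^{n-1}r^{n-1}}\,=\,C\nu,$$
with $C$ depending only on $n$ and $a$. Since $\tilde B$ is one admissible ball in the definition of $M_\hbar$ at the point $(\hbar(x'),x')$, the estimate \eqref{Eqqq-23} follows.

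\textbf{Main obstacle.} There is essentially no analytic obstacle once Lemma~\ref{l6} is granted; the entire argument is elementary Euclidean geometry together with the comparability of graph surface measure to base Lebesgue measure via the $(1/a)$-Lipschitz constant of $\hbar_{\nu,a}$. The only accounting subtlety is that the constant $C$ depends implicitly on $a$ through this Lipschitz constant, so strictly speaking $C=C(n,a)$ rather than $C(n)$ alone — but since $a$ is the fixed background aperture, this is harmless for the subsequent application.
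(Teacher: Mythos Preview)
Your proof is correct and follows essentially the same approach as the paper's own proof, which is a direct two-line application of Lemma~\ref{l6}: the paper simply writes $(M_{\hbar}w)(\hbar(x'),x')\geq\frac{1}{|2R|}\int_R w(\hbar(z'),z')\,dz'\geq\frac{|R|}{|2R|}\cdot\frac{\nu}{2}$, implicitly identifying the graph maximal function with one on the base via the Lipschitz parametrization. You have unpacked this identification carefully by working with the actual surface measure on $\Lambda_\hbar$, and your observation that the resulting constant is really $C(n,a)$ rather than $C(n)$ is accurate --- the paper suppresses this harmless dependence since $a$ is fixed throughout.
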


\begin{proof} 
Fix a point $x'\in E_{\nu,a}$ where $S_{b}(u)(x')\leq\gamma\nu$. Lemma~\ref{l6} then guarantees the 
existence of a boundary ball $R$ with the property that $w(\hbar_{\nu,a}(w)(z'),z')>\nu/{2}$ for all 
$z'\in R$ and $x'\in 2R$. Granted this, it follows that 
\begin{equation}\label{Eqqq-24}
(M_{\hbar_{\nu,a}}w)\big(\hbar_{\nu,a}(w)(x'),x'\big)\geq\frac1{|2R|}\int_R w\big(\hbar_{\nu,a}(w)(z'),z'\big)\,dz'
\geq\frac{|R|}{|2R|}\frac{\nu}{{2}},
\end{equation}
as desired. 
\end{proof}

Now we aim to prove Lemma  \ref{l2} in the case $p>2$. The following lemma is crucial for its proof.

\begin{lemma}\label{S3:L8} 
Consider the elliptic operator $Lu=0$ with bounded coefficients given by a matrix $A$ such that $d\mu(X)=|\nabla (a_{n1},a_{n2},\dots,a_{nn})|^2t\,dX$ is a Carleson measure.
Then there exists $a>0$ with the following significance. Suppose $u$ is a weak solution of  $Lu=0$
in $\Omega={\mathbb{R}}^n_{+}$. Select $\theta\in[1/6,6]$ and, having picked $\nu>0$ arbitrary,  
let $\hbar_{\nu,a}(w)$ be as in \eqref{h}. Also, consider the domain 
$\mathcal{O}=\{(x_0,x')\in\Omega:\,x_0>\theta \hbar_{\nu,a}(x')\}$ with boundary  
$\partial\mathcal{O}=\{(x_0,x')\in\Omega:\,x_0=\theta \hbar_{\nu,a}(x')\}$. In this context, 
for any surface ball $\Delta_r=B_r(Q)\cap\partial\Omega$, with $Q\in\partial\Omega$ and $r>0$ 
chosen such that $\hbar_{\nu,a}(w)\leq 2r$ pointwise on $\Delta_{2r}$, 
one has
\begin{align}\label{TTBBMM}
\int_{\Delta_r}\big|u\big(\theta \hbar_{\nu,a}(w)(\cdot),\cdot\big)\big|^2\,dx' 
&\leq C(1+\|\mu\|_{Carl})\|S_b(u)\|_{L^2(\Delta_{2r})}
\|\tilde{N}_a(u)\|_{L^2(\Delta_{2r})}
\nonumber\\
&\quad+C\|S_b(u)\|^2_{L^2(\Delta_{2r})}+\frac{C}{r}\iint_{\mathcal{K}}|u(X)|^{2}\,dX.
\end{align}
Here $C=C(\lambda,\Lambda,n,N)\in(0,\infty)$ and $\mathcal{K}$ is a region inside $\mathcal{O}$ of diameter, 
distance to the boundary $\partial\mathcal{O}$, and distance to $Q$, are all comparable to $r$. 
Also, the parameter $b>a$ is as in Lemma~\ref{l6}, and the cones used to define the square and nontangential 
maximal functions in this lemma have vertices on $\partial\Omega$.

Moreover, the term $\displaystyle\iint_{\mathcal{K}}|u(X)|^2\,dX$ appearing 
in \eqref{TTBBMM} may be replaced by the quantity
\begin{equation}\label{Eqqq-25}
Cr^{n-1}|u(A_r)|^2+C\int_{\Delta_{2r}}S^2_b(u)\,d\sigma,
\end{equation}
where $A_r$ is any point inside $\mathcal{K}$ {\rm (}usually called a corkscrew point of $\Delta_r${\rm )}.
\end{lemma}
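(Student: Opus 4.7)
The plan is to convert the graph integral into a bulk identity via the divergence theorem, then exploit the equation $Lu=0$ together with the Carleson condition on the last row of $A$ to estimate all bulk terms, absorbing a single bad boundary contribution into the left-hand side by taking the cone aperture $a$ sufficiently large.

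First I fix smooth cutoffs $\eta(x')$ and $\psi(t)$ with $\eta\equiv 1$ on $\Delta_r$, $\mathrm{supp}\,\eta\subset\Delta_{2r}$, $\psi\equiv 1$ on $[0,12r]$, $\mathrm{supp}\,\psi\subset[0,20r]$, and $|\nabla\eta|,|\psi'|\lesssim 1/r$. Because $\hbar\le 2r$ on $\Delta_{2r}$ and $\theta\le 6$, the composition $\psi(\theta\hbar(x'))$ equals $1$ throughout $\Delta_{2r}$. Applying the divergence theorem to $\vec F=-\eta^2(x')\psi(t)e_n$ in $\mathcal O\cap(\Delta_{2r}\times(0,20r))$, noting that the outward unit normal to $\mathcal O$ along the graph is $\nu=(-1,\theta\nabla\hbar)/\sqrt{1+\theta^2|\nabla\hbar|^2}$, I obtain the main identity
\begin{equation*}
\int_{\Delta_{2r}}\eta^2\,|u(\theta\hbar(\cdot),\cdot)|^2\,dx'\;=\;-\iint\eta^2\psi'|u|^2\,dX\;-\;2\iint\eta^2\psi\,u\cdot\partial_t u\,dX.
\end{equation*}
The first bulk term is immediately dominated by $(C/r)\iint_{\mathcal K}|u|^2\,dX$, since $\psi'$ is supported in $\{12r\le t\le 20r\}\subset\mathcal K$ and $|\psi'|\lesssim 1/r$.

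To handle the $u\cdot\partial_t u$ term I invoke $Lu=0$: setting $H:=\sum_j a_{nj}\partial_j u$ (the conormal flux), the identity $a_{nn}\partial_t u=H-\sum_{j<n}a_{nj}\partial_j u$ splits the integral into a tangential sum $\sum_{j<n}\iint\eta^2\psi(a_{nj}/a_{nn})\partial_j|u|^2\,dX$ and an $H$-piece. In the tangential sum I integrate by parts in $x_j$; the boundary contribution on the graph becomes $\sum_{j<n}\int_{\mathrm{graph}}\eta^2\psi(a_{nj}/a_{nn})|u|^2\,\theta\partial_j\hbar\,dx'$, which by the Lipschitz bound $|\nabla\hbar|\le 1/a$ from Lemma \ref{S3:L5} is majorized by $(C\theta/a)\int_{\mathrm{graph}}\eta^2|u|^2\,dx'$, absorbable into the left-hand side once $a$ is large enough (depending only on $\theta\le 6$ and on the ellipticity constants). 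For the $H$-piece I insert the trivial factor $1=\partial_t(t-\theta\hbar)$ and integrate by parts in $t$: the graph contribution vanishes since $(t-\theta\hbar)=0$ there, so the resulting bulk integral inherits the crucial weight $(t-\theta\hbar)\le t\le 20r$. A further application of the equation $\partial_t H=-\sum_{i<n}\partial_i(a_{ij}\partial_j u)$ and an integration by parts in $x_i$ (again with vanishing graph contribution because of the $(t-\theta\hbar)$ weight) produces bulk integrals of three types: square-function terms $\iint(t-\theta\hbar)|\nabla u|^2\eta^2\psi\,dX\lesssim\|S_b(u)\|^2_{L^2(\Delta_{2r})}$; mixed terms involving $|\nabla A||u||\nabla u|$ handled via Cauchy--Schwarz and the Carleson estimate $\iint|\nabla A|^2|u|^2 t\,dX\le C\|\mu\|_{Carl}\|\tilde N_a(u)\|^2_{L^2(\Delta_{2r})}$, yielding at worst $C\|\mu\|_{Carl}^{1/2}\|S_b(u)\|\|\tilde N_a(u)\|$; and cutoff error terms of size $(C/r)\iint_{\mathcal K}|u|^2\,dX$.

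The central obstacle will be the bookkeeping: every surviving bulk integrand must carry either a $t$ (or $(t-\theta\hbar)$) weight so that the square-function or Carleson estimate applies, or else a small coefficient $\theta|\nabla\hbar|\lesssim 1/a$ permitting absorption. The aperture $a$ therefore plays a double role---providing the absorbable graph boundary term from the first integration by parts and producing small coefficients whenever a derivative falls on the weight $(t-\theta\hbar)$. Finally, to replace $(1/r)\iint_{\mathcal K}|u|^2\,dX$ by the alternative bound $Cr^{n-1}|u(A_r)|^2+C\int_{\Delta_{2r}}S_b^2(u)\,d\sigma$ claimed at the end of the lemma, I split $u=u(A_r)+(u-u(A_r))$ on $\mathcal K$: since $t\sim r$ throughout $\mathcal K$, a Poincar\'e inequality combined with $\iint_{\mathcal K} t|\nabla u|^2\,dX\lesssim\int_{\Delta_{2r}} S_b^2(u)\,d\sigma$ gives $\iint_{\mathcal K}|u-u(A_r)|^2\,dX\lesssim r\int_{\Delta_{2r}}S_b^2(u)\,d\sigma$, while the constant piece contributes $r^n|u(A_r)|^2$, completing the argument.
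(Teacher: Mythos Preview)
Your overall architecture is reasonable, but there is a genuine gap in the treatment of the ``tangential sum'' $\sum_{j<n}\iint\eta^2\psi(a_{nj}/a_{nn})\partial_j|u|^2\,dX$. When you integrate by parts in $x_j$ here \emph{before} any $t$-type weight has been introduced, the boundary term on the graph is indeed absorbable via $|\nabla\hbar|\le 1/a$, but the \emph{bulk} terms are not: you are left with
\[
-\sum_{j<n}\iint \partial_j(\eta^2)\,\psi\,\frac{a_{nj}}{a_{nn}}\,|u|^2\,dX
\;-\;\sum_{j<n}\iint \eta^2\psi\,\partial_j\!\Big(\frac{a_{nj}}{a_{nn}}\Big)\,|u|^2\,dX.
\]
The first is $\lesssim r^{-1}\iint_{(\Delta_{2r}\setminus\Delta_r)\times(\theta\hbar,20r)}|u|^2$, which is only controllable by $C\|\tilde N_a(u)\|_{L^2(\Delta_{2r})}^2$; the second is $\iint|\nabla A|\,|u|^2$ with no $t$-weight, so neither the Carleson bound nor the pointwise bound $|\nabla A|\lesssim t^{-1}$ closes it. A bare $\|\tilde N_a(u)\|^2$ term is \emph{not} permitted on the right-hand side of \eqref{TTBBMM}, and this is exactly what makes the subsequent good-$\lambda$ argument work. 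A related issue arises in your $H$-piece: when $\partial_i$ hits the weight $(t-\theta\hbar)$ you produce $(C/a)\iint|u||\nabla u|$ with no weight at all; the small prefactor $1/a$ does not help because there is nothing on the left-hand side to absorb such a bulk quantity into.

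The paper avoids both problems by a different ordering: it inserts $1=\partial_t t$ into the \emph{entire} term $-2\iint u\,\partial_t u\,\zeta$ \emph{before} invoking the equation or splitting off any tangential piece. Every subsequent bulk integral then carries the weight $t$ and retains at least one factor of $\nabla u$, so the cutoff-derivative terms become $\iint|u||\nabla u|\tfrac{t}{r}\lesssim\|S_b(u)\|\,\|\tilde N_a(u)\|$ and the $\nabla A$ terms become $\iint|\nabla A||u||\nabla u|\,t\lesssim\|\mu\|_{Carl}^{1/2}\|S_b(u)\|\,\|\tilde N_a(u)\|$. The price is that the graph boundary term from the tangential integration by parts is now nonzero (it carries a factor $t=\theta\hbar$); the paper handles it not by smallness of $|\nabla\hbar|$ but by the arithmetic--geometric inequality, splitting it as $\tfrac12\mathcal I+C\int_{\Delta_{2r}}|\nabla u(\cdot,\theta\hbar)|^2\hbar^2\,dx'$ and then integrating the second piece over $\theta\in[1/6,6]$ to recover a genuine square-function bound. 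Your final paragraph on replacing the solid integral via Poincar\'e is correct and matches the paper.
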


We postpone the proof until the very end and show consequences of this lemma.

\begin{lemma}\label{LGL} Let $L$ be as in Lemma \ref{S3:L8}.  Then for each $\gamma\in(0,1)$ there exists a constant $C(\gamma)>0$ 
such that $C(\gamma)\to 0$ as $\gamma\to 0$ and with the property that for each $\nu>0$ and 
each energy solution $u$ of $Lu=0$ there holds 
\begin{align}\label{eq:gl}
&\hskip -0.20in 
\left|\Big\{x'\in {{\mathbb R}}^{n-1}:\,\tilde{N}_a(u)>\nu,\,(M(S^2_b(u)))^{1/2}\leq\gamma\nu,\,
\big(M(S^2_b(u))M(\tilde{N}_a^2(u))\big)^{1/4}\leq\gamma\nu\Big\}\right|
\nonumber\\[4pt] 
&\hskip 0.50in
\quad\le C(\gamma)\left|\big\{x'\in{{\mathbb R}}^{n-1}:\,\tilde{N}_a(u)(x')>\nu/32\big\}\right|.
\end{align}
\end{lemma}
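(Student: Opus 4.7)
Denote the set on the left of \eqref{eq:gl} by $\mathcal{B}$ and the one on the right by $\mathcal{G}$. Set $h:=\hbar_{\nu,a}(w)$; by Lemma~\ref{S3:L5}(i), $h$ is $(1/a)$-Lipschitz on $\partial\Omega$, and from \eqref{h} one checks that $h(x')>0$ precisely when $\tilde{N}_a(u)(x')>\nu$, so $h$ vanishes off $\{\tilde{N}_a(u)>\nu\}\subset\mathcal{G}$. The plan is a Whitney-style good-$\lambda$ argument: decompose the open set $\{\tilde{N}_a(u)>\nu\}$ (which contains $\mathcal{B}$) into a Whitney family of boundary balls $\{B_j\}$ with $r_j:=\mathrm{radius}(B_j)\approx\mathrm{dist}(B_j,\{\tilde{N}_a(u)\le\nu\})$, so that by the Lipschitz bound $h\lesssim r_j$ on $2B_j$. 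It suffices to prove $|\mathcal{B}\cap B_j|\le C(\gamma)|B_j|$ for each $j$; summing yields \eqref{eq:gl} since $\sum_j|B_j|\le|\mathcal{G}|$.

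Fix $j$ with $\mathcal{B}\cap B_j\neq\varnothing$, pick $x_j^\ast\in\mathcal{B}\cap B_j$, and let $\Delta:=\Delta_{Cr_j}(x_j^\ast)$ with $C$ large enough that $2B_j\subset\Delta$ and $h\le 2\,\mathrm{radius}(\Delta)$ pointwise on $2\Delta$. Applying Lemma~\ref{S3:L8} with threshold $\nu$, a fixed $\theta\in[1/6,6]$, and the replacement \eqref{Eqqq-25}, together with the maximal-function hypotheses $(MS_b^2)(x_j^\ast)\le\gamma^2\nu^2$ and $(MS_b^2\cdot M\tilde{N}_a^2)^{1/2}(x_j^\ast)\le\gamma^2\nu^2$ (which bound $\|S_b\|_{L^2(2\Delta)}^2$ and $\|S_b\|_{L^2(2\Delta)}\|\tilde{N}_a\|_{L^2(2\Delta)}$ by $\gamma^2\nu^2|2\Delta|$), gives
\begin{equation}\label{planA}
\int_{\Delta}|u(\theta h(x'),x')|^2\,dx'\,\le\,C(1+\|\mu\|_{Carl})\gamma^2\nu^2\,r_j^{n-1}+C\,r_j^{n-1}|u(A_j)|^2,
\end{equation}
with $A_j$ a corkscrew point above $\Lambda_h$ at scale $r_j$.

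To handle $|u(A_j)|$, Whitney adjacency supplies $Q_j\in\{\tilde{N}_a(u)\le\nu\}$ at distance $\approx r_j$ from $x_j^\ast$; placing $A_j\in\Gamma_a(Q_j)$ at height $\approx r_j$ gives $w(A_j)\le\tilde{N}_a(u)(Q_j)\le\nu$, hence $|u(A_j)|\lesssim\nu$ by De~Giorgi--Nash--Moser. To upgrade this to $|u(A_j)|\lesssim\gamma\nu$ (needed for $C(\gamma)\to 0$) I would rerun the construction at the finer threshold $\nu':=c\gamma\nu$, applying Lemma~\ref{S3:L8} at that threshold after verifying the hypothesis $\hbar_{\nu',a}(w)\le 2r$ on the relevant surface balls; this rescaling is where the second maximal-function hypothesis $(MS_b^2\cdot M\tilde{N}_a^2)^{1/4}\le\gamma\nu$ enters, since it controls how large $\hbar_{\nu',a}$ can grow on the Whitney balls we work with. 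In the resulting sawtooth $w\le c\gamma\nu$, which yields $|u(A_j)|\lesssim\gamma\nu$ and reduces \eqref{planA} to $\int_\Delta|u(\theta h,x')|^2 dx'\le C(1+\|\mu\|_{Carl})\gamma^2\nu^2r_j^{n-1}$.

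To conclude, Lemma~\ref{l6} assigns to each $x'\in\mathcal{B}\cap B_j$ a surface ball $R(x')$ with $x'\in 2R(x')$ and $w(h(z'),z')>\nu/2$ on $R(x')$; a reverse-H\"older/Moser inequality for energy solutions transfers this to $|u(\theta h(z'),z')|\gtrsim\nu$ on a subset of $R(x')$ of comparable $(n-1)$-measure. A Vitali extraction $\{R_k\}\subset\{R(x')\}_{x'\in\mathcal{B}\cap B_j}$ of pairwise disjoint balls with $\bigcup 5R_k\supset\mathcal{B}\cap B_j$, combined with Chebyshev's inequality and the bound above, gives
$$\nu^2|\mathcal{B}\cap B_j|\,\lesssim\,\nu^2\sum_k|R_k|\,\lesssim\,\sum_k\int_{R_k}|u(\theta h,z')|^2\,dz'\,\le\,\int_\Delta|u(\theta h,z')|^2\,dz'\,\le\,C(\gamma)\nu^2|B_j|,$$
with $C(\gamma)=C(1+\|\mu\|_{Carl})\gamma^2\to 0$. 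The principal obstacle is the corkscrew-point refinement in the third paragraph: a naive use of the nearest $\{\tilde{N}_a\le\nu\}$-neighbour yields only $|u(A_j)|\lesssim\nu$, producing an absolute constant in $C(\gamma)$; extracting the $\gamma$-factor requires the rescaled Lipschitz graph $\hbar_{\nu',a}$ to remain admissible for Lemma~\ref{S3:L8}, which is precisely what the second maximal-function hypothesis delivers.
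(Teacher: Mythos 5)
Your overall scaffolding---Whitney decomposition, application of Lemma~\ref{S3:L8} with the replacement \eqref{Eqqq-25}, and use of Lemma~\ref{l6}/Corollary~\ref{S3:L6} followed by a maximal-function bound---matches the paper's. But you correctly identify the sticking point and then propose a fix that does not work, and this gap is fatal.

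The corkscrew term $C r^{n-1}|u(A_r)|^2$ in \eqref{Eqqq-25} is removed in the paper not by making $|u|$ small at the corkscrew point, but by subtracting a constant. One decomposes $\{\tilde N_a(u)>\nu/32\}$ (not $\{\tilde N_a(u)>\nu\}$: the factor $1/32$ is essential), so Whitney adjacency gives a nearby point $p'$ with $\tilde N_a(u)(p')\le\nu/32$. This supplies a corkscrew point $q$ with $w(q)\le\nu/16$, hence $|u(q)|\le\nu/16$. One then replaces $u$ by $\tilde u:=u-u(q)$, which still solves $L\tilde u=0$ and has $\tilde u(q)=0$, so the offending term $Cr^{n-1}|\tilde u(q)|^2$ vanishes identically. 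Because $|u(q)|\le\nu/16$, one still has $N_a^{2r}(\tilde w)>\nu/2$ on the bad set, so Corollary~\ref{S3:L6} and Lemma~\ref{Lw-u} apply to $\tilde u$ unchanged, and the right-hand side of Lemma~\ref{S3:L8} involves $\|\tilde N_a(w+|u(q)|)\|_{L^2}$ which is $\lesssim\|\tilde N_a(u)\|_{L^2}+\nu|\Delta_i|^{1/2}$; combining with the two maximal-function hypotheses at some point of $F_\nu^i$ gives $C(\gamma)\nu^2|\Delta_i|$ with $C(\gamma)\to 0$.

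Your alternative---``rerun the construction at the finer threshold $\nu'=c\gamma\nu$'' to arrange $|u(A_j)|\lesssim\gamma\nu$---cannot succeed. The value $u(A_j)$ at a fixed point in the Whitney region is a number determined by $u$, not by the stopping-time level you later choose. There is no reason it should be $\lesssim\gamma\nu$; generically it is of size $\approx\nu$ (consider $u$ nearly constant $\approx\nu$ over the box: then $S_b(u)$ is tiny so both maximal-function hypotheses hold with room to spare, yet $|u|\approx\nu$ everywhere, and the sawtooth graph $\hbar_{\gamma\nu,a}$ sits at the top of the box, far above scale $r_j$). The second maximal-function hypothesis $(M(S_b^2)M(\tilde N_a^2))^{1/4}\le\gamma\nu$ controls the cross term $\|S_b\|\,\|\tilde N_a\|$ in \eqref{TTBBMM}; it does not control the height of $\hbar_{\gamma\nu,a}$ and does not imply smallness of $u$ at any interior point. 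The subtraction of $u(q)$ is thus not a cosmetic convenience but the mechanism by which $C(\gamma)\to 0$ is obtained, and your proof as written would only yield $C(\gamma)=O(1)$.

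A secondary issue: decomposing $\{\tilde N_a(u)>\nu\}$ rather than $\{\tilde N_a(u)>\nu/32\}$ gives you only $\tilde N_a(u)\le\nu$ (not $\le\nu/32$) at the Whitney-adjacent point. Then $|u(q)|$ could be as large as $\nu$, and after subtraction $\tilde w$ might not exceed $\nu/2$ on the bad set, so Corollary~\ref{S3:L6} is no longer applicable to $\tilde u$. The smaller threshold in the right-hand set of \eqref{eq:gl} is built in precisely to make the subtraction trick compatible with Lemma~\ref{l6}.
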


\begin{proof} 
To start, observe that $\big\{x'\in{{\mathbb R}}^{n-1}:\,\tilde{N}_a(u)(x')>\nu/32\}$ is an open 
subset of ${{\mathbb R}}^{n-1}$. When this set is empty or the entire Euclidean ambient, 
estimate \eqref{eq:gl} is trivial, so we focus on the case when the set in question is 
both nonempty and proper. Granted this, we may consider a Whitney decomposition $(\Delta_i)_{i\in I}$ 
of it, consisting of open cubes in ${\mathbb{R}}^{n-1}$. Let $F_\nu^i$ be the set appearing on the 
left-hand side of \eqref{eq:gl} intersected with $\Delta_i$. We may streamline the index set $I$ 
by retaining only those $i$'s for which $F_\nu^i\neq\varnothing$. Let $B_i$ be a ball 
of radius $r_i$ in ${\mathbb R}^n$ such that $\Delta_i\subset B_i\cap \{x_0=0\}$ and there 
exists a point $p'\in 2B_i\cap \partial{\mathbb R}^n_{+}$ with $\tilde{N}_a(u)(p')=N_a(w)(p')\leq\nu/32$. 
The existence of such point $p'$ is guaranteed by the very nature of the Whitney decomposition. 
Indeed, there exists a point near $\Delta_i$ not contained in the set 
$\{x'\in{{\mathbb R}}^{n-1}:\,\tilde{N}_a(u)(x')>\nu/32\}$.
 
This clearly implies that $w(z)\le\nu/32$ for all $z\in\Gamma_a(p')$. In particular, for all $x'\in \Delta_i$ 
we have $w(z)\le\nu/32$ for all $z\in\Gamma_a(x')\cap\Gamma_a(p')$, so we focus on estimating the size 
of $w(z)$ for $z\in\Gamma_a(x')\setminus\Gamma_a(p')$ with $z_0\geq 2r$. 
Since we also assume that for at least one $x'\in\Delta_i$ we have $M(S^2_b(u))(x')\leq(\gamma\nu)^2$, 
we may conclude that for sufficiently small $\gamma>0$ we have that for any $z\in\Gamma_a(x')$ with $z_0\ge 2r$ 
there is a point $\tilde{z}\in\Gamma_a(p')$ with 
\begin{equation}\label{Eqqq-33}
|z-\tilde{z}|\leq Cr_i\,\,\text{ and }\,\,|w(z)-w(\tilde{z})|\leq\nu/32.
\end{equation}
It follows that for all such $z$ we have $w(z)\le\nu/16$. Hence for all $x'\in \Delta_i$ we have
\begin{equation}\label{Eqqq-34}
\nu<\tilde{N}_a(u)(x')={N}_a(w)(x')=N_a^{2r}(w)(x'),
\end{equation}
where  $N_a^{2r}$ is the truncated nontangential maximal function at height $2r$. In particular this also implies
\begin{equation}\label{eq:hbound}
\hbar_{\nu,a}(w)\leq 2r_i\,\,\text{ pointwise on }\,\,\Delta_i.
\end{equation}

Let us also note that we can find a point $q$ (specifically, a corkscrew point for $12\Delta_i$) 
with distance to $\Delta_i$ and the boundary equal to $12r_i$ such that $w(q)\leq\nu/16$. 
When $h\lesssim r_i$ since $u$ vanishes above height $h$ we might actually take $q$ such that $w(q)=0$.

As $w$ is the $L^2$ average of $|u|$, De Giorgi-Nash-Moser theory implies that
\begin{equation}\label{Eqqq-35}
|u(q)|\leq w(q)\le\nu/16.
\end{equation}

Next, consider $\tilde{u}:=u-u(q)$.  Then ${L}\tilde{u}=0$, hence $\tilde{u}$ still 
solves $Lu=0$ and $\tilde{u}({q})=0$. Denote by $\tilde{w}$ the $L^2$ 
averages of $|\tilde u|$. For all $x'\in F^i_{\nu}$ we have
\begin{equation}\label{Eqqq-36}
N_a^{2r}(\tilde{w})(x')\geq N_a^{2r}({w})(x')-|u({q})|\geq\nu-\nu/16>\nu/2.
\end{equation}
With $\hbar:=\hbar_{\nu,a}(w)$ and for $M_\hbar$ defined on the graph of $\hbar$ 
in Corollary~\ref{S3:L6} we see that Corollary~\ref{S3:L6} 
applied to $\tilde{u}$ implies\footnote{Technically $\tilde{u}\in W^{1,2}_{\rm loc}(\Omega)$ 
is not an energy solution, but in the proof the smallness of the solution is only needed above 
a certain distance from the boundary. In our case we obviously have 
$\tilde{w}(z)\leq w(z)+|u({q})|\leq\nu/8$ for points $z$ whose distance to the boundary 
exceeds $2r_i$ which suffices for our purposes.}  
\begin{equation}\label{Eqqq-37}
M_\hbar\left(\tilde{w}\chi_{4B_i}\right)\big(\hbar(x'),x'\big)\geq C(n)\nu.
\end{equation}
Here we are allowed to apply the cutoff function $\chi_{4B_i}$ since values of 
$\tilde{w}$ are small above the height $2r$, hence this places a bound on the distance 
and the diameter of the boundary ball $R$ constructed in Corollary~\ref{S3:L6} from the 
point $x'$ (both are bounded by $\lesssim r_i$). Thus, by the maximal function theorem
\begin{align}\label{UHB}
|F_\nu^i| &\leq\frac{C}{\nu^2}\int_{4\Delta_i}\big(M_\hbar(\tilde{w}\chi_{4B_i})\big)^{2}\big(\hbar(x'),x'\big)\,dx'
\nonumber\\[4pt]
&\leq\frac{C}{\nu^2}\int_{4\Delta_i}\tilde{w}^2(\hbar(x'),x')\,dx'.
\end{align}

At this stage, we bring in the following lemma. proof of which is again in \cite{DHM},

\begin{lemma}\label{Lw-u} 
For any surface ball $\Delta$, if $a>0$ and $\hbar=\hbar_{\nu,a}(w)$ then 
\begin{equation}\label{Eqqq-38}
\int_{\Delta}\tilde{w}^2(\hbar(x'),x')\,dx'\leq C\int_{1/6}^{6}\int_{3\Delta} 
\big|\tilde{u}(\theta \hbar(x'),x')\big|^2\,dx'\,d\theta.
\end{equation}
\end{lemma}

Hence, we have (taking $a>0$ as in Lemma~\ref{S3:L8})
\begin{equation}\label{eq:Fnu}
|F_\nu^i|\leq\frac{C}{\nu^2}\int_{1/6}^{6}\int_{12\Delta_i}\big|\tilde{u}(\theta \hbar(x'),x')\big|^2\,dx'\,d\theta.
\end{equation}
We apply the conclusion in Lemma~\ref{S3:L8} (in the version recorded in the very last 
part of its statement) to the solution $\tilde{u}$. This gives
\begin{align}\label{NEWEST}
&\hskip 0.00in
\int_{1/6}^6\int_{12\Delta_i}|\tilde{u}(\theta \hbar(x'),x')|^2\,dx'\,d\theta 
\\[4pt]
&\hskip 0.20in
\leq C(1+\|\mu\|^{1/2}_{\mathcal C})\|S_{b}({u})\|_{L^2(24\Delta_i)}
\|{N}_{a}(\tilde{w})\|_{L^2(24\Delta_i)}
\nonumber\\[4pt]
&\hskip 0.20in
\quad+C\|S_{b}(u)\|^2_{L^2(24\Delta_i)}+Cr^{n-1}|\tilde{u}(q)|^2
\nonumber\\[4pt]
&\hskip 0.20in
\leq C(1+\|\mu\|^{1/2}_{\mathcal C})\|S_{b}(u)\|_{L^2(24\Delta_i)}
\|{N}_{a}({w}+w({q}))\|_{L^2(24\Delta_i)}
\nonumber +C\|S_{b}(u)\|^2_{L^2(24\Delta_i)}.
\end{align}
Observe that we have 
dropped the term $Cr^{n-1}|\tilde{u}(q)|^2$ as we have arranged previously that 
$\tilde{u}(q)=0$. Since $F_\nu^i\neq\varnothing$ and $|w({q})|\leq\nu/16$ the 
first term of the last line of \eqref{NEWEST} may be bounded by
\begin{align}
&\hskip -0.20in
C|24\Delta_i|\left(\fint_{24\Delta_i} S_b^2(u)dx'\right)^{1/2}
\left[\left(\fint_{24\Delta_i} N_a^2(u)dx'\right)^{1/2}+\frac\nu{16}\right]
\nonumber\\[4pt]
&\hskip 0.20in
\leq C|24\Delta_i|\left[\big(M(S^2_b(u))(x')M(\tilde{N}_a^2(u))(x')\big)^{1/2} 
+\frac\nu{16}M\big(S^2_b(u)\big)(x')^{1/2}\right]
\nonumber\\[4pt]
&\hskip 0.20in
\leq C|24\Delta_i|(\gamma^2+\gamma/16)\nu^2=C(\gamma)|\Delta_i|\nu ^2.
\end{align}
Here $x'\in F_\nu^i$ is a point where we use the assumptions for the set on 
the left-hand side of \eqref{eq:gl}. Also, we have used that $|24\Delta_i|\lesssim|\Delta_i|$ by the 
doubling property of the Lebesgue measure. The estimate for the very last term of \eqref{NEWEST} is analogous. 
By design, we have $C(\gamma)\to 0$ as $\gamma\to 0$. Using this back in \eqref{eq:Fnu} we obtain
\begin{equation}\label{Eqqq-39}
|F_\nu^i|\leq C'(\gamma)|\Delta_i|.
\end{equation}
Summing over all $i$ we obtain \eqref{eq:gl}, as desired.
\end{proof}

What we have just established is an example of so-called good-lambda inequalities which are key in this theory. We immediately get the following:

\begin{proposition}\label{S3:C7} Let $L$ be as in Lemma \ref{S3:L8}.  
The for any $p>2$ and $a>0$ there exists an integer $m=m(a)\ge 2$ and a finite constant 
$C>0$ such that for any energy solution $u$ 
of $Lu=0$ in $\Omega$ we have:
\begin{equation}\label{S3:C7:E00ooqq}
\|\tilde{N}_a(u)\|_{L^{p}({{\mathbb R}}^{n-1})}\le C\|S_a(u)\|_{L^{p}({{\mathbb R}}^{n-1})}.
\end{equation}
\end{proposition}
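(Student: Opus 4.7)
The plan is to convert the good-$\lambda$ inequality from Lemma~\ref{LGL} into the desired $L^p$ bound by the classical layer-cake argument, followed by an absorption step that relies crucially on $p>2$ via the boundedness of the Hardy--Littlewood maximal operator on $L^{p/2}$.

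Concretely, fix $\gamma\in(0,1)$ to be chosen later. For each level $\nu>0$ I would decompose
\begin{equation*}
\{\tilde N_a(u)>\nu\}\ \subseteq\ G_\nu\ \cup\ \bigl\{(M(S_b^2(u)))^{1/2}>\gamma\nu\bigr\}\ \cup\ \bigl\{(M(S_b^2(u))\,M(\tilde N_a^2(u)))^{1/4}>\gamma\nu\bigr\},
\end{equation*}
where $G_\nu$ is the good set appearing on the left of \eqref{eq:gl}, which has measure at most $C(\gamma)\,|\{\tilde N_a(u)>\nu/32\}|$ by Lemma~\ref{LGL}. Multiplying by $p\nu^{p-1}$, integrating in $\nu\in(0,\infty)$, and using the distribution-function identity $\|f\|_{L^p}^p=p\int_0^\infty\nu^{p-1}|\{f>\nu\}|\,d\nu$ together with the scalings $\nu\mapsto 32\nu$ and $\nu\mapsto\gamma^{-1}\nu$ produces
\begin{equation*}
\|\tilde N_a(u)\|_{L^p}^p\ \le\ C(\gamma)\cdot 32^{p}\,\|\tilde N_a(u)\|_{L^p}^p\ +\ \gamma^{-p}\bigl\|(M(S_b^2(u)))^{1/2}\bigr\|_{L^p}^p\ +\ \gamma^{-p}\bigl\|(M(S_b^2(u))\,M(\tilde N_a^2(u)))^{1/4}\bigr\|_{L^p}^p.
\end{equation*}
Since $p>2$, the Hardy--Littlewood maximal theorem on $L^{p/2}$ gives $\|(M(S_b^2))^{1/2}\|_{L^p}\lesssim\|S_b(u)\|_{L^p}$, and the same theorem combined with the pointwise Cauchy--Schwarz inequality $(M(S_b^2)M(\tilde N_a^2))^{p/4}\le (M(S_b^2))^{p/2}/2 + (M(\tilde N_a^2))^{p/2}/2$ (or H\"older applied to the integral) yields
\begin{equation*}
\bigl\|(M(S_b^2)\,M(\tilde N_a^2))^{1/4}\bigr\|_{L^p}^p\ \lesssim\ \|S_b(u)\|_{L^p}^{p/2}\,\|\tilde N_a(u)\|_{L^p}^{p/2}.
\end{equation*}
A weighted arithmetic--geometric inequality converts this last quantity into $\tfrac14\|\tilde N_a(u)\|_{L^p}^p+C_\gamma\|S_b(u)\|_{L^p}^p$. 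Choosing $\gamma$ so small that $C(\gamma)\cdot 32^p<\tfrac14$ (permissible because $C(\gamma)\to 0$ as $\gamma\to 0$), one absorbs half of $\|\tilde N_a(u)\|_{L^p}^p$ back into the left-hand side. The comparability of square functions with different apertures (noted immediately after Definition~\ref{d1.4}) then replaces $\|S_b(u)\|_{L^p}$ by $\|S_a(u)\|_{L^p}$, establishing \eqref{S3:C7:E00ooqq}; the integer $m=m(a)$ in the statement is simply a sufficiently large integer exceeding the aperture-enlargement ratio $b/a$ coming from Lemma~\ref{l6}.

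The main obstacle is the \emph{a priori} finiteness of $\|\tilde N_a(u)\|_{L^p}$, without which the absorption is illegal. The standard remedy is to run the entire argument with $\tilde N_a(u)$ replaced by its truncation $\tilde N_a^{\,h}(u)$ at height $h<\infty$, noting that $\tilde N_a^{\,h}(u)\in L^\infty$ with compact support modulo lower-order terms, so its $L^p$ norm is finite; one verifies that Lemma~\ref{LGL} applies to $(\tilde N_a^{\,h}(u),S_b^{\,h}(u))$ with constants uniform in $h$ (the $L^2$ decay of the averages $w$ at infinity used to define $\hbar_{\nu,a}$ is preserved under truncation), and then lets $h\to\infty$ by monotone convergence to obtain the claimed unrestricted bound.
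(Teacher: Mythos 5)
Your proof is correct and follows essentially the same layer-cake/absorption argument as the paper. Both you and the paper integrate the good-$\lambda$ inequality of Lemma~\ref{LGL} against $\nu^{p-1}$, invoke boundedness of the Hardy--Littlewood maximal function on $L^{p/2}$ (which is where $p>2$ enters), and choose $\gamma$ small to absorb the surviving $\|\tilde N_a(u)\|_{L^p}^p$ contributions into the left-hand side; the paper's proof is terser and does not spell out the treatment of the mixed term $\int \bigl(M(S_b^2(u))M(\tilde N_a^2(u))\bigr)^{p/4}$, which you correctly handle via H\"older on $L^{p/2}\times L^{p/2}$ followed by weighted arithmetic--geometric mean.

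One small caveat worth correcting: the pointwise Young inequality $(M(S_b^2)M(\tilde N_a^2))^{p/4}\le \tfrac12 (M(S_b^2))^{p/2}+\tfrac12(M(\tilde N_a^2))^{p/2}$ that you offer as an alternative does \emph{not} yield the product estimate $\|S_b(u)\|_{L^p}^{p/2}\|\tilde N_a(u)\|_{L^p}^{p/2}$; after integrating and applying the maximal theorem it produces the sum $\|S_b(u)\|_{L^p}^{p}+\|\tilde N_a(u)\|_{L^p}^{p}$, and the second term comes multiplied by $\gamma^{-p}$, which blows up as $\gamma\to0$ and so cannot be absorbed. Only the H\"older route, which you also name, gives the product form that weighted AM--GM then tames. (Also, the pointwise inequality is Young/AM--GM rather than Cauchy--Schwarz.) Your final paragraph about the a priori finiteness of $\|\tilde N_a(u)\|_{L^p}$ and the truncation remedy is a legitimate and useful supplement; the paper leaves this issue implicit in the statement of the proposition, deferring the extension to all $p>0$ to a localized version of the good-$\lambda$ argument.
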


\begin{proof} Multiply the good-$\lambda$ inequality \eqref{eq:gl} by $\nu^{p-1}$ and integrate in $\nu$ 
over the interval $(0,\infty)$. This implies:

\begin{align}\label{eq:glzzz}
&\int_{\partial\mathbb R^n_+}N_a(u)^pdx \le C'(\gamma) \int_{\partial\mathbb R^n_+}N_a(u)^pdx+K
\int_{\partial\mathbb R^n_+}(M(S^2_b(u)))^{p/2}dx\\&+K\int_{\partial\mathbb R^n_+}(M(S^2_b(u))M(\tilde{N}_a^2(u))\big)^{p/4}dx.\nonumber
\end{align}
Here $C'(\gamma)\to 0$ as $\gamma\to 0$. Thus it is possible to pick $\gamma>0$ for which $C'(\gamma)<1/2$ and hide $C'(\gamma) \int_{\partial\mathbb R^n_+}N_a(u)^pdx$ on the lefthand side. For $p/2>1$ the maximal function is bounded on $L^{p/2}$ and hence \eqref{eq:glzzz} implies
\begin{align}\label{eq:glzzzx}
&\int_{\partial\mathbb R^n_+}N_a(u)^pdx \le K'
\int_{\partial\mathbb R^n_+}S_b(u)^pdx.
\end{align}
From this our claim follows. The proof that above actually holds for all $p>0$ requires a more sophisticated version of \eqref{eq:gl} which is localised.   
The corresponding local version of the estimate \eqref{S3:C7:E00ooqq} for $p>2$ is the necessary ingredient for what  is otherwise a purely abstract, real-variable argument that then extends the estimate to all $p>0$. Further details can be found in \cite{FSt}.
\end{proof}

In a similar spirit Lemma \ref{ll3a} can be established using the following Lemma from \cite[Lemma 3.4]{DPreg} :

\begin{lemma}\label{S3:L8bb} 
Consider the elliptic PDE $Lu=0$ with coefficients satisfying Carleson condition \eqref{CCalt}, let $v=\nabla u$ and let $w$ be the $L^2$ averages of $v$. 
Then there exists $a>0$ with the following significance.  Select $\theta\in[1/6,6]$ and, having picked $\nu>0$ arbitrary,  
let $h_{\nu,a}(w)$ be as in \eqref{h}. Also, consider the domain 
$\mathcal{O}=\{(x_0,x')\in\Omega:\,x_0>\theta h_{\nu,a}(x')\}$ with boundary  
$\partial\mathcal{O}=\{(x_0,x')\in\Omega:\,x_0=\theta h_{\nu,a}(x')\}$. In this context, 
for any surface ball $\Delta_r=B_r(Q)\cap\partial\Omega$, with $Q\in\partial\Omega$ and $r>0$ 
chosen such that $h_{\nu,a}(w)\leq 2r$ pointwise on $\Delta_{2r}$, 
one has for an arbitrary $\vec{c}=(c_1,c_2,\dots,c_{n})\in\mathbb R$:
\begin{align}\label{TTBBMMww}
\int_{1/6}^6\int_{\Delta_r}\big|v\big(\theta h_{\nu,a}(w)(\cdot),\cdot\big)-\vec{c}\big|^2\,dx d\theta
&\leq C(1+\|\mu\|^{1/2}_{\mathcal C})\|S_{b}(v)\|_{L^p(\Delta_{2r})}
\|\tilde{N}_{2,a}(v-\vec{c})\|_{L^p(\Delta_{2r})}
\nonumber\\
&\hskip-4cm +C\|\mu\|_{\mathcal C}^{1/2}\|\tilde{N}_{2,a}(v-\vec{c})\|^2_{L^p(\Delta_{2r})}+C\|S_{b}(v)\|^2_{L^p(\Delta_{2r})}+\frac{C}{r}\iint_{\mathcal{K}}|v-\vec{c}|^{2}\,dX.
\end{align}
Here $C=C(\lambda,\Lambda,p,n)\in(0,\infty)$ and $\mathcal{K}$ and $\mathcal{O}$ are as in Lemma \ref{S3:L8}.
\end{lemma}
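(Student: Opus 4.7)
The plan is to adapt the strategy of Lemma~\ref{S3:L8} to the vector-valued function $v=\nabla u$ by exploiting the system of inhomogeneous equations \eqref{eqDer} satisfied by the components $v_k=\partial_k u$, namely $Lv_k=-\mathrm{div}((\partial_k A)v)$. Since $\vec c$ is constant, setting $\tilde v:=v-\vec c$ gives $L\tilde v_k=-\mathrm{div}((\partial_k A)\tilde v)-\mathrm{div}((\partial_k A)\vec c)$, so every move from the scalar argument in Lemma~\ref{S3:L8} transfers, with one additional source term to bookkeep. By Lemma~\ref{S3:L5}(i), the hypersurface $\Lambda_\theta=\partial\mathcal{O}_\theta$ is a Lipschitz graph with constant at most $1/a$, uniformly in $\theta\in[1/6,6]$; this is the geometric input that makes the stopping-time region behave like a Lipschitz subdomain.

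First, I localize with a smooth cutoff $\varphi\in C_0^\infty(\Delta_{2r})$ with $\varphi\equiv 1$ on $\Delta_r$, together with a vertical cutoff $\psi$ adapted to the graph $\Lambda_\theta$. Using the fundamental theorem of calculus and the decay of $\tilde v$ at infinity,
\[
|\tilde v(\theta h_{\nu,a}(w)(x'),x')|^{2}=-\int_{\theta h_{\nu,a}(w)(x')}^{\infty}\partial_t\bigl(|\tilde v|^2\psi\bigr)(t,x')\,dt,
\]
I integrate against $\varphi$, apply Fubini, and then average over $\theta\in[1/6,6]$. This $\theta$-averaging is precisely what converts the $L^\infty_\theta$ pointwise evaluation on the graph into a solid integral weighted by a factor comparable to $t\,dX$, producing the natural weight appearing in the square function and, from the derivatives of $\varphi$ and $\psi$, the tail contribution $\tfrac{1}{r}\iint_{\mathcal K}|v-\vec c|^{2}\,dX$ over the corkscrew region.

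Next, I integrate by parts componentwise using the coercivity identity of Lemma~\ref{lgeneral} applied to each $\tilde v_k$. The pure square function term $\iint_{\mathcal{O}}|\nabla\tilde v|^2\,t\,dX$ produces $\|S_b(v)\|^2_{L^p(\Delta_{2r})}$. The homogeneous part of the source, $-\mathrm{div}((\partial_k A)\tilde v)$, paired against $\tilde v_k$ and integrated by parts, generates cross terms $\iint|\tilde v||\nabla\tilde v||\nabla A|\,t\,dX$ and diagonal terms $\iint|\nabla A|^2|\tilde v|^2\,t\,dX$. By Cauchy--Schwarz and the Carleson condition \eqref{CCalt} on $d\mu=|\nabla A|^2 t\,dX$, these are bounded respectively by
\[
\|\mu\|_{\mathcal C}^{1/2}\,\|S_b(v)\|_{L^p(\Delta_{2r})}\|\tilde N_{2,a}(\tilde v)\|_{L^p(\Delta_{2r})}
\quad\text{and}\quad
\|\mu\|_{\mathcal C}^{1/2}\,\|\tilde N_{2,a}(\tilde v)\|^2_{L^p(\Delta_{2r})},
\]
which are precisely the first two groups of terms on the right-hand side of \eqref{TTBBMMww}, where the second bound uses $\|\mu\|_{\mathcal C}^{1/2}\le \|\mu\|_{\mathcal C}^{1/2}$ trivially and one copy of $\|\mu\|_{\mathcal C}^{1/2}$ is spent absorbing a factor of $\|\nabla A\|\sqrt{t}$ against the remaining Carleson bound.

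The main obstacle is the residual source $-\mathrm{div}((\partial_k A)\vec c)$ arising from the constant $\vec c$. A naive estimate pairs $\vec c$ with $|\nabla A|$ and yields a $|\vec c|$-dependent contribution that is not controllable by norms of $\tilde v$ alone. The remedy, following \cite{DPreg}, is a second integration by parts that moves the loose $\partial_k$ derivative off of $A$ and onto $\tilde v_k$, exploiting $\partial_k\vec c=0$ so no boundary term in $|\vec c|$ survives; after this manoeuvre every appearance of $\vec c$ sits inside a factor $|\tilde v|$ or $|\nabla\tilde v|$ multiplied by $|\nabla A|$ (with possibly one extra $|\nabla A|$), and hence slots back into the two Carleson-type estimates above. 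Careful bookkeeping through this double integration by parts, together with verifying that the Lipschitz control on $\Lambda_\theta$ prevents uncontrolled boundary contributions on $\partial\mathcal{O}_\theta$, completes the proof.
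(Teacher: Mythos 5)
The paper gives no self-contained proof of this lemma; it states that ``Proofs of Lemmas \ref{S3:L8bb}--\ref{S3:L8-alt1zz} are similar and hence we omit them,'' referring the reader to \cite[Lemma 3.4]{DPreg}, and instead proves Lemma \ref{S3:L8} in detail. So the comparison must be against that model proof together with the concluding Remark, which explains the role of $\vec c$: unlike the scalar case where $u-c$ is again a solution, $\nabla u - \vec c$ does \emph{not} satisfy the same inhomogeneous system as $\nabla u$, so the constant shift generates genuine extra source terms that must be tracked.

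Your high-level plan is the intended one: set $\tilde v_k = v_k - c_k$, use the commutator identity $L v_k = -\div\big((\partial_k A)v\big)$ from \eqref{eqDer}, run the same cutoff/FTC/$t$-integration-by-parts sequence as in the proof of Lemma \ref{S3:L8}, and absorb the extra source terms using the Carleson condition \eqref{CCalt}. You correctly identify the constant $\vec c$ as the main new difficulty and correctly propose to move the loose derivative off of $A$.

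There are two genuine issues. The first is substantive: your claim that ``after this manoeuvre every appearance of $\vec c$ sits inside a factor $|\tilde v|$ or $|\nabla\tilde v|$ multiplied by $|\nabla A|$, and hence slots back into the two Carleson-type estimates above'' does not close the argument. After the second integration by parts one still has terms of the schematic form $\iint |\nabla\tilde v|\,|\nabla A|\,|\vec c|\,t\zeta\,dX$ or $\iint |\tilde v|\,|\nabla A|^2\,|\vec c|\,t\zeta\,dX$, and Cauchy--Schwarz together with the Carleson bound gives something like $\|\mu\|_{Carl}^{1/2}\,\|S_b(v)\|_{L^2(\Delta_{2r})}\,|\vec c|\,|\Delta_{2r}|^{1/2}$. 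The right-hand side of \eqref{TTBBMMww} contains \emph{only} $\|S_b(v)\|$, $\|\tilde N_{2,a}(v-\vec c)\|$, and the $\mathcal{K}$-integral of $|v-\vec c|^2$ --- not $|\vec c|$ or any unshifted nontangential maximal function. Since $\vec c$ is an arbitrary constant vector, an estimate with a surviving factor $|\vec c|$ cannot be correct, so you need to exhibit an explicit cancellation or a reorganization (e.g., pairing against $v=\tilde v+\vec c$ before splitting, exploiting the symmetry $\partial_i v_k=\partial_k v_i$, or exploiting the pointwise bound $\delta(X)|\nabla A(X)|\lesssim 1$ implicit in \eqref{CCalt}) that removes $|\vec c|$. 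Saying it ``slots back in'' is exactly where the difficulty lies, and it is the very point the authors' Remark flags as a gap in \cite{DPR}.

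The second issue is smaller but worth noting. In the model proof of Lemma \ref{S3:L8}, the weight $t$ in the solid integrals does not come from the $\theta$-averaging; it is introduced by inserting $\partial_n(t)=1$ and integrating by parts in $t$ (the decomposition $\mathcal A = I + II + III$ around \eqref{utAA}). The vertical cutoff $\zeta_0$ is a fixed cutoff above height $r_0\approx r$, not ``adapted to the graph $\Lambda_\theta$.'' The $\theta$-averaging in \eqref{TTBBMMww} is used (as in Lemma \ref{Lw-u}) to convert a surface evaluation on a single graph into a solid average over the sawtooth region, but the basic weight comes from the integration by parts. If you make the vertical cutoff $\theta$-dependent, the boundary terms on $\partial\mathcal{O}_\theta$ become harder to control uniformly in $\theta$, so you should follow the paper's fixed-cutoff setup.
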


Finally, Lemma \ref{S3:C7-alt1} follows from \cite[Lemma 5.1]{DHP}
\begin{lemma}\label{S3:L8-alt1zz} 
Let $\Omega={\mathbb R}^n_+$ and let $Lu=\mbox{div}_x(B_\parallel\nabla_x u)+u_{tt}$ be a block-form operator with bounded measurable coefficients.
Suppose $V=(v_1,v_2,\dots,v_{n-1})$ is a weak solution of \eqref{system} in $\Omega$  For a fixed (sufficiently large) $a>0$, consider an arbitrary Lipschitz function $\hbar:{\mathbb R}^{n-1}\to \mathbb R$ such that
\begin{equation}
\|\nabla \hbar\|_{L^\infty}\le 1/a,\qquad \hbar(x)\ge 0\text{ for all }x\in{\mathbb R}^{n-1}.\label{hbarprop}
\end{equation}
Then for sufficiently large $b=b(a)>0$ we have the following. For an arbitrary surface ball $\Delta_r\subset{\mathbb R}^{n-1}$ of radius $r$ such that at least one point of $\Delta_r$
the inequality $\hbar(x)\le 2r$ holds we have the following estimate for all $m=1,2,\dots,n-1$ and an arbitrary $\vec{c}=(c_1,c_2,\dots,c_{n-1})\in\mathbb R$:
\begin{align}\label{TTBBMMxx}
&\sum_{m<n}\int_{1/6}^6\int_{\Delta_r}\big|v_m\big(x,\theta\hbar(x)\big)-c_m\big|^2\,dx\,d\theta
\leq C\Big[\|S_b(V)\|_{L^2(\Delta_{2r})}
\|{N}_a(V-\vec{c})\|_{L^2(\Delta_{2r})}
\nonumber\\+&\|\mu\|_{Carl}(\|{N}_a(V-\vec{c})\|_{L^2(\Delta_{2r})}^2+\|N_a(V)\|^2_{L^2(\Delta_{2r})})+\|S_b(V)\|^2_{L^2(\Delta_{2r})}+\frac{1}{r}\iint_{\mathcal{K}}|V-\vec{c}|^{2}\,dX\Big],
\end{align}
for some $C\in(0,\infty)$ that only depends on $a,\lambda,\Lambda,n$. Here $d\mu(X)=|\nabla_x B_\parallel(X)|^2 t\,dX$ is the Carleson measure and $\mathcal{K}$ and $\mathcal{O}$ are as in Lemma \ref{S3:L8}.
\end{lemma}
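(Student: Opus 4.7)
The strategy parallels the proofs of Lemma~\ref{S3:L8} (for the scalar Dirichlet case) and Lemma~\ref{S3:L8bb} (for the scalar Regularity case), adapted to the weakly coupled system \eqref{system} for $V=(v_1,\dots,v_{n-1})$. First, set $\widetilde V = V-\vec c$ and fix a smooth cutoff $\phi$ supported in $2\Delta_r$, equal to $1$ on $\Delta_r$ with $|\nabla\phi|\lesssim 1/r$, together with a radial cutoff $\psi(t)$ truncating near height $t\sim r$. For each $\theta\in[1/6,6]$ apply the divergence theorem to the vector field $(0,\dots,0,|\widetilde v_m|^2\phi^2\psi)$ on the domain $\mathcal{O}_\theta=\{(x,t):t>\theta\hbar(x)\}$ to convert the surface integral over the Lipschitz graph of $\theta\hbar$ into a solid integral; then integrate in $\theta$ and use Fubini. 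The $\theta$-averaging naturally produces a weight $1/\hbar(x)\sim 1/t$ on the intermediate region $\{(x,t):\hbar(x)/6\le t\le 6\hbar(x)\}$, and a second integration by parts in $t$ promotes this to the $t$-weight that matches the square function. This is the mechanism by which the LHS, integrated over $[1/6,6]\times\Delta_r$, is linked to genuine $t$-weighted bulk integrals.

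After these manipulations, the principal solid term is $\iint \widetilde v_m\,\partial_t\widetilde v_m\,\phi^2\psi\,t\,dX$. Cauchy--Schwarz together with the Fubini identity $\iint|\nabla V|^2 t\,dX\sim \|S(V)\|_{L^2}^2$ and the averaged pointwise bound \eqref{NTMaxVar} produce the product $\|S_b(V)\|_{L^2(\Delta_{2r})}\|N_a(V-\vec c)\|_{L^2(\Delta_{2r})}$. The inhomogeneous contribution is handled by inserting the system \eqref{system}, namely $Lv_m=\sum_{i,j<n}\partial_i((\partial_m b_{ij})v_j)$, into $\iint (Lv_m)\widetilde v_m\phi^2\psi\,t\,dX$ and integrating by parts in $\partial_i$, thereby moving the derivative off the coefficient factor $\partial_m b_{ij}$. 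The resulting integrand is pointwise bounded by $|\nabla_x B_\parallel|\cdot|V|\cdot(|\nabla V|+|\widetilde V|/t)\cdot t$; applying Cauchy--Schwarz and invoking the Carleson measure condition on $d\mu=|\nabla_x B_\parallel|^2\,t\,dX$ yields the $\|\mu\|_{Carl}\bigl(\|N_a(\widetilde V)\|_{L^2}^2+\|N_a(V)\|_{L^2}^2\bigr)$ and additional $\|S_b(V)\|_{L^2}^2$ contributions on the right. The cutoff-derivative terms, where $\nabla\phi$ or $\psi'$ are hit, are absorbed by Cauchy--Schwarz into the interior remainder $\tfrac{C}{r}\iint_{\mathcal{K}}|V-\vec c|^2\,dX$, $\mathcal{K}$ being a Whitney-type region comparable to $\Delta_{2r}$.

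The main obstacle is to ensure that the Carleson norm $\|\mu\|_{Carl}$ multiplies \emph{only} nontangential maximal function terms, not square function terms, since this is precisely what allows the $N$-terms to be absorbed in the subsequent good-$\lambda$ argument when $\|\mu\|_{Carl}$ is small (as in Lemma~\ref{LGL}). This requires careful bookkeeping in the integrations by parts applied to the commutator $Lv_m=[L,\partial_m]u$: whenever $\nabla_x B_\parallel$ appears paired with $\nabla V$, one must use Cauchy--Schwarz so that the $\nabla V$ factor enters through $\|S_b(V)\|_{L^2}$ while the remaining Carleson density $|\nabla_x B_\parallel|^2 t\,dX$ pairs only with $N$-type quantities. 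The block-form structure of $L$ is essential here, since $b_{in}=0$ for $i<n$ eliminates the cross terms that would otherwise involve $\partial_t v_n$, which is exactly why $N(v_n)$ does not appear on the right-hand side of \eqref{TTBBMMxx}; this clean separation is what makes the $L^2$ Regularity proof in the main text go through.
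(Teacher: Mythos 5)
Your overall strategy does follow the template of the paper's proof of Lemma~\ref{S3:L8}: start from the divergence-theorem/FTC identity on the domain above the graph $\theta\hbar$, convert the boundary integral to a solid one, introduce the $t$-weight, substitute the PDE (here the system \eqref{system}) for the $\partial_{tt}^2$ contribution, and invoke the Carleson measure property for the error terms. You also correctly identify the role of the block structure ($b_{in}=0$ for $i<n$) in preventing $N(v_n)$ from appearing on the right-hand side, and the bookkeeping constraint that $\|\mu\|_{Carl}$ should multiply only $N$-type quantities. That much is faithful to the paper's design.

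However, two steps in the middle are either described incorrectly or missing. First, the mechanism you give for producing the $t$-weight is muddled. You claim that integrating in $\theta$ over $[1/6,6]$ produces a $1/\hbar(x)\sim 1/t$ weight on the region $\hbar/6 \le t\le 6\hbar$, which a ``second integration by parts in $t$'' then converts to a $t$-weight. As stated, this does not work: a $1/t$ weight does not become a $t$-weight under one $\partial_t$ integration by parts, and in fact when one integrates the solid (right-hand) side over $\theta$, the Fubini computation over the $\theta$-dependent domains $\mathcal{O}_\theta$ yields the factor $\min(35/6,\ t/\hbar(x)-1/6)$, not $1/\hbar$. In the paper's proof, the $t$-weight in the bulk arises directly from inserting the identity $1=\partial_t t$ and integrating by parts (this is how the paper passes from $\mathcal{A}$ to the terms $I$, $II$, $III$ in \eqref{utAA}); the $\theta$-integration is only used at the very end, through the change of variables $s=\theta\hbar(x)$, to convert graph-boundary terms carrying a factor $\theta^2\hbar^2\approx t^2$ into $t$-weighted bulk integrals such as $\int_{1/6}^6|II_7|\,d\theta\lesssim \iint |\nabla u|^2 t\,dt\,dx$. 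Your description conflates these two mechanisms and as written would not produce the correct weights.

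Second, you do not address the boundary terms on the Lipschitz graph $t=\theta\hbar(x)$ that arise when you integrate by parts in the tangential directions $\partial_i$ inside $\mathcal{O}_\theta$. These are not cutoff-derivative terms (which you do mention) but genuine surface integrals on the graph, analogous to the term $II_6$ in \eqref{TFWW}. In the paper these are controlled by the AM--GM inequality, producing a factor $\tfrac12\mathcal{I}$ that must be absorbed into the left-hand side plus a remainder $II_7$ that is then converted to a $t$-weighted bulk integral via the $\theta$-integration as above. Without this absorption step your chain of inequalities has an unbounded piece, so the proof does not close. Both issues are repairable by following \S 8's proof of Lemma~\ref{S3:L8} more literally, but as it stands the sketch contains genuine gaps rather than mere omissions of detail.
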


\noindent{\it Remark.} Let us explain the role of the vector $\vec{c}$ in Lemmas \ref{S3:L8bb}-\ref{S3:L8-alt1zz}. Note that Lemma \ref{S3:L8} does not contain it. This is due to the fact that in Lemma \ref{S3:L8}  if $u$ solves PDE $Lu=0$ then so does $u-c$ and hence Lemma \ref{S3:L8} must also hold for it. The difference in the other two lemmas is that $\nabla u$ and $\nabla u-\vec{c}$ do not solve the same PDE system and  we need a claim that works for both. The particular place this is used is just below \eqref{Eqqq-35} where $\tilde{u}$ is defined with a particular property that its avarage at a corkscrew point is zero and then Lemma \ref{S3:L8} is applied to $\tilde{u}$. This is a small gap we have not realized in our the original paper \cite{DPR} and hence this lemma there is formulated only for $\nabla u$.\medskip

Finally, we prove Lemma \ref{S3:L8}. Proofs of Lemmas \ref{S3:L8bb}-\ref{S3:L8-alt1zz} are similar and hence we omit them.\medskip

\noindent{\it Proof of Lemma \ref{S3:L8}.}
Let $\Delta_r$ be as in the statement of our Lemma. and assume that $(q,0)$ in the center of our ball. Let $\zeta$ be a smooth cutoff function of the form $\zeta(x,t)=\zeta_{0}(t)\zeta_{1}(x)$ where
\begin{equation}\label{Eqqq-27}
\zeta_{0}= 
\begin{cases}
1 & \text{ in } (-\infty, r_0+r], 
\\
0 & \text{ in } [r_0+2r, \infty),
\end{cases}
\qquad
\zeta_{1}= 
\begin{cases}
1 & \text{ in } \Delta_{r}(q), 
\\
0 & \text{ in } \mathbb{R}^{n}\setminus \Delta_{2r}(q)
\end{cases}
\end{equation}
and
\begin{equation}\label{Eqqq-28}
r|\partial_{t}\zeta_{0}|+r|\nabla_{x}\zeta_{1}|\leq c
\end{equation}
for some constant $c\in(0,\infty)$ independent of $r$. Here 
$r_0=6\sup_{x\in \Delta_r(q)}\hbar(x)$. Observe that our assumptions imply that
$$0\le r_0-\theta\hbar(x)\le  r_0 \lesssim r,\qquad \mbox{for all }x\in \Delta_{2r}(q),$$
for $\theta\in (1/6,6)$.

Our goal is to control the $L^2$ norm of $u$.  We proceed to estimate
\begin{align}
&\hskip -0.20in
\int_{\Delta_{r}(q)}u(x,\theta\hbar(x))^2\,dx \le \mathcal I:=\int_{\Delta_{2r}(q)}u(x,\theta\hbar(x))^2\zeta(x,\theta\hbar(x))\,dx
\nonumber\\[4pt]
&\hskip 0.70in
=-\iint_{\mathcal S(q,r,r_0,\theta\hbar)}\partial_{t}\left[u(x,t)^2\zeta(x,t)\right]\,dt\,dx,
\nonumber
\end{align}
where $\mathcal S(q,r,r_0,\theta\hbar)=\{(x,t):x\in \Delta_{2r}(q)\mbox{ and }\theta\hbar(x)<t<r_0+2r\}$. Hence:

\begin{align}\nonumber
&\hskip 0.10in
\mathcal I \le-2\iint_{\mathcal S(q,r,r_0,\theta\hbar)}u\partial_{t}u\zeta\,dt\,dx  
\\[4pt]
&\hskip 0.70in
\quad-\iint_{\mathcal S(q,r,r_0,\theta\hbar)}u^2(x,t)\partial_{t}\zeta\,dt\,dx
=:\mathcal{A}+IV.\label{u6tg}
\end{align}
We further expand the term $\mathcal A$ as a sum of three terms obtained 
via integration by parts with respect to $t$ as follows:
\begin{align}\label{utAA}
\mathcal A &=-2\iint_{\mathcal S(q,r,r_0,\theta\hbar)}u\partial_{t} 
u(\partial_{t}t)\zeta\,dt\,dx 
\nonumber\\[4pt]
&=2\iint_{\mathcal S(q,r,r_0,\theta\hbar)}\left|\partial_{t}u\right|^{2}t\zeta\,dt\,dx 
+2\iint_{\mathcal S(q,r,r_0,\theta\hbar)}u(\partial^2_{tt}u)t\zeta\,dt\,dx 
\nonumber\\[4pt]
&\quad +2\iint_{\mathcal S(q,r,r_0,\theta\hbar)}u\partial_{t}u\,t\partial_{t}\zeta\,dt\,dx
=:I+II+III.
\end{align}

We start by analyzing the term $II$. As the $u$ solve the PDE $Lu=0$ we see that
$$\partial^2_{tt}u=\partial_t\left(\frac{a_{nn}\partial_t u}{a_{nn}}\right)=\frac{\partial_t(a_{nn}\partial_tu)}{a_{nn}}+\partial_t\left(\frac1{a_{nn}}\right)a_{nn}\partial_tu$$
$$=-\frac{\partial_ta_{nn}}{a_{nn}}\partial_tu-\sum_{(i,j)\ne(n,n)}\frac{\partial_i(a_{ij}\partial_ju)}{a_{nn}}.$$
When $i<n$ in the sum above we are happy. Otherwise we write $\partial_t(a_{nj}\partial_ju)$ as
$$\sum_{j<n}\left[\partial_j(a_{nj}\partial_tu)+(\partial_ta_{nj})\partial_j u-(\partial_ja_{nj})\partial_tu\right].$$

In turn, this permits us to write the term $II$ 

\begin{align}
II &=-2\sum_{i<n}\iint_{\mathcal S(q,r,r_0,\theta\hbar)}(a_{nn})^{-1}u\partial_{i}\left({a}_{ij}\partial_{j}u\right)t\zeta\,dt\,dx\nonumber\\[4pt]
&\quad-2\sum_{j<n}\iint_{\mathcal S(q,r,r_0,\theta\hbar)}(a_{nn})^{-1}u\partial_{j}\left({a}_{nj}\partial_{t}u\right)t\zeta\,dt\,dx
\nonumber\\[4pt]
&\quad+\mbox{error terms}=:II_1+II_2+\mbox{error terms},
\nonumber
\end{align}
where the error terms are all bounded by 
$$\iint_{\mathcal S(q,r,r_0,\theta\hbar)}|\nabla A||u||\nabla u|t\zeta\,dt\,dx\lesssim \|\mu\|_{Carl}
\|S_b(u)\|_{L^2(\Delta_{2r})}\|{N}_a(u)\|_{L^2(\Delta_{2r})}^2.$$
using the Carleson condition for $A$ and the Cauchy-Schwarz inequality.

Observe that the two main terms $II_1$ and $II_2$ are of the same type which motivates us to define $b_{ij}=a_{ij}$ when $i,j<n$ and $b_{in}=a_{in}+a_{ni}$ to obtain
\begin{align}
II_1+II_2&=-2\sum_{i<n}\iint_{\mathcal S(q,r,r_0,\theta\hbar)}(a_{nn})^{-1}u\partial_{i}\left({b}_{ij}\partial_{j}u\right)t\zeta\,dt\,dx\nonumber\\[4pt]
&=2\sum_{i,j<n}\iint_{\mathcal S(q,r,r_0,\theta\hbar)}b_{ij}\partial_i(a_{nn}^{-1})u(\partial_j u)\,t\zeta\,dt\,dx
\nonumber\\[4pt]
&+2\sum_{i<n}\iint_{\mathcal S(q,r,r_0,\theta\hbar)}(a_{nn}^{-1})b_{ij}(\partial_iu)(\partial_ju)\,t\zeta\,dt\,dx
\nonumber\\[4pt]
&-2\sum_{i,j<n}\iint_{\mathcal S(q,r,r_0,\theta\hbar)}(a_{nn}^{-1})b_{ij}u(\partial_j u)t(\partial_i \zeta)\,dt\,dx 
\nonumber\\[4pt]
&\quad-2\sum_{i>0}\int_{\partial\mathcal S(q',r,r_0,\theta\hbar)}(\mbox{boundary terms})t\zeta\nu_i\,dS
\nonumber\\[4pt]
&=:II_{3}+II_{4}+II_{5}+II_{6}.\label{TFWW}
\end{align}
Here we integrated by parts w.r.t. $\partial_i$. The term $II_3$ can again be considered to be an error term with the same estimate as given above.
The boundary integral (term $II_6$) vanishes everywhere except on the graph of the function $\theta\hbar$ which implies that
\begin{align}
|II_6|&\le C\sum_{i,j<n}\int_{\Delta_{2r}(q)}|u(x,\theta\hbar(x))(\nabla u)(x,\theta\hbar(x))\hbar(x)\zeta(x,\theta\hbar(x)) \nu_i|dS.\nonumber\\
&\le \frac12\int_{\Delta_{2r}(q)}u(x,\theta\hbar(x))^2\zeta(x,\theta\hbar(x))\,dx\nonumber\\&\quad+C'
\int_{\Delta_{2r}(q)}|\nabla u(x,\theta\hbar(x))|^2|\hbar(x)|^2\,dx
=\frac12\mathcal I+II_7.
\end{align}
Here we have used the AG inequality.
 We can hide the term $\frac12\mathcal I$ on the lefthand side of \eqref{u6tg}, while the second term after integrating $II_7$ in $\theta$ becomes:
\begin{align}
\int_{1/6}^6|II_7|\,d\theta&\le C\int_{1/6}^6\int_{\Delta_{2r}(q)}|\nabla u(x,\theta\hbar(x))|^2 |\hbar(x)|^2dxd\theta.\nonumber\\
&\lesssim\iint_{\Delta_{2r}(q)\times[0,r_0]}|\nabla u|^2t\,dt\,dx\lesssim\|S_b(u)\|^2_{L^2(\Delta_{2r})}.
\end{align}

Some of the remaining (solid integral) terms that are of the same type we estimate together. Firstly, we have 
\begin{equation}\label{Eqqq-29}
|I+II_4|\lesssim\|S_b(u)\|^2_{L^2(\Delta_{2r})}.
\end{equation}
Here, the estimate holds even if the square function truncated at a hight $O(r)$.
Next, since $r|\nabla\zeta|\le c$, if the derivative falls on the cutoff function $\zeta$ we have
\begin{align}\label{TDWW}
|II_5+III| &\lesssim \iint_{[0,2r]\times \Delta_{2r}}\left|\nabla u\right||u|\frac{t}{r}\,dt\,dx
\nonumber\\[4pt]
&\le \left(\iint_{[0,2r]\times \Delta_{2r}}|u|^{2}\frac{t}{r^{2}}\,dt\,dx\right)^{1/2} 
\|S^{2r}_b(u)\|_{L^2(\Delta_{2r})} 
\nonumber\\[4pt]
&\lesssim\|S_b(u)\|_{L^2(\Delta_{2r})}\|\tilde{N}_a(u)\|_{L^2(\Delta_{2r})}.
\end{align}

Finally, the interior term $IV$, which arises from the fact that $\partial_{0}\zeta$ vanishes on the set
$(-\infty,r_0+r)\cup(r_0+2r,\infty)$ may be estimated as follows:
\begin{equation}\label{Eqqq-31}
|IV|\lesssim\frac{1}{r}\iint_{\Delta_{2r}(q)\times [r_0+r,r_0+2r]}|u|^{2}\,dt\,dx.
\end{equation}
We put together all terms and integrate in $\theta$. The above analysis ultimately yields \eqref{TTBBMM}.
It is worth noting that we have only used the Carleson condition for the coefficients of the last row of the matrix $A$ and hence if for example $A$ is a block form matrix then no assumption beyond boundedness is needed for coefficients with indices $1\le i,j<n$.

Finally, the last claim in the statement of the lemma that we can use \eqref{Eqqq-25} on the righthand side instead of the solid integral is a consequence of the Poincar\'e's inequality.
\qed


\begin{thebibliography}{30}

\bibitem{AHMMT}
J.~Azzam, S.~Hofmann, J.~M. Martell, M.~Mourgoglou, and X.~Tolsa.
\newblock {\em Harmonic measure and quantitative connectivity: geometric
  characterization of the {$L^p$}-solvability of the {D}irichlet problem.}
\newblock Invent. Math., 222(3):881--993, 2020.
\vglue1mm

\bibitem{CFK} L. Caffarelli, E. Fabes, C. Kenig, {\em Completely singular elliptic-harmonic measures}, Indiana Univ. Math. J. 30 (1981), no. 6, 917--924.\vglue1mm




\bibitem{D} B. Dahlberg, {\em Poisson semigroups and singular integrals},
Proc. Amer. Math. Soc. {\bf 97} (1) (1976) 41--48.\vglue1mm

\bibitem{D4} B. Dahlberg, {\em On the absolute continuity of elliptic measures},
Amer. J. Math. 108 (1986), no. 5, 1119--1138.\vglue1mm

\bibitem{Dsystems} M. Dindo\v{s}, {\em
The $L^p$ Dirichlet and regularity problems for second order elliptic systems with application to the Lam\'e system}, Comm. Partial Differential Equations 46 (2021), no. 9, 1628--1673.\vglue1mm

\bibitem{DHP} M. Dindo\v{s}, S. Hofmann, J. Pipher, {\em Regularity and Neumann problems for operators with real coefficients satisfying Carleson condition}, arXiv:2207.10366.\vglue1mm

\bibitem{DHM} M. Dindo\v{s}, S. Hwang, M. Mitrea, {\em The $L^p$ Dirichlet boundary problem for second order elliptic systems with rough coefficients}, Trans. Amer. Math. Soc. 374 (2021), no. 5, 3659--3701.\vglue1mm

\bibitem{DKP09} M. Dindo\v{s}, C. Kenig, J. Pipher, {\em BMO
solvability and the $A_\infty$ condition for elliptic operators},
Journal of Geometric Analysis {\bf 21} no.1 (2011),
78--95.\vglue1mm

\bibitem{DFMperturb} Z. Dai, J. Feneuil, S. Mayboroda 
\newblock {\em Carleson perturbations for the Regularity problem}
\newblock Preprint. August 2022. Arxiv:2203.07992
\vglue1mm



\bibitem{DK} M. Dindo\v{s} and J. Kirsch, {\em The regularity problem for elliptic operators with boundary data in Hardy-Sobolev space $HS^1$},
Math. Res. Lett., 19 (2012), no. 3, 699--717.\vglue1mm

\bibitem{DLP} M. Dindo\v{s}, J. Li, J. Pipher, {\em The $p$-ellipticity condition for second order elliptic systems and applications to the Lam\'e and homogenization problems}, J. Differential Equations 302 (2021), 367--405.
\vglue1mm

\bibitem{DPP} M. Dindo\v{s}, S. Petermichl, J. Pipher, {\em The
$L^p$ Dirichlet problem for second order elliptic operators and a
$p$-adapted square function}, J. of Funct. Anal. Jour. {\bf 249}
(2007), 372--392.\vglue1mm

\bibitem{DPcompl} M. Dindo\v{s},  J. Pipher, {\em 
Regularity theory for solutions to second order elliptic operators with complex coefficients and the $L^p$ Dirichlet problem}, Adv. Math. 341 (2019), 255--298.\vglue1mm

\bibitem{DPreg} M. Dindo\v{s},  J. Pipher, {\em Boundary value problems for second-order elliptic operators with complex coefficients}, Anal. PDE 13 (2020), no. 6, 1897--1938.\vglue1mm

\bibitem{DPex} M. Dindo\v{s},  J. Pipher, {\em
Extrapolation of the Dirichlet problem for elliptic equations with complex coefficients}, J. Funct. Anal. 279 (2020), no. 7, 108693, 20 pp.\vglue1mm

\bibitem{DPR} M. Dindo\v{s}, J. Pipher, D. Rule, {\em The Boundary value problems for second order elliptic operators satisfying Carleson condition},  Comm. Pure Appl. Math. 70 (2017), no. 7, 1316--1365.
\vglue1mm

\bibitem{DR} M. Dindo\v{s}, D. Rule, {\em Elliptic equations in a
plane satisfying the Carleson measure condition}, Revista
Mathematica Iberoamericana {\bf 26} no.3 (2010),
1013--1034.\vglue1mm

\bibitem{DS}
G.~David and S.~Semmes.
\newblock {\em Analysis of and on uniformly rectifiable sets}, volume~38 of {\em
  Mathematical Surveys and Monographs}.
\newblock American Mathematical Society, Providence, RI, 1993.
\vglue1mm

\bibitem{FJK} E. Fabes, D. Jerison, C. Kenig, {\em Necessary and sufficient conditions for absolute continuity of elliptic-harmonic measure}, Ann. of Math. (2) 119 (1984), no. 1, 121--141.\vglue1mm

\bibitem{FKP} R. Fefferman, C. Kenig, J. Pipher {\it The theory of weights and the Dirichet problem for elliptic
equations}, Ann. Math, 2nd Ser., {\bf 131}, no. 1 (1991), 65--121.
\vglue1mm

\bibitem{FSt} C.Fefferman, E. Stein, {\em $H^p$ spaces of several variables}, Acta Math. 129 (1972), no. 3--4, 137--193.\vglue1mm

\bibitem{HKMP1}  S. Hofmann, C. Kenig, S. Mayboroda, J. Pipher, {\it Square function/non-tangential maximal function estimates and the Dirichlet problem for non-symmetric elliptic operators},  J. Amer. Math. Soc. 28 (2015), no. 2, 483--529.\vglue1mm

\bibitem{HKMP2}  S. Hofmann, C. Kenig, S. Mayboroda, J. Pipher, {\it The Regularity problem for second order elliptic operators with complex-valued bounded measurable coefficients},  Math. Ann. 361 (2015), no. 3-4, 863--907.\vglue1mm

\bibitem{HMMTZ}  S. Hofmann, J-M. Martell, S. Mayboroda, T. Toro, Z. Zhao  {\it Uniform rectifiability and elliptic operators satisfying
a Carleson measure condition}, Geom. Funct. Anal. Vol. 31 (2021) 325–401. Published online May 8, 2021.\vglue1mm

\bibitem{JKnta}
D.~S. Jerison and C.~E. Kenig.
\newblock {\em Boundary behavior of harmonic functions in nontangentially accessible
  domains.}
\newblock Adv. in Math., 46(1):80--147, 1982.
\vglue1mm

\bibitem{Ken94} C. Kenig, {\em Harmonic analysis techniques for second order elliptic boundary value
problems}, Amer Mathematical Society (1994)\vglue1mm

\bibitem{KKPT} C. Kenig, B. Kirchheim, J. Pipher, T. Toro, {\em
Square functions and the $A_\infty$ property of elliptic measures}, J. Geom. Anal. 26 (2016), no. 3, 2383--2410.\vglue1mm

\bibitem{KKPT2} C. Kenig, H. Koch, J. Pipher, T. Toro, {\em A new approach to absolute continuity of elliptic measure,
with applications to non-symmetric equations}, Adv. Math. {\bf
153} (2000), no. 2, 231--298. \vglue1mm

\bibitem{KP3} C. Kenig, J. Pipher, {\em The Neumann problem for elliptic equations with nonsmooth coefficients}, Inv. math. {\bf 113} (1993),
447--509.  \vglue1mm

\bibitem{KP2} C. Kenig, J. Pipher, {\em The Neumann problem for elliptic equations with nonsmooth coefficients. II,
A celebration of John F. Nash}, Jr. Duke Math. J. {\bf 81} (1995),
no. 1, 227--250.  \vglue1mm

\bibitem{KP} C. Kenig and J. Pipher, {\em The Dirichlet problem for elliptic equations with drift terms},
Publ. Mat. {\bf 45} (2001), no. 1, 199--217. \vglue1mm

\bibitem{KR} C. Kenig and D. Rule, {\em The regularity and Neumann problem for non-symmetric elliptic
operators}, Trans. Amer. Math. Soc. {\bf 361} (2009), no. 1,
125--160. \vglue1mm

\bibitem{LSW63} W. Littman, G. Stampacchia, H. Weinberger, {\em Regular points for elliptic equations with discontinuous coefficients},
Ann. Scuola Norm. Sup. Pisa {\bf 17} no.1-2 (1963),
43--77\vglue1mm

\bibitem{MPT}
M.~Mourgoglou, B.~Poggi, X.~Tolsa.
\newblock {\em $L^p$ solvability of the Poisson-Dirichlet problem and its applications to the Regularity problem.}
\newblock Preprint. July 2022. arXiv:2207.10554.
\vglue1mm

\bibitem{MT} M. Mitrea, M. Taylor, {\em Potential theory on Lipschitz domains in Riemannian manifolds: the case of Dini metric tensors},
Trans. Amer. Math. Soc. {\bf 355} (2003), no. 5, 1961--1985
\vglue1mm

\bibitem{MouT}
M.~Mourgoglou and X.~Tolsa.
\newblock {\em The regularity problem for the {L}aplace equation in rough domains.}
\newblock Preprint. February 2022. arXiv:2110.02205.
\vglue1mm


\bibitem{N} J. Ne\v{c}as, {\em Les methodes directes en theorie des equations
elliptiques (French)}, Masson et Cie, Editeurs, Paris; Academia,
Editeurs, Prague (1967), 351 pp.\vglue1mm

\bibitem{S} Z. Shen, {\em A relationship between the Dirichlet and
regularity problems for elliptic equations}, Math. Res. Lett. {\bf
 14} no.2 (2007), 205--213. \vglue1mm

\end{thebibliography}
\end{document}